\DeclareSymbolFont{rsfs}{U}{rsfs}{m}{n}
\DeclareSymbolFontAlphabet{\mathrsfs}{rsfs}
\definecolor{darkspringgreen}{rgb}{0.09, 0.45, 0.27}
\definecolor{deepjunglegreen}{rgb}{0.0, 0.29, 0.29}
\newenvironment{NB}{
\color{red}{\bf NB}. \footnotesize
}{}
\newenvironment{NB2}{
\color{blue}{\bf NB2}. \footnotesize
}{}
\let\RR\relax
\let\bN\relax
\let\SL\relax
\let\oldref=\ref
\renewcommand{\ref}[1]{%
  \def\@mystring{affine_pre-#1}%
  \@ifundefined{r@\@mystring}{%
    \def\@mystring{Coulomb2-#1}%
    \@ifundefined{r@\@mystring}{%
      \oldref{#1}}{%
      \namecref{Coulomb2-#1}II.\oldref{Coulomb2-#1}%
    }%
  }%
  {\cite[\namecref{affine_pre-#1}\oldref{affine_pre-#1}]{affine}%
  }%
}
\renewcommand{\eqref}[1]{%
  \def\@mystring{affine_pre-#1}%
  \@ifundefined{r@\@mystring}{%
    \def\@mystring{Coulomb2-#1}%
    \@ifundefined{r@\@mystring}{%
      \textup{\tagform@{\oldref{#1}}}}{%
      \text{(II.\oldref{Coulomb2-#1})}%
    }%
  }%
  {\text{\cite[(\oldref{affine_pre-#1}]{affine}}%
}%
}
\crefname{Theorem}{Theorem\xspace}{Theorems}
\crefname{section}{\S}{\S\S}
\crefname{Lemma}{Lemma\xspace}{Lemmas\xspace}
\crefname{Proposition}{Proposition\xspace}{Propositions\xspace}
\crefname{Corollary}{Corollary\xspace}{Corollaries\xspace}
\crefname{Definition}{Definition}{Definitions}
\crefname{Remark}{Remark\xspace}{Remarks\xspace}
\crefname{Remarks}{Remark\xspace}{Remarks\xspace}
\crefname{Conjecture}{Conjecture\xspace}{Conjectures\xspace}
\crefname{figure}{Figure\xspace}{Figure\xspace}
\renewcommand{\thesubsection}{\thesection(\@roman\c@subsection)}
\newenvironment{aenume}{%
  \begin{enumerate}%
  }{\end{enumerate}}
\newcounter{number}
\newtheorem{Theorem}[equation]{Theorem}
\newtheorem{Corollary}[equation]{Corollary}
\newtheorem{Lemma}[equation]{Lemma}
\newtheorem{Proposition}[equation]{Proposition}
\theoremstyle{definition}
\newtheorem{Definition}[equation]{Definition}
\newtheorem{Conjecture}[equation]{Conjecture}
\theoremstyle{remark}
\newtheorem{Remark}[equation]{Remark}
\newtheorem{Remarks}[equation]{Remarks}
\newtheorem{Question}[equation]{Question}
\numberwithin{equation}{section}
\newcommand{\thmref}[1]{Theorem~\ref{#1}}
\newcommand{\secref}[1]{\S\ref{#1}}
\newcommand{\lemref}[1]{Lemma~\ref{#1}}
\newcommand{\propref}[1]{Proposition~\ref{#1}}
\newcommand{\corref}[1]{Corollary~\ref{#1}}
\newcommand{\subsecref}[1]{\S\ref{#1}}
\newcommand{\remref}[1]{Remark~\ref{#1}}
\newcommand{\defeq}{\overset{\operatorname{\scriptstyle def.}}{=}}
\newcommand{\CC}{{\mathbb C}}
\newcommand{\ZZ}{{\mathbb Z}}
\newcommand{\RR}{{\mathbb R}}
\newcommand{\proj}{{\mathbb P}}
\newcommand{\CP}{\proj}
\newcommand{\SL}{\operatorname{\rm SL}}
\newcommand{\SU}{\operatorname{\rm SU}}
\newcommand{\GL}{\operatorname{GL}}
\renewcommand{\U}{\operatorname{\rm U}}
\newcommand{\gl}{\operatorname{\mathfrak{gl}}}
\newcommand{\g}{{\mathfrak g}}
\newcommand{\Spec}{\operatorname{Spec}\nolimits}
\newcommand{\End}{\operatorname{End}}
\newcommand{\Hom}{\operatorname{Hom}}
\renewcommand{\MR}[1]{}
\newcommand{\dslash}{/\!\!/}
\newcommand{\vin}[1]{{
\operatorname{i}(#1)}} 
\newcommand{\vout}[1]{{
\operatorname{o}(#1)}} 
\newcommand{\bM}{\mathbf M}
\newcommand{\bN}{\mathbf N}
\newcommand{\shfO}{\mathcal O}
\newcommand{\tslabar}{\mathbin{
\setbox0=\hbox{/\!\!/\!\!/}\rule[0.4\ht0]{\wd0}{.3\dp0}\kern-\wd0\box0}}
\newcommand{\bmu}{\boldsymbol\mu}
\newcommand{\la}{\lambda}
\newcommand{\aff}{\mathrm{aff}}
\newcommand{\Gr}{\mathrm{Gr}}
\newcommand{\cR}{\mathcal R}
\newcommand{\cT}{\mathcal T}
\newcommand{\cO}{\mathcal O}
\newcommand{\cW}{\mathcal W}
\newcommand{\scP}{\mathscr P}
\newcommand{\br}{\mathbf r}
\newcommand{\cA}[1][{}]{%
  \@ifmtarg{#1}%
  {\mathcal A}
  {\mathcal A(#1)}
}
\newcommand{\cAh}[1][{}]{%
  \@ifmtarg{#1}%
  {\mathcal A_\hbar}
  {\mathcal A_\hbar(#1)}
}
\newcommand{\Stab}{\operatorname{Stab}}
\newcommand{\ft}{\mathfrak t}
\newcommand{\gr}{\operatorname{gr}}
\newcommand{\bNT}{\bN_T}
\newcommand{\calO}{\cO}
\newcommand{\po}{\ar@{}[dr]|{\text{\pigpenfont R}}}
\newcommand{\pb}{\ar@{}[dr]|{\text{\pigpenfont J}}}
\newcommand{\pp}{\ar@{}[dr]|{\text{\pigpenfont P}}}
\newcommand{\uS}{\mathscr S}
\newcommand{\uQ}{\mathscr Q}
\newcommand{\GV}{\GL(V)}
\newcommand{\TV}{T(V)}
\newcommand{\TW}{T(W)}
\newcommand{\Weyl}{\mathbb W}
\newcommand{\SDAHA}{\mathbf S\mathbf H}
\newcommand{\cM}{\mathcal M}
\newcommand{\BA}{{\mathbb{A}}}
\newcommand{\BC}{{\mathbb{C}}}
\newcommand{\BF}{{\mathbb{F}}}
\newcommand{\BG}{{\mathbb{G}}}
\newcommand{\BN}{{\mathbb{N}}}
\newcommand{\BP}{{\mathbb{P}}}
\newcommand{\BZ}{{\mathbb{Z}}}
\newcommand{\bp}{{\mathbf{p}}}
\newcommand{\bq}{{\mathbf{q}}}
\newcommand{\bfr}{{\mathbf{r}}}
\newcommand{\bt}{{\mathbf{t}}}
\newcommand{\bz}{{\mathbf{z}}}
\newcommand{\CB}{{\mathcal{B}}}
\newcommand{\CF}{{\mathcal{F}}}
\newcommand{\CG}{{\mathscr{G}}}
\newcommand{\CI}{{\mathcal{I}}}
\newcommand{\CJ}{{\mathcal{J}}}
\newcommand{\CK}{{\mathcal{K}}}
\newcommand{\CL}{{\mathcal{L}}}
\newcommand{\CM}{{\mathcal{M}}}
\newcommand{\CO}{{\mathcal{O}}}
\newcommand{\CR}{{\mathcal{R}}}
\newcommand{\cS}{{\mathcal{S}}}
\newcommand{\CT}{{\mathcal{T}}}
\newcommand{\CU}{{\mathcal{U}}}
\newcommand{\CV}{{\mathcal{V}}}
\newcommand{\CW}{{\mathcal{W}}}
\newcommand{\oW}{\overline{\mathcal{W}}{}}
\newcommand{\iso}{\overset{\sim}{\longrightarrow}}
\newcommand{\fg}{{\mathfrak{g}}}
\newcommand{\fri}{{\mathfrak{i}}}
\newcommand{\fz}{{\mathfrak{z}}}
\newcommand{\fB}{{\mathfrak{B}}}
\newcommand{\fC}{{\mathfrak{C}}}
\newcommand{\fM}{{\mathfrak{M}}}
\newcommand{\fU}{{\mathfrak{U}}}
\newcommand{\sft}{{\mathsf{t}}}
\newcommand{\sy}{{\mathsf{y}}}
\newcommand{\sx}{{\mathsf{x}}}
\newcommand{\sfu}{{\mathsf{u}}}
\newcommand{\sw}{{\mathsf{w}}}
\newcommand{\sfM}{{\mathsf{M}}}
\newcommand{\unl}{\underline}
\newcommand{\ol}{\overline}
\newcommand{\on}{\operatorname}
\newcommand{\lambdavee}{\lambda^{\!\scriptscriptstyle\vee}}
\newcommand{\alphavee}{\alpha^{\!\scriptscriptstyle\vee}}
\newcommand\calL{\mathcal L}
\newcommand\calX{\mathcal X}
\newcommand\Pic{\operatorname{Pic}}
\newcommand\ra{\rangle}
\newcommand\ten{\otimes}
\newcommand\x{\times}
\newcommand\Bunt{\operatorname{Bun}_T}
\newcommand\Bunb{\operatorname{Bun}_B}
\newcommand\Bung{\operatorname{Bun}_G}
\newcommand\grp{\mathfrak p}
\newcommand\grq{\mathfrak q}
\newcommand{\II}{{
Q_0}}
\newcommand{\Qo}{{
Q_1}}
\newcommand{\La}{\mathfrak L}
\newcommand{\fA}{\mathfrak A}
\let\@wraptoccontribs\wraptoccontribs
\def\@setcontribs{%
  \@xcontribs
  \textsf{\xcontribs}
}
\newcommand{\sbullet}{%
  \hbox{\fontfamily{lmr}\fontsize{.4\dimexpr(\f@size pt)}{0}\selectfont\textbullet}}
\DeclareRobustCommand{\mathbullet}{\accentset{\sbullet}}
\newcommand{\CZ}{{\mathscr{Z}}}
\newcommand{\bW}{\mathbullet{\CW}}
\newcommand{\uoW}{{\overline{\underline{\mathcal{W}}}}{}}
\newcommand{\bGZ}{\mathbullet{\CG\CZ}}
\newcommand{\oGZ}{\mathring{\CG\CZ}}
\newcommand{\butimes}{\mathbullet{\times}}
\newcommand{\oU}{\mathring{\fU}^\alpha_{G,B}}
\newcommand{\oZ}{\mathring{Z}}
\newcommand{\bZ}{\mathbullet{Z}}
\newcommand{\oE}{\mathring{E}}
\newcommand{\bE}{\mathbullet{E}}
\newcommand{\uZ}{\underline{Z}}
\newcommand{\uoZ}{\mathring{\underline Z}}
\newcommand{\uB}{\underline{\mathfrak B}}
\newcommand{\oB}{\mathring{\mathfrak B}}
\newcommand{\uoB}{\mathring{\underline{\mathfrak B}}}
\newcommand{\oA}{\mathring{\mathbb A}}
\newcommand{\oG}{\mathring{\mathbb G}}
\newcommand{\bA}{\mathbullet{\mathbb A}}
\begin{document}
\title[Quiver gauge theories and slices in the affine
Grassmannian]{{C}oulomb branches of $3d$ $\mathcal N=4$ quiver gauge
  theories and slices in the affine Grassmannian
}
\author[A.~Braverman]{Alexander Braverman}
\address{
Department of Mathematics,
University of Toronto and Perimeter Institute of Theoretical Physics,
Waterloo, Ontario, Canada, N2L 2Y5
}
\email{braval@math.toronto.edu}
\author[M.~Finkelberg]{Michael Finkelberg}
\address{National Research University Higher School of Economics,
Russian Federation,
Department of Mathematics, 6 Usacheva st, Moscow 119048;
Skolkovo Institute of Science and Technology}
\email{fnklberg@gmail.com}
\author[H.~Nakajima]{Hiraku Nakajima}
\address{Research Institute for Mathematical Sciences,
Kyoto University, Kyoto 606-8502,
Japan}
\email{nakajima@kurims.kyoto-u.ac.jp}
\contrib[Two appendices by]{Alexander~Braverman
   Michael~Finkelberg,
   Joel~Kamnitzer,
   Ryosuke~Kodera,
   Hiraku~Nakajima,
   Ben~Webster,
and    
   Alex~Weekes
}

\subjclass[2000]{Primary 81T13; Secondary 14D21, 16G20, 20G45}

\begin{abstract}
    This is a companion paper of \cite{main}. We study Coulomb
    branches of unframed and framed quiver gauge theories of type
    $ADE$. In the unframed case they are isomorphic to the moduli
    space of based rational maps from $\CP^1$ to the flag variety. In
    the framed case they are slices in the affine Grassmannian and
    their generalization. In the appendix,
    written jointly with
Joel~Kamnitzer,
Ryosuke~Kodera,
Ben~Webster,
and    
Alex~Weekes,
we identify the quantized Coulomb branch with the truncated shifted
Yangian.
\end{abstract}

\maketitle

\setcounter{tocdepth}{2}


\section{Introduction}\label{blowup-intro}

In \cite{2015arXiv150303676N} the third named author proposed an
approach to define the Coulomb branch $\mathcal M_C$ of a $3d$
$\mathcal N=4$ SUSY gauge theory in a mathematically rigorous way.
The subsequent paper \cite{main} by the present authors gives
a mathematically rigorous definition of $\mathcal M_C$ as an affine
algebraic variety. The purpose of this companion paper is to determine
$\mathcal M_C$ for a framed/unframed quiver gauge theory of type
$ADE$.

Let us first consider the unframed case. We are given a quiver
$Q = (\II,\Qo)$ of type $ADE$ and a $\II$-graded vector space
$V=\bigoplus V_i$. Here $\II$ (resp.\ $\Qo$) is the set of vertices
(resp.\ oriented arrows) of $Q$.
We consider the corresponding quiver gauge theory: the gauge group is
$\GL(V) = \prod \GL(V_i)$, its representation is
$\bN = \bigoplus_{h\in \Qo}\Hom(V_{\vout{h}},V_{\vin{h}})$, where
$\vout{h}$ (resp.\ $\vin{h}$) is the outgoing (resp.\ incoming) vertex
of an arrow $h\in\Qo$.
Our first main result (\thmref{pestun}) is $\mathcal M_C \cong
\oZ^{\alpha}$, where $\oZ^\alpha$ is the moduli space of based
rational maps from $\CP^1$ to the flag variety $\CB = G/B$ of degree
$\alpha$, where the group $G$ is determined by the $ADE$ type of $Q$,
and $\alpha$ is given by the dimension vector $\dim V = (\dim
V_i)_{i\in \II}$.

In physics literature, it is argued that $\mathcal M_C$ is the moduli
space of monopoles on $\RR^3$. See \cite{MR1451054} for type $A$,
\cite{MR1677752} in general. Here the gauge group $G_{c} = G_{ADE,c}$
is the maximal compact subgroup of $G$, hence is determined by the
type of the quiver as above. It is expected that two moduli spaces are
in fact isomorphic as complex manifolds. See \remref{rem:bijection}
below.

We also generalize this result to the case of an affine quiver
(\thmref{conditional}). We show that $\mathcal M_C$ is a partial
compactification of the moduli space of parabolic framed $G$-bundles over
$\CP^1\times\CP^1$. In fact, we prove this under an assumption that
the latter space is normal, which is known only for type $A$. This
space is expected to be isomorphic to the Uhlenbeck partial
compactification of the moduli space of $G_c$-calorons (= $G_c$-instantons
on $\RR^3\times S^1$).

In order to show $\mathcal M_C\cong\oZ^\alpha$, we use the criterion
in \ref{prop:flat}: Suppose that we have $\varPi\colon\mathcal M\to
\ft(V)/\Weyl$ such that $\mathcal M$ is a Cohen-Macaulay affine
variety and $\varPi$ is flat. Here $\ft(V)$ is the Lie algebra of a
maximal torus $T(V)$ of $\GL(V)$, and $\Weyl$ is the Weyl group of
$\GL(V)$.
If we have a birational isomorphism $\Xi^\circ\colon \mathcal M
\overset{\approx}{\dasharrow} \mathcal M_C$ over $\ft(V)/\Weyl$, which
is biregular up to codimension $2$, it extends to the whole spaces.
In order to apply this for $\mathcal M = \oZ^\alpha$, we shall check
those required properties.
To construct $\Xi^\circ$, let us recall that $\mathcal M_C$ is
birational to $T^*T(V)^\vee/\Weyl$ over the complement of union of
hyperplanes (generalized root hyperplanes) in $\ft(V)/\Weyl$ (see
\ref{cor:coordinate}). The same is true by the `factorization'
property of $\oZ^\alpha$. This defines $\Xi^\circ$ up to codimension
$1$, hence it is enough to check that $\Xi^\circ$ extends at a generic
point in each hyperplane. This last check will be done again by using
the factorization.
The factorization property is well-known in the context of zastava
space $Z^\alpha$, which is a natural partial compactification of
$\oZ^\alpha$. (See \cite{bdf} and references therein.)

\begin{Remark}\label{rem:bijection}
    Consider $\oZ^\alpha$ the moduli space of based rational maps
    $\CP^1\to G/B$ of degree $\alpha$, and the moduli space of
    $G_c$-monopoles with monopole charge $\alpha$.
    It is known that there is a bijection between two moduli spaces
    (given in \cite{MR709461,MR769355} for $A_1$,
    \cite{MR994495,MR987771} for classical groups and \cite{MR1625475}
    for general groups).
    It is quite likely that this is an isomorphism of complex
    manifolds, but not clear to authors whether the proofs give this
    stronger statement. For $A_1$, one can check it by using
    \cite{MR1215288}, as it is easy to check that the bijection
    between the moduli space of solutions of Nahm's equation and the
    moduli space of based maps is an isomorphism.

    For affine type $A$, a bijection is given by \cite{Takayama} based
    on earlier works
    \cite{Nye-thesis,MR1789949,MR2395473,MR2681686}. The same question
    above arises here also.
\end{Remark}

Let us turn to the framed case. We take an additional $\II$-graded vector space $W = \bigoplus W_i$ and add $\bigoplus \Hom(W_i,V_i)$ to $\bN$.
The answer has been known in the physics
literature~\cite{MR1451054,MR1454291,MR1454292,MR1636383} (at least
for type $A$): $\mathcal M_C$ is a moduli space of singular
$G_c$-monopoles on $\RR^3$. Two coweights $\la,\mu\colon S^1\to G_{c}$
attached at $0$, $\infty$ of $\RR^3$ are given by dimension vectors
for framed and ordinary vertices respectively. 
We will not use the moduli space of singular monopoles, hence we will
not explain how $\la$ and $\mu$ arise in this paper. (See the summary
in \cite[App.~A]{2015arXiv150304817B}.)
But let us emphasize that we need to take the Uhlenbeck partial
compactification.\footnote{Uhlenbeck partial compactification is
  necessary, due to bubbling at $0$. This is naturally understood by
  considering singular monopoles as $S^1$-equivariant instantons on
  the Taub-NUT space. See \remref{rem:Takayama} below. This bubbling
  is called \emph{monopole bubbling} in physics
  literature. e.g.,\cite{MR2306566,2015arXiv150304817B}.} This point
will be important as explained below.

On the other hand, it was conjectured that $\mathcal M_C$ is a framed
moduli space of $S^1$-equivariant instantons on $\RR^4$ in
\cite[\S3.2]{2015arXiv150303676N} when $\mu$ is dominant. There is a
subtle difference between $S^1$-equivariant instantons and singular
monopoles. (The third named author learned it after reading
\cite{2015arXiv150304817B}. See \cite[\S5(iii)]{2015arXiv151003908N}.)
The former makes sense only when $\mu$ is dominant, but is expected
to be isomorphic to the latter as complex manifolds.

In order to identify $\mathcal M_C$, we use the criterion as above. In particular,
we need a candidate $\mathcal M$ as an affine algebraic variety, or at least
a complex analytic space. 
For this purpose, the moduli space of singular monopoles has a defect,
as a complex
structure on its Uhlenbeck partial compactification is
not constructed in the literature except of type $A$.
(See \remref{rem:Takayama} below for type $A$.)

By \cite{braverman-2007} the Uhlenbeck partial compactification of the framed moduli space of $S^1$-equivariant instantons on $\RR^4$ is isomorphic to a slice $\oW^\lambda_{\mu}$ of a $G[[z]]$-orbit in the affine Grassmannian $\Gr_{G}$ in the closure of another orbit.
This is a reasonable alternative, as it has a close connection to the
zastava space $Z^\alpha$, and lots of things are known in the
literature.
\begin{NB}
   If you want to make the latter more specific, please edit it.
\end{NB}%

The first half of this paper is devoted to the construction of 
a generalization of the slice 
$\oW^\lambda_{\mu}$,
which makes sense even when $\mu$ is \emph{not} dominant. We call it a \emph{generalized slice}, and denote it by the same notation. There are
several requirements for the generalized slice. It must be possible for us to check
properties required in the criterion above. The most important one is the factorization. Also we should have a dominant birational morphism 
$\oZ^{\alpha^*}\to\oW^\la_{\mu}$, as a property of the Coulomb branch (see \ref{rem:further} and \cref{open_piece} below).
These properties naturally led the authors to our definition of
generalized slices.
The heart of the first half is \cref{prop:blow} showing
$\oW^\la_{\mu}$ is a certain affine blowup of the zastava space
$Z^{\alpha^*}$ up to codimension $2$.
\begin{NB}
    Please edit this paragraph, if necessary.
\end{NB}%

We introduce $\oW^\lambda_\mu$ as a moduli space of $G$-bundles over $\BP^1$
with trivialization outside $0$ and $B$-structure. This definition originally
appeared in~\cite{mf}. We also observe that it has an embedding into $G(z)$
so that its image coincides with the space of scattering matrices of singular
monopoles appearing in~\cite{2015arXiv150304817B}.\footnote{This was explained
to the authors by D.~Gaiotto and J.~Kamnitzer during the preparation of the
manuscript.}

We conjecture that Coulomb branches of framed affine quiver gauge theories
are Uhlenbeck partial compactifications of moduli spaces of instantons on the
Taub-NUT space invariant under a cyclic group action. This is {\em not}
precise yet as 1) we do not endow them with the structure of affine algebraic
varieties, and 2) we do not specify the cyclic group action. Also we should
recover the moduli spaces of singular monopoles by replacing the cyclic group
with $S^1$. Therefore they must be isomorphic to the generalized slice
$\oW^\la_{\mu}$, but we do not give a proof of this simple version of the 
conjecture.

\begin{Remark}\label{rem:Takayama}
    Singular monopoles are $S^1$-equivariant instantons on the
    Taub-NUT space by \cite{Kronheimer-msc}. See also
    \cite{MR2748801}. A moduli space of instantons on the Taub-NUT
    space is described as Cherkis bow variety
    \cite{MR2525636,MR2721657}, though mathematically rigorous proof
    of the completeness is still lacking as far as the authors
    know. Its $S^1$-fixed locus is also Cherkis bow variety of a
    different type. A bow variety is technically more tractable than
    the moduli space of singular monopoles.
    In \cite{2016arXiv160602002N}, Takayama and the third named author will identify
    $\mathcal M_C$ for a framed quiver gauge theory of type $A$ with
    Cherkis bow variety. This method is applicable for affine type $A$
    case, which conjecturally corresponds to a moduli space of
    $\ZZ/k\ZZ$-equivariant instantons on the Taub-NUT space.
\begin{NB}
    Relation between generalized slices and $S^1$-equivariant
    instantons on the Taub-NUT ?
\end{NB}%
\end{Remark}

The paper is organized as follows. In \secref{ZS} we introduce a
generalized slice as explained above.
In \secref{QGT} we identify the Coulomb branch of a quiver gauge
theory of type $ADE$ with generalized slices. We also treat the case
of affine type, but without framing. Then we identify the Coulomb
branch of a framed Jordan quiver gauge theory with a symmetric power
of the surface $xy = w^l$ ($l\ge 0$).
In \secref{sec:twist} we study the folding of a quiver gauge
theory. We show that the character of the coordinate ring of the fixed
point loci of the Coulomb branch is given by the twisted monopole
formula in \cite{Cremonesi:2014xha}.
In the appendices \S\S\ref{sec:minuscule},\ref{sec:quantization}
written jointly with
Joel~Kamnitzer,
Ryosuke~Kodera,
Ben~Webster,
and    
Alex~Weekes,
we study the embedding of the quantized Coulomb branch into the ring
of difference operators. We find various difference operators known in
the literature, such as Macdonald operators and those in
representations of Yangian (\cite{GKLO,kwy}). In particular, we show
that the quantized Coulomb branch of a framed quiver gauge theory of
type $ADE$ is isomorphic to the truncated shifted Yangian introduced
in \cite{kwy} under the dominance condition in \secref{sec:quantization}.


\subsection*{Notation}

We basically follow the notation in \cite{main}. However a group $G$
is used for a flag variety $\CB = G/B$, while the group for the gauge
theory is almost always a product of general linear groups and denoted
by $\GL(V)$. Exceptions are \propref{prop:ad} and
\subsecref{subsec:adjoint}, where the gauge theory for the adjoint
representation of a reductive group is considered. We use $W$ for a
vector space for a quiver, while the Weyl group of $G$ is denoted by
$\Weyl$.

Sections, equations, Theorems, etc in \cite{main} will be referred
with `II.' plus the numbering, such as \ref{prop:flat}.

\subsection*{Acknowledgments}

We thank
R.~Bezrukavnikov,
M.~Bullimore,
S.~Cherkis,
T.~Dimofte,
P.~Etingof,
B.~Feigin,
D.~Gaiotto,
D.~Gaitsgory,
V.~Ginzburg,
A.~Hanany,
A.~Kuznetsov,
A.~Oblomkov,
V.~Pestun,
L.~Rybnikov,
Y.~Takayama,
M.~Temkin
and
A.~Tsymbaliuk
for the useful discussions.
We also thank
J.~Kamnitzer,
R.~Kodera,
B.~Webster,
and
A.~Weekes
for the collaboration.

A.B.\  was partially supported by the NSF grant DMS-1501047.
M.F.\ was partially supported by
a subsidy granted to the HSE by
the Government of the Russian Federation for the implementation of the Global
Competitiveness Program.
The research of H.N.\ is supported by JSPS Kakenhi Grant Numbers
23224002, 
23340005, 
24224001, 
25220701. 

\section{Zastava and slices}
\label{ZS}

\subsection{Zastava}
\label{zastava}
Let $G$ be an adjoint simple
simply laced complex algebraic group.
We fix a Borel and a Cartan subgroup $G\supset B\supset T$. Let $\Lambda$
be the coweight lattice, and let $\Lambda_+\subset\Lambda$ be the submonoid
spanned by the simple coroots $\alpha_i,\ i\in \II$.
Here the index set of simple coroots is identified with the set of
vertices of the Dynkin diagram, i.e.\ $\II$.
The involution
$\alpha\mapsto-w_0\alpha$ of $\Lambda$ restricts to an involution of
$\Lambda_+$ and induces an involution $\alpha_i\mapsto\alpha_{i^*}$ of the
set of the simple coroots. We will sometimes write $\alpha^*:=-w_0\alpha$ for
short. For
$\alpha=\sum_{i\in \II}a_i\alpha_i$ let $Z^\alpha\supset\oZ^\alpha$ be the
corresponding zastava space (moduli space of based quasimaps $\phi$ from $\BP^1$
to the flag variety $\CB=G/B$ such that $\phi$ has no defect at $\infty\in\BP^1$
and $\phi(\infty)=B_-\in\CB$, the opposite Borel subgroup to $B$ sharing the
same Cartan torus $T$) and its open moduli subspace of based maps.
Recall the factorization map $\pi_\alpha\colon Z^\alpha\to\BA^\alpha$ and its
section $s_\alpha\colon \BA^\alpha\hookrightarrow Z^\alpha$, see
e.g.~\cite{bdf}: the restriction of $\pi_\alpha$ to $\oZ^\alpha\subset Z^\alpha$
takes a based map $\phi\colon \BP^1\to\CB$ to the pullback $\phi^*{\mathfrak S}$
of the $\II$-colored Schubert divisor (the boundary of the open $B$-orbit in
$\CB$). Recall that
$\BA^\alpha=\BA^{|\alpha|}/S_\alpha$ where $\BA^{|\alpha|}=\prod_{i\in \II}\BA^{a_i}$,
and $S_\alpha$ is the product of the symmetric groups $\prod_{i\in \II}S_{a_i}$.
We define $\uZ^\alpha:=Z^\alpha\times_{\BA^\alpha}\BA^{|\alpha|},\
\uoZ^\alpha:=\oZ^\alpha\times_{\BA^\alpha}\BA^{|\alpha|}$. Clearly, $S_\alpha$ acts
on both $\uZ^\alpha$ and $\uoZ^\alpha$, and we have $Z^\alpha=\uZ^\alpha/S_\alpha,\
\oZ^\alpha=\uoZ^\alpha/S_\alpha$.

\begin{NB}
We consider the affine space $\BA^{|\alpha|}\times\BA^{|\alpha|}$
with the coordinates $w_{i,r}$ on the first factor and the corresponding
coordinates $y_{i,r}$ on the second factor. The group $S_\alpha$ acts on
$\BA^{|\alpha|}\times\BA^{|\alpha|}$ diagonally. We define the affine blowup
$\uB^\alpha$ as the spectrum of the following algebra
$$\BC[\uB^\alpha]:=\BC[\BA^{|\alpha|}\times\BA^{|\alpha|}]
\left[\frac{y_{i,r}-y_{i,s}}{w_{i,r}-w_{i,s}}\right]_{i\in \II,\ 1\leq r\ne s\leq a_i}
\left[\frac{y_{i,r}^{d_i}y_{j,s}}{(w_{i,r}-w_{j,s})^{d_i}}\right]_{\alpha_i\cdot\alpha_j<0,\
d_i\geq d_j}$$
Here $?\cdot?$ is the unique $W$-invariant scalar product on $\Lambda$ such that
the square length of a short coroot is 2, and $d_i=\alpha_i\cdot\alpha_i/2$.
Note that the action of $S_\alpha$ on $\BA^{|\alpha|}\times\BA^{|\alpha|}$ extends
uniquely to its action on $\uB^\alpha$, and we define $\fB^\alpha$ as
$\uB^\alpha/S_\alpha$.

We consider the following regular function on $\BA^{|\alpha|}\times\BA^{|\alpha|}$:
$$F_\alpha:=\prod_{i,r}y_{i,r}^{d_i}
\prod_{j\ne i}^{1\leq s\leq a_j}(w_{i,r}-w_{j,s})^{\alpha_j\cdot\alpha_i/2}.$$
Finally, we define $\uoB^\alpha$ as the spectrum of the algebra
$\BC[\uoB^\alpha]:=\BC[\uB^\alpha][F_\alpha^{-1}]$, and
$\oB^\alpha:=\uoB^\alpha/S_\alpha$. The natural projections to the first factor
$\uoB^\alpha\subset\uB^\alpha\to\BA^{|\alpha|}$ and
$\oB^\alpha\subset\fB^\alpha\to\BA^\alpha$ will be denoted $\varpi_\alpha$.

Unfortunately, these blowups are bad for the following reasons
(though we always have a birational projection $Z^\alpha\to\fB^\alpha$):

1) For non simply laced case, already for $G$ of type $B_2,G_2$ and
degree $\alpha=\alpha_i+\alpha_j$, we have $\oZ^\alpha\ne\oB^\alpha$.

2) Even for simply laced case, already for $G$ of type $A_3$ and degree
$\alpha=\alpha_i+\alpha_j+\alpha_k$, the projection $\varpi$ is not flat.
\end{NB}%

We denote by $w_{i,r},\ i\in \II,\ 1\leq r\leq a_i$ the natural coordinates on
$\BA^{|\alpha|}$.
We define an open subset
$\oA^{|\alpha|}\subset\BA^{|\alpha|}$ as the complement to all the diagonals
$w_{i,r}=w_{j,s}$, and also $\oA^\alpha:=\oA^{|\alpha|}/S_\alpha\subset\BA^\alpha$.
We also define a bigger open subset
$\oA^{|\alpha|}\subset\bA^{|\alpha|}\subset\BA^{|\alpha|}$ as the complement to
all the pairwise intersections of diagonals. We set
$\bA^\alpha:=\bA^{|\alpha|}/S_\alpha\subset\BA^\alpha$.

Recall that $\pi_\alpha\colon Z^\alpha\to\BA^\alpha$ is flat (since $\BA^\alpha$
is smooth, $Z^\alpha$ has rational singularities and hence it is
Cohen-Macaulay~\cite[Proposition~5.2]{brfi}, and all the fibers of $\pi_\alpha$
have the same dimension $|\alpha|$~\cite[Propositions~2.6,~6.4, and the line
right after~6.4]{bfgm}). Recall the regular functions
$(w_{i,r},y_{i,r})_{i\in \II,\ 1\leq r\leq a_i}$ on $\uZ^\alpha$, see~\cite[2.2]{bdf}.
Note that $\pi_\alpha(w_{i,r},y_{i,r})=(w_{i,r})$. We have
$\pi_\alpha^{-1}(\oA^{|\alpha|})\cong\oA^{|\alpha|}\times\BA^{|\alpha|}$ with
coordinates $(y_{i,r})$ on the second factor. Recall that the boundary
$\partial Z^\alpha=Z^\alpha\setminus\oZ^\alpha$ is
the zero divisor of a regular function $F_\alpha\in\BC[Z^\alpha]$ defined uniquely
up to a multiplicative scalar; in terms of $(w_{i,r},y_{i,r})$ coordinates we have
$$F_\alpha=\prod_{i,r}y_{i,r}
\prod_{\substack{h:\Qo\sqcup\overline\Qo \\ \vout{h}=i}}^{1\leq s\leq a_{\vin{h}}}(w_{i,r}-w_{\vin{h},s})^{-1/2},$$
(the inner product over all arrows $h\in\Qo$ or in the opposite orientation $\overline\Qo$ connected to $i$), see~\cite[Theorem~1.6.(2)]{bdf}. It follows that
$\pi_\alpha^{-1}(\oA^{|\alpha|})\cap\uoZ^\alpha\cong\oA^{|\alpha|}\times\BG_m^{|\alpha|}$,
and $\pi_\alpha^{-1}(\oA^\alpha)\cap\oZ^\alpha\cong
(\oA^{|\alpha|}\times\BG_m^{|\alpha|})/S_\alpha$
(with respect to the diagonal action).

In case $\alpha=\beta+\gamma,\ \beta=\sum_{i\in \II}b_i\alpha_i,\
\gamma=\sum_{i\in \II}c_i\alpha_i$, according to~\cite[Theorem~1.6.(3)]{bdf},
the factorization isomorphism ${\mathfrak f}_{\beta,\gamma}\colon
\uoZ^\alpha|_{(\BA^{|\beta|}\times\BA^{|\gamma|})_{\on{disj}}}\iso
(\uoZ^\beta\times\uoZ^\gamma)|_{(\BA^{|\beta|}\times\BA^{|\gamma|})_{\on{disj}}}$ takes
$(w_{i,r},y_{i,r})_{i\in \II}^{1\leq r\leq a_i}$ to
\begin{equation} \label{149}
\left((w_{i,r},y_{i,r}
\prod_{b_i+1\leq s\leq a_i}(w_{i,r}-w_{i,s}))_{i\in \II}^{1\leq r\leq b_i},(w_{i,r},y_{i,r}
\prod_{1\leq s\leq b_i}(w_{i,r}-w_{i,s}))_{i\in \II}^{b_i+1\leq r\leq a_i}\right).
\end{equation}
Here $(\BA^{|\beta|}\times\BA^{|\gamma|})_{\on{disj}}$ is the open subset of
$\BA^{|\beta|}\times\BA^{|\gamma|}$ formed by all the configurations where none of the
first $|\beta|$ points meets any of the last $|\gamma|$ points.

\begin{Remark}
\label{4dim}
For a future use we recall the examples of $\uoZ^\gamma$ for $|\gamma|=2$,
see~\cite[5.5,~5.6]{bdf}. In case $\gamma=\alpha_i+\alpha_j$ and $i,j$ are
not connected by an edge of the Dynkin diagram of $G$, we have
$\BC[\uoZ^\gamma]=\BC[w_i,w_j,y_i^{\pm1},y_j^{\pm1}]$.
In case $\gamma=\alpha_i+\alpha_j$ and $i,j$ are connected by an edge, we have
$\BC[\uoZ^\gamma]=\BC[w_i,w_j,y_i,y_j,y_{ij}^{\pm1}]/(y_iy_j-y_{ij}(w_j-w_i))$.
In case $\gamma=2\alpha_i$, we have
$\BC[\uoZ^\gamma]=\BC[w_{i,1},w_{i,2},y_{i,1}^{\pm1},y_{i,2}^{\pm1},\xi]/
(y_{i,1}-y_{i,2}-\xi(w_{i,1}-w_{i,2}))$.
\end{Remark}

\subsection{Generalized transversal slices}
\label{nondom}
In this subsection $\lambda$ is a dominant coweight of $G$, and $\mu\leq\lambda$
is an {\em arbitrary} coweight of $G$, not necessarily dominant, such that
$\alpha:=\lambda-\mu=\sum_{i\in \II}a_i\alpha_i,\ a_i\in\BN$.
We will define the analogues of slices
$\oW^\lambda_{G,\mu}$ of~\cite[Section~2]{bf14}
and prove that they are the Coulomb branches of the
corresponding quiver gauge theories.

Recall the convolution diagram
$\ol{\Gr}{}^\lambda_G\stackrel{\bp}{\longleftarrow}\CG\CZ^{-\mu}_\lambda
\stackrel{\bq}{\longrightarrow}Z^{\alpha^*}$ of~\cite[11.7]{mf}.
Here $\CG\CZ^{-\mu}_\lambda$ is the moduli space of the following data:\par
(a) a $G$-bundle $\scP$ on $\BP^1$. \par
(b) A trivialization $\sigma\colon
\scP_{\on{triv}}|_{\BP^1\setminus\{0\}}\iso\scP|_{\BP^1\setminus\{0\}}$ having a pole
of degree $\leq\lambda$ at $0\in\BP^1$. This means that for an irreducible
$G$-module $V^{\lambdavee}$ and the associated vector bundle
$\CV^{\lambdavee}_\scP$ on $\BP^1$ we have
$V^{\lambdavee}\otimes\CO_{\BP^1}(-\langle\lambda,\lambdavee\rangle\cdot0)
\subset\CV^{\lambdavee}_\scP\subset
V^{\lambdavee}\otimes\CO_{\BP^1}(-\langle w_0\lambda,\lambdavee\rangle\cdot0)$.
\par
(c) a generalized $B$-structure $\phi$
on $\scP$ of degree $w_0\mu$ having no defect at $\infty\in\BP^1$ and having
fiber $B_-\subset G$ at $\infty\in\BP^1$ (with respect to the trivialization
$\sigma$ of $\scP$ at $\infty\in\BP^1$). This means in particular that for an irreducible
$G$-module $V^{\lambdavee}$ and the associated vector bundle
$\CV^{\lambdavee}_\scP$ on $\BP^1$ we are given an invertible subsheaf
$\CL_{\lambdavee}\subset\CV^{\lambdavee}_\scP$ of degree
$-\langle w_0\mu,\lambdavee\rangle$.\par
Now $\bp$ forgets $\phi$, while $\bq$ sends $(\scP,\sigma,\phi)$ to a
collection of invertible subsheaves
$\CL_{\lambdavee}(\langle w_0\lambda,\lambdavee\rangle\cdot0)\subset
V^{\lambdavee}\otimes\CO_{\BP^1}$. This collection will be denoted by
$\sigma^{-1}\phi(w_0\lambda\cdot0)$ for short. Clearly, $\deg\sigma^{-1}\phi(w_0\lambda\cdot0)=\alpha^*$, i.e.
$\deg\CL_{\lambdavee}(\langle w_0\lambda,\lambdavee\rangle\cdot0)=
\langle w_0\lambda-w_0\mu,\lambdavee\rangle=
-\langle\alpha^*,\lambdavee\rangle$. Note that the range of $\bq$ in~\cite[11.7]{mf}
is erroneously claimed to be $Z^\alpha$ as opposed to $Z^{\alpha^*}$.

We have an open subvariety $\oGZ^{-\mu}_\lambda\subset\CG\CZ^{-\mu}_\lambda$
formed by all the triples
$(\scP,\sigma,\phi)$ such that $\phi$ has no defects (i.e.\ is a genuine
$B$-structure). We define the generalized slice
$\oW^\lambda_{\mu}:=\oGZ^{-\mu}_\lambda$.
To avoid a misunderstanding about possible nilpotents in the structure sheaf,
let us rephrase the definition. Let $'\!\on{Bun}_G(\BP^1)$ be the moduli stack of
$G$-bundles on $\BP^1$ with a $B$-structure at $\infty\in\BP^1$.
Let $'\ol{\on{Bun}}_B^{w_0\mu}(\BP^1)$
be the moduli stack of degree $w_0\mu$ generalized $B$-bundles on
$\BP^1$ having no defect at $\infty\in\BP^1$. Let $\on{Bun}^{w_0\mu}_B(\BP^1)$
be its open substack formed by the genuine $B$-bundles.
Finally, we equip $\ol\Gr{}^\lambda_G$ with the reduced scheme structure. Then
$\CG\CZ^{-\mu}_\lambda:=\ol\Gr{}^\lambda_G\times_{'\!\on{Bun}_G(\BP^1)}{}'\ol{\on{Bun}}_B^{w_0\mu}(\BP^1)$,
and $\oW^\lambda_{\mu}=\oGZ^{-\mu}_\lambda:=
\ol\Gr{}^\lambda_G\times_{'\!\on{Bun}_G(\BP^1)}\on{Bun}_B^{w_0\mu}(\BP^1)$.
Note that $\oW^\lambda_{\mu}$ is reduced since it is generically reduced and
Cohen-Macaulay (see~\lemref{lem:CMBD} below).

We denote by $s^\lambda_\mu\colon \oW^\lambda_{\mu}\to Z^{\alpha^*}$ the restriction
of $\bq\colon \CG\CZ^{-\mu}_\lambda\to Z^{\alpha^*}$ to
$\oW^\lambda_{\mu}=\oGZ^{-\mu}_\lambda\subset\CG\CZ^{-\mu}_\lambda$.
Note that when $\mu$ is dominant,
$\bp\colon \oGZ^{-\mu}_\lambda\to\ol{\Gr}{}^\lambda_G$ is a locally closed
embedding~\cite[Remark~2.9]{bf14}, and the image coincides with the
transversal slice $\oW^\lambda_{G,\mu}$ in the affine Grassmannian
$\Gr_G$~\cite[Section~2]{bf14}, hence the name and notation.
However, when $\mu$ is nondominant, the restriction of
$\bp\colon \oGZ^{-\mu}_\lambda\to\ol{\Gr}{}^\lambda_G$ is not a locally closed
embedding.

\subsection{Determinant line bundles and Hecke correspondences}
\label{Determinant}
We recall that
given a family $f\colon \calX\to S$ of smooth projective curves and two line bundles
$\calL_1$ and $\calL_2$ on $\calX$ Deligne defines a line bundle
$\langle\calL_1,\calL_2\ra$ on $S$~\cite[Section~7]{Deligne}.
In terms of determinant bundles the definition is simply
\begin{equation}\label{deligne}
\langle\calL_1,\calL_2\ra=\det Rf_*(\calL_1\ten\calL_2)\ten\det Rf_*(\calO_{\calX})\ten
(\det Rf_*(\calL_1)\ten \det Rf_*(\calL_2))^{-1}.
\end{equation}
Deligne shows that the resulting pairing
$\Pic(\calX)\x\Pic(\calX)\to \Pic(S)$ is symmetric (obvious) and bilinear (not obvious).

Let $Y$ (resp.\ $Y^{\vee}$) denote
the coweight (resp.\ weight) lattice of $T$.
Let $(\cdot,\cdot)$ be an even pairing on $Y$.
Let also $X$ be a smooth projective curve
and let $\Bunt$ denote the moduli stack of
$T$-bundles on $X$. Then to the above data one associates a line bundle
$\calL_T$ on $\Bunt$ in the following way. Let $e_1,...,e_n$ be a basis of $Y$ and let
$f_1,...,f_n$ be the dual basis (of the dual lattice). For every $i=1,...,n$ let $\calL_i$ denote the
line bundle on $\Bunt\x X$ associated to the weight $f_i$.
Let also $a_{ij}=(e_i,e_j)\in\ZZ$. Then we define
\begin{equation}\label{detformula}
\calL_T=(\bigotimes\limits_{i=1}^n \langle\calL_i,\calL_i\ra^{\ten \frac{a_{ii}}{2}})\otimes
(\bigotimes\limits_{1\leq i<j\leq n} \langle\calL_i,\calL_j\ra^{\ten a_{ij}}).
\end{equation}
It is easy to see that $\calL_T$ does not depend on the choice of the basis (here, of course, we have to use
the statement that Deligne's pairing is bilinear).

We have natural
maps $\grp\colon \Bunb\to \Bunt,\ \grq\colon \Bunb\to \Bung$.
Let $(\cdot,\cdot)$ be a pairing as above. Let us in addition assume that it is
$W$-invariant.
Let $\calL_T$ be the corresponding determinant bundle. Faltings~\cite{Faltings}
shows that the pullback $\grp^*\calL_T$
descends naturally to $\Bung$, i.e.\ there exists a (canonically defined) line
bundle $\calL_G$ on $\Bung$ with an isomorphism
$\grp^*\calL_T\simeq \grq^*\calL_G$. The pullback of $\CL_G$ under the natural
morphism $\Gr_G\to\Bung$ is the determinant line bundle on the affine
Grassmannian; it will be also denoted $\CL_G$ or even simply $\CL$ when no
confusion is likely.

\subsection{Basic properties of generalized transversal slices}
\label{basic_proper}
The convolution diagram $\CG\CZ^{-\mu}_\lambda$ is
equipped with the tautological morphism $\bfr$ to the stack
$\overline{\on{Bun}}_B(\BP^1)$ (see~\cite[Section~1]{bfgm} for notation).
The boundary $\partial\overline{\on{Bun}}_B(\BP^1):=
\overline{\on{Bun}}_B(\BP^1)\setminus\on{Bun}_B(\BP^1)$ is a Cartier divisor, and
$\CO_{\CG\CZ^{-\mu}_\lambda}(\bfr^{-1}(\partial\overline{\on{Bun}}_B(\BP^1)))=\bp^*\CL$
~\cite[Proof of~Theorem~11.6]{BFG} where $\CL$ is the very ample determinant
line bundle on $\Gr_G$.

\begin{Lemma}
\label{lem:affine}
$\oW^\lambda_{\mu}$ is an affine variety.
\end{Lemma}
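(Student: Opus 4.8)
The claim is that $\oW^\lambda_\mu = \oGZ^{-\mu}_\lambda$ is an affine variety. The key point already at our disposal is that $\oGZ^{-\mu}_\lambda$ carries the line bundle $\bp^*\CL$, where $\CL$ is the \emph{very ample} determinant line bundle on $\ol\Gr{}^\lambda_G$ (pulled back along $\bp$), and moreover $\bp^*\CL = \CO_{\CG\CZ^{-\mu}_\lambda}(\bfr^{-1}(\partial\overline{\on{Bun}}_B(\BP^1)))$, i.e.\ it is the line bundle associated with the boundary Cartier divisor. First I would recall that $\oGZ^{-\mu}_\lambda\subset\CG\CZ^{-\mu}_\lambda$ is precisely the complement of $\bfr^{-1}(\partial\overline{\on{Bun}}_B(\BP^1))$, so that $\oW^\lambda_\mu$ is the non-vanishing locus of a canonical section $s$ of the line bundle $\bp^*\CL$ on $\CG\CZ^{-\mu}_\lambda$.

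\textbf{Reduction to a projectivity/ampleness statement.} So it suffices to show that $\CG\CZ^{-\mu}_\lambda$ is projective (or at least proper), and that $\bp^*\CL$ is ample on it; then the complement of the zero divisor of a section of an ample line bundle on a projective variety is affine, and we are done. Properness of $\CG\CZ^{-\mu}_\lambda$ follows from its description as a fiber product $\ol\Gr{}^\lambda_G\times_{{}'\!\on{Bun}_G(\BP^1)}{}'\ol{\on{Bun}}_B^{w_0\mu}(\BP^1)$: the map $\bp\colon\CG\CZ^{-\mu}_\lambda\to\ol\Gr{}^\lambda_G$ is proper because ${}'\ol{\on{Bun}}_B^{w_0\mu}(\BP^1)\to{}'\!\on{Bun}_G(\BP^1)$ is proper (the fibers are the relevant compactified Drinfeld/Zastava-type spaces of generalized $B$-structures, which are proper over the curve point $\infty$ — this is the standard properness of $\overline{\on{Bun}}_B$ over $\on{Bun}_G$ from~\cite{bfgm}), and $\ol\Gr{}^\lambda_G$ is projective. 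Hence $\CG\CZ^{-\mu}_\lambda$ is projective. For ampleness of $\bp^*\CL$: $\CL$ is very ample on $\ol\Gr{}^\lambda_G$, so $\bp^*\CL$ is at least relatively ample along $\bp$; combined with very ampleness on the base $\ol\Gr{}^\lambda_G$, one gets that $\bp^*\CL^{\otimes N}\otimes\bp^*(\text{ample})$ is ample for $N\gg 0$, and since $\bp^*(\text{ample})$ is itself $\bp^*\CL^{\otimes k}$ up to twist, $\bp^*\CL$ is ample. (Alternatively, one can invoke the known fact for $\mu$ dominant that $\oW^\lambda_{G,\mu}$ is an affine slice in $\Gr_G$, and for general $\mu$ use the factorization/stratification to reduce to that case — but the ampleness argument is cleaner and uniform.)

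\textbf{Conclusion.} Putting it together: $\CG\CZ^{-\mu}_\lambda$ is a projective variety (reduced, by the scheme structure fixed above via the reduced structure on $\ol\Gr{}^\lambda_G$), $\bp^*\CL$ is an ample line bundle on it, and $\oW^\lambda_\mu$ is the complement of the support of the effective Cartier divisor $\bfr^{-1}(\partial\overline{\on{Bun}}_B(\BP^1))$, which is the zero locus of a section of $\bp^*\CL$. Therefore $\oW^\lambda_\mu$ is affine, being the complement of an ample divisor in a projective variety. The main obstacle I expect is the careful verification that $\bfr^{-1}(\partial\overline{\on{Bun}}_B(\BP^1))$ is genuinely a Cartier divisor cut out by a section of $\bp^*\CL$ on \emph{all} of $\CG\CZ^{-\mu}_\lambda$ (not just generically), and that $\bp$ is proper when $\mu$ is non-dominant — where $\bp$ fails to be an embedding, one must be sure that the compactified $B$-structure moduli still behaves properly over ${}'\!\on{Bun}_G(\BP^1)$; this is where the input from~\cite{bfgm,BFG,mf} on the geometry of $\overline{\on{Bun}}_B$ is essential.
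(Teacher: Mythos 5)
Your proposal has a genuine gap, and it is located exactly where you yourself flagged the main risk: the properness of $\bp$. You claim that $\bp\colon\CG\CZ^{-\mu}_\lambda\to\ol\Gr{}^\lambda_G$ is proper, appealing to the properness of the full Drinfeld compactification $\overline{\on{Bun}}_B\to\on{Bun}_G$. But the stack entering the fiber-product definition is ${}'\ol{\on{Bun}}_B^{w_0\mu}(\BP^1)$, whose definition includes the condition \emph{``having no defect at $\infty$''}. This is an \emph{open} substack of the full Drinfeld compactification, and the ``no defect at $\infty$'' condition destroys properness over $\on{Bun}_G$ (e.g.\ a family of $B$-structures can degenerate so that a defect escapes to $\infty$). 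Consequently $\bp$ is not proper, and $\CG\CZ^{-\mu}_\lambda$ is \emph{not} projective. One can also see this concretely: the morphism $\bq\colon\CG\CZ^{-\mu}_\lambda\to Z^{\alpha^*}$ is nonconstant (its image has positive dimension for $\lambda>\mu$), and a projective variety admits no nonconstant map to an affine one. A secondary issue is your claim that $\bp^*\CL$ is ``relatively ample along $\bp$''---it is trivial on the fibers of $\bp$, so it is not relatively ample; the ampleness of $\bp^*\CL$ has to come from a different mechanism.

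The paper's actual argument uses the opposite structural fact: $(\bp,\bq)$ is a \emph{closed embedding} $\CG\CZ^{-\mu}_\lambda\hookrightarrow\ol\Gr{}^\lambda_G\times Z^{\alpha^*}$. Since $Z^{\alpha^*}$ is affine, the projection onto the first factor is an affine morphism, hence so is $\bp$. The pullback of an ample line bundle along an affine (more generally, quasi-affine) morphism is ample, so $\bp^*\CL$ is ample on $\CG\CZ^{-\mu}_\lambda$. Meanwhile, $\CG\CZ^{-\mu}_\lambda$ is proper over the affine base $Z^{\alpha^*}$ via $\bq$ (being closed in $\ol\Gr{}^\lambda_G\times Z^{\alpha^*}$, which is projective over $Z^{\alpha^*}$). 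Finally, $\oW^\lambda_\mu$ is the non-vanishing locus of a section of this ample line bundle on a variety proper over an affine base, and such a locus is affine. So the conclusion you aim for is correct, and the role of the ample divisor is the same, but the ambient geometry is ``proper over affine,'' not ``projective,'' and the key input is affineness of $\bp$ (via the closed embedding $(\bp,\bq)$), not its properness.
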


\begin{proof}
The morphism $(\bp,\bq)\colon \CG\CZ^{-\mu}_\lambda\to\ol{\Gr}{}^\lambda_G\times
Z^{\alpha^*}$ is a closed embedding. Since $Z^{\alpha^*}$ is affine, we conclude
that $\bp\colon \CG\CZ^{-\mu}_\lambda\to\ol{\Gr}{}^\lambda_G$ is affine.
The complement $\CG\CZ^{-\mu}_\lambda\setminus\oGZ^{-\mu}_\lambda=
\br^{-1}(\partial\overline{\on{Bun}}_B(\BP^1))$, but
$\CO_{\CG\CZ^{-\mu}_\lambda}(\br^{-1}(\partial\overline{\on{Bun}}_B(\BP^1)))=\bp^*\CL$
is the very ample determinant line bundle on $\CG\CZ^{-\mu}_\lambda$ since
$\bp$ is affine. Hence the complement
$\CG\CZ^{-\mu}_\lambda\setminus\br^{-1}(\partial\overline{\on{Bun}}_B(\BP^1))=
\oW^\lambda_{\mu}$ is affine.
\end{proof}

The proof of the following lemma is contained in a more general proof
of~\lemref{lem:CMBD} below:
\begin{Lemma}
\label{lem:CMslice}
$\oW^\lambda_{\mu}$ is Cohen-Macaulay. \qed
\end{Lemma}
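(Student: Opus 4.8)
The plan is to deduce Cohen-Macaulayness of $\oW^\lambda_\mu=\oGZ^{-\mu}_\lambda$ from the structure of $\CG\CZ^{-\mu}_\lambda$ as a fibre product, together with the established Cohen-Macaulay (indeed rational-singularities) properties of the zastava space and of $\overline{\on{Bun}}_B(\BP^1)$. Concretely, recall from \S\ref{basic_proper} that $\oW^\lambda_\mu$ is the open locus in $\CG\CZ^{-\mu}_\lambda$ where the $B$-structure is genuine, equivalently the complement of $\bfr^{-1}(\partial\overline{\on{Bun}}_B(\BP^1))$. Since Cohen-Macaulayness is preserved under passage to open subschemes, it suffices to show that $\CG\CZ^{-\mu}_\lambda$ itself is Cohen-Macaulay, or at least that it is so along the open part $\oGZ^{-\mu}_\lambda$. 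I would first reduce to this statement, and then analyse $\CG\CZ^{-\mu}_\lambda=\ol\Gr{}^\lambda_G\times_{'\!\on{Bun}_G(\BP^1)}{}'\ol{\on{Bun}}_B^{w_0\mu}(\BP^1)$.

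The key step is to exhibit $\bq\colon\oGZ^{-\mu}_\lambda\to Z^{\alpha^*}$, or better the relevant piece of $\bp,\bq$, as a \emph{flat} morphism (or a fibration with equidimensional, Cohen-Macaulay fibres) onto a Cohen-Macaulay base, and then invoke the standard fact that the total space of a flat morphism with Cohen-Macaulay fibres over a Cohen-Macaulay base is Cohen-Macaulay (miracle flatness / \cite[Proposition~5.2]{brfi}-type arguments, as already used for $\pi_\alpha\colon Z^\alpha\to\BA^\alpha$ in \S\ref{zastava}). The base $Z^{\alpha^*}$ has rational singularities, hence is Cohen-Macaulay, by the cited results of Braverman--Finkelberg and Braverman--Finkelberg--Gaitsgory--Mirkovi\'c. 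One then needs to control the map $\bp$ in the other direction: over the slice locus (or after restricting to $\ol\Gr{}^\lambda_G$, on which $\bp$ is affine by the proof of \lemref{lem:affine}), the fibres of $\bp$ are open subsets of semi-infinite orbit intersections, which are again known to have rational singularities. Combining these, $\CG\CZ^{-\mu}_\lambda$ is a fibre product of Cohen-Macaulay stacks over the smooth-enough stack $'\!\on{Bun}_G(\BP^1)$ with appropriate flatness, so the fibre product is Cohen-Macaulay; restricting to the genuine-$B$-structure locus gives $\oW^\lambda_\mu$.

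The main obstacle I anticipate is the flatness/equidimensionality bookkeeping for the two legs $\bp$ and $\bq$ of the convolution diagram when $\mu$ is \emph{not} dominant: in that case $\bp$ is no longer a locally closed embedding (as the excerpt explicitly warns), so one cannot simply read off Cohen-Macaulayness from a slice in $\Gr_G$, and one must genuinely use the fibre-product description and a Tor-independence (or transversality) statement for $\ol\Gr{}^\lambda_G$ and $'\ol{\on{Bun}}_B^{w_0\mu}(\BP^1)$ inside $'\!\on{Bun}_G(\BP^1)$. Making that transversality precise — and checking that the fibres of the relevant projection are equidimensional of the expected dimension $|\alpha|$ everywhere, not just generically — is the technical heart. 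Since the excerpt states that this lemma is a special case of \lemref{lem:CMBD} below, in practice I would set up the general Bott--Samelson / convolution framework once, prove the fibre-product Cohen-Macaulayness there by the flatness argument above, and then specialise; the generically-reduced-plus-Cohen-Macaulay conclusion already recorded in \S\ref{nondom} then also yields reducedness of $\oW^\lambda_\mu$ as a bonus.
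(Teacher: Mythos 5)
There is a genuine gap: the key step you propose — exhibiting $\bq$ (equivalently $s^\lambda_\mu$) as a \emph{flat} morphism onto the Cohen--Macaulay base $Z^{\alpha^*}$ and then applying miracle flatness — cannot work. The morphism $\bq$ is proper, birational and only \emph{semismall}; it has positive-dimensional fibres over boundary strata. Indeed \cref{prop:blow} shows that $\bW^\lambda_\mu$ is an affine blowup of $\bZ^{\alpha^*}$, and a blowup with positive-dimensional exceptional locus between equidimensional varieties is never flat. Trying instead to use the flatness of $\pi_{\alpha^*}\circ s^\lambda_\mu$ (\lemref{semismall}) would be circular, since in the paper that flatness is \emph{deduced from} the Cohen--Macaulay property of $\oW^\lambda_\mu$, not the other way around.

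The paper's actual argument (the proof of \lemref{lem:CMBD}) does not involve flatness of either $\bp$ or $\bq$. It proceeds via the fibre-product description $\oW^{\unl\lambda}_\mu = \ol\Gr{}^{\unl\lambda}_{G,BD}\times_{'\!\on{Bun}_G(\BP^1)}\on{Bun}_B^{w_0\mu}(\BP^1)$ and three ingredients: (a) $\ol\Gr{}^{\unl\lambda}_{G,BD}$ is Cohen--Macaulay, proved by the flatness of $\ol\Gr{}^{\unl\lambda}_{G,BD}\to\BA^N$ with fibres that are products of Schubert varieties; (b) the two legs over $'\!\on{Bun}_G(\BP^1)$ are Tor-independent because the Grassmannian leg is a product locally in the smooth topology; and (c) $\on{Bun}_B^{w_0\mu}(\BP^1)\to{}'\!\on{Bun}_G(\BP^1)$ is a locally complete intersection morphism since both source and target are smooth, hence its (Tor-independent) base change $\oW^{\unl\lambda}_\mu\to\ol\Gr{}^{\unl\lambda}_{G,BD}$ is lci, and an lci morphism to a Cohen--Macaulay scheme has Cohen--Macaulay source. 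You do mention Tor-independence as a possibility at the end, but you do not identify the lci step, which is what actually replaces the (false) flatness claim and makes the argument go through. Note also that the specific flatness the paper \emph{does} use is over $\BA^N$ with Schubert fibres, not over $Z^{\alpha^*}$, and none of the relevant rational-singularities or semi-infinite-orbit facts you invoke are needed.
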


\begin{Lemma}
\label{semismall}
The composition
$\pi_{\alpha^*}\circ s^\lambda_\mu\colon \oW^\lambda_{\mu}\to\BA^{\alpha^*}$ is flat.
\end{Lemma}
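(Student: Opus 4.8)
The plan is to deduce flatness of $\pi_{\alpha^*}\circ s^\lambda_\mu$ from the "miracle flatness" criterion: a morphism from a Cohen–Macaulay scheme to a smooth scheme is flat as soon as all its fibers have the same dimension. Since $\BA^{\alpha^*}$ is smooth and $\oW^\lambda_\mu$ is Cohen–Macaulay by \lemref{lem:CMslice}, it suffices to show that every fiber of $\pi_{\alpha^*}\circ s^\lambda_\mu$ has dimension exactly $\dim\oW^\lambda_\mu-|\alpha^*| = \dim\oW^\lambda_\mu-|\alpha|$. First I would compute $\dim\oW^\lambda_\mu$, and identify a good open locus over which the composed map is manifestly flat so as to fix the expected fiber dimension; the natural choice is $\oA^{\alpha^*}\subset\BA^{\alpha^*}$, over which $\pi_{\alpha^*}^{-1}(\oA^{\alpha^*})\cap\oZ^{\alpha^*}\cong(\oA^{|\alpha^*|}\times\BG_m^{|\alpha^*|})/S_{\alpha^*}$ as recalled in \secref{zastava}, and the map $\bq$ restricted there is well understood; this pins the generic fiber dimension and hence the target dimension for all fibers.

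The substantive work is then the \emph{upper} bound on fiber dimension: one must show no fiber of $\pi_{\alpha^*}\circ s^\lambda_\mu$ jumps in dimension. My approach is to exploit the factorization structure. Over the open subset $\bA^{\alpha^*}\subset\BA^{\alpha^*}$ (complement of pairwise intersections of diagonals), a point of $\BA^{\alpha^*}$ is supported either at distinct points or with at most a single collision of two points, and by the factorization isomorphism \eqref{149} the fiber of $\pi_{\alpha^*}$ over such a point decomposes (étale-locally) into a product of fibers over a configuration entirely away from the diagonal — which are tori, hence of the right dimension — and fibers of the "colliding" factor, which by \remref{4dim} is one of the explicit two-dimensional varieties $\uoZ^{\alpha_i+\alpha_j}$, $\uoZ^{2\alpha_i}$. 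One checks directly from those equations that $\pi$ has equidimensional fibers there. So flatness holds over $\bA^{\alpha^*}$; since $\bA^{\alpha^*}$ has complement of codimension $\geq 2$ in $\BA^{\alpha^*}$ and the source is Cohen–Macaulay, I would then argue that equidimensionality of the fibers over all of $\BA^{\alpha^*}$ follows: a fiber over a deeper stratum degenerates from nearby fibers over $\bA^{\alpha^*}$, and semicontinuity of fiber dimension together with the already-established constancy over the codimension-$\le 1$ part forces the generic-on-the-fiber estimate; combined with Cohen–Macaulayness this upgrades to genuine flatness by miracle flatness.

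An alternative, perhaps cleaner, route that I would keep in reserve: factor the map as $\oW^\lambda_\mu\xrightarrow{s^\lambda_\mu}Z^{\alpha^*}\xrightarrow{\pi_{\alpha^*}}\BA^{\alpha^*}$, use that $\pi_{\alpha^*}$ is flat (stated in \secref{zastava}, with $Z^{\alpha^*}$ Cohen–Macaulay and equidimensional fibers), and reduce to showing $s^\lambda_\mu$ is flat, or at least that the composite has equidimensional fibers, by comparing with the zastava picture via the birational dominant morphism $\oZ^{\alpha^*}\to\oW^\lambda_\mu$ mentioned in the introduction; but since $s^\lambda_\mu$ need not be flat (indeed $\bp$ is not even an embedding when $\mu$ is nondominant), I expect to fall back on the direct dimension count.

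The main obstacle I anticipate is precisely the uniform upper bound on fiber dimension over the non-generic (deeply collided) loci in $\BA^{\alpha^*}$: the factorization only controls the situation up to one collision, so for triple and higher collisions one genuinely needs either an inductive argument on $|\alpha|$ feeding off the factorization over $\bA^{\alpha^*}$, or a direct geometric estimate on the fibers of $\bq$ over such configurations using the $B$-structure description of $\CG\CZ^{-\mu}_\lambda$; making this estimate airtight — ruling out that the defect of the $B$-structure can concentrate and blow up the fiber dimension at a collision point — is where the real care is required, and it is exactly the kind of statement that \lemref{lem:CMBD} and the Cohen–Macaulay input are designed to bypass by letting equidimensionality-in-codimension-$\le 1$ plus miracle flatness do the rest.
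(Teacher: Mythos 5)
Your reduction via miracle flatness and \lemref{lem:CMslice} matches the paper's first step exactly: since $\oW^\lambda_\mu$ is Cohen--Macaulay and $\BA^{\alpha^*}$ is smooth, it suffices to show all fibers of $\pi_{\alpha^*}\circ s^\lambda_\mu$ have the same dimension $|\alpha|$. But the step you lean on next --- deducing equidimensionality over all of $\BA^{\alpha^*}$ from equidimensionality over $\bA^{\alpha^*}$ (a codimension-$\geq 2$ complement) together with Cohen--Macaulayness --- is where the argument breaks. Semicontinuity of fiber dimension goes the wrong way: the locus $\{D\in\BA^{\alpha^*}: \dim(\pi_{\alpha^*}\circ s^\lambda_\mu)^{-1}(D)\geq n\}$ is closed, so fiber dimension can jump \emph{up} exactly over the deep strata you have no control over, and Cohen--Macaulayness of the source alone does not forbid it. For instance, $X = V(xz-yw)\subset\BA^4$ is a hypersurface (hence CM) of dimension $3$; projecting to $\BA^2$ by $(x,y,z,w)\mapsto(x,y)$, the fiber over any $(a,b)\neq(0,0)$ is a line, but the fiber over $(0,0)$ is all of $\BA^2$. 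So ``miracle flatness plus codimension-$2$ control'' is not a valid deduction, and the hard estimate you were hoping the CM hypothesis would bypass is precisely the content of the lemma.

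What the paper does instead is supply a genuine a priori upper bound on \emph{every} fiber. It stratifies $Z^{\alpha^*}$ by the degree of the defect at $0\in\BA^1$: the locally closed piece $Z^{\alpha^*}_{\beta^*}$ consisting of quasimaps with defect exactly $\beta^*$ at $0$ is identified with an open subset of $Z^{\alpha^*-\beta^*}$, so $\pi_{\alpha^*}$ restricted there has fibers of dimension $|\alpha|-|\beta|$. The crucial additional input is that for $\varphi\in Z^{\alpha^*}_{\beta^*}$ the fiber $(s^\lambda_\mu)^{-1}(\varphi)$ has dimension $\leq |\beta|$, which is the semismallness of $\bq\colon\CG\CZ^{-\mu}_\lambda\to Z^{\alpha^*}$ from~\cite[Lemma~12.9.1]{mf}. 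Summing the two contributions gives exactly $|\alpha|$ on each stratum, and miracle flatness then applies. The missing idea in your proposal is thus the defect stratification together with the semismallness of $\bq$ --- an external geometric input, not something that generic control over $\bA^{\alpha^*}$ plus Cohen--Macaulayness can replace.
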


\begin{proof}
Since $\oW^\lambda_{\mu}$ is Cohen-Macaulay,
it suffices to prove that all the fibers of $\pi_{\alpha^*}\circ s^\lambda_\mu$
have the same dimension $|\alpha|$.
\begin{NB} and then apply~\cite[Theorem~23.1]{matsumura}.
\end{NB}
To this end, for $\beta\leq\alpha$, we consider a locally closed subvariety
$Z^{\alpha^*}_{\beta^*}\subset Z^{\alpha^*}$ formed by all the based quasimaps
whose defect at $0\in\BA^1$ has degree precisely $\beta^*$. Note that
$Z^{\alpha^*}_{\beta^*}$ is isomorphic to an open subvariety in $Z^{\alpha^*-\beta^*}$.
It is enough to prove that for $\varphi\in Z^{\alpha^*}_{\beta^*}$, we have
$\dim(s^\lambda_\mu)^{-1}(\varphi)\leq|\beta|$.
Now the desired dimension estimate follows from
the semismallness of $\bq$~\cite[Lemma~12.9.1]{mf}.
\end{proof}

\subsection{A symmetric definition of generalized slices}
\label{symmetric}
We slightly modify our definition of the transversal slices.

Given arbitrary coweights $\mu_-,\mu_+$ such that $\mu_-+\mu_+=\mu$
we consider the moduli space $\oW^\lambda_{\mu_-,\mu_+}$ of the following data:
(a) $G$-bundles $\scP_-,\scP_+$ on $\BP^1$;
(b) an isomorphism
$\sigma\colon \scP_-|_{\BP^1\setminus\{0\}}\iso\scP_+|_{\BP^1\setminus\{0\}}$ having a
pole of degree $\leq\lambda$ at $0\in\BP^1$; (c) a trivialization of
$\scP_-=\scP_+$ at $\infty\in\BP^1$; (d) a reduction $\phi_-$ of $\scP_-$ to a
$B_-$-bundle (a $B_-$-structure on $\scP_-$) such that the induced $T$-bundle
has degree $-w_0\mu_-$, and the fiber of $\phi_-$ at $\infty\in\BP^1$ is
$B\subset G$; (e) a reduction $\phi_+$ of $\scP_+$ to a $B$-bundle (a
$B$-structure on $\scP_+$) such that the induced $T$-bundle has degree $w_0\mu_+$,
and the fiber of $\phi_+$ at $\infty\in\BP^1$ is $B_-\subset G$.

Note that the trivial $G$-bundle on $\BP^1$ has a unique $B_-$-reduction
of degree 0 with fiber $B$ at $\infty$. Conversely, a $G$-bundle $\scP_-$
with a $B_-$-structure of degree 0 is necessarily trivial, and its
trivialization at $\infty$ uniquely extends to the whole of $\BP^1$.
Hence $\oW^\lambda_{0,\mu}=\oW^\lambda_{\mu}$.

For arbitrary $\oW^\lambda_{\mu_-,\mu_+}$,
the $G$-bundles $\scP_-,\scP_+$ are identified via $\sigma$ on
$\BP^1\setminus\{0\}$, so they are both equipped with $B$ and $B_-$-structures
transversal around $\infty\in\BP^1$, that is they are both equipped with a
reduction to a $T$-bundle around $\infty\in\BP^1$. So $\scP_\pm=\scP_\pm^T\times^TG$
for certain $T$-bundles $\scP_\pm^T$ around $\infty\in\BP^1$, trivialized at
$\infty\in\BP^1$. The modified $T$-bundles
$'\scP_\pm^T:=\scP_\pm^T(w_0\mu_-\cdot\infty)$
are also trivialized at $\infty\in\BP^1$ and canonically isomorphic to
$\scP_\pm^T$ off $\infty\in\BP^1$. We define $'\scP_\pm$ as the
result of gluing $\scP_\pm$ and $'\scP_\pm^T\times^TG$ in the punctured
neighbourhood of $\infty\in\BP^1$. Then the isomorphism
$\sigma\colon '\scP_-|_{\BP^1\setminus\{0,\infty\}}\iso\
'\scP_+|_{\BP^1\setminus\{0,\infty\}}$ extends to $\BP^1\setminus\{0\}$,
and $\phi_\pm$ also extend from $\BP^1\setminus\{\infty\}$ to a
$B$-structure $'\phi_+$ in $'\scP_+$ of degree $w_0\mu$
(resp.\ a $B_-$-structure $'\phi_-$ in $'\scP_-$ of degree 0).

This defines an isomorphism $\oW^\lambda_{\mu_-,\mu_+}\simeq\oW^\lambda_{\mu}$.

\subsection{Multiplication of slices}
\label{multiplication}
Given $\lambda_1\geq\mu_2$ and $\lambda_2\geq\mu_2$ with $\lambda_1,\lambda_2$
dominant, we think of $\oW^{\lambda_1}_{\mu_1}$ (resp.\ $\oW^{\lambda_2}_{\mu_2}$)
in the incarnation $\oW^{\lambda_1}_{\mu_1,0}$ (resp.\ $\oW^{\lambda_2}_{0,\mu_2}$).
Given $(\scP^1_\pm,\sigma_1,\phi^1_\pm)\in\oW^{\lambda_1}_{\mu_1,0}$ and
$(\scP^2_\pm,\sigma_2,\phi^2_\pm)\in\oW^{\lambda_1}_{0,\mu_2}$,
we consider $(\scP^1_-,\scP^2_+,\sigma_2\circ\sigma_1,\phi^1_-,\phi^2_+)\in
\oW^{\lambda_1+\lambda_2}_{\mu_1,\mu_2}=\oW^{\lambda_1+\lambda_2}_{\mu_1+\mu_2}$
(note that $\scP^2_-$ is canonically trivialized as in~\secref{symmetric},
and $\scP^1_+$ is canonically trivialized for the same reason, so that
$\scP^1_+=\scP^2_-$).
This defines a multiplication morphism $\oW^{\lambda_1}_{\mu_1}\times
\oW^{\lambda_2}_{\mu_2}\to\oW^{\lambda_1+\lambda_2}_{\mu_1+\mu_2}$.\footnote{We learnt
of this multiplication from J.~Kamnitzer, D.~Gaiotto and T.~Dimofte.}

In particular, taking $\mu_2=\lambda_2$ so that $\oW^{\lambda_2}_{\lambda_2}$ is a
point and $\oW^{\lambda_1}_{\mu_1}\times\oW^{\lambda_2}_{\lambda_2}=\oW^{\lambda_1}_{\mu_1}$,
we get a stabilization morphism
$\oW^{\lambda_1}_{\mu_1}\to\oW^{\lambda_1+\lambda_2}_{\mu_1+\lambda_2}$.

\subsection{Involution}
\label{involution}
For the same reason as in~\secref{symmetric},
$\scP_+$ in $\oW^\lambda_{\mu,0}$ is canonically trivialized,
so we obtain a morphism $'\bp\colon \oW^\lambda_{\mu,0}\to\ol\Gr{}^{-w_0\lambda}_G$,
sending the data of $(\scP_\pm,\sigma,\phi_\pm)$ to
$(\scP_+=\scP_{\on{triv}}\stackrel{\sigma^{-1}}{\longrightarrow}\scP_-)$.
Also, recalling the ``symmetric'' definition of zastava~\cite[2.6]{bdf},
we obtain a morphism $'s^\lambda_\mu\colon \oW^\lambda_{\mu,0}\to Z^{\alpha^*}$.
Namely, it takes a collection $(\CL_{\lambdavee}^+\subset\CV^{\lambdavee}_{\scP_+}=
V^{\lambdavee}\otimes\CO_{\BP^1})$
to a collection of invertible subsheaves
$\CL_{\lambdavee}^+(\langle-\lambda,\lambdavee\rangle\cdot0)\subset
\CV^{\lambdavee}_{\scP_-}$. This transformed
generalized $B$-structure will be denoted by $\sigma^{-1}\phi_+(-\lambda\cdot0)$ for short.
Finally, we have an isomorphism $\iota^\lambda_\mu\colon \oW^\lambda_{\mu}=
\oW^\lambda_{0,\mu}\iso\oW^\lambda_{\mu,0}$ obtained by an application of the
Cartan involution $\fC$ of $G$ (interchanging $B$ and $B_-$, and acting
on $T$ as $t\mapsto t^{-1}$): replacing $(\scP_-,\scP_+,\phi_-,\phi_+)$ by
$(\fC\scP_+,\fC\scP_-,\fC\phi_+,\fC\phi_-)$, and $\sigma$ by $\fC\sigma^{-1}$.
Clearly, $\pi_{\alpha^*}\circ\ 's^\lambda_\mu\circ\iota^\lambda_\mu=
\pi_{\alpha^*}\circ s^\lambda_\mu\colon \oW^\lambda_{\mu}\to\BA^{\alpha^*}$.
Indeed, for $(\scP_\pm,\sigma,\phi_\pm)\in\oW^\lambda_{0,\mu}=\oW^\lambda_{\mu}$,
the $\II$-colored divisor $\pi_{\alpha^*}\circ s^\lambda_\mu(\scP_\pm,\sigma,\phi_\pm)$
on $\BP^1$ measures the nontransversality of $\phi_-$ and $\sigma^{-1}\phi_+(-\lambda\cdot0)$,
while
$\pi_{\alpha^*}\circ\ 's^\lambda_\mu\circ\iota^\lambda_\mu(\scP_\pm,\sigma,\phi_\pm)$
measures the nontransversality of $\sigma\phi_-(-\lambda\cdot0)$ and $\phi_+$, and these
two measures coincide manifestly.

Under the identification $\oW^\lambda_{\mu_-,\mu_+}\simeq\oW^\lambda_{\mu}$
of~\secref{symmetric}, the isomorphism $\iota^\lambda_\mu$
becomes an involution\footnote{We thank J.~Kamnitzer who has convinced us
such an involution should exist.}
$\iota^\lambda_\mu\colon \oW^\lambda_{\mu}\iso \oW^\lambda_{\mu}$.

\subsection{Divisors in the convolution diagram}
\label{divisors}
For a future use we describe certain divisors in the convolution diagram.
We define a divisor $\oE_i\subset\oW^\lambda_{0,\mu}=\oGZ^{-\mu}_\lambda$
as the subvariety formed by the data $(\scP,\sigma,\phi)$ such that the
transformed $B$-structure $\bq(\scP,\sigma,\phi)=\sigma^{-1}\phi(w_0\lambda\cdot0)$
in the trivial bundle
$\scP_{\on{triv}}$ acquires the defect of color $i$ at $0\in\BP^1$ (the defect
may be possibly bigger than $\alpha_i\cdot0$). We define a divisor
$E_i\subset\CG\CZ^{-\mu}_\lambda$ as the closure of $\oE_i$.
Thus $E:=\bigcup_{i\in \II}E_i$ is the exceptional divisor of
$\bq\colon \CG\CZ^{-\mu}_\lambda\to Z^{\alpha^*}$, and
$s^\lambda_\mu\colon \oGZ^{-\mu}_\lambda\to Z^{\alpha^*}$
restricted to $\oGZ^{-\mu}_\lambda\setminus\oE$
(where $\oE:=\bigcup_{i\in \II}\oE_i$) induces an isomorphism
$\oGZ^{-\mu}_\lambda\setminus\oE\iso\oZ^{\alpha^*}$.
Note that $E_i$ can be empty if $\lambda$ is nonregular.

Similarly, we define a divisor $\oE'_i\subset\oW^\lambda_{\mu,0}$ as the
subvariety formed by the data $(\scP_\pm,\sigma,\phi_\pm)$ such that the
transformed $B$-structure
$'s^\lambda_\mu(\scP_\pm,\sigma,\phi_\pm)=\sigma^{-1}\phi_+(-\lambda\cdot0)$ in
$\scP_-$ acquires the defect of color $i$ at $0\in\BP^1$.

\begin{Lemma}
\label{lem:full}
The full preimage $(s^\lambda_\mu)^*(\pi_{\alpha^*}^*(\BA^{\alpha^*}_i))=
\oE_i\cup(\iota^\lambda_\mu)^{-1}(\oE'_i)$.
\end{Lemma}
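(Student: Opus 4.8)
The plan is to understand the divisor $\BA^{\alpha^*}_i\subset\BA^{\alpha^*}$ (the locus where a point of color $i$ collides with the puncture $0\in\BP^1$) pulled back along the composite $\pi_{\alpha^*}\circ s^\lambda_\mu$, and to decompose its support into the two irreducible-type contributions. Recall from \secref{divisors} that $s^\lambda_\mu$ restricted to $\oGZ^{-\mu}_\lambda\setminus\oE$ is an isomorphism onto $\oZ^{\alpha^*}$, and on $\oZ^{\alpha^*}$ the preimage $\pi_{\alpha^*}^{-1}(\BA^{\alpha^*}_i)$ is precisely the locus where the based \emph{map} $\varphi\colon\BP^1\to\CB$ has $\varphi^*{\mathfrak S}$ containing $\alpha_i\cdot0$ with positive multiplicity, i.e.\ the transformed $B$-structure $\sigma^{-1}\phi(w_0\lambda\cdot0)$ degenerates (in color $i$) at $0$ without acquiring a genuine defect there. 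So away from $\oE$ the two sides agree in an obvious way. The content of the lemma is therefore entirely about what happens along the exceptional locus $\oE=\bigcup_j\oE_j$: we must show that the full scheme-theoretic preimage of $\BA^{\alpha^*}_i$ picks up exactly $\oE_i$ (the color-$i$ piece of the exceptional divisor of $\bq$, where the defect at $0$ has color $i$) together with the $\iota^\lambda_\mu$-image of $\oE'_i$, and nothing more.

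First I would pin down the two types of degeneration that force a color-$i$ point to the origin. Using the "symmetric" picture $\oW^\lambda_{\mu}\simeq\oW^\lambda_{\mu_-,\mu_+}$ of \secref{symmetric} and the description in \secref{involution}, the factorization divisor $\pi_{\alpha^*}\circ s^\lambda_\mu$ measures the nontransversality of $\phi_-$ and $\sigma^{-1}\phi_+(-\lambda\cdot0)$ as an $\II$-colored divisor on $\BP^1$; its color-$i$ component can acquire a point at $0$ for two reasons: either $\phi_+$ (equivalently, via $\iota^\lambda_\mu$, the data giving $\oE'_i$ in $\oW^\lambda_{\mu,0}$) degenerates in color $i$ at $0$, contributing $(\iota^\lambda_\mu)^{-1}(\oE'_i)$; or the transformed structure $\sigma^{-1}\phi(w_0\lambda\cdot0)$ on $\scP_{\on{triv}}$ acquires a defect of color $i$ at $0$, which is exactly the definition of $\oE_i$. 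I would argue, working locally near $0$ with the coordinate description of zastava (\secref{zastava}, the functions $(w_{i,r},y_{i,r})$ and $F_\alpha$), that these exhaust the possibilities, and that each occurs with multiplicity one in the Cartier divisor $(s^\lambda_\mu)^*(\pi_{\alpha^*}^*\BA^{\alpha^*}_i)$. The multiplicity-one and "no embedded components" claims use that $\oW^\lambda_{\mu}$ is reduced and Cohen-Macaulay (\lemref{lem:CMslice}) and that $\pi_{\alpha^*}\circ s^\lambda_\mu$ is flat (\lemref{semismall}), so the pullback divisor has pure codimension one and is reduced if its support is.

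Concretely, the key steps in order: (1) over the open locus $\oGZ^{-\mu}_\lambda\setminus\oE\iso\oZ^{\alpha^*}$, identify $(s^\lambda_\mu)^*(\pi_{\alpha^*}^*\BA^{\alpha^*}_i)$ with the restriction to $\oZ^{\alpha^*}$ of the pullback divisor, and observe its closure in $\oW^\lambda_{\mu}$ meets $\oE$ only in $\oE_i$ — this uses that $\BA^{\alpha^*}_i$ is the reduced divisor "$i$-th point $=0$" and the explicit formula for $\pi_{\alpha^*}$ on $\uoZ^{\alpha^*}$; (2) show $\oE_i$ itself lies in the preimage of $\BA^{\alpha^*}_i$ by its very definition (a defect of color $i$ at $0$ maps to a color-$i$ point at $0$ under $\pi_{\alpha^*}$), and with multiplicity one by a local computation in the affine blowup model of \cref{prop:blow} (assumed) or directly from \remref{4dim}-type local models; (3) produce the other branch $(\iota^\lambda_\mu)^{-1}(\oE'_i)$ by transporting the analogous statement through the involution $\iota^\lambda_\mu$ of \secref{involution}, using the identity $\pi_{\alpha^*}\circ\,'s^\lambda_\mu\circ\iota^\lambda_\mu=\pi_{\alpha^*}\circ s^\lambda_\mu$ recorded there; (4) conclude that the support of $(s^\lambda_\mu)^*(\pi_{\alpha^*}^*\BA^{\alpha^*}_i)$ is exactly $\oE_i\cup(\iota^\lambda_\mu)^{-1}(\oE'_i)$, and upgrade "support" to equality of divisors by reducedness.

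The main obstacle I expect is step (1)–(2): controlling precisely the \emph{closure} in $\oW^\lambda_{\mu}$ of the preimage of $\BA^{\alpha^*}_i$ as one approaches the exceptional locus, i.e.\ checking that no component of $\oE_j$ for $j\neq i$ sneaks into the preimage and that the multiplicity along $\oE_i$ is exactly one. A color-$i$ point of the factorization divisor cannot run into the origin "for free" when the degeneration at $0$ has a different color $j\neq i$ — this is intuitively clear but needs the factorization structure of $\oZ^{\alpha^*}$ near $0$ (splitting off the part supported at $0$) together with the local models of \remref{4dim} to see that the colors are independent there. Once the local picture is in hand, the global statement follows by taking closures and using that both $\oE_i$ and $(\iota^\lambda_\mu)^{-1}(\oE'_i)$ are irreducible divisors (or by reducedness of $\oW^\lambda_{\mu}$, finite unions of such), so no further multiplicity bookkeeping is needed.
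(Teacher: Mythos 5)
Your overall strategy — decomposing the preimage into an exceptional piece and a "strict transform" piece, and using the involution $\iota^\lambda_\mu$ to produce the latter — is the same as the paper's, but the details of your plan are muddled in ways that matter.

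The crux of the paper's proof is the identification of the strict transform $(s^\lambda_\mu)^{-1}_*(\pi_{\alpha^*}^*\BA^{\alpha^*}_i)$ with $(\iota^\lambda_\mu)^{-1}(\oE'_i)$, and you only gesture at it via "transport through the involution." What actually makes this work is a combination of two observations: first, the identity $\pi_{\alpha^*}\circ\,'s^\lambda_\mu\circ\iota^\lambda_\mu=\pi_{\alpha^*}\circ s^\lambda_\mu$ forces $\pi_{\alpha^*}\circ s^\lambda_\mu((\iota^\lambda_\mu)^{-1}(\oE'_i))\subset\BA^{\alpha^*}_i$, so $(\iota^\lambda_\mu)^{-1}(\oE'_i)$ lies in the full preimage; second — and this is what you don't state — at a general point of $(\iota^\lambda_\mu)^{-1}(\oE'_i)$ the transformed $B$-structure $\sigma^{-1}\phi(w_0\lambda\cdot0)$ has \emph{no defect}, merely nontransversality in position $s_i$ at $0$, so $(\iota^\lambda_\mu)^{-1}(\oE'_i)$ is not contained in $\oE$. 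Together these two facts force $(\iota^\lambda_\mu)^{-1}(\oE'_i)$ to lie in, hence equal, the strict transform. Without that second observation your step (3) doesn't close.

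Your step (1) as written doesn't match any correct step: the closure of $(\pi_{\alpha^*}\circ s^\lambda_\mu)^{-1}(\BA^{\alpha^*}_i)\cap(\oGZ^{-\mu}_\lambda\setminus\oE)$ is precisely the strict transform, which (as just explained) is $(\iota^\lambda_\mu)^{-1}(\oE'_i)$, \emph{not} $\oE_i$, and its intersection with $\oE$ is of codimension $\ge 2$. What you actually need — that no $\oE_j$ with $j\ne i$ is a component of the full preimage — is settled in the paper by a clean pointwise argument: at a general point of $\oE_j$ the defect of $s^\lambda_\mu(\scP,\sigma,\phi)$ at $0$ has degree exactly $\alpha_j$, so such a point maps into $\BA^{\alpha^*}_j$ but not $\BA^{\alpha^*}_i$, hence $\oE_j\cap(\text{preimage})$ is not a divisor. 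Your alternative — reduction to factorization and the local models of \remref{4dim} — would also work but is more machinery than the situation requires.

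Finally, the statement is purely set-theoretic (the $\cup$ is a union of supports), so the reducedness of $\oW^\lambda_{\mu}$ (\lemref{lem:CMslice}) and the flatness of $\pi_{\alpha^*}\circ s^\lambda_\mu$ (\lemref{semismall}) that you invoke to argue "multiplicity one" are not needed here; that is the content of the following \lemref{lem:invo}, which deduces multiplicity one \emph{from} this lemma together with the symmetry under $\iota^\lambda_\mu$.
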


\begin{proof}
At a general
point of $(\iota^\lambda_\mu)^{-1}(\oE'_i)\subset\oW^\lambda_{\mu}$ the
transformed $B$-structure $\bq(\scP,\sigma,\phi)=\sigma^{-1}\phi(w_0\lambda\cdot0)$
in the trivial bundle
$\scP_{\on{triv}}$ has no defect but at $0\in\BP^1$ is not transversal
to $B$: it lies in position $s_i$ with respect to $B$.
Indeed, if $\BA^{\alpha^*}_i$ denotes the divisor formed by the configurations where
at least on point of color $i$ meets $0\in\BA^1$, then
$\pi_{\alpha^*}\circ\ 's^\lambda_\mu(\oE'_i)\subset\BA^{\alpha^*}_i$, and hence
$\pi_{\alpha^*}\circ s^\lambda_\mu(\iota^\lambda_\mu)^{-1}(\oE'_i)\subset\BA^{\alpha^*}_i$.
Now the full preimage $(\pi_{\alpha^*}\circ s^\lambda_\mu)^*(\BA^{\alpha^*}_i)$
a priori lies in the union of the exceptional divisor $\oE$ and the strict
transform $(s^\lambda_\mu)^{-1}_*(\pi_{\alpha^*}^*\BA^{\alpha^*}_i)$ of the divisor
$\pi_{\alpha^*}^*\BA^{\alpha^*}_i$. At a general point of the component
$\oE_j,\ j\ne i$, the degree of the defect of $s^\lambda_\mu(\scP,\sigma,\phi)$
at $0$ is exactly $\alpha_j$, hence the intersection of $\oE_j$ with
the full preimage of $\BA^{\alpha^*}_i$ is not a divisor. Thus
$(\pi_{\alpha^*}\circ s^\lambda_\mu)^*(\BA^{\alpha^*}_i)=\oE_i\cup
(s^\lambda_\mu)^{-1}_*(\pi_{\alpha^*}^*\BA^{\alpha^*}_i)$,
and the strict transform $(s^\lambda_\mu)^{-1}_*(\pi_{\alpha^*}^*\BA^{\alpha^*}_i)$
must coincide with $(\iota^\lambda_\mu)^{-1}(\oE'_i)$.
We conclude that the full preimage
$(s^\lambda_\mu)^*(\pi_{\alpha^*}^*(\BA^{\alpha^*}_i))=
\oE_i\cup(\iota^\lambda_\mu)^{-1}(\oE'_i)$.
\end{proof}

\subsection{Factorization}
\label{factorization}
For $n\in\BN$, let $\cS_n$ stand for a hypersurface in $\BA^3$ with
coordinates $x,y,w$ cut out by an equation $xy=w^n$ (in particular,
$\cS_0\simeq\BG_m\times\BA^1$). Let $\varPi\colon \cS_n\to\BA^1$ stand for the projection
onto the line with $w$ coordinate. Given $i\in \II$ such that $a_i\geq1$
(recall that $\alpha=\sum_{i\in \II}a_i\alpha_i$) we identify $\BA^{\alpha_{i^*}}$
with $\BA^1$, and we set $\beta:=\alpha-\alpha_i$. We denote by
$\BG_m^{\beta^*}\subset\BA^{\beta^*}$ the open subset formed by all the colored
configurations such that none of the points equals $0\in\BA^1$.
We denote by $(\BG_m^{\beta^*}\times\BA^1)_{\on{disj}}\subset\BG_m^{\beta^*}\times\BA^1$
the open subset equal to the intersection $(\BA^{\beta^*}\times\BA^1)_{\on{disj}}\cap
\BG_m^{\beta^*}\times\BA^1$.

Let $s_n\colon \cS_n\to\BA^1\times\BA^1\simeq Z^{\alpha_{i^*}}$
be the birational isomorphism sending $(x,y,w)$ to $(y,w)$.
Then $s^\lambda_\mu$ gives rise to the birational isomorphism
$$(\BG_m^{\beta^*}\times\BA^1)_{\on{disj}}\times_{\BA^{\alpha^*}}\oW^\lambda_{\mu}\to
(\BG_m^{\beta^*}\times\BA^1)_{\on{disj}}\times_{\BA^{\alpha^*}}Z^{\alpha^*},$$
and $s_{\langle\lambda,\alphavee_i\rangle}$
gives rise to the birational isomorphism
$$(\BG_m^{\beta^*}\times\BA^1)_{\on{disj}}\times_{\BA^{\alpha^*}}(\oZ^{\beta^*}\times
\cS_{\langle\lambda,\alphavee_i\rangle})\to
(\BG_m^{\beta^*}\times\BA^1)_{\on{disj}}\times_{\BA^{\alpha^*}}(\oZ^{\beta^*}\times
Z^{\alpha_{i^*}}).$$
Composing the above birational isomorphisms with the factorization isomorphism
for zastava
$$(\BG_m^{\beta^*}\times\BA^1)_{\on{disj}}\times_{\BA^{\alpha^*}}Z^{\alpha^*}\iso
(\BG_m^{\beta^*}\times\BA^1)_{\on{disj}}\times_{\BA^{\beta^*}\times\BA^1}(Z^{\beta^*}\times
Z^{\alpha_{i^*}})$$ we obtain a birational isomorphism $$\varphi\colon
(\BG_m^{\beta^*}\times\BA^1)_{\on{disj}}\times_{\BA^{\alpha^*}}\oW^\lambda_{\mu}\dasharrow
(\BG_m^{\beta^*}\times\BA^1)_{\on{disj}}\times_{\BA^{\beta^*}\times\BA^1}(\oZ^{\beta^*}\times
\cS_{\langle\lambda,\alphavee_i\rangle}).$$

The aim of this section is the following

\begin{Proposition}
\label{prop:factor}
The birational isomorphism $\varphi$ extends to a regular isomorphism
of the varieties over $(\BG_m^{\beta^*}\times\BA^1)_{\on{disj}}$:
$$(\BG_m^{\beta^*}\times\BA^1)_{\on{disj}}\times_{\BA^{\alpha^*}}\oW^\lambda_{\mu}\iso
(\BG_m^{\beta^*}\times\BA^1)_{\on{disj}}\times_{\BA^{\beta^*}\times\BA^1}(\oZ^{\beta^*}\times
\cS_{\langle\lambda,\alphavee_i\rangle}).$$
\end{Proposition}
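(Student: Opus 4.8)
The plan is to prove the isomorphism by showing that $\varphi$ is defined everywhere and that its inverse is as well — equivalently, by an \'etale-local (or rather Zariski-local on the base $(\BG_m^{\beta^*}\times\BA^1)_{\on{disj}}$) analysis of the modular meaning of the convolution diagram. The key point is that over the disjoint locus, the moduli problem defining $\oW^\lambda_\mu$ literally splits: a point of $\oW^\lambda_\mu$ lying over a configuration in $(\BG_m^{\beta^*}\times\BA^1)_{\on{disj}}$ records a $G$-bundle $\scP$ with trivialization $\sigma$ off $0$ and a generalized $B$-structure $\phi$; the condition $\phi\in\oGZ^{-\mu}_\lambda$ says $\phi$ has no defect, while the divisor $\pi_{\alpha^*}\circ s^\lambda_\mu$ of nontransversality is supported on the $\beta^*$-part (away from $0$) plus the $\alpha_{i^*}$-part (the point near $0$, which is the coordinate $w$ of the $\cS_{\langle\lambda,\alphavee_i\rangle}$-factor). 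Since these supports are disjoint, the bundle, trivialization and $B$-structure restrict and glue over the two regions independently, and the behavior near $0$ is governed entirely by $\ol\Gr{}^\lambda_G$, whose relevant piece for a single colored point of color $i$ is precisely $\cS_{\langle\lambda,\alphavee_i\rangle}$ (this is the surface $xy=w^n$ appearing as a minimal-degree slice; compare the $|\gamma|=2$ computations in~\remref{4dim} and the surface $\cS_n$ of~\secref{factorization}).

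Concretely I would proceed in the following steps. First, unwind $\varphi$ on the open locus $\oZ^{\alpha^*}\subset Z^{\alpha^*}$: there $s^\lambda_\mu$ and the zastava factorization isomorphism are honest isomorphisms, so $\varphi$ restricted to $(\BG_m^{\beta^*}\times\BA^1)_{\on{disj}}\times_{\BA^{\alpha^*}}\oZ^{\alpha^*}$ is a genuine isomorphism onto $(\BG_m^{\beta^*}\times\BA^1)_{\on{disj}}\times_{\BA^{\beta^*}\times\BA^1}(\oZ^{\beta^*}\times\oZ^{\alpha_{i^*}})$, and there is nothing to extend there. Second, construct the extension of $\varphi$ across the exceptional locus by a direct modular map: given $(\oZ^{\beta^*}$-data$)\times(x,y,w)\in\cS_{\langle\lambda,\alphavee_i\rangle}$ over a disjoint configuration, glue the $G$-bundle-with-$B$-structure coming from the zastava factor near the $\beta^*$-points with the ``elementary modification of color $i$'' near $0$ encoded by $(x,y,w)$, producing a point of $\oW^\lambda_\mu$; one checks this lands in $\oW^\lambda_\mu$ (no defect) and that $\pi_{\alpha^*}\circ s^\lambda_\mu$ of the result is the expected configuration. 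Third, verify the two maps are mutually inverse: away from codimension $\ge 2$ this holds because both agree with the birational $\varphi$; then one uses that the source $(\BG_m^{\beta^*}\times\BA^1)_{\on{disj}}\times_{\BA^{\alpha^*}}\oW^\lambda_\mu$ is normal (being Cohen–Macaulay by~\lemref{lem:CMslice}, and generically smooth) so a biregular-in-codimension-one map between it and the (also normal) target extends to a biregular isomorphism — this is exactly the extension mechanism invoked in the introduction for the main criterion.

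The main obstacle will be the second step: making the gluing of the ``color-$i$ modification near $0$'' precise, i.e.\ writing down honestly the morphism from $\oZ^{\beta^*}\times\cS_{\langle\lambda,\alphavee_i\rangle}$ (over the disjoint locus) into the stacky fiber product $\ol\Gr{}^\lambda_G\times_{'\!\on{Bun}_G(\BP^1)}{}\on{Bun}_B^{w_0\mu}(\BP^1)$ and checking it is well-defined and has no defect. This amounts to understanding, for a single color $i$, how a point of $\cS_{\langle\lambda,\alphavee_i\rangle}$ simultaneously prescribes a point of $\ol\Gr{}^\lambda_G$ (the $G$-bundle and $\sigma$) and the resulting $B$-structure at $0$; the equation $xy=w^{\langle\lambda,\alphavee_i\rangle}$ is forced precisely by compatibility between the pole bound $\le\lambda$ at $0$ and the degree/defect bookkeeping of $\phi$, much as in~\remref{4dim}. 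Once that local picture is nailed down, the rest is the formal normality-plus-codimension-two argument together with the already-established factorization of zastava, and I would expect no further difficulty. It may be cleanest to do the local analysis after passing to the symmetric incarnation $\oW^\lambda_{\mu_-,\mu_+}$ of~\secref{symmetric}, where the roles of $0$ and $\infty$ and of the two $B$-structures are more visible, and then transport back via the isomorphism of~\secref{symmetric}.
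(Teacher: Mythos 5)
Your plan is a genuinely different route. The paper proves \propref{prop:factor} by first establishing (via the preparation culminating in \propref{prop:blow}) that $\bW^\lambda_\mu$ --- the open piece of $\oW^\lambda_\mu$ that sees only simple-coroot defects and multiplicities --- coincides with the affine blowup $\on{Bl}^{\on{aff}}_\CK\bZ^{\alpha^*}$ of the zastava along the ideal $\CK=\bigcap_i(\CI_i^{\langle\lambda,\alphavee_{i^*}\rangle}+\CJ_i)$. Because each $\CI_i$, $\CJ_i$ is defined locally on $\BP^1$, this blowup inherits the zastava's factorization for free, and the remaining computation $\on{Bl}^{\on{aff}}_\CK Z^{\alpha_{i^*}}\simeq\cS_{\langle\lambda,\alphavee_i\rangle}$ is explicit. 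You instead propose to build the isomorphism directly from a modular gluing --- split the bundle-with-$B$-structure data between the $\beta^*$-part away from $0$ and a single ``color-$i$ elementary modification'' at $0$ --- and then close the argument by a normality-in-codimension-two extension.

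The proposal has two real gaps. First, your step two --- the assertion that a point of $\cS_{\langle\lambda,\alphavee_i\rangle}$ is ``governed entirely by $\ol\Gr{}^\lambda_G$'' and that gluing it with the $\oZ^{\beta^*}$ factor yields a well-defined defect-free point of $\CG\CZ^{-\mu}_\lambda$ --- is precisely where the delicate bookkeeping lives, and you leave it unproved. The coordinate $x$ of $\cS_n$ encodes the Grassmannian datum while $y$ encodes the order of nontransversality of the $B$-structure, and the equation $xy=w^n$ is the compatibility constraint between them; working this out in the required scheme-theoretic generality (including at the singular point of $\cS_n$, not just at a modular-generic one) is exactly what the determinant-line-bundle computation of \lemref{lem:dete} and the involution trick of \lemref{lem:full}, \lemref{lem:invo} accomplish in the paper, and you cannot simply wave at it. Second, the closing extension mechanism is misstated: a rational map between normal varieties that is biregular outside codimension two is not automatically biregular --- one needs actual morphisms in both directions and a separatedness argument, or (as the paper does) the universal property of the blowup producing a projective birational $\Upsilon$ which is then forced to be an isomorphism via a relative-Picard-group argument. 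As things stand, your proposal sketches a plausible alternative but does not carry out the key local computation, and it understates what that computation involves.
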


The proof will be given after a certain preparation.

\begin{NB}
If $\langle\lambda,\alphavee_i\rangle=0$, the desired isomorphism follows
from $\cS_0\simeq\BG_m\times\BA^1\simeq\oZ^{\alpha_i}$, the usual factorization
for $\oZ^\alpha$, and the observation that the image of $s^\lambda_\mu\colon
(\BG_m^\beta\times\BA^1)_{\on{disj}}\times_{\BA^\alpha}\oW^\lambda_{\mu}\to Z^\alpha$
lands into $\oZ^\alpha\subset Z^\alpha$.

Let now $n:=\langle\lambda,\alphavee_i\rangle>0$, and
$\phi\in\pi_\beta^{-1}(\BG_m^\beta)\subset\oZ^\beta$. We define a finite subscheme
$D_\phi\subset\BG_m$ as $D_\phi:=\pi_\beta(\phi)$. The factorization isomorphism
for zastava defines a locally closed embedding $Z^{\alpha_i}|_{\BA^1\setminus D_\phi}=
\phi\times Z^{\alpha_i}|_{\BA^1\setminus D_\phi}\hookrightarrow Z^\alpha$. We define
$\cS_\phi:=\oW^\lambda_{\mu}\times_{Z^\alpha}(Z^{\alpha_i}|_{\BA^1\setminus D_\phi})$.
We have to prove that the birational isomorphism
$\cS_\phi\to Z^{\alpha_i}|_{\BA^1\setminus D_\phi}\dasharrow \cS_n|_{\BA^1\setminus D_\phi}$
extends to a regular isomorphism $\cS_\phi\iso \cS_n|_{\BA^1\setminus D_\phi}$
of varieties over $\BA^1\setminus D_\phi$. We know this over
$\BG_m\setminus D_\phi$. We also know that the fiber of $\cS_\phi$ over
$0\in\BA^1$ is the spectrum of $\BC[x,y]/(xy)$. Finally, we know that in case
$n=1$, the surface $\cS_\phi$ is smooth, and in case $n>1$ the surface $\cS_\phi$
has a unique singular point with Kleinian singularity of type $A_{n-1}$
(see e.g.~\cite[Theorem~(5.2.1)]{mov}). This completes the proof.
\end{NB}

Let $\bZ^{\alpha^*}\subset Z^{\alpha^*}$ be an open subset
formed by all the based quasimaps $\phi$ satisfying the following two
conditions: (i) the defect $\on{def}\phi$ is at most a simple coroot;
(ii) the multiplicity of the origin $0\in\BP^1$ in the divisor
$\pi_{\alpha^*}(\phi)$ is at most a simple coroot.

Note the three properties: a) The codimension of the complement
$Z^{\alpha^*}\setminus\bZ^{\alpha^*}$ in $Z^{\alpha^*}$ is 2; b) $\bZ^{\alpha^*}$ inherits the
factorization property from $Z^{\alpha^*}$; c) $\bZ^{\alpha^*}$ is smooth.
We consider the open subset
$\bW^\lambda_{\mu}:=(s^\lambda_\mu)^{-1}(\bZ^{\alpha^*})\subset\oW^\lambda_{\mu}$, and
$\bGZ^{-\mu}_\lambda:=\bq^{-1}(\bZ^{\alpha^*})\subset\CG\CZ^{-\mu}_\lambda$.
We set $\bE_i=E_i\cap\bGZ^{-\mu}_\lambda$.
The codimension of the complement $\oW^\lambda_{\mu}\setminus\bW^\lambda_{\mu}$ in
$\oW^\lambda_{\mu}$ is 2. The open embedding
$(\BG_m^{\beta^*}\times\BA^1)_{\on{disj}}\times_{\BA^{\alpha^*}}\bW^\lambda_{\mu}\hookrightarrow
(\BG_m^{\beta^*}\times\BA^1)_{\on{disj}}\times_{\BA^{\alpha^*}}\oW^\lambda_{\mu}$ is an
isomorphism, so we have to prove that $\varphi$ extends to a regular
isomorphism of the varieties over $(\BG_m^{\beta^*}\times\BA^1)_{\on{disj}}$:
$$(\BG_m^{\beta^*}\times\BA^1)_{\on{disj}}\times_{\BA^{\alpha^*}}\bW^\lambda_{\mu}\iso
(\BG_m^{\beta^*}\times\BA^1)_{\on{disj}}\times_{\BA^{\beta^*}\times\BA^1}(\oZ^{\beta^*}\times
\cS_{\langle\lambda,\alphavee_i\rangle})$$
To this end we will identify $\bW^\lambda_{\mu}$ with a certain affine blowup of
$\bZ^{\alpha^*}$. We consider the smooth connected components
$\partial_i\bZ^{\alpha^*},\ i\in \II$, of the boundary divisor
$\bZ^{\alpha^*}\setminus\oZ^{\alpha^*}$.
Recall the divisors $\BA^{\alpha^*}_i\subset\BA^{\alpha^*}$ formed by all the
colored configurations such that at least one point of color $i\in \II$ meets
$0\in\BA^1$. Let $f_i\in\BC[\BA^{\alpha^*}]$ be an equation of $\BA^{\alpha^*}_i$.
Let $\CI_i\subset\CO_{\bZ^{\alpha^*}}$ (resp.\ $\CJ_i\subset\CO_{\bZ^{\alpha^*}}$)
be the ideal of functions vanishing at $\pi_{\alpha^*}^{-1}(\BA^{\alpha^*}_i)$
(resp.\ at $\partial_i\bZ^{\alpha^*}$). We define an ideal
$\CK_i:=\CI_i^{\langle\lambda,\alphavee_{i^*}\rangle}+\CJ_i$, and
$\CK:=\bigcap_{i\in \II}\CK_i$. We define
$\on{Bl}_\CK\bZ^{\alpha^*}$ as the blowup of $\bZ^{\alpha^*}$ at the ideal $\CK$, and
$\on{Bl}_\CK^{\on{aff}}\bZ^{\alpha^*}$ as the complement in $\on{Bl}_\CK\bZ^{\alpha^*}$
to the union of the strict transforms of the divisors
$\partial_i\bZ^{\alpha^*},\ i\in \II$. A crucial step towards~\propref{prop:factor}
is the following
\begin{Proposition}
\label{prop:blow}
The identity isomorphism over $\pi_{\alpha^*}^{-1}(\BG_m^{\alpha^*})$ extends to a
regular isomorphism of the varieties over
$\bZ^{\alpha^*}\colon \bW^\lambda_{\mu}\iso\on{Bl}_\CK^{\on{aff}}\bZ^{\alpha^*}$.
\end{Proposition}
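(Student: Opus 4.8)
The plan is to realize $\bW^\lambda_\mu$ as a chart of the blowup $\on{Bl}_\CK\bZ^{\alpha^*}$ via the universal property of blowing up, and then to check that the resulting morphism is an isomorphism by reducing, through factorization, to the rank-one situation already implicit in \secref{factorization}. The first point is to prove that $(s^\lambda_\mu)^{-1}\CK\cdot\CO_{\bW^\lambda_\mu}$ is an invertible ideal sheaf. A based quasimap in $\bZ^{\alpha^*}$ has total defect at most one simple coroot, so the boundary components $\partial_i\bZ^{\alpha^*}$, and hence the closed subsets $V(\CK_i)\subseteq\partial_i\bZ^{\alpha^*}$, are pairwise disjoint; it therefore suffices to check that $(s^\lambda_\mu)^{-1}\CK_i\cdot\CO_{\bW^\lambda_\mu}$ is invertible near $(s^\lambda_\mu)^{-1}V(\CK_i)$ for each $i$. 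For this I would combine \lemref{lem:full}, which identifies $(s^\lambda_\mu)^*\pi_{\alpha^*}^*\BA^{\alpha^*}_i$ with $\oE_i\cup(\iota^\lambda_\mu)^{-1}(\oE'_i)$, with the description in \secref{basic_proper} of $\CO(\br^{-1}(\partial\overline{\on{Bun}}_B(\BP^1)))=\bp^*\CL$ and the divisors $\oE_i$ of \secref{divisors}: these express the pullbacks of $\CI_i$ and of $\CJ_i$ as ideals of explicit effective Cartier divisors on $\bW^\lambda_\mu$ supported on $\bE_i$ together with a strict transform, and from the exponent $\langle\lambda,\alphavee_{i^*}\rangle$ appearing in $\CK_i$ one reads off that $(s^\lambda_\mu)^{-1}\CK_i\cdot\CO_{\bW^\lambda_\mu}$ is locally principal, equal to the ideal of $\bE_i$.

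By the universal property of blowing up this yields a morphism $f\colon\bW^\lambda_\mu\to\on{Bl}_\CK\bZ^{\alpha^*}$ over $\bZ^{\alpha^*}$; and since, by \secref{divisors}, the pullback to $\bW^\lambda_\mu$ of each $\partial_i\bZ^{\alpha^*}$ is supported on the exceptional divisor $\bE_i$ rather than on a strict transform, $f$ factors through $\on{Bl}_\CK^{\on{aff}}\bZ^{\alpha^*}$. Over $\pi_{\alpha^*}^{-1}(\BG_m^{\alpha^*})$, and more generally over $\oZ^{\alpha^*}\cap\bZ^{\alpha^*}$ where $\CK$ and $\partial\bZ^{\alpha^*}$ are trivial, both $s^\lambda_\mu$ and $\on{Bl}_\CK^{\on{aff}}\bZ^{\alpha^*}\to\bZ^{\alpha^*}$ are already isomorphisms and $f$ is the identity, so $f$ extends the identity of the statement.

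It remains to check that $f$ is an isomorphism near $(s^\lambda_\mu)^{-1}V(\CK)=\bigsqcup_i(s^\lambda_\mu)^{-1}V(\CK_i)$, and here I would work \'etale- (or formally-) locally on $\bZ^{\alpha^*}$. Fix $z\in V(\CK_i)$ with $\pi_{\alpha^*}(z)=D=D_0+D'$, $D_0$ supported at $0$ and $D'$ away from $0$. Using the smoothness and the factorization of $\bZ^{\alpha^*}$ inherited from $Z^{\alpha^*}$ together with a direct local computation of $\on{Bl}_\CK^{\on{aff}}$, and on the other side the factorization of the convolution diagram $\CG\CZ^{-\mu}_\lambda$ (cf.\ \cite{mf}), compatible with $\bp$, $\bq$ and $\br$, one reduces the claim near $z$ to the rank-one case $\alpha=\alpha_i$, $\mu=\lambda-\alpha_i$ — the points of $D'$ and the remaining colors contributing smooth zastava factors on which $f$ is already an isomorphism. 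In that case $\bZ^{\alpha_{i^*}}=Z^{\alpha_{i^*}}\cong\BA^2$, the ideal $\CK$ is generated by a power of the origin equation together with the boundary equation, a two-chart computation identifies $\on{Bl}_\CK^{\on{aff}}\BA^2$ with the surface $\cS_{\langle\lambda,\alphavee_i\rangle}=\{xy=w^{\langle\lambda,\alphavee_i\rangle}\}$ projecting by $(x,y,w)\mapsto(y,w)$, and under the identification of $\oW^\lambda_{\lambda-\alpha_i}$ with this surface (cf.\ \secref{factorization}) the morphism $f$ is the identity.

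Since $\on{Bl}_\CK^{\on{aff}}\bZ^{\alpha^*}$ is covered by the open sets over which $f$ has now been shown to be an isomorphism — the preimage of $\oZ^{\alpha^*}\cap\bZ^{\alpha^*}$ and \'etale neighbourhoods of the fibres over the $V(\CK_i)$ — it follows that $f$ is an isomorphism; alternatively one may invoke Zariski's main theorem, $f$ being affine, birational and quasi-finite with the normal target $\on{Bl}_\CK^{\on{aff}}\bZ^{\alpha^*}$ (which is smooth away from the locus of the previous paragraph and, along it, analytically locally a product of $\cS_n$ with a smooth factor, hence has at worst rational double points). I expect the main obstacle to be this last step: arranging the factorization of $\CG\CZ^{-\mu}_\lambda$ to interact cleanly with $s^\lambda_\mu$ and with the factorization of $\bZ^{\alpha^*}$ so that the general case genuinely reduces to the rank-one surface, together with the control of all the strata of $V(\CK)$ in $\bZ^{\alpha^*}$; the multiplicity bookkeeping for $(s^\lambda_\mu)^{-1}\CK\cdot\CO_{\bW^\lambda_\mu}$ in the first step is a secondary difficulty.
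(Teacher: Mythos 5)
Your first half — showing that $(s^\lambda_\mu)^{-1}\CK\cdot\CO_{\bW^\lambda_\mu}$ is invertible and getting the universal-property morphism — is essentially the paper's argument, citing the same \lemref{lem:dete} and \lemref{lem:invo} and the disjointness of the $\partial_i\bZ^{\alpha^*}$; one small wording issue is that the pullback is the ideal of $\langle\lambda,\alphavee_{i^*}\rangle\bE_i$, not of $\bE_i$.

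The second half diverges from the paper and has two genuine gaps. The paper does \emph{not} work with the open $\bW^\lambda_\mu$ directly: it first produces a \emph{projective} birational $\Upsilon\colon\bGZ^{-\mu}_\lambda\to\on{Bl}_\CK\bZ^{\alpha^*}$, observes via the disjointness of the two kinds of strict transforms that $\Upsilon$ is an isomorphism away from codimension~$2$, and then concludes by a pure Picard-group argument (the relative Picard group of $\Upsilon$ is trivial, so a proper birational morphism with this property must be an isomorphism); it only then passes to $\bW^\lambda_\mu=\bGZ^{-\mu}_\lambda\setminus\bq^{-1}_*(\partial\bZ^{\alpha^*})$. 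This costs no local analysis and no factorization. Your route tries to prove directly that the non-proper $f$ is an isomorphism by factorization into the rank-one case, and here: \textup{(i)} the identification $\oW^\lambda_{\lambda-\alpha_i}\cong\cS_{\langle\lambda,\alphavee_i\rangle}$ that you cite from \secref{factorization} is \propref{prop:factor} with $\beta=0$, and \propref{prop:factor} is \emph{derived from} \propref{prop:blow}, so this is circular as written; you would need an independent rank-one argument (true for $\mu$ dominant, i.e.\ $\langle\lambda,\alphavee_i\rangle\ge 2$, from the known slice structure, but requiring a separate treatment of the non-dominant cases $\langle\lambda,\alphavee_i\rangle\in\{0,1\}$); and \textup{(ii)} the factorization of $\CG\CZ^{-\mu}_\lambda$ compatible with $\bq$, which you attribute to \cite{mf}, is not established anywhere in the text and would have to be proved. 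For the ZMT variant, the quasi-finiteness of the non-proper $f$ is not free (this is exactly what forces the paper to work with the proper $\Upsilon$), and the claimed normality of $\on{Bl}_\CK^{\on{aff}}\bZ^{\alpha^*}$ again rests on the rank-one local computation. Your approach can likely be completed, but the paper's Picard-group argument is cleaner precisely because it sidesteps \textup{(i)} and \textup{(ii)}; indeed \propref{prop:factor} is then \emph{deduced} from \propref{prop:blow}, in the opposite logical order to the one you are implicitly assuming.
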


The proof will be given after a couple of lemmas. Recall that
$\CO_{\CG\CZ^{-\mu}_\lambda}(\bfr^{-1}(\partial\overline{\on{Bun}}_B(\BP^1)))=\bp^*\CL$,
and $\bfr^{-1}(\partial\overline{\on{Bun}}_B(\BP^1))$ is the strict transform
$\sum_{i\in \II}\bq^{-1}_*(\partial_iZ^{\alpha^*})$. The pullback of the zastava boundary
divisor will be denoted by $\sum_{i\in \II}\bq^*(\partial_i\bZ^{\alpha^*})$.

\begin{Lemma}
\label{lem:dete}
\textup{(1)} $\on{div}F_{\alpha^*}=\sum_{i\in \II}\partial_i\bZ^{\alpha^*}$;

\textup{(2)} $\on{div}\bq^*F_{\alpha^*}=\sum_{i\in \II}\bq^{-1}_*(\partial_i\bZ^{\alpha^*})+
\sum_{i\in \II}\langle\lambda,\alphavee_{i^*}\rangle\bE_i$;

\textup{(3)} $\bp^*\CL=\CO_{\bGZ^{-\mu}_\lambda}(\sum_{i\in \II}\bq^*(\partial_i\bZ^{\alpha^*})-
\sum_{i\in \II}\langle\lambda,\alphavee_{i^*}\rangle\bE_i)\simeq
\CO_{\bGZ^{-\mu}_\lambda}(-\sum_{i\in \II}\langle\lambda,\alphavee_{i^*}\rangle\bE_i)$.
\end{Lemma}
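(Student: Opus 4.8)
The statement has three parts, and the natural order is to prove (1), then deduce (2) by a local analysis of $\bq$ near the exceptional divisors, and finally obtain (3) by combining (2) with the known identity $\CO_{\CG\CZ^{-\mu}_\lambda}(\bfr^{-1}(\partial\overline{\on{Bun}}_B(\BP^1)))=\bp^*\CL$ recalled just above. For part (1), recall from \secref{zastava} that $\partial Z^{\alpha^*}$ is the zero divisor of the regular function $F_{\alpha^*}\in\BC[Z^{\alpha^*}]$, and that on the open locus $\pi_{\alpha^*}^{-1}(\oA^{|\alpha^*|})$ one has the explicit product formula for $F_{\alpha^*}$ in the $(w_{i,r},y_{i,r})$ coordinates. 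Since $\bZ^{\alpha^*}$ is an open subset of $Z^{\alpha^*}$ whose complement has codimension $2$, and $\bZ^{\alpha^*}$ is smooth, the divisor of $F_{\alpha^*}|_{\bZ^{\alpha^*}}$ is determined by its restriction to any dense open; one reads off from the product formula that the reduced components of $\{F_{\alpha^*}=0\}$ are exactly the smooth connected components $\partial_i\bZ^{\alpha^*}$, each appearing with multiplicity one (the half-integer exponents on the $(w_{i,r}-w_{\vin h,s})$ factors cancel in pairs on $\bZ^{\alpha^*}$ because only the first two conditions defining $\bZ^{\alpha^*}$ are imposed, so no collision of distinct-colored points is allowed to the relevant order). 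This is essentially \cite[Theorem~1.6.(2)]{bdf} read on the smooth locus, so it should be quick.

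For part (2), I would pull back the equation $\{F_{\alpha^*}=0\}$ under $\bq\colon\bGZ^{-\mu}_\lambda\to\bZ^{\alpha^*}$. Away from the exceptional divisor $\bE=\bigcup\bE_i$ the map $\bq$ is an isomorphism onto $\oZ^{\alpha^*}$, so there $\bq^*F_{\alpha^*}$ has no zeros; hence $\on{div}\bq^*F_{\alpha^*}$ is supported on the strict transforms $\bq^{-1}_*(\partial_i\bZ^{\alpha^*})$ together with the $\bE_i$. The strict transforms occur with multiplicity one by definition. The multiplicity of $\bE_i$ is the order of vanishing of $F_{\alpha^*}\circ\bq$ along the generic point of $\bE_i$; by the modular description in \secref{divisors}, at a general point of $\bE_i$ the transformed $B$-structure $\sigma^{-1}\phi(w_0\lambda\cdot 0)$ acquires a defect of color $i$ of degree exactly $\alpha_i$ concentrated at $0$, while the pole condition (b) in the definition of $\CG\CZ^{-\mu}_\lambda$ forces the relevant invertible subsheaf $\CL_{\lambdavee}$ to have precisely the degree dictated by $\lambda$; tracking the factor $\prod y_{i,r}$ in $F_{\alpha^*}$ against this degree shift yields the coefficient $\langle\lambda,\alphavee_{i^*}\rangle$ (the $i^*$ appearing because of the $w_0$-twist, i.e.\ $\deg$ is measured against $w_0\lambda$ and $\alpha^*=\lambda-\mu$ matches $\oZ^{\alpha^*}$). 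I expect this local computation of the multiplicity of $\bE_i$ to be the main obstacle: one must be careful about the half-integer powers, the $w_0$/starred indices, and the precise normalization of $F_{\alpha^*}$, and check it on a transverse slice to $\bE_i$ — the $|\gamma|=2$ examples in \remref{4dim} (especially $\gamma=\alpha_i+\alpha_j$ with $i,j$ adjacent, and $\gamma=2\alpha_i$) together with the rank-one slices $\oW^\lambda_\mu$ with $\alpha=\alpha_i$, where $\oW^\lambda_\mu\cong\cS_{\langle\lambda,\alphavee_i\rangle}$ by \secref{factorization}, give the base cases to calibrate the constant.

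Finally, part (3): since $F_{\alpha^*}$ is a regular function on $Z^{\alpha^*}$ (nowhere zero on $\oZ^{\alpha^*}$), the rational function $\bq^*F_{\alpha^*}$ trivializes $\CO_{\bGZ^{-\mu}_\lambda}(\on{div}\bq^*F_{\alpha^*})$, so from (2) we get $\CO_{\bGZ^{-\mu}_\lambda}(\sum_i\bq^*(\partial_i\bZ^{\alpha^*}))\cong\CO_{\bGZ^{-\mu}_\lambda}(\sum_i\bq^{-1}_*(\partial_i\bZ^{\alpha^*})+\sum_i\langle\lambda,\alphavee_{i^*}\rangle\bE_i)$, equivalently $\CO_{\bGZ^{-\mu}_\lambda}(\sum_i\bq^{-1}_*(\partial_i\bZ^{\alpha^*}))\cong\CO_{\bGZ^{-\mu}_\lambda}(\sum_i\bq^*(\partial_i\bZ^{\alpha^*})-\sum_i\langle\lambda,\alphavee_{i^*}\rangle\bE_i)$. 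By the recollection preceding the lemma, $\bfr^{-1}(\partial\overline{\on{Bun}}_B(\BP^1))=\sum_i\bq^{-1}_*(\partial_i\bZ^{\alpha^*})$ and $\CO(\bfr^{-1}(\partial\overline{\on{Bun}}_B(\BP^1)))=\bp^*\CL$, which gives the first isomorphism in (3). For the second isomorphism, note that each $\partial_i\bZ^{\alpha^*}$ is (set-theoretically, on $\bZ^{\alpha^*}$) the pullback $\pi_{\alpha^*}^*\BA^{\alpha^*}_i$ up to the strict-transform correction recorded in \lemref{lem:full}; since $\pi_{\alpha^*}^*\BA^{\alpha^*}_i$ is the divisor of the pullback of the regular function $f_i\in\BC[\BA^{\alpha^*}]$, the line bundle $\CO_{\bGZ^{-\mu}_\lambda}(\sum_i\bq^*(\partial_i\bZ^{\alpha^*}))$ is trivial, whence $\bp^*\CL\cong\CO_{\bGZ^{-\mu}_\lambda}(-\sum_i\langle\lambda,\alphavee_{i^*}\rangle\bE_i)$, as claimed. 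The only subtlety here is making sure ``$\partial_i\bZ^{\alpha^*}$ versus $\pi_{\alpha^*}^*\BA^{\alpha^*}_i$'' is used correctly — on $\bZ^{\alpha^*}$ itself $F_{\alpha^*}$ already exhibits $\sum_i\partial_i\bZ^{\alpha^*}$ as principal by (1), so in fact $\CO_{\bZ^{\alpha^*}}(\sum_i\partial_i\bZ^{\alpha^*})$ is trivial and one pulls this back directly, avoiding the need to invoke \lemref{lem:full} at all.
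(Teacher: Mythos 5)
Your handling of parts (1) and (3) is correct and matches the intent of the paper: (1) is essentially \cite[Theorem~1.6.(2)]{bdf} restricted to the smooth open locus, and (3) is a formal consequence of (1), (2), and the identity $\CO_{\CG\CZ^{-\mu}_\lambda}(\bfr^{-1}(\partial\overline{\on{Bun}}_B(\BP^1)))=\bp^*\CL$ recalled before the lemma, exactly as you say — your observation that $\CO_{\bZ^{\alpha^*}}(\sum_i\partial_i\bZ^{\alpha^*})$ is already trivial by (1), so its pullback is trivial, is the cleanest route.

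The gap is in part (2), and it is a real one: your proposed calibration is circular. You want to read off the multiplicity of $\bE_i$ by passing to a transverse slice and quoting the rank-one identification $\oW^\lambda_\mu\cong\cS_{\langle\lambda,\alphavee_i\rangle}$ ``from \S\ref{factorization}''. But that identification is the conclusion of \cref{prop:factor}, which is deduced from \cref{prop:blow}, whose proof begins by invoking \lemref{lem:dete}(2)–(3). More fundamentally, the reduction of the general case to a rank-one local model along $\bE_i$ \emph{is} the factorization statement for $\oW^\lambda_\mu$, and the paper's whole point is to establish that factorization by first proving this lemma independently. The informal argument you sketch — ``the pole condition forces $\CL_{\lambdavee}$ to have the degree dictated by $\lambda$; tracking $\prod y_{i,r}$ yields $\langle\lambda,\alphavee_{i^*}\rangle$'' — identifies the right invariant but does not constitute a computation of the order of vanishing.

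What the paper actually does for (2) is quite different and not local. It introduces an auxiliary moduli space $\mathcal X^\lambda_\mu$ (and a $B_\CO$-torsor $\widetilde{\mathcal X}^\lambda_\mu$ over it) parametrizing Hecke modifications $(\scP_-\xrightarrow{\sigma}\scP_+)$ together with a $B$-structure $\phi_+$ on $\scP_+$ whose transform on $\scP_-$ has no defect, and observes that the natural map $\bp_+\colon\widetilde{\mathcal X}^\lambda_\mu\to\Gr_G$ lands in the open piece $\ol\Gr{}^{-w_0\lambda}_G\cap S_{-w_0\lambda}$. There are two canonically isomorphic line bundles $\bp_+^*\CL\cong\widetilde\pi_-^*\CL_G$, each with a canonical trivialization (one from the semiinfinite orbit, one from the $B$-structure on $\scP_-$), and these trivializations agree. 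Extending to the compactified space where defects are allowed, this trivialization becomes a section, and the key step is to compute its order of vanishing along $\mathcal E_i$: this is done by pulling back to $\ol\Gr{}^{-w_0\lambda}_G$ and using the isomorphism $\CL|_{\Gr^{-w_0\lambda}_G}\cong\CO(\sum_i\langle-w_0\lambda,\alphavee_i\rangle\on{pr}^*D_i)$, established by comparing $T$-weights in fibers at $T$-fixed points (\cite[Proposition~3.1]{MV2}). Finally, when $\scP_-$ is trivialized, this section is identified with $\bq^*F_{\alpha^*}$ via its construction in \cite[Section~4]{bf14}. So the coefficient $\langle\lambda,\alphavee_{i^*}\rangle$ comes out of a global $T$-equivariant computation on the affine Grassmannian, not from a transversal slice or a factorization; this is precisely how the paper avoids the circularity your approach would run into.
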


\begin{proof}
The first assertion is already known. Let us prove the second and
third assertions that are equivalent by the remark preceding the lemma.
Consider the moduli space $\mathcal X^{\lambda}_{\mu}$ of the following data:

1) Two $G$-bundles $\scP_+,\scP_-$ on $\BP^1$.

2) An isomorphism $\sigma\colon\scP_-\to\scP_+$ away from $0\in \BP^1$, which
lies in $G_\CO\backslash\ol\Gr{}_G^{\lambda}$.

3) A $B$-structure $\phi_+$ on the bundle $\scP_+$ of degree $w_0\mu$ such that
the transformed $B$-structure $\sigma^{-1}\phi_+(w_0\lambda\cdot0)$ on $\scP_-$ has no defects.

4) A trivialization of the $B$-bundle $\phi_+$ at $\infty\in\BP^1$.

\noindent
Note that the open subspace of $\mathcal X^{\lambda}_{\mu}$ given by the condition
of triviality of $\scP_-$ is an open subspace of $\oGZ^{-\mu}_\lambda$. We will later introduce
a larger space $\widetilde{\overline{\mathcal X}}{}^{\lambda}_{\mu}$ that is a $B_\CO$-torsor over the whole
of $\oGZ^{-\mu}_\lambda$. We have natural maps
$\pi_+,\pi_-:\mathcal X^{\lambda}_{\mu}\to \on{Bun}_G(\BP^1)$.

Let $\CL_G$ denote the determinant bundle on $\on{Bun}_G(\BP^1)$. Then
the pull-back $\pi_-^*\CL_G$ acquires a natural trivialization coming
from the $B$-structure on $\scP_-$ (note that the associated $T$-bundle has degree $\mu$ and is trivialized
at $\infty$; hence it is canonically isomorphism to the $T$-bundle $\mathcal O(\mu)$.
In fact, the above trivialization is well-defined up to (one) multiplicative scalar;
the scalar is fixed if we trivialize the determinant of the $T$-bundle
$\mathcal O(\mu)$ on $\BP^1$).

On the other hand, consider a bigger moduli space
$\widetilde{\mathcal X}{}^{\lambda}_{\mu}$ of the data 1--3 above together with a
trivialization of
$\scP_+$ in the formal neighbourhood of $0$ compatible with the $B$-structure
(this is a $B_\CO$-torsor over $\mathcal X^{\lambda}_{\mu}$). Then it
acquires a natural map $\bp_+$ to $\Gr_G$; moreover, it follows easily
from~2 and~3 that $\bp_+$ actually lands in the open subvariety
$\ol\Gr{}_G^{-w_0(\lambda)}\cap S_{-w_0(\lambda)}$ (intersection with
a semiinfinite orbit). Indeed, the open subvariety
$\ol\Gr{}_G^{-w_0(\lambda)}\cap S_{-w_0(\lambda)}\subset\ol\Gr{}_G^{-w_0(\lambda)}$
is the moduli space of data $(\scP_-\stackrel{\sigma}{\longrightarrow}\scP_+)$
where $\scP_+$ is trivial on the formal disc, $\sigma$ has a pole of degree
$\leq\lambda$ at $0$, and the transformation
$\sigma^{-1}\phi_+(w_0\lambda\cdot0)$ of the
standard $B$-structure in $\scP_+$ has no defect at $0$. In effect, the latter
condition is satisfied for the torus fixed point $-w_0\lambda\in\Gr_G$, and
since the condition is $N(\CO)$-invariant, the intersection
$\ol\Gr{}_G^{-w_0(\lambda)}\cap S_{-w_0(\lambda)}$ lies in the above moduli space.
However, for the other torus fixed points
$-w_0\lambda\ne\nu\in\ol\Gr{}_G^{-w_0(\lambda)}$ the condition is not satisfied,
and hence the intersection of the above moduli space with $S_\nu$ is empty.

Let us denote by $f$ the projection
$\widetilde{\mathcal X}{}^{\lambda}_{\mu}\to {\mathcal X}^{\lambda}_{\mu}$;
let $\widetilde{\pi}_-=\pi_-\circ f$.
Let us also recall that we denote by $\CL$ the determinant bundle on
$\Gr_G$. We have a canonical isomorphism
$\bp_+^*\CL={\widetilde\pi}{}_-^*\CL_G$. This is so because
$\bp_+^*\CL$ is naturally isomorphic to the ratio of
${\widetilde\pi}{}_-^*\CL_G$ and ${\widetilde\pi}{}_+^*\CL_G$ and the
latter is canonically trivial, since $\scP_+$ is equipped with a $B$-structure
with a fixed reduction to $T$).\footnote{Of course, the fiber of the determinant
bundle at $\scP_-$ can be trivialized as well (for the same reason), but we want
to ignore this here, since a little later we are going to work with a larger space
where $\scP_-$ will only be endowed with a generalized $B$-structure.}

Since the restriction of $\mathcal L$ to
$\ol\Gr{}_G^{-w_0(\lambda)}\cap S_{-w_0(\lambda)}$
acquires a canonical trivialization, we get a trivialization of
$\bp_+^*\mathcal L$. This trivialization is equal to the
pullback under $f$ of the trivialization of $\pi_-^*\CL_G$ discussed above
(since both come from the same reduction of $\scP_-$ to $B$).

Let us now consider a variant of this situation. Namely, we consider a
moduli space $\overline{\mathcal X}{}^{\lambda}_{\mu}$ of the same data as above,
except that in 3) we do not require that the transformed $B$-structure has no
defect. Then  ${\mathcal X}^{\lambda}_{\mu}$ is an open subset of
$\overline{\mathcal X}{}^{\lambda}_{\mu}$.

Similarly, we have the corresponding space
$\widetilde{\overline{\mathcal X}}{}^{\lambda}_{\mu}$.
We will denote the extension of $\widetilde{\pi}_-$
to $\widetilde{\overline{\mathcal X}}{}^{\lambda}_{\mu}$ by
$\widetilde{\overline{\pi}}_-$. Similarly, we have
$\overline{\bp}_+\colon \widetilde{\overline{\mathcal X}}{}^{\lambda}_{\mu}\to \Gr_G$.
The line bundles $\overline{\bp}{}_+^*\mathcal L$ and
$\widetilde{\overline{\pi}}{}_-^*\CL_G$ are again canonically isomorphic,
so we can regard them as the same line bundle.

The above trivialization of this bundle extends to a section (without poles
but with zeroes). We are interested in the divisor of this section.
Namely, let ${\mathcal E}_i$ denote the divisor in
$\overline{\mathcal X}{}^{\lambda}_{\mu}$ corresponding
to the condition that the transformed $B$-structure in $\scP_-$ acquires
the defect of degree at least $\alpha_i$. Then we claim that the corresponding
section of $\widetilde{\overline{\pi}}{}_-^*\CL_G$ vanishes to the order
$\langle-w_0(\lambda),\alphavee_i\rangle=
\langle\lambda,\alphavee_{i^*}\rangle$ on ${\mathcal E}_i$.
This immediately follows from the above, since a similar statement is true on
$\ol\Gr{}_G^{-w_0(\lambda)}$.

In effect, assume $\lambda$ regular (the argument in the general case is
similar but requires introducing more notations). Then we have a canonical
projection $\on{pr}\colon \Gr_G^{-w_0\lambda}\to\CB$.
The preimage under $\on{pr}$ of the open $B$-orbit in $\CB$ is nothing but
$\Gr_G^{-w_0(\lambda)}\cap S_{-w_0(\lambda)}$. The complement to the open
$B$-orbit in $\CB$ is the union of Schubert divisors $D_i\subset\CB,\ i\in \II$,
and we have $\CL|_{\Gr_G^{-w_0\lambda}}\cong\CO_{\Gr_G^{-w_0\lambda}}
(\sum_{i\in \II}\langle\-w_0\lambda,\alphavee_i\rangle\on{pr}^*D_i)$ as can be seen
by comparing the $T$-weights in the fibers of both sides at the $T$-fixed
points, see e.g.\ the proof of~\cite[Proposition~3.1]{MV2}.

Finally it remains to note that when $\scP_-$ is trivialized, its
determinant is trivialized as well, and the above section of
$\widetilde{\overline{\pi}}{}_-^*\CL_G$ is a function which coincides with
$\bq^*F_{\alpha^*}$, by its construction in~\cite[Section~4]{bf14}.
\end{proof}

\begin{Lemma}
\label{lem:invo}
The divisor $\on{div}\bq^*\pi_{\alpha^*}^*f_i$ is the sum of $E_i$ and
the strict transform $\bq^{-1}_*(\pi_{\alpha^*}^*\BA^{\alpha^*}_i)$.
\end{Lemma}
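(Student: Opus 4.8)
The plan is to compute $\on{div}(\bq^*\pi_{\alpha^*}^*f_i)$ on $\CG\CZ^{-\mu}_\lambda$ one prime divisor at a time, using that $\bq$ is proper and birational. Since $\pi_{\alpha^*}$ is flat, $\pi_{\alpha^*}^*f_i$ is a regular function on $Z^{\alpha^*}$ with $\on{div}(\pi_{\alpha^*}^*f_i)=\pi_{\alpha^*}^*\BA^{\alpha^*}_i$, and pulling back gives
\[
\on{div}(\bq^*\pi_{\alpha^*}^*f_i)=\bq^{-1}_*(\pi_{\alpha^*}^*\BA^{\alpha^*}_i)+\sum_{j\in\II}c_jE_j+D,
\]
with $c_j\ge0$ and $D$ an effective divisor supported on $\CG\CZ^{-\mu}_\lambda\setminus\oGZ^{-\mu}_\lambda=\sum_{k\in\II}\bq^{-1}_*(\partial_kZ^{\alpha^*})$. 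The strict transform occurs with coefficient $1$, because its generic point lies over $\oZ^{\alpha^*}$ (a based map with a single color-$i$ point at $0$ and no defect), where $\bq$ is an isomorphism $\oGZ^{-\mu}_\lambda\setminus\oE\iso\oZ^{\alpha^*}$ and $\pi_{\alpha^*}^*f_i$ vanishes to order $1$. Restricting the displayed equality to $\oGZ^{-\mu}_\lambda$ and invoking \lemref{lem:full}, which says $(s^\lambda_\mu)^*(\pi_{\alpha^*}^*\BA^{\alpha^*}_i)$ is supported on $\oE_i\cup(\iota^\lambda_\mu)^{-1}(\oE'_i)$, we get $c_j=0$ for $j\ne i$. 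So it remains to prove $c_i=1$ and $D=0$.

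For $D=0$: at a general point of $\bq^{-1}_*(\partial_kZ^{\alpha^*})$ the transformed $B$-structure $\bq(\scP,\sigma,\phi)$ has its defect (of color $k$) concentrated at a general, hence nonzero, point of $\BA^1$, so its image under $\pi_{\alpha^*}$ carries no point of color $i$ at $0$ and $\pi_{\alpha^*}^*f_i$ is invertible there. This is the same generic-point estimate as in the proof of \lemref{lem:full} (which also re-confirms $c_j=0$ for $j\ne i$, the general point of $E_j$ having defect exactly $\alpha_j$ at $0$).

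To see $c_i=1$ I would use the involution of \secref{involution}. Put $g:=(s^\lambda_\mu)^*\pi_{\alpha^*}^*f_i$ and $g':=({}'s^\lambda_\mu)^*\pi_{\alpha^*}^*f_i$. From $\pi_{\alpha^*}\circ s^\lambda_\mu=\pi_{\alpha^*}\circ\ 's^\lambda_\mu\circ\iota^\lambda_\mu$ we get $g=(\iota^\lambda_\mu)^*g'$, so $\iota^\lambda_\mu$ identifies $\on{div}g$ with $\on{div}g'$. By \lemref{lem:full}, $\on{div}g=c_i\,\oE_i+(\iota^\lambda_\mu)^{-1}(\oE'_i)$, the second term being the strict transform $(s^\lambda_\mu)^{-1}_*(\pi_{\alpha^*}^*\BA^{\alpha^*}_i)$ and hence occurring with coefficient $1$ by the first paragraph; applying $\iota^\lambda_\mu$ gives $\on{div}g'=c_i\,\iota^\lambda_\mu(\oE_i)+\oE'_i$. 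On the other hand, running \lemref{lem:full} in the mirror realization $\oW^\lambda_{\mu,0}$ — where the strict transform $({}'s^\lambda_\mu)^{-1}_*(\pi_{\alpha^*}^*\BA^{\alpha^*}_i)$ is $\iota^\lambda_\mu(\oE_i)$ and the exceptional part lies on $\oE'_i$ — gives $\on{div}g'=c_i'\,\oE'_i+\iota^\lambda_\mu(\oE_i)$ with strict-transform coefficient again $1$. Comparing the two expressions for $\on{div}g'$ forces $c_i=1$ (and incidentally $c_i'=1$). This proves the identity on $\oGZ^{-\mu}_\lambda$, hence on all of $\CG\CZ^{-\mu}_\lambda$, since $E_i$ and $\bq^{-1}_*(\pi_{\alpha^*}^*\BA^{\alpha^*}_i)$ are the closures of $\oE_i$ and of the strict transform and $D=0$.

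An alternative for this last step is a local computation: near a general point of $\oE_i$ the generalized slice is, in an \'etale neighbourhood, a product $\oZ^{\beta^*}\times\cS_n$ with $\beta=\alpha-\alpha_i$ and $n=\langle\lambda,\alphavee_{i^*}\rangle$ (the rank-one factorization of \secref{factorization}; compare \remref{4dim}), and there $\bq^*\pi_{\alpha^*}^*f_i$ becomes, up to a unit, the coordinate $w$ on $\cS_n=\{xy=w^n\}\subset\BA^3$; since $\on{div}_{\cS_n}(w)$ is the reduced union of the two coordinate lines — one the trace of $\oE_i$, the other that of the strict transform — one reads off $c_i=1$, regardless of $n$. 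The crux of the whole argument is precisely this determination of $c_i$: as $E_i$ is genuinely $\bq$-exceptional, no soft argument rules out a multiplicity such as $\langle\lambda,\alphavee_{i^*}\rangle$ there, and either route (the $\iota^\lambda_\mu$-symmetry, which needs the mirror of \lemref{lem:full} on $\oW^\lambda_{\mu,0}$, or the rank-one model $\cS_n$) requires care to match $\oE_i$ with the correct component.
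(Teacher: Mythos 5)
Your main argument matches the paper's: the paper's proof is exactly the reduction to the multiplicity of $\oE_i$ in $\on{div}(s^\lambda_\mu)^*\pi_{\alpha^*}^*f_i$, an appeal to \lemref{lem:full}, the one-sentence claim that ``the multiplicities of the summands are equal,'' and the observation that the strict transform $(\iota^\lambda_\mu)^{-1}(\oE'_i)$ carries multiplicity one. What you have done is unpack that one sentence into an explicit symmetry argument running \lemref{lem:full} on both $\oW^\lambda_{0,\mu}$ and its mirror $\oW^\lambda_{\mu,0}$, then comparing the two resulting expressions for $\on{div}g'$. That is a fair expansion of what the paper leaves implicit, and it is worth noting that the ``mirror of \lemref{lem:full}'' you invoke is not genuinely extra input: it follows from \lemref{lem:full} and the identity $\on{div}g' = (\iota^\lambda_\mu)_*(\on{div}g)$, or by re-running the Cartan-involution argument in the proof of \lemref{lem:full} with $\oE_i$ and $\oE'_i$ interchanged. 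Your preliminary steps (strict-transform coefficient $1$ by comparing with the generic order of vanishing over $\oZ^{\alpha^*}$, $c_j=0$ for $j\neq i$, $D=0$ on the boundary strict transforms) are slightly more bookkeeping than the paper's proof needs, but they are correct.

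However, the ``alternative'' at the end is circular \emph{as the paper is structured}: the rank-one model $\cS_{\langle\lambda,\alphavee_i\rangle}$ near a generic point of $\oE_i$ is precisely \propref{prop:factor}, whose proof relies on \propref{prop:blow}, which in turn uses \lemref{lem:invo} (via \lemref{lem:dete}(3) combined with \lemref{lem:invo} to identify $\bq^{-1}\CK\cdot\CO$ as invertible). So you cannot quote the local product structure $\oZ^{\beta^*}\times\cS_n$ to pin down $c_i$ without first having \lemref{lem:invo}. (There is an independent route to the factorization using the Kleinian-singularity description of transversal slices, sketched in a commented-out block in the source and based on \cite{mov}, but the paper chose not to go that way, precisely so that the $\cS_n$ model is a \emph{consequence} and not a prerequisite.) The $\iota^\lambda_\mu$-symmetry argument is therefore not merely ``one of two routes'' but the only one available at this point in the logical order; you should drop or clearly caveat the $\cS_n$ alternative.
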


\begin{proof}
We must prove that the multiplicity of the exceptional divisor
$E_i$ in $\on{div}\bq^*\pi_{\alpha^*}^*f_i$ equals 1, or equivalently,
the multiplicity of $\oE_i$ in $\on{div}(s^\lambda_\mu)^*\pi_{\alpha^*}^*f_i$ equals 1.
But according to~\lemref{lem:full}
$\on{div}(s^\lambda_\mu)^*\pi_{\alpha^*}^*f_i$ is a sum of multiples of
$\oE_i$ and $(\iota^\lambda_\mu)^{-1}(\oE'_i)$,
and the multiplicities of the summands are equal. The latter divisor
coincides with the strict transform
$(s^\lambda_\mu)^{-1}_*(\pi_{\alpha^*}^*\BA^{\alpha^*}_i)$
and has multiplicity one, hence the former also has multiplicity one.
\end{proof}

\begin{proof}[Proof of \propref{prop:blow}] It suffices
to prove that the identity isomorphism over $\pi_{\alpha^*}^{-1}(\BG_m^{\alpha^*})$
extends to a regular isomorphism of the varieties over
$\bZ^{\alpha^*}\colon \bGZ^{-\mu}_\lambda\iso\on{Bl}_\CK\bZ^{\alpha^*}$.
Indeed, removing the strict transform $\bq_*^{-1}(\partial\bZ^{\alpha^*})=
\bGZ^{-\mu}_\lambda\cap(\CG\CZ^{-\mu}_\lambda\setminus\oGZ^{-\mu}_\lambda)$ we then
obtain the desired isomorphism
$\bW^\lambda_{\mu}\iso\on{Bl}^{\on{aff}}_\CK\bZ^{\alpha^*}$.
We first prove that
$\bq^{-1}\CK\cdot\CO_{\bGZ^{-\mu}_\lambda}\subset\CO_{\bGZ^{-\mu}_\lambda}$
is an {\em invertible} sheaf of ideals. More precisely, we will prove that
$\bq^{-1}\CK\cdot\CO_{\bGZ^{-\mu}_\lambda}\simeq\bp^*\CL$ where
$\CL$ is the very ample determinant line bundle on $\Gr_G$.
It follows from~\lemref{lem:dete}
that $\bq^{-1}(\bigcap_{i\in \II}\CJ_i)\cdot\CO_{\bGZ^{-\mu}_\lambda}$ may be viewed
as the sheaf of sections of $\bp^*\CL$ vanishing at the strict transform
$\sum_{i\in \II}\bq^{-1}_*(\partial_i\bZ^{\alpha^*})$. Also, it follows
from~\lemref{lem:dete}(3) and~\lemref{lem:invo} that
$\bq^{-1}(\bigcap_{i\in \II}\CI_i^{\langle\lambda,\alphavee_{i^*}\rangle})\cdot
\CO_{\bGZ^{-\mu}_\lambda}$ may be viewed as the sheaf of sections of
$\bp^*\CL(-\sum_{i\in \II}\langle\lambda,\alphavee_{i^*}\rangle
\bq^{-1}_*(\pi_{\alpha^*}^*\BA^{\alpha^*}_i))$.
The strict transforms
$\bq^{-1}_*(\partial_i\bZ^{\alpha^*})$ and $\bq^{-1}_*(\pi_{\alpha^*}^*\BA^{\alpha^*}_j)$
do not intersect for any $i,j$ (including the case $i=j$). Indeed, for
$(\scP,\sigma,\phi)\in\bq^{-1}_*(\partial_{i^*}\bZ^{\alpha^*})$ the generalized
$B$-structure $\phi$ has defect of order exactly $\alpha_{i^*}$, and the saturated
(nongeneralized) $B$-structure $\widetilde\phi$ is well defined, so that
$(\scP,\sigma,\widetilde\phi)\in\bGZ_\lambda^{-\mu-\alpha_i}$.
For $(\scP,\sigma,\phi)\in\bq^{-1}_*(\partial_{i^*}\bZ^{\alpha^*})\cap
\bq^{-1}_*(\pi_{\alpha^*}^*\BA^{\alpha^*}_j)$ we have
$(\scP,\sigma,\widetilde\phi)\in\bq^{-1}_*(\pi_{\beta^*}^*\BA^{\beta^*}_j)$,
and $\pi_{\alpha^*}\bq(\scP,\sigma,\phi)$ contains the origin with multiplicity
more than a simple coroot. So the above intersection must be empty.
Hence, the sum of subsheaves
$\bp^*\CL(-\sum_{i\in \II}\langle\lambda,\alphavee_{i^*}\rangle
\bq^{-1}_*(\pi_{\alpha^*}^*\BA^{\alpha^*}_i))$ and
$\bp^*\CL(-\sum_{i\in \II}\bq^{-1}_*(\partial_i\bZ^{\alpha^*}))$ in $\bp^*\CL$ is
the whole of $\bp^*\CL$.

Now by the universal property of blowup we obtain a projective morphism
$\Upsilon\colon \bGZ^{-\mu}_\lambda\to\on{Bl}_\CK\bZ^{\alpha^*}$. From the above,
$\Upsilon$ is an isomorphism away from the closed subvariety of codimension 2,
namely the intersection of the exceptional divisor with the strict transform
of the boundary $\partial Z^{\alpha^*}$ and of the divisor $\prod_{i\in \II}f_i=0$.
Hence $\Upsilon$ induces an isomorphism of the Picard groups, and the relative
Picard group of $\Upsilon$ is trivial. Hence $\Upsilon$ is an isomorphism.
\propref{prop:blow} is proved.
\end{proof}

\begin{proof}[Proof of \propref{prop:factor}]
If $\langle\lambda,\alphavee_i\rangle=0$, the desired isomorphism follows
from $\cS_0\simeq\BG_m\times\BA^1\simeq\oZ^{\alpha_{i^*}}$, the usual factorization
for $\oZ^{\alpha^*}$, and the observation that the image of $s^\lambda_\mu\colon
(\BG_m^{\beta^*}\times\BA^1)_{\on{disj}}\times_{\BA^{\alpha^*}}\oW^\lambda_{\mu}\to
Z^{\alpha^*}$ lands into $\oZ^{\alpha^*}\subset Z^{\alpha^*}$.

For arbitrary $\lambda$, we have the isomorphisms
\begin{multline}
(\BG_m^{\beta^*}\times\BA^1)_{\on{disj}}\times_{\BA^{\alpha^*}}\bW^\lambda_{\mu}\iso
(\BG_m^{\beta^*}\times\BA^1)_{\on{disj}}\times_{\BA^{\alpha^*}}\on{Bl}_\CK^{\on{aff}}\bZ^{\alpha^*}\iso\\
\iso(\BG_m^{\beta^*}\times\BA^1)_{\on{disj}}\times_{\BA^{\beta^*}\times\BA^1}\on{Bl}_\CK^{\on{aff}}
(\oZ^{\beta^*}\times Z^{\alpha_{i^*}})\iso
(\BG_m^{\beta^*}\times\BA^1)_{\on{disj}}\times_{\BA^{\beta^*}\times\BA^1}(\oZ^{\beta^*}\times
\cS_{\langle\lambda,\alphavee_i\rangle})
\end{multline}
\propref{prop:factor} is proved.
\end{proof}

\begin{Remark}
\label{non sl}
\propref{prop:factor} along with its proof holds for an arbitrary almost simple
simply connected complex algebraic group $G$, not necessarily simply laced.
\end{Remark}

\subsection{BD slices}\label{subsec:bd-slices}
Recall the definition of Beilinson-Drinfeld slices $\oW^{\unl\lambda}_{\mu}$
from~\cite[2.4]{kwy}. Here $\lambda\geq\mu$ are dominant coweights of $G$,
and $\unl\lambda=(\omega_{i_1},\ldots,\omega_{i_N})$ is a sequence of fundamental
coweights of $G$ such that $\sum_{s=1}^N\omega_{i_s}=\lambda$.
Namely, $\oW^{\unl\lambda}_{\mu}$ is the moduli space of the following data:

(a) a collection of points $(z_1,\ldots,z_N)\in\BA^N$;

(b) a $G$-bundle $\scP$ on $\BP^1$ of isomorphism type $\mu$;

(c) a trivialization (a section) $\sigma$ of $\scP$ on
$\BP^1\setminus\{z_1,\ldots,z_N\}$ with a pole of degree
$\leq\sum_{s=1}^N\omega_{i_s}\cdot z_s$ on the complement,
such that the value of the Harder-Narasimhan flag of $\scP$ at $\infty\in\BP^1$
(where $\scP$ is trivialized via $\sigma$) is compatible with $B_-\subset G$.

Note that the Harder-Narasimhan flag above can be uniquely refined to a full
flag of degree $w_0\mu$ with value $B_-\subset G$ at $\infty\in\BP^1$, and this
flag is the unique flag of degree $w_0\mu$ with the prescribed value at $\infty$.
Hence the above definition of $\oW^{\unl\lambda}_{\mu}$ can be extended to the
case when $\mu\leq\lambda$ is not necessarily dominant (but
$\lambda=\sum_{s=1}^N\omega_{i_s}$ is still dominant) as follows:
$\oW^{\unl\lambda}_{\mu}$ is the moduli space of the following data:

(a) a collection of points $(z_1,\ldots,z_N)\in\BA^N$;

(b) a $G$-bundle $\scP$ on $\BP^1$;

(c) a trivialization (a section) $\sigma$ of $\scP$ on
$\BP^1\setminus\{z_1,\ldots,z_N\}$ with a pole of degree
$\leq\sum_{s=1}^N\omega_{i_s}\cdot z_s$ on the complement;

(d) a $B$-structure $\phi$ on $\scP$ of degree $w_0\mu$ having fiber
$B_-\subset G$
at $\infty\in\BP^1$ (with respect to the trivialization $\sigma$).

If in this definition we allow $B$-structure in (d) to be generalized (but
with no defects at $\infty\in\BP^1$), then we obtain a partial compactification
$\CG\CZ^{-\mu}_{\unl\lambda}\supset\oW^{\unl\lambda}_{\mu}$.
As in~\subsecref{nondom}, let us rephrase the definition to avoid a possible
misunderstanding about nilpotents in the structure sheaf.
We equip $\ol\Gr{}^{\unl\lambda}_{G,BD}$ with the reduced scheme structure. Then
$\CG\CZ^{-\mu}_{\unl\lambda}:=\ol\Gr{}^{\unl\lambda}_{G,BD}\times_{'\!\on{Bun}_G(\BP^1)}{}'\ol{\on{Bun}}_B^{w_0\mu}(\BP^1)$,
and $\oW^\lambda_{\mu}:=
\ol\Gr{}^{\unl\lambda}_{G,BD}\times_{'\!\on{Bun}_G(\BP^1)}\on{Bun}_B^{w_0\mu}(\BP^1)$.

As in~\lemref{lem:affine}
one can prove that $\oW^{\unl\lambda}_{\mu}$ is an affine algebraic variety.
For $\alpha=\lambda-\mu$, we have the convolution diagram
$\ol{\Gr}{}^{\unl\lambda}_{G,BD}\stackrel{\bp}{\longleftarrow}\CG\CZ^{-\mu}_{\unl\lambda}
\stackrel{\bq}{\longrightarrow}Z^{\alpha^*}\times\BA^N$ defined similarly
to~\subsecref{nondom}. 
In particular, $\bq$ sends $(z_1,\ldots,z_N,\scP,\sigma,\phi)$ to a
collection of invertible subsheaves
$\CL_{\lambdavee}(\sum_{1\leq s\leq N}\langle w_0\omega_{i_s},\lambdavee\rangle\cdot z_s)\subset
V^{\lambdavee}\otimes\CO_{\BP^1}$.
The restriction of $\bq$ to
$\oW^{\unl\lambda}_{\mu}\subset\CG\CZ^{-\mu}_{\unl\lambda}$ is denoted by
$s^{\unl\lambda}_\mu\colon \oW^{\unl\lambda}_{\mu}\to Z^{\alpha^*}\times\BA^N$.
We also have a morphism
$\bfr\colon \CG\CZ^{-\mu}_{\unl\lambda}\to\ol{\on{Bun}}_B(\BP^1)$ forgetting the
data (a,c) above.
Let $f_{i,\unl\lambda}\in\BC[\BA^{\alpha^*}\times\BA^N]$ be
defined as $f_{i,\unl\lambda}(\unl{w},\unl{z})=
\displaystyle{\prod_{\substack{1\leq r\leq a_i\\
1\leq s\leq N\colon i_s={i^*}}}(w_{i,r}-z_s)}$.
By an abuse of notation we will keep the name $f_{i,\unl\lambda}$ for
$\pi_{\alpha^*}^*f_{i,\unl\lambda}\in\BC[Z^{\alpha^*}\times\BA^N]$.
Let $Z^{\alpha^*}\butimes\BA^N\subset Z^{\alpha^*}\times\BA^N$ be an open subset
formed by all the pairs $(\phi,\unl{z})$ of the based quasimaps and
configurations satisfying the following two conditions: (i) the defect of
$\phi$ is at most a simple coroot; (ii) the multiplicity of $z_i$ in the
divisor $\pi_{\alpha^*}(\phi)$ is at most a simple coroot for any $i=1,\ldots,N$.
We define an open subvariety
$\bW^{\unl\lambda}_{\mu}\subset\oW^{\unl\lambda}_{\mu}$
(resp.\ $\bGZ^{-\mu}_{\unl\lambda}\subset\CG\CZ^{-\mu}_{\unl\lambda}$) as
$(s^{\unl\lambda}_\mu)^{-1}(Z^{\alpha^*}\butimes\BA^N)$
(resp.\ $\bq^{-1}(Z^{\alpha^*}\butimes\BA^N)$)

Let $\CI_i^{\unl\lambda}\subset\CO_{Z^{\alpha^*}\butimes\BA^N}$ (resp.\
$\CJ_i\subset\CO_{Z^{\alpha^*}\butimes\BA^N}$) be the ideal generated by $f_{i,\unl\lambda}$
(resp.\ the ideal of functions vanishing at $\partial_i Z^{\alpha^*}\butimes\BA^N$).
We define an ideal $\CK_i:=\CI_i^{\unl\lambda}+\CJ_i$, and
$\CK:=\bigcap_{i\in \II}\CK_i$. We define $\on{Bl}_\CK(Z^{\alpha^*}\butimes\BA^N)$
as the blowup of $Z^{\alpha^*}\butimes\BA^N$ at the ideal $\CK$, and
$\on{Bl}_\CK^{\on{aff}}(Z^{\alpha^*}\butimes\BA^N)$ as the complement in
$\on{Bl}_\CK(Z^{\alpha^*}\butimes\BA^N)$ to the union of the strict transforms of
the divisors $\partial_i Z^{\alpha^*}\butimes\BA^N,\ i\in \II$.

The proof of the following proposition is parallel to the one
of~\propref{prop:blow}.

\begin{Proposition}
\label{prop:blowBD}
The identity isomorphism over $\bigcap_{i\in \II}f_{i,\unl\lambda}^{-1}(\BG_m)$ extends
to the regular isomorphisms of the varieties over $Z^{\alpha^*}\butimes\BA^N\colon
\bGZ^{-\mu}_{\unl\lambda}\iso\on{Bl}_\CK(Z^{\alpha^*}\butimes\BA^N)$
and $\bW^{\unl\lambda}_{\mu}\iso\on{Bl}_\CK^{\on{aff}}(Z^{\alpha^*}\butimes\BA^N)$.
\qed
\end{Proposition}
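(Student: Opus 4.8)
The plan is to run the proof of \propref{prop:blow} essentially verbatim, with the single marked point $0\in\BP^1$ and the pole bound $\lambda$ replaced by the configuration $(z_1,\dots,z_N)$ and the fundamental-coweight poles $\omega_{i_s}\cdot z_s$, working throughout relative to the base $\BA^N$. As there, it suffices to construct the first isomorphism $\bGZ^{-\mu}_{\unl\lambda}\iso\on{Bl}_\CK(Z^{\alpha^*}\butimes\BA^N)$ over $Z^{\alpha^*}\butimes\BA^N$; the affine version follows by deleting the strict transforms $\bq^{-1}_*(\partial_i Z^{\alpha^*}\butimes\BA^N)$, which together constitute $\bGZ^{-\mu}_{\unl\lambda}\cap(\CG\CZ^{-\mu}_{\unl\lambda}\setminus\oW^{\unl\lambda}_{\mu})$. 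First I would record the analogues of the three properties of $\bZ^{\alpha^*}$ from \subsecref{factorization}: that $Z^{\alpha^*}\times\BA^N\setminus(Z^{\alpha^*}\butimes\BA^N)$ has codimension $2$, that $Z^{\alpha^*}\butimes\BA^N$ inherits factorization, and that it is smooth. These are proved exactly as for $\bZ^{\alpha^*}$, now controlling the divisor $\pi_{\alpha^*}(\phi)$ near each of the points $z_s$ rather than only near $0$.

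The core is the BD-analogue of \lemref{lem:dete}, \lemref{lem:full} and \lemref{lem:invo}. Write $\CL$ for the relative (over $\BA^N$) determinant line bundle on $\ol\Gr{}^{\unl\lambda}_{G,BD}$; as in \subsecref{basic_proper} one has $\CO_{\CG\CZ^{-\mu}_{\unl\lambda}}(\bfr^{-1}(\partial\ol{\on{Bun}}_B(\BP^1)))=\bp^*\CL$ and $\bfr^{-1}(\partial\ol{\on{Bun}}_B(\BP^1))=\sum_{i\in\II}\bq^{-1}_*(\partial_i Z^{\alpha^*}\butimes\BA^N)$. Let $E_i\subset\CG\CZ^{-\mu}_{\unl\lambda}$ be the divisor of triples whose transformed $B$-structure acquires a defect of color $i$ (now possibly spread over several of the $z_s$), and $\bE_i=E_i\cap\bGZ^{-\mu}_{\unl\lambda}$. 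Then I would prove: (1) $\on{div}F_{\alpha^*}=\sum_{i\in\II}\partial_i(Z^{\alpha^*}\butimes\BA^N)$, unchanged; (2) $\on{div}\bq^*F_{\alpha^*}=\sum_{i\in\II}\bq^{-1}_*(\partial_i Z^{\alpha^*}\butimes\BA^N)+\sum_{i\in\II}\bE_i$, hence $\bp^*\CL\simeq\CO_{\bGZ^{-\mu}_{\unl\lambda}}(-\sum_{i\in\II}\bE_i)$; (3) $\on{div}\bq^*f_{i,\unl\lambda}=\bE_i+\bq^{-1}_*\Delta_i$, where $\Delta_i\subset Z^{\alpha^*}\butimes\BA^N$ is the vanishing locus of $f_{i,\unl\lambda}$ and the multiplicity of $\bE_i$ is $1$. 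The point making (2)--(3) go through is that, both $\CL$ and the family $\CG\CZ^{-\mu}_{\unl\lambda}\to\BA^N$ being factorizable, these divisor computations localize around each $z_s$, where the situation is precisely that of \lemref{lem:dete}, \lemref{lem:full} and \lemref{lem:invo} with $\lambda$ replaced by the single fundamental coweight $\omega_{i_s}$; in particular the relevant multiplicity $\langle\omega_{i_s},\alphavee_{i^*}\rangle$ equals $1$ exactly when $i_s=i^*$, which explains why no power of $\CI_i^{\unl\lambda}$ appears in the definition of $\CK_i$.

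Granting (1)--(3), the rest is formal and identical to the proof of \propref{prop:blow}: one checks that the strict transforms $\bq^{-1}_*(\partial_i Z^{\alpha^*}\butimes\BA^N)$ and $\bq^{-1}_*\Delta_j$ are pairwise disjoint for all $i,j$, by the same saturated $B$-structure argument using condition (ii) in the definition of $Z^{\alpha^*}\butimes\BA^N$, so that the subsheaves $\bp^*\CL(-\sum_i\bq^{-1}_*(\partial_i Z^{\alpha^*}\butimes\BA^N))$ and $\bp^*\CL(-\sum_i\bq^{-1}_*\Delta_i)$ generate $\bp^*\CL$; hence $\bq^{-1}\CK\cdot\CO_{\bGZ^{-\mu}_{\unl\lambda}}\simeq\bp^*\CL$ is invertible, the universal property of the blowup yields a projective morphism $\Upsilon\colon\bGZ^{-\mu}_{\unl\lambda}\to\on{Bl}_\CK(Z^{\alpha^*}\butimes\BA^N)$, and $\Upsilon$ is an isomorphism because it is one away from a closed subset of codimension $\geq2$ (the intersection of the exceptional divisor with the strict transforms of the boundary and of $\prod_{i\in\II}f_{i,\unl\lambda}=0$), hence induces an isomorphism on Picard groups with trivial relative Picard group. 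I expect the only genuinely new work, relative to \propref{prop:blow}, to be the factorization-over-$\BA^N$ bookkeeping in (2)--(3): making precise that the determinant-bundle computation and the involution argument of \lemref{lem:full} can be carried out near a single $z_s$ and assembled over $\BA^N$, and that the exceptional multiplicities indeed drop to $1$.
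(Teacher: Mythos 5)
Your proposal is correct and matches the paper's approach; the paper simply asserts that the proof is parallel to \propref{prop:blow} and gives no further details, so your account fills in precisely the analogues of \lemref{lem:dete}, \lemref{lem:full}, and \lemref{lem:invo} that make the parallel precise, and you correctly identify the crucial point that near each $z_s$ the multiplicity $\langle\omega_{i_s},\alphavee_{i^*}\rangle$ is $0$ or $1$, which is why the BD ideal $\CK_i=\CI_i^{\unl\lambda}+\CJ_i$ needs no extra power.
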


\begin{Lemma}
\label{lem:CMBD}
$\oW^{\unl\lambda}_{\mu}$ is Cohen-Macaulay.
\end{Lemma}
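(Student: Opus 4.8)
The plan is to deduce the statement from Cohen--Macaulayness of the partial compactification $\CG\CZ^{-\mu}_{\unl\lambda}$: since $\oW^{\unl\lambda}_{\mu}=\oGZ^{-\mu}_{\unl\lambda}$ is an \emph{open} subvariety of $\CG\CZ^{-\mu}_{\unl\lambda}$, it is enough to show the latter is Cohen--Macaulay, and the same argument with $\unl\lambda=(\lambda)$ (so that $\ol\Gr{}^{\unl\lambda}_{G,BD}=\ol\Gr{}^\lambda_G$) specializes to \lemref{lem:CMslice}. I would prove Cohen--Macaulayness by proving the stronger fact that $\CG\CZ^{-\mu}_{\unl\lambda}$ has rational singularities, which forces it to be Cohen--Macaulay in characteristic $0$.

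To exhibit the rational singularities I would construct a resolution $\rho\colon\widetilde{\CG\CZ}\to\CG\CZ^{-\mu}_{\unl\lambda}$ starting from the presentation $\CG\CZ^{-\mu}_{\unl\lambda}=\ol\Gr{}^{\unl\lambda}_{G,BD}\times_{'\!\on{Bun}_G(\BP^1)}{}'\ol{\on{Bun}}_B^{w_0\mu}(\BP^1)$: over the first factor use the Bott--Samelson (convolution) resolution of the Beilinson--Drinfeld Schubert variety $\ol\Gr{}^{\unl\lambda}_{G,BD}$, over the second the Drinfeld-style resolution $\widetilde{\on{Bun}}_B\to\ol{\on{Bun}}_B$ of the generalized $B$-bundle space, then take the fibre product of the two. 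Smoothness of $\widetilde{\CG\CZ}$ follows from that of the two constituent resolutions, and one then has to check $R\rho_*\CO_{\widetilde{\CG\CZ}}=\CO_{\CG\CZ^{-\mu}_{\unl\lambda}}$; I would do this by identifying the fibres of $\rho$ with (products of) fibres of the two known resolutions and invoking the projection formula, so that the vanishing of higher direct images reduces to the corresponding vanishing for affine Grassmannian Schubert varieties and for Drinfeld's $\ol{\on{Bun}}_B$, both available in the literature (\cite{bfgm, mf}).

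It is useful to first pin down the locus where the work really happens. The zastava $Z^{\alpha^*}$ factorizes in the positions of its colored points, $\ol\Gr{}^{\unl\lambda}_{G,BD}$ factorizes in the marked points $z_1,\dots,z_N$, and both factorizations are compatible with the convolution diagram, so $\CG\CZ^{-\mu}_{\unl\lambda}$ is factorizable in all of these moving points (cf.\ \propref{prop:factor}, \propref{prop:blowBD}, and the factorization of $\ol\Gr{}_{G,BD}$); since a product of Cohen--Macaulay schemes over a field is again Cohen--Macaulay, an induction on $|\alpha|+N$ confines the non--Cohen--Macaulay locus to the stratum where the whole colored divisor and all the $z_s$ collide at one point of $\BP^1$ (which, after colliding the $z_s$ and a translation, is the point of $\CG\CZ^{-\mu}_\lambda$ over the divisor supported at $0\in\BP^1$, $\lambda=\sum_s\omega_{i_s}$). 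On the complementary big open $\bW^{\unl\lambda}_{\mu}$ the picture is already concrete: by \propref{prop:blow}/\propref{prop:blowBD} it is the affine blowup $\on{Bl}_\CK^{\on{aff}}$ of the \emph{smooth} space $\bZ^{\alpha^*}$ (resp.\ $Z^{\alpha^*}\butimes\BA^N$), and near the support of $\CK$ only one component $\CK_i=\CI_i^{\langle\lambda,\alphavee_{i^*}\rangle}+\CJ_i$ matters; in local coordinates adapted to the smooth divisors $\pi_{\alpha^*}^{-1}(\BA^{\alpha^*}_i)$ and $\partial_i\bZ^{\alpha^*}$ this ideal is $(f^n,g)$ with $f,g$ a regular sequence and $n=\langle\lambda,\alphavee_{i^*}\rangle$, whose affine blowup chart is the hypersurface $\{xg=f^n\}\cong\cS_n$ times a smooth factor --- the Kleinian picture already met in the proof of \propref{prop:factor}. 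So $\bW^{\unl\lambda}_{\mu}$ is a local complete intersection, hence Cohen--Macaulay, and only the codimension $\geq2$ complement remains.

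The main obstacle is precisely that last point: Cohen--Macaulayness does not propagate from an open set whose complement has codimension $2$, so the elementary blowup model does not by itself finish the proof, and one must genuinely carry out the rational--singularity argument of the second paragraph along the deep strata where the generalized $B$-structure degenerates. The technical heart is thus the comparison of $\rho$ with the convolution resolution of $\ol\Gr{}_{G,BD}$ and with Drinfeld's resolution of $\ol{\on{Bun}}_B$, adapted to the relative-over-$\ol\Gr$ setting of $\CG\CZ$; granting the resulting rational singularities, $\CG\CZ^{-\mu}_{\unl\lambda}$ is Cohen--Macaulay and hence so is its open subvariety $\oW^{\unl\lambda}_{\mu}$.
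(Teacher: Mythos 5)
Your approach is genuinely different from the paper's and, as written, has gaps that would need to be filled in before it could count as a proof.

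The paper does \emph{not} pass through $\CG\CZ^{-\mu}_{\unl\lambda}$ or its rational singularities at all. It works directly with $\oW^{\unl\lambda}_{\mu}$ as the fibre product $\ol{\Gr}^{\unl\lambda}_{G,BD}\times_{'\!\on{Bun}_G(\BP^1)}\on{Bun}_B^{w_0\mu}(\BP^1)$, and the decisive point --- which your proposal misses entirely --- is that $\on{Bun}_B^{w_0\mu}(\BP^1)$ (\emph{genuine} $B$-structures, not the Drinfeld compactification) is a \emph{smooth} stack. Because both $\on{Bun}_B^{w_0\mu}$ and $'\!\on{Bun}_G$ are smooth, the map $\on{Bun}_B^{w_0\mu}\to{}'\!\on{Bun}_G$ is lci; it is Tor-independent against $\ol\Gr{}^{\unl\lambda}_{G,BD}\to{}'\!\on{Bun}_G$ since the latter is a product locally in the smooth topology; hence the base change $\oW^{\unl\lambda}_{\mu}\to\ol\Gr{}^{\unl\lambda}_{G,BD}$ is lci. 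Cohen--Macaulayness of $\ol\Gr{}^{\unl\lambda}_{G,BD}$ (flat over $\BA^N$ with CM Schubert-variety fibres, base smooth, so total space CM by Matsumura) then pushes forward along an lci morphism. All the difficulty you locate in the degeneration of the generalized $B$-structure simply disappears once you cut down to genuine $B$-structures at the start --- which is legitimate, because $\oW^{\unl\lambda}_{\mu}$ is by definition the fibre product against $\on{Bun}_B^{w_0\mu}$, not against $'\ol{\on{Bun}}_B^{w_0\mu}$.

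Two concrete gaps in your plan. First, the claim that the fibre product $\widetilde{\CG\CZ}$ of the Bott--Samelson resolution and a resolution of $'\ol{\on{Bun}}_B^{w_0\mu}$ over $'\!\on{Bun}_G$ is smooth does not follow from smoothness of the two factors: a fibre product of smooth schemes over a smooth stack is smooth only under a transversality hypothesis (e.g.\ one of the two maps to the base is smooth), and neither map here is smooth in general --- the Bott--Samelson map to $\on{Bun}_G$ is flat but not smooth, and whatever resolution of $'\ol{\on{Bun}}_B^{w_0\mu}$ you use, its map to $\on{Bun}_G$ will again only be flat. Second, your verification of $R\rho_*\CO_{\widetilde{\CG\CZ}}=\CO$ by ``identifying fibres and invoking the projection formula'' is not a proof: rational singularities require higher-direct-image vanishing on the total space, which is a finer statement than fibrewise acyclicity, and the needed vanishing for $'\ol{\on{Bun}}_B^{w_0\mu}$ is itself nontrivial. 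You correctly note that the factorization plus the affine-blowup/lci picture on $\bW^{\unl\lambda}_{\mu}$ only controls things up to a codimension-$2$ locus and that CM does not propagate across such a complement, so the rational-singularities input is where your argument's weight lands; but you defer exactly that part, so the proposal as it stands is incomplete. If you want a complete alternative argument, the cleanest fix is to abandon $\CG\CZ$ and notice, as the paper does, that the smoothness of $\on{Bun}_B^{w_0\mu}$ makes the whole question a routine base-change statement.
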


\begin{proof}
The natural morphism
$\ol{\Gr}^{\unl\lambda}_{G,BD}\to\BA^N$ is flat with fibers isomorphic to the
products of Schubert varieties in $\Gr_G$, as a consequence
of~\cite[Theorem~1]{fl}. Since these Schubert varieties are Cohen-Macaulay,
we deduce from~\cite[Corollary of~Theorem~23.3]{matsumura} that
$\ol{\Gr}^{\unl\lambda}_{G,BD}$ is Cohen-Macaulay as well.

The morphisms
$\ol{\Gr}^{\unl\lambda}_{G,BD}\stackrel{p}{\to}{}'\!\on{Bun}_G(\BP^1)\leftarrow\on{Bun}_B^{w_0\mu}(\BP^1)$
are Tor-independent since the left morphism is a product locally in the smooth
topology. In effect, let
$'\!\on{Bun}_G(\BP^1)\leftarrow{\mathcal H}^{\unl\lambda}_{G,BD}\to{}'\!\on{Bun}_G(\BP^1)$
be the (Beilinson-Drinfeld-)Hecke correspondence. Then the right projection
is a product locally in the smooth topology. Let
$'\!\on{Bun}_G^{\on{triv}}(\BP^1)\subset{}'\!\on{Bun}_G(\BP^1)$ be the open substack
of trivial $G$-bundles. Then its preimage in ${\mathcal H}^{\unl\lambda}_{G,BD}$
under the left projection to $'\!\on{Bun}_G(\BP^1)$ is
$G\backslash\ol{\Gr}^{\unl\lambda}_{G,BD}$, and the restriction of the right projection to the
preimage is $G\backslash p$.

Now the morphism $'\!\on{Bun}_G(\BP^1)\leftarrow\on{Bun}_B^{w_0\mu}(\BP^1)$
is locally complete intersection (lci) since
both the target and the source are smooth. Hence its base change
$\ol{\Gr}^{\unl\lambda}_{G,BD}\leftarrow\oW^{\unl\lambda}_{\mu}$ is also lci
(see~\cite[Corollary~2.2.3(i)]{illusie}).
Hence the Cohen-Macaulay property of $\ol{\Gr}^{\unl\lambda}_{G,BD}$ implies the
one of the fiber product
$\ol{\Gr}^{\unl\lambda}_{G,BD}\times_{'\!\on{Bun}_G(\BP^1)}\on{Bun}_B^{w_0\mu}(\BP^1)=
\oW^{\unl\lambda}_{\mu}$.\footnote{The last part of the proof is due to
M.~Temkin.}
\end{proof}

\subsection{An embedding into \texorpdfstring{$G(z)$}{G(z)}}
\label{G(z)}
Given a collection $(z_1,\ldots,z_N)\in\BA^N$ we define
$P_{\unl{z}}(z):=\prod_{s=1}^N(z-z_s)\in\BC[z]$. We also define a closed subvariety
$\oW^{\unl{\lambda},\unl{z}}_\mu\subset\oW^{\unl{\lambda}}_\mu$ as the fiber of the
latter over $\unl{z}=(z_1,\ldots,z_N)$.
We construct a locally closed embedding
$\Psi\colon \oW^{\unl{\lambda},\unl{z}}_\mu\hookrightarrow G[z,P^{-1}]$
into an ind-affine scheme as follows.
Similarly to~\subsecref{symmetric}, we have a symmetric definition of
BD slices and an isomorphism $\zeta\colon \oW^{\unl{\lambda},\unl{z}}_\mu=
\oW^{\unl{\lambda},\unl{z}}_{0,\mu}\iso\oW^{\unl{\lambda},\unl{z}}_{\mu,0}$.
We denote $\zeta(\scP_\pm,\sigma,\phi_\pm)$ by
$(\scP'_\pm,\sigma',\phi'_\pm)$. Note that $\scP_-$ and $\scP'_+$
are trivialized, and $\scP'_+$ is obtained from $\scP_+$ by an application of
a certain Hecke transformation at $\infty\in\BP^1$. In particular, we obtain
an isomorphism $\scP_+|_{\BA^1}\iso\scP'_+|_{\BA^1}=\scP_{\on{triv}}|_{\BA^1}$.
Composing it with $\sigma\colon \scP_{\on{triv}}|_{\BA^1\setminus\unl{z}}=
\scP_-|_{\BA^1\setminus\unl{z}}\iso\scP_+|_{\BA^1\setminus\unl{z}}$ we obtain an
isomorphism $\scP_{\on{triv}}|_{\BA^1\setminus\unl{z}}\iso
\scP_{\on{triv}}|_{\BA^1\setminus\unl{z}}$ i.e.\ an element of $G[z,P_{\unl{z}}^{-1}]$.

Note that if $N=0$, then $P_{\unl{z}}=1$, and
$\oW^{\unl{\lambda},\unl{z}}_\mu=\oZ^\alpha$ where
$\alpha=w_0\mu$. Thus we obtain an embedding $\Psi\colon
\oZ^\alpha\hookrightarrow G[z]$, which should be the same as the one
in~\cite[4.2]{MR1625475}.

Here is an equivalent construction of the above embedding due to J.~Kamnitzer.
Given $(\scP_\pm,\sigma,\phi_\pm)\in\oW^{\unl{\lambda},\unl{z}}_{\mu_-,\mu_+}$, we choose
a trivialization of the $B$-bundle $\phi_+|_{\BA^1}$ (resp.\ of the
$B_-$-bundle $\phi_-|_{\BA^1}$); two choices of such a trivialization differ
by the action of an element of $B[z]$ (resp.\ $B_-[z]$). This trivialization
gives rise to a trivialization of the $G$-bundle $\scP_+|_{\BA^1}$
(resp.\ of $\scP_-|_{\BA^1}$), so that $\sigma$ becomes an element of $G(z)$
well-defined up to the left multiplication by an element of $B[z]$ and the
right multiplication by an element of $B_-[z]$, i.e.\ a well defined element
of $B[z]\backslash G(z)/B_-[z]$. Clearly, this element of $G(z)$ lies in
the closure of the double coset $\ol{G[z]z^{\unl{\lambda},\unl{z}}G[z]}$ where
$z^{\unl{\lambda},\unl{z}}:=\prod_{s=1}^N(z-z_s)^{\omega_{i_s}}$. Thus we have constructed
an embedding $\Psi'\colon \oW^{\unl{\lambda},\unl{z}}_{\mu_-,\mu_+}\to B[z]\backslash
\ol{G[z]z^{\unl{\lambda},\unl{z}}G[z]}/B_-[z]$. If we compose with an embedding
$G(z)\hookrightarrow G((z^{-1}))$, then the image of $\Psi'$ lies in
$B[z]\backslash B_1[[z^{-1}]]z^\mu B_{-,1}[[z^{-1}]]/B_-[z]$ where
$B_1[[z^{-1}]]\subset B[[z^{-1}]]$ (resp.\ $B_{-,1}[[z^{-1}]]\subset B_-[[z^{-1}]]$)
stands for the kernel of evaluation at $\infty\in\BP^1$. However, the projection
$B_1[[z^{-1}]]z^\mu B_{-,1}[[z^{-1}]]\to
B[z]\backslash B_1[[z^{-1}]]z^\mu B_{-,1}[[z^{-1}]]/B_-[z]$ is clearly one-to-one.
Summing up, we obtain an embedding
$$\Psi\colon \oW^{\unl{\lambda},\unl{z}}_{\mu_-,\mu_+}\to
B_1[[z^{-1}]]z^\mu B_{-,1}[[z^{-1}]]\bigcap\ol{G[z]z^{\unl{\lambda},\unl{z}}G[z]}.$$
We claim that $\Psi$ is an isomorphism. To see it, we construct the inverse
map to $\oW^{\unl{\lambda},\unl{z}}_{0,\mu}$: given $g(z)\in
B_1[[z^{-1}]]z^\mu B_{-,1}[[z^{-1}]]\bigcap\ol{G[z]z^{\unl{\lambda},\unl{z}}G[z]}$,
we use it to glue $\scP_+$ together with a rational isomorphism
$\sigma\colon \scP_{\on{triv}}=\scP_-\to\scP_+$, and define $\phi_+$ as the
image of the standard $B$-structure in $\scP_{\on{triv}}$ under $\sigma$.

Note that the same space of scattering matrices appears
in~\cite[6.4.1]{2015arXiv150304817B}.

\subsection{An example}
\label{an example}
Let $G=\GL(2)=\GL(V)$ where $V=\BC e_1\oplus\BC e_2$.
Let $N,m\in\BN;\ \unl{\lambda}$ be an $N$-tuple of fundamental coweights
$(1,0)$, and $\mu=(N-m,m)$, so that $w_0\mu=(m,N-m)$.
Let $\CO:=\CO_{\BP^1}$. We fix a collection $(z_1,\ldots,z_N)\in\BA^N$ and define
$P_{\unl{z}}(z):=\prod_{s=1}^N(z-z_s)\in\BC[z]$.
Then $\oW^{\unl{\lambda},\unl{z}}_\mu$ is the moduli space of flags
$(\CO\otimes V\supset\CV\supset\CL)$, where

\textup{(a)} $\CV$ is a 2-dimensional
locally free subsheaf in $\CO\otimes V$ coinciding with $\CO\otimes V$
around $\infty\in\BP^1$ and such that on $\BA^1\subset\BP^1$ the global
sections of $\det\CV$ coincide with $P_{\unl{z}}\BC[z]e_1\wedge e_2$
as a $\BC[z]$-submodule of
$\Gamma(\BA^1,\det(\CO_{\BA^1}\otimes V))=\BC[z]e_1\wedge e_2$.

\textup{(b)} $\CL$ is a line subbundle in $\CV$ of degree
$-m$, assuming the value $\BC e_1$ at $\infty\in\BP^1$.
In particular, $\deg\CV/\CL=m-N$.

On the other hand, let us introduce a closed subvariety
$\CM^{\unl{\lambda},\unl{z}}_\mu$ in $\on{Mat}_2[z]$
formed by all the matrices $\sfM=\begin{pmatrix}A&B\\ C&D\end{pmatrix}$
such that $A$ is a monic polynomial of degree $m$, while the degrees of
$B$ and $C$ are strictly less than $m$, and $\det\sfM=P_{\unl{z}}(z)$.

Finally, let $\on{inv}\colon \on{Mat}_2^*(z)\to\on{Mat}_2^*(z)$ denote the
inversion operation on matrices with nonzero determinant.

\begin{Proposition}[J.~Kamnitzer]
\label{prop:Joel}
The composition $\Phi:=\on{inv}\circ\Psi$ establishes an isomorphism
$\oW^{\unl{\lambda},\unl{z}}_\mu\iso\CM^{\unl{\lambda},\unl{z}}_\mu$.
\end{Proposition}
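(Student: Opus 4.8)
## Proof proposal for Proposition~\ref{prop:Joel}

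The plan is to compare the two moduli descriptions directly, using the explicit embedding $\Psi$ constructed in \secref{G(z)} together with the concrete linear-algebra normalization that defines $\CM^{\unl\lambda,\unl z}_\mu$. Recall that $\Psi$ (in the $\oW^{\unl\lambda,\unl z}_{\mu,0}$ incarnation) sends $(\scP_\pm,\sigma,\phi_\pm)$ to the element of $G[z,P_{\unl z}^{-1}]=\GL(2)[z,P_{\unl z}^{-1}]$ obtained by trivializing $\scP_-=\scP_{\on{triv}}$ and $\scP'_+=\scP_{\on{triv}}$ and reading off $\sigma$. First I would unwind what the flag data $(\CO\otimes V\supset\CV\supset\CL)$ of \textup{(a)},\textup{(b)} corresponds to under $\Psi$. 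The sheaf $\CV$, being locally free of rank $2$, equal to $\CO\otimes V$ near $\infty$, and having $\det\CV$ cut out on $\BA^1$ by $P_{\unl z}\BC[z]\,e_1\wedge e_2$, is exactly the bundle $\scP_+$ with its degree-$\leq\sum\omega_{i_s}z_s$ inclusion $\sigma^{-1}\colon\scP_+\hookrightarrow\scP_{\on{triv}}=\scP_-$; the line subbundle $\CL\subset\CV$ of degree $-m$ with fiber $\BC e_1$ at $\infty$ is precisely the $B$-structure $\phi_+$ (here $B\subset\GL(2)$ is the stabilizer of $\BC e_1$, and the degree $-m$ translates into the condition $\deg$-of-$T$-bundle $=w_0\mu=(m,N-m)$ read through the weight $\lambdavee$). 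Thus the bijective content is already present; the work is to pin down the matrix normalization.

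Next I would produce the inverse map concretely and show it lands in $\CM^{\unl\lambda,\unl z}_\mu$. Given the flag $(\CO\otimes V\supset\CV\supset\CL)$, choose the unique generator of $\CL$ over $\BC[z]$ of the form $\ell=(A',C')$ with $A'$ monic: indeed $\CL$ is a line bundle of degree $-m$ meeting $\scP_{\on{triv}}|_{\BA^1}=\BC[z]^{\oplus2}$, so $\Gamma(\BA^1,\CL)$ is a free rank-one $\BC[z]$-module, and the condition ``value $\BC e_1$ at $\infty$, degree $-m$'' forces a generator whose first coordinate $A'$ is monic of degree $m$ and whose second coordinate $C'$ has degree $<m$. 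Complete $\ell$ to a $\BC[z]$-basis $(\ell,\ell')$ of $\Gamma(\BA^1,\CV)$; writing $\ell'=(B',D')$, the matrix $\begin{pmatrix}A'&B'\\ C'&D'\end{pmatrix}$ has determinant $P_{\unl z}$ up to a scalar by condition \textup{(a)} on $\det\CV$, and by the Gram–Schmidt-type freedom $\ell'\mapsto\ell'+(\text{polynomial})\cdot\ell$ we can and must adjust so that $\deg B'<m$ (subtract off the appropriate multiple of $A'$); this uses that $A'$ is monic so the division algorithm applies. The resulting matrix lies in $\CM^{\unl\lambda,\unl z}_\mu$ after normalizing the determinant to be exactly $P_{\unl z}$. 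Applying $\on{inv}$ to $\sfM$ swaps the roles so that $\Phi=\on{inv}\circ\Psi$ matches this recipe: $\Psi$ itself gives $\sigma$ read in a basis adapted to $\phi_-$ on the left and $\phi_+$ on the right, which is the transpose-of-cofactor, i.e.\ $P_{\unl z}\cdot(\text{inverse})$, of the matrix of the inclusion $\CV\hookrightarrow\CO\otimes V$ written in the $\CL$-adapted basis; composing with $\on{inv}$ undoes the $\det$-twist and yields $\sfM$.

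I would then check the two maps are mutually inverse and that both are morphisms of schemes, not merely bijections on points. One direction: starting from $\sfM\in\CM^{\unl\lambda,\unl z}_\mu$, set $\CV$ to be the $\BC[z]$-span of the columns of $\sfM$ (glued to $\CO\otimes V$ near $\infty$ using that $\sfM\in\GL(2)$ generically and that $\deg B,\deg C<m=\deg A$ controls the behaviour at $\infty$), and $\CL$ the span of the first column; verify \textup{(a)} via $\det\sfM=P_{\unl z}$ and \textup{(b)} via the degree bounds and the leading term of the first column being $z^m e_1$. The other direction is the recipe above, and the uniqueness statements in each step (unique monic generator of $\CL$, unique reduction of $\ell'$ modulo $\ell$) give that the composites are the identity. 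For the scheme-theoretic statement I would note both sides are reduced affine varieties — $\oW^{\unl\lambda,\unl z}_\mu$ is a fiber of $\oW^{\unl\lambda}_\mu$, which is reduced Cohen–Macaulay by \lemref{lem:CMBD}, and $\CM^{\unl\lambda,\unl z}_\mu$ is visibly reduced — so it suffices that $\Phi$ and its inverse are given by the polynomial formulas above, which they are: the Euclidean division producing $B$ from $B'$ is polynomial in the entries because $A$ is \emph{monic}, and $\on{inv}$ on $\on{Mat}_2^*(z)$ is the classical adjugate divided by $P_{\unl z}$, again polynomial here.

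The main obstacle I anticipate is the careful matching of degrees and values at $\infty\in\BP^1$ through the Hecke modification implicit in $\Psi$ (the passage $\scP_+\rightsquigarrow\scP'_+$ and the symmetric-definition isomorphism $\zeta$ of \secref{symmetric}), i.e.\ making sure that ``$\CL$ of degree $-m$ with fiber $\BC e_1$ at $\infty$'' is exactly the normalization ``$A$ monic of degree $m$, $\deg B,\deg C<m$'' and not an off-by-one variant, and that the $B[z]\backslash G(z)/B_-[z]$ ambiguity in $\Psi'$ is killed precisely by the monic-plus-degree-bound conditions (this is the analogue of the injectivity argument via $B_1[[z^{-1}]]z^\mu B_{-,1}[[z^{-1}]]$ already in \secref{G(z)}). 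Once the bookkeeping of these degrees is done correctly, the rest is the elementary linear algebra of putting a rank-two $\BC[z]$-lattice with a distinguished line into the normal form $\begin{pmatrix}A&B\\ C&D\end{pmatrix}$.
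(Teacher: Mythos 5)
Your overall approach agrees with the paper's: establish that the image of $\Phi$ lies in $\CM^{\unl\lambda,\unl z}_\mu$, then build the inverse $\mho$ from the columns of $\sfM$ by gluing and saturating, and your description of $\mho$ is essentially the same as the paper's. However, there is a technical slip and a genuine gap in the forward direction. The slip: since $\sigma\colon\scP_{\on{triv}}\to\CV$ is the \emph{inverse} of the inclusion $\CV\hookrightarrow\CO\otimes V$, reading $\sigma$ in a flag-adapted frame of $\CV$ on one side and the standard frame of $\CO\otimes V$ on the other gives $\Psi=\sfM^{-1}$, not the adjugate $\on{adj}(\sfM)=P_{\unl z}\cdot\sfM^{-1}$ as you write; with your formula one would compute $\on{inv}\circ\Psi=\on{inv}(\on{adj}(\sfM))=P_{\unl z}^{-1}\sfM$, which is not even a polynomial matrix, so the ``$\on{inv}$ undoes the $\det$-twist'' step does not close. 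Only $\Psi=\sfM^{-1}$ gives $\Phi=\sfM$.

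The gap, at precisely the step you flag as the main obstacle, is the bound $\deg B<m$. You obtain it by normalizing the complementary frame vector $\ell'$ of $\CV|_{\BA^1}$ via Euclidean division modulo the monic generator of $\CL$, and then asserting that this normalized frame is exactly the one built into $\Psi$ (the unique lift to $B_1[[z^{-1}]]z^\mu B_{-,1}[[z^{-1}]]$). That assertion is the content that has to be proved, and you do not prove it. The paper gets $\deg B<m$ by a completely different, basis-free device that avoids this bookkeeping: it invokes the involution $\iota^{\unl\lambda,\unl z}_\mu$ of \subsecref{involution}, observes that $\Phi\circ\iota^{\unl\lambda,\unl z}_\mu$ equals transposition composed with $\Phi$ (this holds by construction), and then deduces $\deg B<m$ from the already-established $\deg C<m$ applied to $\iota^{\unl\lambda,\unl z}_\mu$ of the data. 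This symmetry trick is the key idea your proposal is missing; without it, or a genuine verification that your Euclidean normalization coincides with the one intrinsic to $\Psi$, the forward bound $\deg B<m$ remains unestablished.
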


\begin{proof}
First note that a morphism between two line bundles on $\BP^1$ trivialized
at $\infty\in\BP^1$, viewed as a polynomial in $z$, has a leading term $1$
if and only if the morphism preserves the trivializations at $\infty$.

Let us denote $\Phi(\CO\otimes V\supset\CV\supset\CL)$ by
$\sfM=\begin{pmatrix}A&B\\ C&D\end{pmatrix}\in\on{Mat}_2[z]$.
By construction $\det\sfM$ is proportional to $P_{\unl{z}}(z)$.
If we view $\det\sfM$ as a rational morphism from $\det\CV$ to $\CO$
compatible with trivializations at $\infty$, we deduce that the leading
coefficient of $\det\sfM=1$, i.e.\ $\det\sfM=P_{\unl{z}}(z)$.

Furthermore, the pole of the first column of $\sfM$ at $\infty\in\BP^1$
has order exactly $m$; more precisely, the leading term of $A$ is
$az^m,\ a\in\BC^\times$, while $C$ has a smaller degree. If we view $A$
as a morphism $\CL\to\CO$ compatible with trivializations at $\infty$,
we obtain $a=1$.

Let us consider the involution $\iota^{\unl{\lambda},\unl{z}}_\mu\colon
\oW^{\unl{\lambda},\unl{z}}_\mu\iso\oW^{\unl{\lambda},\unl{z}}_\mu$ defined as
in~\subsecref{involution}. Then by construction,
$\Phi\circ\iota^{\unl{\lambda},\unl{z}}_\mu$ equals the composition of transposition
and $\Phi$. Hence we obtain that $\deg B<m$ as well, so that the image
of $\Phi$ lies in $\CM^{\unl{\lambda},\unl{z}}_\mu$.

Now let us describe the inverse morphism $\mho\colon
\CM^{\unl{\lambda},\unl{z}}_\mu\iso\oW^{\unl{\lambda},\unl{z}}_\mu$.
Given $\sfM\in\CM^{\unl{\lambda},\unl{z}}_\mu$ we view it as a transition matrix
in a punctured neighbourhood of $\infty\in\BP^1$ to glue a vector bundle
$\CV$ which embeds, by construction, as a locally free subsheaf into
$\CO\otimes V$. The morphism
$\sfM\CO_{\BA^1}e_1\hookrightarrow\CO_{\BA^1}\otimes V$ naturally extends to
$\infty\in\BP^1$ with a pole of degree $m$, hence it extends to an embedding
of $\CO(-m\cdot\infty)$ into $\CV\subset\CO\otimes V$. The image of this
embedding is the desired line subbundle $\CL\subset\CV$.

Finally, one can check that $\Phi$ and $\mho$ are inverse to each other.
\end{proof}
Note that this argument is just a special case of the one in~\subsecref{G(z)}.
Indeed, $z^\mu=\on{diag}(z^{N-m},z^m)$, and $$B_1[[z^{-1}]]z^\mu B_{-,1}[[z^{-1}]]=
\left\{\begin{pmatrix}a_{11}&a_{12}\\ a_{21}&a_{22}\end{pmatrix} \mid
\deg(a_{22})=m>\deg(a_{21}),\deg(a_{12})\right\}.$$ Furthermore,
$z^{\unl{\lambda},\unl{z}}=\on{diag}(P(z)^{-1},1)$, so that
$\on{inv}\left(\ol{G[z]z^{\unl{\lambda},\unl{z}}G[z]}\right)$
consists of matrices with entries in
$\BC[z]$ and determinant $P(z)$ up to a scalar multiple.

\subsection{Scattering matrix}\label{subsec:scatter}

The isomorphism between moduli spaces of $G_c$-monopoles and rational
maps is given by the scattering matrix
\cite{MR804459,MR1625475}. Although we do not use this fact, let us
briefly review it following \cite{MR934202}, as it seems closely
related to a version of definition of the zastava due to Drinfeld (see
\cite[2.12]{BFG}).

Let $(A,\Phi)$ be a monopole on a $G_c$-bundle $P$ over $\RR^3$. Let
us assume $G_c = \SU(2)$ for brevity. Let $k$ be the monopole
charge. Therefore the Higgs field has the asymptotic behaviour
\begin{equation*}
    \Phi = \sqrt{-1}\operatorname{diag}(1, -1) -
    \frac{\sqrt{-1}}{2r}\operatorname{diag}(k,-k) + O(r^{-2}).
\end{equation*}

We fix an isomorphism $\RR^3 = \RR\times \CC$ and consider rays
$(t,z)$ ($t\to\pm \infty$). We solve $(\nabla_A - \sqrt{-1}\Phi)s = 0$ along
rays for the associated rank $2$ vector bundle
$P\times_{\SU(2)}\CC^2$. We have two sections $s_0$, $s_1$ along
$t\to\infty$ and $s'_0$, $s'_1$ along $t\to -\infty$.
Here $s_0$ and $s'_0$ are exponentially decaying while $s_1$ and
$s'_1$ are exponentially growing. The scattering matrix is defined as
the transition between $(s_0,s_1)$ and $(s'_0,s'_1)$.
We consider the framed moduli space, i.e., we choose an eigenvector
for $\Phi$ at $+\infty$ with eigenvalue $\sqrt{-1}$. Then $s_0$ is
uniquely determined, while $s_1$ is well-defined up to the addition of
a multiple of $s_0$.
On the other hand, $s'_0$ is well-defined up to a multiple of scalar
as we do not take the framing at $-\infty$.
Therefore the scattering matrix is naturally a map from $\CC$ to the
quotient stack $B\backslash G/U$ where $G = \SL(2)$, $B$ (resp.\ $U$) is the
group of upper triangular (resp.\ uni-triangular) matrices in $G$.
\begin{NB}
    It may be natural to consider $(s_1',s_0')$ instead of
    $(s_0',s_1')$, as $s_0$ and $s_1'$ are eigenvectors for eigenvalue
    $\sqrt{-1}$.
\end{NB}%
This is nothing but a description of the zastava due to Drinfeld, as
explained in \cite[\S2.12]{BFG}.
Moreover these maps make sense for a Riemann surface $X$, not only
$\CC$. As we have learned from Gaiotto, we expect that the scattering
matrix for a monopole on $\RR\times X$ is a map from $X$ to
$B\backslash G/U$.

We write $s_0'(z) = g(z) s_0(z) + f(z) s_1(z)$. Then $f$ and $g$ have
no common zeroes and they are well-defined up to
\begin{enumerate}
      \item multiplying both by an invertible function on $X$,
      \item adding a multiple of $f$ to $g$.
\end{enumerate}
For $X = \CC$, we can uniquely bring it to a pair $(f,g)$ such that
\begin{aenume}
    \item $g$ is a monic polynomial of some degree $k$,
    \item $f$ is a polynomial of degree $<k$.
\end{aenume}
Thus we have a based map $z\mapsto g(z)/f(z)$.

See \cite[App.~A]{2015arXiv150304817B} for the consideration for
singular monopoles.

\section{Quiver gauge theories}
\label{QGT}

We choose an orientation of the Dynkin graph of $G$, and denote the set of oriented arrows by $\Qo$.
\begin{NB}
We denote an element of $\Qo$ by $i\to j$ where $i$ (resp.\ $j$) is
the outgoing (resp.\ incoming) vertex. Even though we do \emph{not}
assume that the graph is simply-laced in general (e.g.\
\secref{affine}), we hope that this does not cause confusion.
\end{NB}%

\subsection{\texorpdfstring{$W=0$}{W=0} cases}
\label{Co_bra}
We set
$V_i=\BC^{a_i}$, and $\GL(V):=\prod_{i\in \II}\GL(V_i)$. The group $\GL(V)$
acts naturally on $\bN=\bN^\alpha:=\bigoplus_{h\in\Qo}\Hom(V_{\vout{h}},V_{\vin{h}})$.
This representation gives rise to the variety of triples
$\CR\to\Gr=\Gr_{\GL(V)}$. The equivariant Borel-Moore homology
$H_*^{\GL(V)_\CO}(\CR)$ equipped the convolution product forms a commutative algebra,
and its spectrum is the Coulomb branch $\cM_C=\cM_C(\GL(V),\bN)$.
We choose a maximal torus $T(V)\subset\GL(V)$
and its identification with $\prod_{i\in \II}\BG_m^{a_i}$.
The basic characters of $T(V)$ are denoted
$\sw_{i,r},\ i\in \II,\ 1\leq r\leq a_i$; their differentials are
$w_{i,r}\in\ft^\vee(V)$.
The generalized roots are $w_{i,r}-w_{i,s},\ r\ne s$, and $w_{i,r}-w_{j,s}$ for
$i\ne j$ vertices connected in the Dynkin diagram.

We consider the algebra homomorphism
$\iota_*\colon H_*^{T(V)_\CO}(\CR_{T(V),\bN_{T(V)}})\to
H_*^{\GL(V)_\CO}(\CR)\otimes_{H^*_{\GL(V)}(\on{pt})}H^*_{T(V)}(\on{pt})$
of~\ref{lem:iotahom}. According to {\em loc.\ cit.}, $\iota_*$ becomes
an isomorphism over ${\mathfrak t}^\circ(V)$ (note that
${\mathfrak t}^\circ(V)/S_\alpha=\oA^\alpha$). We denote by
$\sy_{i,r}\in H_*^{T(V)_\CO}(\CR_{T(V),\bN_{T(V)}})$ the fundamental class of the fiber
of $\CR_{T(V),\bN_{T(V)}}$ over the point of $\Gr_{T(V)}$ equal to the cocharacter
$w^*_{i,r}$ of $T(V)$: an element of the dual basis to
$\{w_{i,r}\}_{i\in \II,\ 1\leq r\leq a_i}$. Finally, we denote
$\sfu_{i,r}\in H_*^{T(V)_\CO}(\Gr_{T(V)})$ the fundamental class of the point
$w^*_{i,r}$. According to~\ref{prop:integrable}, $H_*^{T(V)_\CO}(\Gr_{T(V)})\cong
\BC[{\mathfrak t}(V)\times T^\vee(V)]=\BC[w_{i,r},\sfu_{i,r}^{\pm1}\colon i\in \II,\
1\leq r\leq a_i]$. We define an isomorphism
$\Xi\colon \BC[\oZ^{\alpha}]\otimes_{\BC[\BA^{\alpha}]}\BC[\oA^{\alpha}]\iso
\BC[{\mathfrak t}(V)\times T^\vee(V)]\otimes_{\BC[\BA^\alpha]}\BC[\oA^\alpha]$
identical on  $w_{i,r}$ and sending $y_{i,r}$ to
$\sfu_{i,r}\cdot\prod_{h\in\Qo:\vout{h}=i}\prod_{1\leq s\leq a_{\vin{h}}}(w_{\vin{h},s}-w_{i,r})$.
In notations of~\ref{prop:flat} this defines a generic isomorphism
$\Xi^\circ\colon \BC[\oZ^{\alpha}]\otimes_{\BC[\BA^{\alpha}]}\BC[\oA^{\alpha}]\iso
H_*^{\GL(V)_\CO}(\CR)\otimes_{\BC[\BA^\alpha]}\BC[\oA^\alpha]$.
According to~\ref{sec:chang-repr}, the homomorphism
$\bz^*\colon H_*^{T(V)_\CO}(\CR_{T(V),\bN_{T(V)}})\to H_*^{T(V)_\CO}(\Gr_{T(V)})$ takes
$\sy_{i,r}$ to $\sfu_{i,r}\cdot\prod_{h\in\Qo:\vout{h}=i}\prod_{1\leq s\leq a_{\vin{h}}}(w_{\vin{h},s}-w_{i,r})$.
Thus in notations of~\ref{prop:flat}, $\Xi^\circ$ takes $y_{i,r}$ to
$\iota_*\sy_{i,r}$.

\begin{NB}
The convolution
algebra $H_*^{\GL(V)_\CO}(\CR)$ contains the equivariant cohomology
of a point $H^*_{\GL(V)_\CO}(pt)=\BC[\BA^\alpha]$. Similarly,
$H_*^{T(V)_\CO}(\CR)$ is a module over $H^*_{T(V)_\CO}(pt)=\BC[\BA^{|\alpha|}]$.

\begin{Lemma}
\label{BFM_6.2}
The $H^*_{T(V)_\CO}(pt)$-module $H_*^{T(V)_\CO}(\CR)$ is flat, and the
$H^*_{\GL(V)_\CO}(pt)$-module $H_*^{\GL(V)_\CO}(\CR)$ is flat. Moreover, the
natural map $H^*_{T(V)_\CO}(pt)\otimes_{H^*_{\GL(V)_\CO}(pt)}H_*^{\GL(V)_\CO}(\CR)\to
H_*^{T(V)_\CO}(\CR)$ is an isomorphism, and
$H_*^{\GL(V)_\CO}(\CR)=(H_*^{T(V)_\CO}(\CR))^{S_\alpha}$.
\end{Lemma}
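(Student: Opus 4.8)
The plan is to deduce all four assertions from a single input: that $H_*^{\GL(V)_\CO}(\CR)$ is a \emph{free} module over $H^*_{\GL(V)_\CO}(\on{pt})=\BC[\BA^\alpha]$. Granting this, $\GL(V)$-flatness is immediate, and everything else will follow by base change along the finite flat extension $\BC[\BA^\alpha]=\BC[\BA^{|\alpha|}]^{S_\alpha}\hookrightarrow\BC[\BA^{|\alpha|}]=H^*_{T(V)_\CO}(\on{pt})$, which is free since $S_\alpha=\prod_i S_{a_i}$ is the Weyl group of $\GL(V)=\prod_i\GL(V_i)$ and $\BC[\ft(V)]$ is free over $\BC[\ft(V)]^{S_\alpha}$ by Chevalley.

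For the freeness I would stratify $\Gr_{\GL(V)}$ by the $\GL(V)_\CO$-orbits $\Gr^\nu$ ($\nu$ a dominant coweight) and set $\CR_\nu:=\CR|_{\Gr^\nu}$, $\CR_{\le\nu}:=\CR|_{\ol\Gr{}^\nu}$, $\CR_{<\nu}:=\CR|_{\partial\ol\Gr{}^\nu}$. The projection $\CR_\nu\to\Gr^\nu$ is a (pro-)affine bundle, so by the Thom isomorphism, together with the fact that $\Gr^\nu\cong\GL(V)_\CO/P_\nu$ for a parahoric $P_\nu$ whose reductive quotient is the Levi $L_\nu=Z_{\GL(V)}(\nu)$, the graded module $H_*^{\GL(V)_\CO}(\CR_\nu)$ is, up to an even degree shift, $H^*_{L_\nu}(\on{pt})=\BC[\ft(V)]^{W_{L_\nu}}$, a free $\BC[\BA^\alpha]$-module concentrated in even cohomological degrees. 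This purity makes the long exact sequences of the pairs $(\CR_{\le\nu},\CR_{<\nu})$ split, and induction over the strata (finitely many in each homological degree) yields the freeness of $H_*^{\GL(V)_\CO}(\CR)$ over $\BC[\BA^\alpha]$; this is the variety-of-triples analogue of the Bezrukavnikov-Finkelberg-Mirkovi\'c argument and is carried out in~\cite{main}. The torus case is elementary: $\Gr_{T(V)}$ is a discrete union of points indexed by the cocharacter lattice of $T(V)$, and over each point $\CR_{T(V),\bN_{T(V)}}$ restricts to a pro-affine space, so $H_*^{T(V)_\CO}(\CR_{T(V),\bN_{T(V)}})$ is free over $\BC[\BA^{|\alpha|}]$ on the fundamental classes of its components --- the classes $\sy_{i,r}$ and their analogues used in~\ref{prop:integrable}.

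Next I would establish the base-change isomorphism. Since $H_*^{\GL(V)_\CO}(\CR)$ is free over $\BC[\BA^\alpha]$ and $\BC[\BA^{|\alpha|}]$ is free over $\BC[\BA^\alpha]$, a standard change-of-groups (Leray-Hirsch) argument applied to the fibration $\CR/T(V)_\CO\to\CR/\GL(V)_\CO$ with fibre $\GL(V)/T(V)$ --- whose cohomology is free and even over $H^*_{\GL(V)}(\on{pt})$ --- shows that the natural map
\[ H^*_{T(V)_\CO}(\on{pt})\otimes_{H^*_{\GL(V)_\CO}(\on{pt})}H_*^{\GL(V)_\CO}(\CR)\longrightarrow H_*^{T(V)_\CO}(\CR) \]
is surjective; moreover it is an isomorphism after restricting to $\ft^\circ(V)$, where the $\GL(V)_\CO$-equivariant homology is governed by torus-fixed data exactly as for $\iota_*$ in~\ref{lem:iotahom}. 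A surjection from a free $\BC[\BA^{|\alpha|}]$-module that becomes injective over the fraction field has torsion, hence zero, kernel, so the map is an isomorphism; in particular $H_*^{T(V)_\CO}(\CR)$ is free, hence flat, over $\BC[\BA^{|\alpha|}]$.

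Finally, $\BC[\BA^\alpha]$ is a direct summand of $\BC[\BA^{|\alpha|}]$ as a $\BC[\BA^\alpha][S_\alpha]$-module via the averaging projector $\tfrac1{|S_\alpha|}\sum_{w\in S_\alpha}w$, so $(-)^{S_\alpha}$ is exact and commutes with $-\otimes_{\BC[\BA^\alpha]}H_*^{\GL(V)_\CO}(\CR)$; applying it to the isomorphism of the previous paragraph, with $S_\alpha$ acting trivially on $H_*^{\GL(V)_\CO}(\CR)$, yields $H_*^{\GL(V)_\CO}(\CR)\iso(H_*^{T(V)_\CO}(\CR))^{S_\alpha}$. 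The one genuinely delicate point is the freeness over $\BC[\BA^\alpha]$ in the second paragraph: it rests on the affine-bundle structure of $\CR$ over each $\GL(V)_\CO$-orbit together with the purity/splitting argument for the stratification, and the ind-pro nature of $\CR$ must be handled with the usual care for equivariant Borel-Moore homology of such spaces, as in~\cite{main}.
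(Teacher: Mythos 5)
Your proof is correct and follows essentially the same route as the paper's, which the paper gives only as a citation: ``Same as the one of~\cite[Lemma~6.2]{MR2135527}'' (the Bezrukavnikov--Finkelberg--Mirkovi\'c purity-and-stratification argument), so you have done the useful work of actually reconstructing it for the variety of triples. One thing to tidy up: your ``torus case is elementary'' paragraph computes $H_*^{T(V)_\CO}(\CR_{T(V),\bN_{T(V)}})$, the equivariant BM homology of the $T(V)$-variety of triples, whereas the module whose flatness the Lemma asserts is $H_*^{T(V)_\CO}(\CR)$, the $T(V)_\CO$-equivariant BM homology of the \emph{same} space $\CR$ associated to $(\GL(V),\bN)$. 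These are genuinely different objects. The slip is harmless to the logical structure only because you later (correctly) deduce flatness of $H_*^{T(V)_\CO}(\CR)$ from the base-change isomorphism rather than from that paragraph, so it should simply be deleted. The other point worth a remark is the ``Leray--Hirsch surjectivity plus generic-fibre injectivity plus torsion-freeness'' route to the base change: it is sound, but the surjectivity hypothesis for Leray--Hirsch (that restriction to the fibre $\GL(V)/T(V)$ is onto) deserves a word; the shorter path, once freeness of $H_*^{\GL(V)_\CO}(\CR)$ over $H^*_{\GL(V)}(\on{pt})$ is in hand, is the change-of-groups/Eilenberg--Moore collapse, which gives the isomorphism in one step with no detour through fraction fields. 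Neither choice changes the conclusion, and the final averaging-projector step for the $S_\alpha$-invariants is correct as written.
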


\begin{proof}
Same as the one of~\cite[Lemma~6.2]{MR2135527}
\end{proof}

\subsection{Bimodules}
\label{bimodules}
Recall that we have a natural embedding $\Gr_{T(V)}\subset\Gr_{\GL(V)}$, and
if we denote by $\bN_T$ the restriction of $\GL(V)$-module $\bN$ to
$T(V)\subset\GL(V)$, then $\CR_{\bN_T}$ is nothing but the preimage in $\CR$
of $\Gr_{T(V)}\subset\Gr_{\GL(V)}$ under the natural projection
$\CR\to\Gr_{\GL(V)}$. Hence $H_*^{T(V)_\CO}(\CR)$ is a
$H_*^{T(V)_\CO}(\CR_{\bN_T})-H_*^{\GL(V)_\CO}(\CR)$-bimodule. Also, $S_\alpha$
acts on $H_*^{T(V)_\CO}(\CR)$ commuting with the right action of
$H_*^{\GL(V)_\CO}(\CR)$ and normalizing the left action of
$H_*^{T(V)_\CO}(\CR_{\bN_T})$.

Recall the closed embedding $\jmath\colon \CR\hookrightarrow\CT=\CT_{\GL(V),\bN}$
(see~Section~\ref{triples}).
By an abuse of notation we will also denote by $\jmath$ the closed
embedding $\CR_{\bN_T}\hookrightarrow\CT_{T(V),\bN}$. We will denote by
$\imath$ the projections $\CT_{\GL(V),\bN}\to\Gr_{\GL(V)}$ and
$\CT_{T(V),\bN}\to\Gr_{T(V)}$.
Then $\imath^*\colon H_*^?(\Gr_{\GL(V)})\to H_*^?(\CT_{\GL(V),\bN})$ is an isomorphism
(where ? stands for equivariance with respect to $T(V)_\CO$ or $\GL(V)_\CO$),
and similarly $\imath^*\colon H_*^{T(V)_\CO}(\Gr_{T(V)})\to H_*^{T(V)_{\CO}}(\CT_{T(V),\bN})$
is an isomorphism. We will denote $(\imath^*)^{-1}$ the inverse isomorphism.

\begin{Lemma}
\label{tilde_H}
The composition $(\imath^*)^{-1}\circ\jmath_*\colon H_*^{T(V)_\CO}(\CR_{\bN_T})\to
H_*^{T(V)_\CO}(\Gr_{T(V)})$ (resp.\\ $H_*^{\GL(V)_\CO}(\CR_{\bN})\to
H_*^{\GL(V)_\CO}(\Gr_{\GL(V)})$) is a homomorphism of convolution algebras.
Furthermore, the composition
$(\imath^*)^{-1}\circ\jmath_*\colon H_*^{T(V)_\CO}(\CR_{\bN})\to
H_*^{T(V)_\CO}(\Gr_{\GL(V)})$ is a homomorphism of the bimodules over the above
algebras.
\end{Lemma}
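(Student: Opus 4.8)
The plan is to observe that all three products (and, for the last assertion, the two module actions) are built from one and the same convolution diagram over $\Gr_{\GL(V)}$ --- respectively $\Gr_{T(V)}$ --- as set up in~\cite{main}: the product on $H_*^?(\mathcal Y)$ is the composite of an external product, a string of flat (or Gysin) pullbacks along the forgetful and twisting maps, and an (ind-)proper pushforward along the multiplication map, for $\mathcal Y\in\{\Gr_{\GL(V)},\CT_{\GL(V),\bN},\CR_{\GL(V),\bN}\}$ and their torus counterparts. The whole lemma then amounts to the statement that $\imath$ and $\jmath$ are morphisms of this entire ``convolution datum''. So I would treat the two maps separately and then compose; the $T(V)_\CO$- and $\GL(V)_\CO$-equivariant cases run in parallel, word for word.

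For $\imath$: it is a (pro-finite-dimensional) vector bundle over $\Gr_{\GL(V)}$, and its twisted incarnations occurring in the convolution diagram are again vector bundles, so every square relating the $\CT_{\GL(V),\bN}$-diagram to the $\Gr_{\GL(V)}$-diagram is Cartesian with flat vertical maps. Flat base change and the projection formula (in the renormalized sense of~\cite{main}) show that the Thom isomorphism $\imath^*$ intertwines the pullbacks and the multiplication pushforward, so $\imath^*\colon H_*^?(\Gr_{\GL(V)})\iso H_*^?(\CT_{\GL(V),\bN})$, and hence $(\imath^*)^{-1}$, is an isomorphism of convolution algebras. For $\jmath\colon\CR_{\GL(V),\bN}\hookrightarrow\CT_{\GL(V),\bN}$: it is a closed embedding, and by construction the convolution diagram for $\CR_{\GL(V),\bN}$ is the base change along $\jmath$ of the one for $\CT_{\GL(V),\bN}$; thus all the relevant squares are again Cartesian, the closed-embedding legs are the (twisted) $\jmath$'s, and the remaining legs are flat or (ind-)proper. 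Proper base change for the flat and Gysin pullbacks, together with functoriality of proper pushforward for the multiplication maps, then gives $\jmath_*(a\ast b)=\jmath_*(a)\ast\jmath_*(b)$, so $\jmath_*\colon H_*^?(\CR_{\GL(V),\bN})\to H_*^?(\CT_{\GL(V),\bN})$ is a homomorphism of convolution algebras; composing with $(\imath^*)^{-1}$ proves the first assertion in both the $\GL(V)_\CO$- and the $T(V)_\CO$-equivariant settings.

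For the bimodule statement I would repeat the argument with the module-action diagrams: the left $H_*^{T(V)_\CO}(\CR_{\bN_T})$-action and the right $H_*^{\GL(V)_\CO}(\CR_{\bN})$-action on $H_*^{T(V)_\CO}(\CR_{\bN})$ are defined by convolution over $\Gr_{T(V)}$ and over $\Gr_{\GL(V)}$ respectively, and both of these diagrams fit into the $\CT$- and $\Gr$-level ones via the Cartesian, flat-or-closed maps $\imath$ and $\jmath$, taken for the $T(V)$- and the $\GL(V)$-versions alike. The same base-change and pushforward-functoriality inputs then show that $(\imath^*)^{-1}\circ\jmath_*$ intertwines the two actions, where on $H_*^{T(V)_\CO}(\Gr_{\GL(V)})$ the algebras $H_*^{T(V)_\CO}(\Gr_{T(V)})$ and $H_*^{\GL(V)_\CO}(\Gr_{\GL(V)})$ act through the convolution-algebra homomorphisms just constructed. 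The one genuinely delicate point --- and the step I expect to be the main obstacle in a clean write-up --- is the bookkeeping in the ind-scheme/pro-vector-bundle setting: ``flat base change'', ``Thom isomorphism'' and ``(ind-)proper pushforward'' must all be taken in the renormalized sense of~\cite{main}, and one has to check that the various Thom-twist shifts cancel consistently across the convolution diagram (on both inputs and the output) so that the renormalized products really do correspond. However, all of these compatibilities were already established in~\cite{main} when the convolution products themselves were defined, so no new analytic or homological input is required beyond citing them.
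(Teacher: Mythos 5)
Your proposal hinges on a claim that is not true: ``by construction the convolution diagram for $\CR_{\GL(V),\bN}$ is the base change along $\jmath$ of the one for $\CT_{\GL(V),\bN}$.'' There is no BFN-style convolution diagram for $\CT$ of which the $\CR$-diagram is a base change. In the definition of the convolution on $H_*^{G_\cO}(\CR)$, the crucial intermediate map $p\colon G_\cK\times_{G_\cO}\CR\to\CT$, $[g_1,[g_2,v]]\mapsto [g_1,v]$, is only well-defined because the $\CR$-condition $v\in\bN_\cO$ holds; the analogous rule on $G_\cK\times_{G_\cO}\CT$ does not descend through the $G_\cO$-twist. The ring structure on $H_*^{?}(\CT)$ invoked in the lemma is the one transported from $H_*^{?}(\Gr)$ along the Thom isomorphism $\imath^*$, which is the ordinary $\Gr$-convolution; this is a different construction, not related to the BFN one by a Cartesian diagram along $\jmath$. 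Concretely, the fibre product of $G_\cK\times_{G_\cO}\Gr$ with $\CR\times\CR$ over $\Gr\times\Gr$ carries two independent sections $v,w$ of the $\bN$-bundle, whereas $p^{-1}(\CR)$ carries only one; so the squares you would want to be Cartesian are not. What $(\imath^*)^{-1}\circ\jmath_*$ does, after unwinding, is multiply (fibrewise over $\Gr$) by the Euler class of $z^{\lambda}\bN_\cO/(z^{\lambda}\bN_\cO\cap\bN_\cO)$, and the assertion that this is a ring map is a nontrivial cocycle-type identity for those Euler classes under the $\Gr$-convolution --- it is not a formal consequence of flat/proper base change.

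The paper's own proof of this lemma is in fact a partial one and makes this point explicitly: it opens with ``We can only prove the first statement,'' and then verifies the abelian case (part (a) only) by writing everything in the explicit generators $z^\lambda$, $\fz^\lambda$ of the presentation in [\ref{abel}], computing $(\imath^*)^{-1}\jmath_*(z^\lambda)=\fz^\lambda\prod_i(-\chi_i)^{\max(0,\chi_i(\lambda))}$, and checking the relations directly. The non-abelian statement (b) and the bimodule statement (c) are not proved there, and in the published text this lemma is supplanted by the change-of-representation homomorphism $\bz^*$ from [\ref{sec:chang-repr}], whose construction is more involved than the argument you propose. Your proof of the claim for $\imath$ (Thom isomorphism intertwines the $\CT$- and $\Gr$-convolutions) is fine; the gap is entirely in the step concerning $\jmath$, and in the bimodule part, which inherits the same problem.
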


\begin{proof}
We can only prove the first statement. We use the notations (and result)
of~\ref{abel}. Note that the target of $(\imath^*)^{-1}\circ\jmath_*$
is $H_*^{T(V)_\CO}(\CR_{\bN'_T})$ for $\bN'=0$. Let us denote the corresponding
generators of the convolution ring by $\fz^\lambda,\ \lambda\in\Lambda$.
Then $(\imath^*)^{-1}\circ\jmath_*$ takes $z^\lambda$ to
$\fz^\lambda\prod_{i=1}^n(-\chi_i)^{\max(0,\chi_i(\lambda))}$. From the relations
of~\ref{abel} it is clear that $(\imath^*)^{-1}\circ\jmath_*$ is
a homomorphism.

\end{proof}

\subsection{Localization}
\label{BFM_6.3}
Let $t$ be a general element of $\on{Lie}T(V)$ (more precisely, such that
$t\in\oA^{|\alpha|}\subset\BA^{|\alpha|}=\on{Lie}T(V)$). Then the fixed point set
$\CR^t$ coincides with $\Gr_{T(V)}\subset\Gr_{\GL(V)}\subset\CR$.
According to the Localization theorem the localized
$\left(H_*^{T(V)_\CO}(\CR_{\bN_T})\right)_t$-module
$\left(H_*^{T(V)_\CO}(\CR)\right)_t$ is free of rank one. Lemma~\ref{tilde_H}
provides an isomorphism $$\left(H_*^{T(V)_\CO}(\CR_{\bN_T})\right)_t\iso
\left(H_*^{T(V)_\CO}(\Gr_{T(V)})\right)_t=
\left(\BC[\BA^{|\alpha|}\times\BG_m^{|\alpha|}]\right)_t.$$
Hence after restriction to
$\oA^{|\alpha|}\subset\BA^{|\alpha|}$ we have an isomorphism
$H_*^{T(V)_\CO}(\CR)|_{\oA^{|\alpha|}}\simeq\oA^{|\alpha|}\times\BG_m^{|\alpha|}$
compatible with the natural $S_\alpha$-actions (diagonal on the RHS).
The localized algebra $H_*^{\GL(V)_\CO}(\CR)|_{\oA^\alpha}$ is embedded into
$\left(\End_{H_*^{T(V)_\CO}(\CR_{\bN_T})|_{\oA^{|\alpha|}}}
(H_*^{T(V)_\CO}(\CR)|_{\oA^{|\alpha|}})\right)^{S_\alpha}$.
By~Lemma~\ref{BFM_6.2}, $H_*^{\GL(V)_\CO}(\CR)=(H_*^{T(V)_\CO}(\CR))^{S_\alpha}$.
Hence the above embedding is an isomorphism, and we have
$H_*^{\GL(V)_\CO}(\CR)|_{\oA^{\alpha}}\simeq
(\BC[\BA^{|\alpha|}\times\BG_m^{|\alpha|}])^{S_\alpha}|_{\oA^\alpha}$.
\end{NB}

\begin{Theorem}
\label{pestun}
The isomorphism
$\Xi^\circ\colon \BC[\oZ^{\alpha}]\otimes_{\BC[\BA^{\alpha}]}\BC[\oA^{\alpha}]\iso
H_*^{\GL(V)_\CO}(\CR)\otimes_{\BC[\BA^\alpha]}\BC[\oA^\alpha]$
extends to a biregular isomorphism
$\BC[\oZ^{\alpha}]\simeq H_*^{\GL(V)_\CO}(\CR)$.
\end{Theorem}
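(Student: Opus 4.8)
The plan is to invoke the flatness criterion stated in the introduction (the cited Proposition~\ref{prop:flat}), applied to $\mathcal M=\oZ^\alpha$ together with the composite morphism $\varPi\colon\oZ^\alpha\to\BA^\alpha\to\ft(V)/\Weyl$, where the first map is the factorization map $\pi_\alpha$ and the identification $\ft(V)/\Weyl\cong\BA^\alpha$ is the one noted in the text. Concretely I would verify three things: (i) $\oZ^\alpha$ is a Cohen--Macaulay affine variety; (ii) $\varPi$ is flat; and (iii) the generic isomorphism $\Xi^\circ$ already constructed is biregular away from a closed subset of codimension $\geq 2$ in both $\oZ^\alpha$ and $\cM_C=\Spec H_*^{\GL(V)_\CO}(\CR)$ and is an isomorphism over $\ft(V)/\Weyl$. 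The criterion then forces $\Xi^\circ$ to extend to a biregular isomorphism $\BC[\oZ^\alpha]\simeq H_*^{\GL(V)_\CO}(\CR)$, which is exactly the claim.

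For step (i), $\oZ^\alpha$ is affine because it is an open subset of the affine zastava $Z^\alpha$ (it is the non-vanishing locus of the function $F_\alpha$), and Cohen--Macaulayness of $Z^\alpha$ follows from the rational-singularities argument already recalled in \subsecref{zastava} (citing \cite{brfi, bfgm}); openness preserves Cohen--Macaulayness. For step (ii), flatness of $\pi_\alpha\colon Z^\alpha\to\BA^\alpha$ is recalled verbatim in the text (equidimensional fibers of dimension $|\alpha|$ over the smooth base $\BA^\alpha$, plus Cohen--Macaulayness), and restricting to the open locus $\oZ^\alpha$ keeps flatness; the base change $\BA^\alpha\cong\ft(V)/\Weyl$ is an isomorphism, so $\varPi$ is flat. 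On the Coulomb-branch side, the corresponding flatness of $\varPi_C\colon\cM_C\to\ft(V)/\Weyl$ is part of the general machinery of \cite{main} (Proposition~\ref{prop:flat} is formulated precisely so as to apply here), and $\cM_C$ is affine and $\cM_C$ Cohen--Macaulay by \cite{main}.

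For step (iii), the generic isomorphism $\Xi^\circ$ is defined over $\oA^\alpha=\ft^\circ(V)/S_\alpha$, the complement of the (generalized root) hyperplanes, so a priori it is biregular in codimension $1$. I would then improve this to codimension $2$ by checking that $\Xi^\circ$ extends across a generic point of each root hyperplane. This is where the factorization property enters: near a generic point of the hyperplane $w_{i,r}=w_{j,s}$ one writes $\alpha=\gamma+(\alpha-\gamma)$ with $|\gamma|=2$ (so $\gamma=\alpha_i+\alpha_j$, or $\gamma=2\alpha_i$), and both $\oZ^\alpha$ and $\cM_C$ factor as a product of the corresponding rank-one/rank-two local model with a lower zastava; the explicit rank-two models are the ones listed in \remref{4dim}. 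One then checks, by the same kind of explicit comparison of generators (the formula sending $y_{i,r}\mapsto\sfu_{i,r}\prod(w_{\vout{h},\dots}-w_{i,r})$ and the parallel statement for the Coulomb branch from \ref{sec:chang-repr}), that $\Xi^\circ$ matches the factorization isomorphisms on both sides and hence extends regularly over the generic point of each hyperplane, in both directions. With $\Xi^\circ$ now biregular off codimension $2$ and compatible with $\varPi,\varPi_C$, the criterion of \ref{prop:flat} applies and yields the desired extension to a global biregular isomorphism.

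The main obstacle I expect is step (iii), specifically the codimension-$2$ extension along root hyperplanes of the non-simply-laced-looking type $w_{i,r}=w_{j,s}$ with $i\sim j$ adjacent: there the local model $\BC[\uoZ^\gamma]=\BC[w_i,w_j,y_i,y_j,y_{ij}^{\pm1}]/(y_iy_j-y_{ij}(w_j-w_i))$ is a nontrivial surface bundle, and one must check that the Coulomb-branch side produces the same equation, i.e.\ that the relevant rank-two quiver Coulomb branch $\cM_C(\GL_1\times\GL_1,\Hom(\BC,\BC))$ is cut out by exactly this relation, matching $\Xi^\circ$. This is a bounded computation but it is the crux; the diagonal case $w_{i,r}=w_{i,s}$ (model $\BC[w_{i,1},w_{i,2},y_{i,1}^{\pm1},y_{i,2}^{\pm1},\xi]/(y_{i,1}-y_{i,2}-\xi(w_{i,1}-w_{i,2}))$) is handled the same way and is the $A_1$ pure-gauge-theory computation. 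Everything else (affineness, Cohen--Macaulayness, flatness) is either quoted directly from the excerpt or from \cite{main}.
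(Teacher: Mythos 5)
Your proposal matches the paper's proof essentially line for line. Both invoke the criterion of \ref{prop:flat} (together with \ref{rem:conditions}), both quote the Cohen--Macaulayness and flatness facts for the zastava from \cite{brfi,bfgm}, both reduce the codimension-two extension to a check at a generic point $t$ of each generalized-root hyperplane using factorization and the explicit $|\gamma|=2$ local models of \remref{4dim}, and both handle the two flavors of hyperplane ($w_{i,r}=w_{j,s}$ with $i\neq j$, further split by adjacency, and $w_{i,r}=w_{i,s}$ giving the $\GL(2)$ pure-gauge local model) by matching the formula $y_{k,p}\mapsto\sfu_{k,p}\prod_{h:\vout{h}=k}\prod_q(w_{\vin{h},q}-w_{k,p})$ against the localized Coulomb branch algebras from \ref{prop:flat}(2) and \ref{sec:chang-repr}. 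The only slip is cosmetic: in your sketch of the formula for $\Xi^\circ$ you wrote $w_{\vout{h},\dots}$ where the correct index is $w_{\vin{h},\dots}$ (the product is over outgoing arrows $h$ from $i$, evaluated at their targets), but this does not affect the structure of the argument.
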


\begin{proof}
Recall the coordinates $\sw_{i,r},\ i\in \II,\ 1\leq r\leq a_i$ (the
characters) on $T(V)$ whose differentials are $w_{i,r}\in\ft^\vee(V)$.
Let $t\in\bA^{|\alpha|}\setminus\oA^{|\alpha|}\subset\ft(V)$.
According to~\ref{prop:flat} and~\ref{rem:conditions}, it suffices
to identify the localizations $\BC[\uoZ^{\alpha}]_{t}$ and
$\left(H_*^{T(V)_\CO}(\CR)\right)_t$ as
$\BC[\BA^{|\alpha|}]_{t}=\BC[\ft(V)]_t$-modules. Our $t$ lies on a diagonal
divisor. We will consider two possibilities.

First we can have $(w_{i,r}-w_{j,s})(t)=0$ for $i\ne j$. Then
the fixed point set $\CR^t_\bN$ is isomorphic to the product
$\Gr_{T_1}\times\CR_{T_2,\bN'}$. Here $T_2$ is a 2-dimensional torus with coordinates
$\sw_{i,r},\sw_{j,s}$, and $T_1$ is an $(|\alpha|-2)$-dimensional torus with
coordinates $\{\sw_{k,p}\colon (i,r)\ne(k,p)\ne(j,s)\}$, so that $T(V)=T_1\times T_2$.
Furthermore, $\bN'$ is the following representation of $T_2$: if $i,j$ are
not connected by an edge of the Dynkin diagram, then $\bN'=0$;
and if there is an arrow $h$ from
$i$ to $j$ in our orientation $\Omega$, then $\bN'$ is a character
$\sw_{i,r}^{-1}\sw_{j,s}$ with differential $w_{j,s}-w_{i,r}$.
In case $i,j$ are not connected we conclude that in notations
of~\ref{prop:flat}(2), $G'=T(V)$, and
$$\left(H_*^{G'_\CO}(\CR_{G',\bN'})\right)_t=
\left(H_*^{T(V)_\CO}(\Gr_{T(V)})\right)_t=(\BC[{\mathfrak t}(V)\times T^\vee(V)])_t
=(\BC[\BA^{|\alpha|}\times\BG_m^{|\alpha|}])_t$$
We define
$\Xi^t\colon (\BC[\oZ^{\alpha}]\otimes_{\BC[\BA^{\alpha}]}\BC[\BA^{|\alpha|}])_{t}\to
\left(H_*^{G'_\CO}(\CR_{G',\bN'})\right)_t$ identical on $w_{k,p}$ and sending
$y_{k,p}$ to $\sfu_{k,p}\cdot\prod_{h\in\Qo:\vout{h}=k}\prod_{1\leq q\leq a_{\vin{h}}}(w_{\vin{h},q}-w_{k,p})$.
Note that at the moment $\Xi^t$ is defined only as a rational morphism.
The condition of~\ref{prop:flat}(2) is trivially satisfied.
Also, $\Xi^t$ is a regular isomorphism due to the factorization property of
zastava~\textup{(\ref{149})}
and e.g.~\cite[5.5]{bdf}, and the fact that the factors $(w_{\vin{h},q}-w_{k,p})$
in the formula for $\Xi^t$ are all invertible at $t$.

In case $h\in\Qo$ with $\vout{h}=i$, $\vin{h}=j$, in notations of~\ref{prop:flat}(2), $G'=T(V)$,
and $$\left(H_*^{G'_\CO}(\CR_{G',\bN'})\right)_t=
\left(H_*^{T(V)_\CO}(\Gr_{T_1}\times\CR_{T_2,\bN'})\right)_t=
(\BC[{\mathfrak t}_1\times T_1^\vee]\otimes A)_t$$
where $A$ is generated by $w_{i,r},w_{j,s}$ and the fundamental classes
$\sy'_{i,r},\sy'_{j,s},\sy'_{irjs},\sy'^{-1}_{irjs}$
of the fibers of $\CR_{T_2,\bN'}$ over the points
$w_{i,r}^*,w_{j,s}^*,w_{i,r}^*+w_{j,s}^*,-w_{i,r}^*-w_{j,s}^*\in\Gr_{T_2}$ respectively.
According to~\ref{abel}, the relations in $A$ are as follows:
$\sy'_{irjs}\cdot\sy'^{-1}_{irjs}=1;\
\sy'_{i,r}\cdot\sy'_{j,s}=\sy'_{irjs}\cdot(w_{j,s}-w_{i,r})$.

According to~\ref{sec:chang-repr}, the homomorphism
$\bz'^*\colon H_*^{T(V)_\CO}(\Gr_{T_1}\times\CR_{T_2,\bN'})\to H_*^{T(V)_\CO}(\Gr_{T(V)})$
takes $\sy'_{i,r}$ to $(w_{j,s}-w_{i,r})\sfu_{i,r}$, while
$\bz'^*\sy'_{j,s}=\sfu_{j,s},\ \bz'^*\sy'_{irjs}=\sfu_{i,r}\sfu_{j,s},\
\bz'^*\sy'^{-1}_{irjs}=\sfu_{i,r}^{-1}\sfu_{j,s}^{-1}$.
We define
$\Xi^t\colon (\BC[\oZ^{\alpha}]\otimes_{\BC[\BA^{\alpha}]}\BC[\BA^{|\alpha|}])_{t}\to
\left(H_*^{G'_\CO}(\CR_{G',\bN'})\right)_t$ identical on $w_{k,p}$ and sending
$y_{k,p}$ to $\sfu_{k,p}\cdot\prod_{h\in\Qo:\vout{h}=k}\prod_{1\leq q\leq a_{\vin{h}}}(w_{\vin{h},q}-w_{k,p})$.
In particular, $y_{i,r}$ goes to $\sy'_{i,r}\cdot(w_{j,s}-w_{i,r})^{-1}
\prod_{h\in\Qo:\vout{h}=i}\prod_{1\leq q\leq a_{\vin{h}}}(w_{\vin{h},q}-w_{i,r})$.
Note that at the moment $\Xi^t$ is defined only as a rational morphism.
The condition of~\ref{prop:flat}(2) is trivially satisfied.
Also, $\Xi^t$ is a regular isomorphism
due to the factorization property of zastava and e.g.~\cite[5.6]{bdf},
and the fact that the factors $(w_{\vin{h},q}-w_{k,p})$
in the formula for $\Xi^t$ (note that the factor $(w_{j,s}-w_{i,r})$ is excluded)
are all invertible at $t$. In particular,
$\Xi^t$ sends $y_{irjs}$ of~Remark~\ref{4dim} to $\sy'_{irjs}$ up to a product
of invertible factors $(w_{\vin{h},q}-w_{k,p})$.


The second possibility is $(w_{i,r}-w_{i,s})(t)=0$.
Then the fixed point set $\CR^t$ is isomorphic to the product
$\Gr_{T_1}\times\Gr_{\GL(V'')}$. Here $V''\subset V_i$ is a 2-dimensional
subspace whose $T(V)$-weights are $\sw_{i,r},\sw_{i,s}$, and $T_1$ is an
$(|\alpha|-2)$-dimensional torus with
coordinates $\{\sw_{k,p}\colon (i,r)\ne(k,p)\ne(i,s)\}$, and $T_2$ is a
2-dimensional torus with coordinates $\sw_{i,r},\sw_{i,s}$.
 Hence in notations
of~\ref{prop:flat}(2), $G'=T_1\times\GL(V''), \bN'=0$, and
$$H_*^{G'_\CO}(\CR_{G',\bN'})\otimes_{H^*_{G'}(\on{pt})}\BC[\ft(V)]=
H_*^{T_{1,\CO}}(\Gr_{T_1})\otimes H_*^{T_{2,\CO}}(\Gr_{\GL(V'')})=
\BC[{\mathfrak t}_1\times T_1^\vee]\otimes B$$
where $B$ is the following algebra. It has generators
$\iota'_*\sfu_{i,r},\iota'_*\sfu_{i,s},w_{i,r},w_{i,s},\eta$ and relation
$\iota'_*\sfu_{i,r}-\iota'_*\sfu_{i,s}=(w_{i,r}-w_{i,s})\eta$, subject to the condition that
$\iota'_*\sfu_{i,r},\iota'_*\sfu_{i,s}$ are invertible. In effect, the isomorphism
$B\iso H_*^{T_2}(\Gr_{\GL(V'')})$ takes
$\eta$ to the fundamental cycle
$[\BP^1_1]\in H_2^{T_2}(\Gr_{\GL(V'')})$ where
$\BP^1_1$ is the 1-dimensional $\GL(V'')$-orbit containing $w_{i,r}^*$ and
$w_{i,s}^*$ (use the argument in~\cite[3.10]{MR2135527}).

We define
$\Xi^t\colon (\BC[\oZ^{\alpha}]\otimes_{\BC[\BA^{\alpha}]}\BC[\BA^{|\alpha|}])_t\to
\left(H_*^{T(V)_\CO}(\CR_{G',\bN'})\right)_t$ identical on $w_{k,p}$ and sending
$y_{k,p}$ to $\iota'_*\sfu_{k,p}\cdot\prod_{h\in\Qo:\vout{h}=k}\prod_{1\leq q\leq a_{\vin{h}}}(w_{\vin{h},q}-w_{k,p})$.
Note that at the moment $\Xi^t$ is defined only as a rational morphism.
The condition of~\ref{prop:flat}(2) is trivially satisfied.
Also, $\Xi^t$ is a regular isomorphism
due to the factorization property of zastava and e.g.~\cite[5.5]{bdf},
and the fact that the factors $(w_{l,q}-w_{k,p})$
in the formula for $\Xi^t$ are all invertible at $t$.

The theorem is proved.
\end{proof}

\begin{Remark}
\label{cartan_grading}
$H_*^{\GL(V)_\CO}(\CR)$ is naturally graded by $\pi_1\GL(V)=\BZ^\II$.
Under the isomorphism of~Theorem~\ref{pestun} this grading
becomes the grading of $\BC[\oZ^{\alpha}]$ by the root lattice of the
Cartan torus $T\subset G\colon \BZ^\II=\BZ\langle\alphavee_i\rangle_{i\in \II}$
corresponding to the natural action of $T$ on $\oZ^{\alpha}$.
Indeed, the weight of $w_{i,r}$ is 0, while the weight of $y_{i,r}$ is
$\alphavee_{i}$.
\end{Remark}

\begin{Remark}
\label{compare_degrees_zas}
The LHS of~Theorem~\ref{pestun} is naturally
graded by half the homological degree $\deg_h$, while the RHS is naturally graded by the action of loop rotations, $\deg_r$. These gradings are different.
Let $x$ be a homogeneous homology class supported at the connected component
$\nu=(n_i)\in\BZ^\II=\pi_0\Gr_{\GL(V)}$. Then one can check that
$\deg_r(x)=\deg_h(x)-\nu^t\cdot\sqrt{\det\bN}+
\frac{1}{2}\nu^t\cdot{\mathbf C}\cdot\alpha$. Here ${\mathbf C}$ is the Cartan
matrix of $G$, and we view $\sqrt{\det\bN}$ as a (rational) character of
$\GL(V)$, i.e.\ an element of $\frac{1}{2}\BZ^\II$. Note that
$\deg_h(x)-\nu^t\cdot\sqrt{\det\bN}$ coincides with the monopole formula
exponent $\Delta(x)$ of~\eqref{eq:18}, see~\ref{discrepancy}(2).
\end{Remark}

\subsection{Positive part of an affine Grassmannian}
\label{positive}
Given a vector space $U$ we define\footnote{The second named
author thanks Joel Kamnitzer for correcting his mistake.}
$\Gr_{\GL(U)}^+\subset\Gr_{\GL(U)}$ as
the moduli space of vector bundles $\CU$ on the formal
disc $D$ equipped with trivialization $\sigma\colon \CU|_{D^*}\iso U\otimes\CO_{D^*}$
on the punctured disc such that $\sigma$ extends through the puncture
as an embedding $\sigma\colon \CU\hookrightarrow U\otimes\CO_D$.

Now since $\GL(V)=\prod_{i\in \II}\GL(V_i)$ (notations of \cref{Co_bra}),
$\Gr_{\GL(V)}=\prod_{i\in \II}\Gr_{\GL(V_i)}$, and we define
$\Gr_{\GL(V)}^+=\prod_{i\in \II}\Gr_{\GL(V_i)}^+$. We define $\CR^+$ as the preimage
of $\Gr_{\GL(V)}^+\subset\Gr_{\GL(V)}$ under $\CR\to\Gr_{\GL(V)}$. Then
$H_*^{\GL(V)_\CO}(\CR^+)$ forms a convolution subalgebra of
$H_*^{\GL(V)_\CO}(\CR)$. Note that $\CR^+_0$ (the preimage in $\CR$ of the base
point in $\Gr_{\GL(V)}$) is a connected component of $\CR^+$, and the union
of the remaining connected components supports an ``augmentation'' ideal
of $H_*^{\GL(V)_\CO}(\CR^+)$.
Hence we have an algebra homomorphism  $H_*^{\GL(V)_\CO}(\CR^+)\to
H_*^{\GL(V)_\CO}(\CR^+_0)=H^*_{\GL(V)}(\on{pt})$.
The proof of~Theorem~\ref{pestun} repeated essentially
word for word gives a proof (using the fact that $Z^\alpha$ is normal)
of the following

\begin{Corollary}
\label{Pestun}
The pushforward with respect to the closed embedding $\CR^+\hookrightarrow\CR$
induces an injective algebra homomorphism $H_*^{\GL(V)_\CO}(\CR^+)\hookrightarrow
H_*^{\GL(V)_\CO}(\CR)$. The isomorphism $\BC[\oZ^{\alpha}]\iso H_*^{\GL(V)_\CO}(\CR)$
of~Theorem~\ref{pestun} takes $\BC[Z^{\alpha}]\subset\BC[\oZ^{\alpha}]$ onto
$H_*^{\GL(V)_\CO}(\CR^+)\subset H_*^{\GL(V)_\CO}(\CR)$, and so induces an isomorphism
$\BC[Z^{\alpha}]\iso H_*^{\GL(V)_\CO}(\CR^+)$. The above homomorphism
$\BC[Z^{\alpha}]=H_*^{\GL(V)_\CO}(\CR^+)\to H^*_{\GL(V)}(\on{pt})=\BC[\BA^\alpha]$
corresponds to the section $s_\alpha\colon \BA^\alpha\hookrightarrow Z^\alpha$
of~\ref{zastava}.
\qed
\end{Corollary}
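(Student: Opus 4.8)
The plan is to repeat the proof of Theorem~\ref{pestun} essentially verbatim, with $Z^{\alpha}$ in place of $\oZ^{\alpha}$ and $\CR^+$ in place of $\CR$; the only substantive new input is that, unlike the smooth $\oZ^{\alpha}$, the zastava $Z^{\alpha}$ is singular, but it has rational singularities by~\cite[Proposition~5.2]{brfi} and is therefore normal and Cohen-Macaulay. First I would dispose of the algebra statement. The closed embedding $\CR^+\hookrightarrow\CR$ is compatible with the convolution maps, so the pushforward $H_*^{\GL(V)_\CO}(\CR^+)\to H_*^{\GL(V)_\CO}(\CR)$ is an algebra homomorphism; it is injective because $H_*^{\GL(V)_\CO}(\CR^+)$ is flat --- hence torsion-free --- over $\BC[\BA^{\alpha}]=H^*_{\GL(V)}(\on{pt})$ (established exactly as for $\CR$), while over the generic locus $\oA^{\alpha}$ it is visibly the inclusion of a subalgebra by the computation of the next paragraph. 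Thus $H_*^{\GL(V)_\CO}(\CR^+)$ is realized as a subalgebra of $H_*^{\GL(V)_\CO}(\CR)=\BC[\oZ^{\alpha}]$.

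The generic computation is the heart of the matter. For $t\in\oA^{|\alpha|}$ one has $\CR^{+,t}=\Gr^+_{T(V)}\subset\Gr^+_{\GL(V)}$, and $H_*^{T(V)_\CO}(\Gr^+_{T(V)})=\BC[\ft(V)\times\BA^{|\alpha|}]=\BC[w_{i,r},\sfu_{i,r}]$: this is the monoid algebra of $\ZZ^{|\alpha|}_{\ge 0}$ rather than the group algebra of $\ZZ^{|\alpha|}$, i.e.\ the classes $\sfu_{i,r}$ are now \emph{not} inverted. On the zastava side, as recalled above, $\pi_{\alpha}^{-1}(\oA^{|\alpha|})\cap\uZ^{\alpha}\cong\oA^{|\alpha|}\times\BA^{|\alpha|}$, again with the coordinates $y_{i,r}$ regular and not inverted. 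Hence $\Xi^\circ$ restricts to a generic isomorphism
$$\BC[Z^{\alpha}]\otimes_{\BC[\BA^{\alpha}]}\BC[\oA^{\alpha}]\iso H_*^{\GL(V)_\CO}(\CR^+)\otimes_{\BC[\BA^{\alpha}]}\BC[\oA^{\alpha}],$$
identical on $w_{i,r}$ and sending $y_{i,r}$ to $\sfu_{i,r}\cdot\prod_{h\in\Qo\colon\vout{h}=i}\prod_{1\le s\le a_{\vin{h}}}(w_{\vin{h},s}-w_{i,r})$ --- a formula that now makes sense integrally because both $y_{i,r}$ and $\sfu_{i,r}$ are regular and there is no denominator.

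Next I would invoke the criterion~\ref{prop:flat} together with~\ref{rem:conditions}, exactly as in the proof of Theorem~\ref{pestun}: since $Z^{\alpha}$ (hence also $\uZ^{\alpha}$) is Cohen-Macaulay and normal, and $\pi_{\alpha}\colon Z^{\alpha}\to\BA^{\alpha}=\ft(V)/\Weyl$ (resp.\ its base change to $\BA^{|\alpha|}$) is flat, it suffices to identify the localizations $\BC[\uZ^{\alpha}]_t$ and $(H_*^{T(V)_\CO}(\CR^+))_t$ as $\BC[\BA^{|\alpha|}]_t$-modules at a generic point $t$ of each diagonal divisor $w_{i,r}=w_{j,s}$ in $\ft(V)$. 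This last check is word for word the one in Theorem~\ref{pestun}: the factorization of the \emph{whole} zastava $Z^{\alpha}$ (see~\cite{bdf}) reduces it to the three rank-two cases, in which the relevant $\uZ^{\gamma}$ are the ``positive'' versions of those in Remark~\ref{4dim} --- namely $\Spec\BC[w_i,w_j,y_i,y_j]$ for $i,j$ non-adjacent, $\Spec\BC[w_i,w_j,y_i,y_j,y_{ij}]/(y_iy_j-y_{ij}(w_j-w_i))$ for $i,j$ adjacent, and $\Spec\BC[w_{i,1},w_{i,2},y_{i,1},y_{i,2},\xi]/(y_{i,1}-y_{i,2}-\xi(w_{i,1}-w_{i,2}))$ for $\gamma=2\alpha_i$ --- and each of these matches the corresponding $H_*^{T(V)_\CO}(\CR^{+,t})$ computed via~\ref{abel} and~\ref{sec:chang-repr}, the only change from Theorem~\ref{pestun} being that Laurent polynomial rings are replaced by ordinary polynomial rings, precisely the difference between $\oZ$ and $Z$ and between $\CR$ and $\CR^+$. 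This gives the isomorphism $\BC[Z^{\alpha}]\iso H_*^{\GL(V)_\CO}(\CR^+)$.

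Finally, for the last assertion: the augmentation $H_*^{\GL(V)_\CO}(\CR^+)\to H_*^{\GL(V)_\CO}(\CR^+_0)=H^*_{\GL(V)}(\on{pt})=\BC[\BA^{\alpha}]$ is the quotient by the ideal of classes supported on the components of $\CR^+$ other than $\CR^+_0$, which under the $\BZ^{\II}$-grading of Remark~\ref{cartan_grading} is the ideal of elements of nonzero degree; under the isomorphism just obtained this corresponds to the ideal of $\BC[Z^{\alpha}]$ generated by the $y_{i,r}$, i.e.\ the ideal of the section $s_{\alpha}(\BA^{\alpha})=\{y_{i,r}=0\ \forall\, i,r\}\subset Z^{\alpha}$, so the composite $\BC[Z^{\alpha}]\to\BC[\BA^{\alpha}]$ is restriction along $s_{\alpha}$. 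I expect the only real obstacle to lie in the first two steps: pinning down injectivity of the pushforward and the precise ring structure (polynomial, not Laurent) of $H_*^{T(V)_\CO}(\CR^+)$ over the generic locus, and checking that the reduction of~\ref{rem:conditions} to codimension-one points indeed still applies once the smooth $\oZ^{\alpha}$ is replaced by the singular --- though normal and Cohen-Macaulay --- $Z^{\alpha}$.
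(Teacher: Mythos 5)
Your proposal follows exactly the route the paper prescribes (its entire ``proof'' is the one-line remark preceding the statement: repeat the argument of Theorem~\ref{pestun} for $Z^{\alpha}$ and $\CR^+$, using normality of $Z^{\alpha}$), and the generic computation, the reduction via \ref{prop:flat}/\ref{rem:conditions}, and the three rank-two local checks are all set up correctly.

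There is, however, a small but genuine flaw in the last paragraph. You claim the ideal of positive-degree elements in $\BC[Z^{\alpha}]$ is generated by the $y_{i,r}$ and that $s_{\alpha}(\BA^{\alpha})=\{y_{i,r}=0\ \forall\,i,r\}$. Both fail already for $\gamma=\alpha_i+\alpha_j$ with $i\sim j$: in $\BC[\uZ^{\gamma}]=\BC[w_i,w_j,y_i,y_j,y_{ij}]/(y_iy_j-y_{ij}(w_j-w_i))$ the degree-$(\alphavee_i+\alphavee_j)$ element $y_{ij}$ does not lie in $(y_i,y_j)$, and the vanishing locus $\{y_i=y_j=0\}$ contains the spurious component $\{w_i=w_j\}$ in addition to $\{y_i=y_j=y_{ij}=0\}=s_{\gamma}(\BA^{\gamma})$. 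The conclusion is nevertheless correct, and the intended argument is easily repaired: the augmentation ideal corresponds to the ideal generated by \emph{all} positive-degree elements, i.e.\ the ideal of the $T$-fixed subscheme $(Z^{\alpha})^T$, and $(Z^{\alpha})^T=s_{\alpha}(\BA^{\alpha})$ since only the deepest stratum (full defect) is $T$-fixed. Equivalently, the composite $\BC[Z^{\alpha}]\to\BC[\BA^{\alpha}]$ is a $T$-equivariant $\BC[\BA^{\alpha}]$-algebra retraction of $\pi_{\alpha}^*$, hence determines a $T$-invariant section of $\pi_{\alpha}$, and the only such section is $s_{\alpha}$.
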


\begin{Remark}
\label{Ober}
The center of $\GL(V)$ is canonically identified with $\BG_m^\II$, and we have
the diagonal embedding
$\Delta\colon \BG_m\hookrightarrow\BG_m^\II\hookrightarrow\GL(V)$. We can view
$\Delta$ as a cocharacter of $\GL(V)$, and hence a point of
$\Gr_{\GL(V)}^+\subset\Gr_{\GL(V)}$. Note that this point is a $\GL(V)_\CO$-orbit.
We denote the fundamental class of its preimage in $\CR^+$ by
$F_\Delta\in H_*^{\GL(V)_\CO}(\CR^+)$. Then under the isomorphism
$H_*^{\GL(V)_\CO}(\CR^+)\simeq\BC[Z^{\alpha}]$ the class $F_\Delta$ goes to the
boundary equation $F_{\alpha}$ of~\cite[Section~4]{bf14}. Indeed, $F_\Delta$
viewed as an element in $H_*^{\GL(V)_\CO}(\CR)=\BC[\oZ^\alpha]$
is clearly invertible, but all the invertible regular functions on
$\oZ^\alpha$ are of the form
$cF_\alpha^k,\ k\in\BZ,\ c\in\BC^\times$~\cite[Lemma~5.4]{bdf}.
Now for degree reasons, $F_\Delta$ must coincide with $cF_\alpha$.
\end{Remark}

\begin{Remark}
\label{Brav}
We consider the following isomorphism $\fri\colon \Gr_{\GL(V)}\iso\Gr_{\GL(V^*)}$:
it takes
$(\scP,\sigma)$ to $(\scP^\vee,\ ^t\sigma^{-1})$ where $\scP$ is a $\GL(V)$-bundle
on the formal disc, $\sigma$ is a morphism from the trivial $\GL(V)$-bundle
on the punctured disc, $\scP^\vee$ is the dual $\GL(V^*)$-bundle,
and $^t\sigma$ is the
transposed morphism from $\scP^\vee$ to the (dual) trivial bundle on the
punctured disc. Let $\overline\Qo$ be the opposite orientation of our quiver,
and let $\bN$ be the corresponding representation of $\GL(V^*)$
(note that $\Hom(V_i,V_j)=\Hom(V_j^*,V_i^*)$). Then $\fri$ lifts to the
same named isomorphism $\CR_{\GL(V),\bN}\iso\CR_{\GL(V^*),\bN}$. Together with
an isomorphism $\GL(V)\iso\GL(V^*),\ g\mapsto\ ^tg^{-1}$, it gives rise to a
convolution algebra isomorphism $\fri_*\colon H_*^{\GL(V)_\CO}(\CR_{\GL(V),\bN})\iso
H_*^{\GL(V^*)_\CO}(\CR_{\GL(V^*),\bN})$. The composition
$\BC[\oZ^{\alpha}]\simeq H_*^{\GL(V)_\CO}(\CR_{\GL(V),\bN})
\stackrel{\fri_*}{\longrightarrow}H_*^{\GL(V^*)_\CO}(\CR_{\GL(V^*),\bN})
\simeq\BC[\oZ^{\alpha}]$ is an involution of the algebra $\BC[\oZ^{\alpha}]$.
This involution arises from the Cartan involution $\iota$ of
$\oZ^{\alpha}$~\cite[Section~4]{bdf} composed with the involution $\varkappa_{-1}$ of
$\oZ^{\alpha}$ induced by an automorphism $z\mapsto-z\colon\BP^1\iso\BP^1$ and
finally composed with the action $a(h)$ of a certain element of the Cartan
torus $h=\beta(-1)\in T$.\footnote{The second named author thanks Joel
Kamnitzer for correcting his mistake.}
Here $\beta$ is a cocharacter of $T$ equal to
$\sum_{i\in \II}b_i\omega_i$ where $b_i=a_i-\sum_{h\in\Qo:\vout{h}=i}a_{\vin{h}}$.

In effect, $\displaystyle{\iota(w_{i,r},y_{i,r})=(w_{i,r},y_{i,r}^{-1}
\prod_{\substack{h\in\Qo\sqcup\overline\Qo\\ \vout{h}=i}}^{1\leq s\leq a_{\vin{h}}}(w_{i,r}-w_{\vin{h},s}))}$~\cite[Proposition~4.2]{bdf}
(product over \emph{unoriented} edges of the Dynkin diagram connected to $i$),
$\varkappa_{-1}(w_{i,r},y_{i,r})=(-w_{i,r},(-1)^{a_i}y_{i,r})$, and
$a(\beta(-1))(w_{i,r},y_{i,r})=(w_{i,r},(-1)^{a_i-\sum_{h\in\Qo:\vout{h}=i}a_{\vin{h}}}y_{i,r})$.
One checks explicitly that
$\Xi^\circ$ intertwines
$a(\beta(-1))\circ\varkappa_{-1}\circ\iota$ and $\fri_*$.
The desired claim follows.
\end{Remark}

\begin{Remark}
\label{compact_zastava}
The $\GL(V)_\CO$-orbits in $\Gr_{\GL(V)}^+$ are numbered by $\II$-multipartitions
$(\lambda^{(i)})_{i\in \II},\ \lambda^{(i)}=(\lambda^{(i)}_1\geq\lambda^{(i)}_2\geq\ldots)$,
such that the number of parts $l(\lambda^{(i)})\leq a_i$. Given a positive roots combination
$\alphavee=\sum_{i\in\II} m_i\alphavee_i$,
we define a closed $\GL(V)_\CO$-invariant subvariety
$\ol\Gr_{\GL(V)}^{+,\alphavee}\subset\Gr_{\GL(V)}^+$ as the union of orbits
$\Gr_{\GL(V)}^{(\lambda^{(i)})}$ such that $\lambda^{(i)}_1\leq m_i\ \forall i\in \II$.
We define $\CR^+_{\leq\alphavee}\subset\CR^+$ as the preimage of
$\ol\Gr_{\GL(V)}^{+,\alphavee}\subset\Gr_{\GL(V)}^+$ under $\CR^+\to\Gr_{\GL(V)}^+$.
This filtration is the intersection of a certain coarsening of the one
of~\ref{subsec:equiv-borel-moore} and~\ref{sec:degeneration}
with $\CR^+\subset\CR$.
We consider an increasing multifiltration
$H_*^{\GL(V)_\CO}(\CR^+)=\bigcup_{\alphavee}H_*^{\GL(V)_\CO}(\CR^+_{\leq\alphavee})$,
and its Rees algebra ${\mathsf{Rees}}F_\bullet H_*^{\GL(V)_\CO}(\CR^+)$. This is
a multigraded algebra, and we take its multiprojective spectrum
$\on{Proj}{\mathsf{Rees}}F_\bullet H_*^{\GL(V)_\CO}(\CR^+)$. It contains
$\on{Spec}H_*^{\GL(V)_\CO}(\CR^+)\simeq Z^{\alpha}$ as an open dense subvariety.
The relative compactification
$\on{Proj}{\mathsf{Rees}}F_\bullet H_*^{\GL(V)_\CO}(\CR^+)\to\BA^\alpha$ of
$\on{Spec}H_*^{\GL(V)_\CO}(\CR^+)\simeq Z^{\alpha}\to\BA^{\alpha}$ is nothing but the
``two-sided'' compactified zastava
$\overline{Z}{}^{\alpha}\to\BA^{\alpha}$~\cite[7.2]{gait} where
e.g.\ in the ``symmetric'' definition of~\cite[2.6]{bdf} we allow both $B$ and
$B_-$-structures to be generalized (cf.~\cite[1.4]{mirk}). Note that the
Cartan involution $\iota$ of $\oZ^{\alpha}$ (Remark~\ref{Brav}) extends to the
same named involution of $\overline{Z}{}^{\alpha}$.

In effect, it suffices to check that the multifiltration $F_\bullet H_*^{\GL(V)_\CO}(\CR^+)$ of $H_*^{\GL(V)_\CO}(\CR^+)\simeq\BC[Z^{\alpha}]$
coincides with the multifiltration of $\BC[Z^{\alpha}]$ by the order of the pole
at the components of the boundary $\overline{Z}{}^{\alpha}\setminus Z^{\alpha}$.
Due to~Remark~\ref{Brav}, it suffices to check that the multifiltration of
$\BC[\oZ^{\alpha}]\simeq H_*^{\GL(V)_\CO}(\CR)$ by the order of the pole at the
components of the boundary $\partial Z^{\alpha}$ coincides with the following
multifiltration $\BF_\bullet H_*^{\GL(V)_\CO}(\CR)$.
The $\GL(V)_\CO$-orbits in $\Gr_{\GL(V)}$ are numbered by generalized
$\II$-multipartitions $(\lambda^{(i)})_{i\in \II},\ \lambda^{(i)}=
(\lambda^{(i)}_1\geq\lambda^{(i)}_2\geq\ldots\geq\lambda^{(i)}_{a_i}),\
\lambda^{(i)}_r\in\BZ$. Given a positive roots combination
$\alphavee=\sum_{i\in\II} m_i\alphavee_i$,
we define a closed $\GL(V)_\CO$-invariant ind-subvariety
$\Gr_{\GL(V)}^+\subset\Gr_{\GL(V)}^{\geq-\alphavee}\subset\Gr_{\GL(V)}$
as the union of orbits $\Gr_{\GL(V)}^{(\lambda^{(i)})}$
such that $\lambda^{(i)}_{a_i}\geq-m_i\ \forall i\in \II$.
In particular, $\Gr_{\GL(V)}^{\geq-0}=\Gr_{\GL(V)}^+$.
We define $\CR_{\geq-\alphavee}\subset\CR$ as the preimage of
$\Gr_{\GL(V)}^{\geq-\alphavee}\subset\Gr_{\GL(V)}$ under $\CR\to\Gr_{\GL(V)}$.
The desired increasing multifiltration is
$\BF_{\alphavee}H_*^{\GL(V)_\CO}(\CR)=H_*^{\GL(V)_\CO}(\CR_{\geq-\alphavee})$.
It coincides with the multifiltration by the order of the pole at
$\partial Z^{\alpha}$ by~Remark~\ref{Ober}.
\end{Remark}

\begin{Remark}
\label{Simon}
We consider the Rees algebra ${\mathsf{Rees}}\BF_\bullet H_*^{\GL(V)_\CO}(\CR)$
of the multifiltration $\BF_\bullet$ of~Remark~\ref{compact_zastava}.
This is an algebra over $\BC[\overline{T}_{\on{ad}}]$ where
$\overline{T}_{\on{ad}}\supset T_{\on{ad}}$
is a partial closure of the adjoint Cartan torus $T_{\on{ad}}$ determined by the cone of
positive combinations of simple roots in the weight lattice of $T$.
It seems likely that the spectrum of the Rees
algebra $\widetilde{Z}{}^{\alpha}\to\overline{T}_{\on{ad}}$ is nothing but the {\em local
model} of the Drinfeld-Lafforgue-Vinberg degeneration of~\cite[6.1]{sch}.

In view of these Remarks, it is interesting to consider the Rees algebra of
the filtration in~\ref{sec:degeneration} in general. We do not know what
it is in general.
\end{Remark}

\begin{Remarks}
\label{trig}
(1) According to~\ref{rem:Steinberg}(3), we can consider the convolution
algebra $K^{\GL(V)_\CO}(\CR)$. Then, similarly to~Theorem~\ref{pestun}, one
can construct an isomorphism $\BC[^\dagger\!\oZ^\alpha]\simeq K^{\GL(V)_\CO}(\CR)$
where $^\dagger\!\oZ^\alpha\subset\oZ^\alpha$ stands for the trigonometric open
zastava of~\cite{fkr}, that is the preimage of $\BG_m^\alpha\subset\BA^\alpha$
under the factorization projection $\oZ^\alpha\to\BA^\alpha$. Note that the embedding
$^\dagger\!\oZ^\alpha\subset\oZ^\alpha$ is \emph{not} compatible with the symplectic
structure by the argument in \subsecref{symplectic}.

(2) Here is what we have learned from Gaiotto:

Let us consider the $K$-theoretic Coulomb branch
$\cM_C^K = \Spec K^{G_\cO}(\cR)$. Then $\cM_C^K$ is supposed to be
isomorphic to the Coulomb branch of the corresponding $4$-dimensional
gauge theory with a generic complex structure. (Recall that the latter
is a hyper-K\"ahler manifold, which shares many common properties with
Hitchin's moduli spaces of solutions of the self-duality equation over
a Riemann surface. Among the $S^2$-family of complex structures, two
are special and others are isomorphic.) For an unframed quiver gauge theory, the
latter is known to be the moduli space of $G_{ADE,c}$-monopoles on
$\RR^2\times S^1$ \cite{2012arXiv1211.2240N}. Moreover the isomorphism
is given by the scattering matrix: we identify $\RR^2\times S^1$ with
$\RR\times\CC^\times$ and consider scattering at $t\to\pm\infty$ in
the first factor. Then for $G_c=\operatorname{SU}(2)$ as
in~\subsecref{subsec:scatter}, we transform $f$, $g$ uniquely to polynomials
(instead of Laurent polynomials) such that
\begin{aenume}
    \item $g$ is a monic polynomial of degree $k$,
    \item $f$ is a polynomial of degree $<k$.
    \item $g(0)\neq 0$.
\end{aenume}
Thus we recover the trigonometric open zastava.

We do not have further evidences of this conjecture. For example, we
cannot see the remaining two special complex structures. We also
remark that our definition of $K$-theoretic Coulomb branches makes
sense for any $(G,\bN)$, while $4$-dimensional Coulomb branches are
usually considered only when $\bN$ is `smaller' than $G$ (conformal or asymptotically free in physics terminology).
\end{Remarks}

\subsection{General cases}
\label{quivar}
Recall the setup of \cref{Co_bra}. We write down a dominant coweight
$\lambda$ of $G$ as a linear combination of fundamental coweights
$\lambda=\sum_{i\in \II}l_i\omega_i$. Given another coweight
$\mu\leq\lambda$ such that $\lambda-\mu=\alpha=\sum_{i\in \II}a_i\alpha_i$,
we set $W_i=\BC^{l_i}$, and consider the
natural action of $\GL(V)$ on $\bN=\bN^\lambda_\mu:=\bigoplus_{h\in\Qo}
\on{Hom}(V_{\vout{h}},V_{\vin{h}})\oplus\bigoplus_{i\in \II}\on{Hom}(W_i,V_i)$.
The corresponding variety of triples will be denoted $\CR^\lambda_\mu$.
Our goal is to
describe the convolution algebra $H_*^{\GL(V)_\CO}(\CR^\lambda_\mu)$. To this end we
introduce $\lambda^*:=-w_0\lambda,\ \mu^*:=-w_0\mu$. Note that
$\lambda^*$ is dominant, and $(\lambda^*-\mu^*)^*=\alpha$.
We consider an open subset $\oG_m^{|\alpha|}\subset\oA^{|\alpha|}$ defined
as the complement in $\BG_m^{|\alpha|}$ to all the diagonals
$w_{i,r}=w_{j,s}$, and also $\oG_m^\alpha:=\oG_m^{|\alpha|}/S_\alpha\subset\BA^\alpha$
(notations of \cref{zastava}). The generalized roots are
$w_{i,r}-w_{j,s}$ for $i\ne j$ connected in the Dynkin diagram; $w_{i,r}-w_{i,s}$,
and finally $w_{i,r}$. The isomorphism of~\eqref{eq:21}
and~\ref{prop:integrable} identifies
$H_*^{\GL(V)_\CO}(\CR^\lambda_\mu)|_{\oG_m^\alpha}$ and $H_*^{\GL(V)_\CO}(\CR)|_{\oG_m^\alpha}$
(with $(\BC[\BG_m^{|\alpha|}\times\BG_m^{|\alpha|}])^{S_\alpha}|_{\oG_m^\alpha}$).
Furthermore, $H_*^{\GL(V)_\CO}(\CR)$ is identified with $\BC[Z^{\alpha}]$
by~Theorem~\ref{pestun}, and $\BC[Z^{\alpha}]|_{\oG_m^{\alpha}}$ is identified with
$\BC[\oW^{\lambda^*}_{\mu^*}]|_{\oG_m^{\alpha}}$ via $s^{\lambda^*}_{\mu^*}$.
The composition of the above identifications gives us a generic isomorphism
$\Xi^\circ\colon \BC[\oW^{\lambda^*}_{\mu^*}]|_{\oG_m^{\alpha}}\iso
H_*^{\GL(V)_\CO}(\CR^\lambda_\mu)|_{\oG_m^\alpha}$. Equivalently, as
in \cref{Co_bra}, we denote by
$\bar\sy_{i,r}\in H_*^{T(V)_\CO}(\CR^\lambda_{\mu,T(V),\bN_{T(V)}})$ the fundamental class
of the fiber of $\CR^\lambda_{\mu,T(V),\bN_{T(V)}}$ over the point
$w^*_{i,r}\in\Gr_{T(V)}$. We have the algebra homomorphism
$\iota_*\colon H_*^{T(V)_\CO}(\CR^\lambda_{\mu,T(V),\bN_{T(V)}})\to
H_*^{\GL(V)_\CO}(\CR^\lambda_\mu)\otimes_{H^*_{\GL(V)}(\on{pt})}H^*_{T(V)}(\on{pt})$, and
$\Xi^\circ$ sends $y_{i,r}$ to $\iota_*\bar\sy_{i,r}$
(and is identical on $w_{i,r}$).

\begin{Theorem}
\label{Coulomb_quivar}
The isomorphism
$\Xi^\circ\colon \BC[\oW^{\lambda^*}_{\mu^*}]|_{\oG_m^{\alpha}}\iso
H_*^{\GL(V)_\CO}(\CR^\lambda_\mu)|_{\oG_m^\alpha}$
extends to a biregular isomorphism
$\BC[\oW^{\lambda^*}_{\mu^*}]\iso H_*^{\GL(V)_\CO}(\CR^\lambda_\mu)$.
\end{Theorem}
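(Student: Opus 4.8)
The plan is to repeat the argument of \thmref{pestun} almost verbatim, now feeding the criterion of~\ref{prop:flat} (with~\ref{rem:conditions}) the data $\cM=\oW^{\lambda^*}_{\mu^*}$, $\varPi=\pi_\alpha\circ s^{\lambda^*}_{\mu^*}\colon\oW^{\lambda^*}_{\mu^*}\to\BA^\alpha=\ft(V)/\Weyl$, and $\cM_C=\on{Spec}H_*^{\GL(V)_\CO}(\CR^\lambda_\mu)$. The hypotheses are already available: $\oW^{\lambda^*}_{\mu^*}$ is affine by \lemref{lem:affine}, Cohen--Macaulay by \lemref{lem:CMslice}, and $\varPi$ is flat by \lemref{semismall}; and $\Xi^\circ$ is a birational isomorphism over $\oG_m^\alpha$ by its very construction. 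So, exactly as in \thmref{pestun}, everything reduces to checking that $\Xi^\circ$ extends to a biregular isomorphism at the generic point $t$ of each generalized root hyperplane of $\ft(V)$ --- that is, the hyperplanes $w_{i,r}-w_{j,s}=0$ for $i\ne j$ connected in the Dynkin diagram, $w_{i,r}-w_{i,s}=0$, and (this is the new one) $w_{i,r}=0$ --- by matching the localizations of $\BC[\oW^{\lambda^*}_{\mu^*}]\otimes_{\BC[\BA^\alpha]}\BC[\BA^{|\alpha|}]$ and of $H_*^{T(V)_\CO}(\CR^\lambda_\mu)$ at $t$ as $\BC[\ft(V)]_t$-modules.

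For the first two families of hyperplanes the framing is invisible: the summand $\bigoplus_i\Hom(W_i,V_i)$ contributes to $\bN$ only the characters $\sw_{i,r}$, whose differentials $w_{i,r}$ are invertible at such $t$, so $\CR^t$ and the local models of~\ref{prop:flat}(2) on both sides coincide with those in \thmref{pestun} ($G'=T(V)$ with $\bN'$ supported on the single edge $\{i,j\}$, resp.\ $G'=T_1\times\GL(V'')$ with $\bN'=0$). On the slice side one uses, as before, that $s^{\lambda^*}_{\mu^*}$ is an isomorphism onto $\oZ^\alpha$ off the exceptional divisor over $0\in\BP^1$ (\subsecref{divisors}), so near such $t$ the slice behaves like the zastava and the verification that $\Xi^t$ (identical on $w_{k,p}$, sending $y_{k,p}$ to $\iota_*\bar\sy_{k,p}$) is a regular isomorphism is copied from the proof of \thmref{pestun} via the factorization~\eqref{149} and \remref{4dim}, the extra framing factors being invertible at $t$.

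The genuinely new case is $t$ a generic point of $\{w_{i,r}=0\}$. There $\CR^t\cong\Gr_{T_1}\times\CR_{\GL(V''),\bN''}$, where $V''=\BC e_{i,r}$ is one-dimensional, $T_1$ is the $(|\alpha|-1)$-dimensional subtorus of $T(V)$ on the remaining coordinates, and $\bN''=\Hom(W_i,V'')$ is the standard $(\dim W_i)$-dimensional $\GL(V'')\cong\BG_m$-module --- the arrows at $i$ contribute characters with differentials $w_{\vin{h},q}-w_{i,r}$ or $w_{i,r}-w_{\vout{h},q}$, all invertible at $t$, hence absent from $\bN''$. One then needs the computation of the abelian Coulomb branch
$$H_*^{\GL(V'')_\CO}(\CR_{\GL(V''),\bN''})=\BC[u_+,u_-,w_{i,r}]/(u_+u_--w_{i,r}^{l_i})=\BC[\cS_{l_i}],\qquad l_i:=\dim W_i=\langle\lambda,\alphavee_i\rangle,$$
where $u_\pm$ are the fundamental classes of the fibers of $\CR_{\GL(V''),\bN''}$ over the cocharacters $\pm1\in\Gr_{\GL(V'')}$ (this is the ``$\GL_1$ with $l_i$ flavours gives the $A_{l_i-1}$-singularity'' example; cf.\ the framed Jordan quiver treated in \secref{QGT}). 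Thus, in the notation of~\ref{prop:flat}(2), $G'=T_1\times\GL(V'')$, $\bN'=\bN''$, and $H_*^{G'_\CO}(\CR_{G',\bN'})\otimes_{H^*_{G'}(\on{pt})}\BC[\ft(V)]=\BC[\ft_1\times T_1^\vee]\otimes\BC[\cS_{l_i}]$. We define $\Xi^t$ identical on the $w_{k,p}$ and sending $y_{k,p}$ to $\iota'_*\bar\sy_{k,p}$; for $(k,p)\ne(i,r)$ this is the old formula, while $y_{i,r}$ goes, up to the invertible factors $(w_{\vin{h},q}-w_{i,r})$ at $t$, to the coordinate $y$ on $\cS_{l_i}=\{xy=w^{l_i}\}$ with $w=w_{i,r}$. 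That $\Xi^t$ is a regular isomorphism will follow from \propref{prop:factor} applied to $\oW^{\lambda^*}_{\mu^*}$: it identifies, over $(\BG_m^{\beta^*}\times\BA^1)_{\on{disj}}$ (with $\beta=\alpha-\alpha_i$), the slice with $\oZ^{\beta^*}\times\cS_{\langle\lambda^*,\alphavee_{i^*}\rangle}$, and $\langle\lambda^*,\alphavee_{i^*}\rangle=\langle\lambda,\alphavee_i\rangle=l_i$, so the transverse factor is exactly $\cS_{l_i}$, while near $t$ the factor $\oZ^{\beta^*}$ degenerates to a torus by~\subsecref{zastava}, matching $\Gr_{T_1}$. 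Once all three families of hyperplanes are settled, the criterion of~\ref{prop:flat}, \ref{rem:conditions} produces the desired isomorphism $\BC[\oW^{\lambda^*}_{\mu^*}]\iso H_*^{\GL(V)_\CO}(\CR^\lambda_\mu)$.

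The main obstacle, I expect, is not conceptual but a matter of keeping conventions straight: the heavy machinery --- the affine blowup description \propref{prop:blow} and hence the slice factorization \propref{prop:factor}, together with \lemref{lem:CMslice} and \lemref{semismall} --- is already in hand, so what remains is (i) the abelian computation $\cM_C(\GL_1,\BC^l)\cong\cS_l$ with the correct identification of the torus parameter $w$ with $w_{i,r}$, and (ii) tracking the twists $\lambda\leftrightarrow\lambda^*$, $\mu\leftrightarrow\mu^*$, $i\leftrightarrow i^*$ carefully enough that the surface $\cS_{l_i}$ split off by \propref{prop:factor} on the slice side is attached to the same hyperplane $\{w_{i,r}=0\}$ as its counterpart on the Coulomb side --- the presence of $\lambda^*,\mu^*$ in the statement of \thmref{Coulomb_quivar} is exactly the adjustment that makes these agree.
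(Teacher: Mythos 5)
Your proposal is correct and follows essentially the same route as the paper's own proof: invoke the criterion of \ref{prop:flat} and \ref{rem:conditions}, note that the diagonal hyperplanes were already handled in \thmref{pestun}, and reduce the genuinely new case to the generic point of $\{w_{i,r}=0\}$, where the fixed-point set splits as $\Gr_{T_1}\times\CR_{T_2,\bN'}$ with $\bN'$ being $l_i$ copies of the character $\sw_{i,r}$, giving the abelian Coulomb branch $\cS_{l_i}=\{\sx_{i,r}\bar\sy'_{i,r}=w_{i,r}^{l_i}\}$, matched against the transverse factor $\cS_{\langle\lambda,\alphavee_i\rangle}$ supplied by \propref{prop:factor}. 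The only cosmetic difference is that you phrase the local model via $\GL(V'')$ with $V''$ one-dimensional rather than the torus $T_2$, which is the same group; your bookkeeping of $\lambda\leftrightarrow\lambda^*$ and $l_i=\langle\lambda,\alphavee_i\rangle=\langle\lambda^*,\alphavee_{i^*}\rangle$ is also exactly the paper's.
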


\begin{proof}
We repeat the argument in the proof of~Theorem~\ref{pestun}, and use
its notations. We introduce $\uoW^{\lambda^*}_{\mu^*}:=
\oW^{\lambda^*}_{\mu^*}\times_{\BA^{\alpha}}\BA^{|\alpha|}$.
We consider a general
$t\in\bA^{|\alpha|}\setminus\oG_m^{|\alpha|}\subset\ft(V)$.
According to~\ref{prop:flat} and~\ref{rem:conditions}, we have to identify the localizations
$\left(\BC[\uoW^{\lambda^*}_{\mu^*}]\right)_{t}$ and
$\left(H_*^{T(V)_\CO}(\CR^\lambda_\mu)\right)_t$ as
$\BC[\BA^{|\alpha|}]_{t}=\BC[\ft(V)]_t$-modules. There are two
possibilities: either $t$ lies on a diagonal divisor,
or $t$ is a general point of a coordinate hyperplane $w_{i,r}(t)=0$.
The former case having been dealt with in the proof of~Theorem~\ref{pestun},
it is enough to treat the latter case.

Then the fixed point set $(\CR^\lambda_\mu)^t$ is isomorphic to the product
$\Gr_{T_1}\times\CR_{T_2,\bN'}$. Here $T_2$ is a 1-dimensional torus with coordinate
$\sw_{i,r}$, and $T_1$ is an $(|\alpha|-1)$-dimensional torus with coordinates
$\{\sw_{j,s}\colon (j,s)\ne(i,r)\}$, so that $T(V)=T_1\times T_2$. Furthermore,
$\bN'$ is an $l_i$-dimensional representation of $T_2$ equal to the direct sum
of $l_i$ copies of the character $\sw_{i,r}$ with differential $w_{i,r}$.
In notations of~\ref{prop:flat}(2), $G'=T(V)$, and
$$\left(H_*^{G'_\CO}(\CR_{G',\bN'})\right)_t=
\left(H_*^{T(V)_\CO}(\Gr_{T_1}\times\CR_{T_2,\bN'})\right)_t=
(\BC[{\mathfrak t}_1\times T_1^\vee]\otimes C)_t$$ where $C$ is
generated by $w_{i,r}$ and the fundamental classes $\sx_{i,r},\bar\sy'_{i,r}$
of the fibers of $\CR_{T_2,\bN'}$ over the points $-w^*_{i,r},w^*_{i,r}\in\Gr_{T_2}$
respectively. According to~\ref{abel}, the relations in $C$ are
as follows: $\sx_{i,r}\bar\sy'_{i,r}=w_{i,r}^{l_i}$.

According to~\ref{sec:chang-repr}, the homomorphism
$\bz'^*\colon H_*^{T(V)_\CO}(\Gr_{T_1}\times\CR_{T_2,\bN'})\to H_*^{T(V)_\CO}(\Gr_{T(V)})$
takes $\bar\sy'_{i,r}$ to $\sfu_{i,r}$, while $\bz'^*\sx_{i,r}=w_{i,r}^{l_i}\sfu_{i,r}^{-1}$.
We define $\Xi^t\colon \left(\BC[\uoW^\lambda_{\mu}]\right)_{t^*}\to
\left(H_*^{G'_\CO}(\CR_{G',\bN'})\right)_t$ identical on $w_{k,p}$ and sending
$y_{k,p}$ to $\sfu_{k,p}\cdot\prod_{h\in\Qo:\vout{h}=k}\prod_{1\leq q\leq a_{\vin{h}}}(w_{\vin{h},q}-w_{k,p})$.
In particular, $x_{i,r}=y_{i,r}^{-1}w_{i,r}^{l_i}$ goes to
$\sx_{i,r}\prod_{h\in\Qo:\vout{h}=i}\prod_{1\leq q\leq a_{\vin{h}}}(w_{\vin{h},q}-w_{i,r})$.
Note that at the moment $\Xi^t$ is defined only as a rational morphism.
The condition of~\ref{prop:flat}(2) is trivially satisfied. Also, $\Xi^t$
is a regular isomorphism due to the factorization property of
$\oW^\lambda_{\mu}$ and the fact that the factors $(w_{\vin{h},q}-w_{k,p})$ in the
formula for $\Xi^t$ are all invertible at $t$.
The theorem is proved.
\end{proof}

\begin{Remark}
\label{open_piece}
It follows from~\propref{prop:blow} that the restriction of
$s^{\lambda^*}_{\mu^*}\colon \oW^{\lambda^*}_{\mu^*}\to Z^{\alpha}$ to
$\oZ^{\alpha}\subset Z^{\alpha}$
is an isomorphism $(s^{\lambda^*}_{\mu^*})^{-1}(\oZ^{\alpha})\iso\oZ^{\alpha}$, and thus
we have a canonical localization embedding
$\BC[\oW^{\lambda^*}_{\mu^*}]\hookrightarrow\BC[\oZ^{\alpha}]$. Under the isomorphisms
of~Theorem~\ref{Coulomb_quivar} and~Theorem~\ref{pestun} this embedding
is nothing but the one of~\ref{rem:further} corresponding to
$\bN_{\on{hor}}\hookrightarrow\bN_{\on{hor}}\oplus\bN_{\on{vert}}$. Here
$\bN_{\on{hor}}:=\bigoplus_{h\in\Qo}\on{Hom}(V_{\vout{h}},V_{\vin{h}})$
(resp.\ $\bN_{\on{vert}}:=\bigoplus_{i\in \II}\on{Hom}(W_i,V_i)$) is a direct
summand of $\bN^\lambda_\mu=\bigoplus_{h\in\Qo}
\on{Hom}(V_{\vout{h}},V_{\vin{h}})\oplus\bigoplus_{i\in \II}\on{Hom}(W_i,V_i)$.

In the same way, we have $\BC[\oW^{\lambda^*+\nu^*}_{\mu^*+\nu^*}] \to
\BC[\oW^{\lambda^*}_{\mu^*}]$ by adding $\BC^{n_i}$ to $W_i$ for $\nu
= \sum \nu_i \omega_i$. The corresponding birational morphism
$\oW^{\lambda^*}_{\mu^*}\to\oW^{\lambda^*+\nu^*}_{\mu^*+\nu^*}$ was
constructed in~\subsecref{multiplication}.
\end{Remark}

\begin{Remark}
\label{Cartan_grading}
$H_*^{\GL(V)_\CO}(\CR^\lambda_\mu)$ is naturally graded by $\pi_1\GL(V)=\BZ^\II$.
Under the isomorphism of~Theorem~\ref{Coulomb_quivar} this grading
becomes the grading of $\BC[\oW^{\lambda^*}_{\mu^*}]$ by the root lattice of the
Cartan torus $T\subset G\colon \BZ^\II=\BZ\langle\alphavee_i\rangle_{i\in \II}$
corresponding to the natural action of $T$ on $\oW^{\lambda^*}_{\mu^*}$.

This abelian group action extends to an action of $\Stab_G(\mu^*)$,
the stabilizer of $\mu^*$ in $G$. This is the expected
property~\cite[\S4(iv)(d)]{2015arXiv150303676N}.

Similarly, if $\alpha$ happens to be a dominant coweight $\alpha=\lambda$,
the action of the Cartan torus $T$ on $\oZ^\alpha$ of~Remark~\ref{cartan_grading}
extends to the action of $\Stab_G(\lambda)$. Indeed, the morphism
$s^{\lambda^*}_0\colon \oW^{\lambda^*}_{0}\to Z^\alpha$ restricts to an isomorphism
of the open subvarieties $\oW^{\lambda^*}_{0}\setminus\bigcup_i\oE_i\iso\oZ^\alpha$
(see~\subsecref{divisors}). The isomorphism $\iota^{\lambda^*}_0$ restricts to
$\oW^{\lambda^*}_{0}\setminus\bigcup_i\oE_i\iso S_\lambda\cap\oW^\lambda_{0}$
(the open intersection with a semiinfinite orbit).
The composition of the above isomorphisms gives an identification
$\oZ^\alpha\iso S_\lambda\cap\oW^\lambda_{0}$, and the latter intersection is
naturally acted upon by $\Stab_G(\lambda)$.

These actions will be realized directly in terms of Coulomb branches
in \cite[App.~A]{affine}.
\end{Remark}

\begin{Remark}
\label{compare_degrees_slice}
The LHS of~Theorem~\ref{Coulomb_quivar} is naturally
graded by half the homological degree $\deg_h$, while the RHS is naturally graded by the action of loop rotations, $\deg_r$. These gradings are different.
Let $x$ be a homogeneous homology class supported at the connected component
$\nu=(n_i)\in\BZ^\II=\pi_0\Gr_{\GL(V)}$. Then one can check that
$\deg_r(x)=\deg_h(x)-\nu^t\cdot\sqrt{\det\bN_{\on{hor}}}+
\frac{1}{2}\nu^t\cdot{\mathbf C}\cdot\alpha=
\Delta(x)+\nu^t\cdot\sqrt{\det\bN_{\on{vert}}}+
\frac{1}{2}\nu^t\cdot{\mathbf C}\cdot\alpha$
(cf.~Remark~\ref{compare_degrees_zas}).
\end{Remark}

\begin{Remark}
\label{twist}
Let $Q$ be a folding of the Dynkin diagram of $G$ with the corresponding
nonsymmetric Cartan matrix ${\mathbf C}_Q$, and the corresponding nonsimply
laced group $G_Q$. Let $\lambda_Q\geq\mu_Q$ be dominant coweights of $G_Q$,
and $\oW^{\lambda_Q}_{G_Q,\mu_Q}$ the corresponding slice. Then the Hilbert series
of ${\mathbb C}[\oW^{\lambda_Q}_{G_Q,\mu_Q}]$ graded by the loop rotations equals
$$\sum_\theta t^{2\Delta(\theta)+\nu^t\cdot\det\bN_{\on{vert}}+
\nu^t\cdot{\mathbf C}_Q\cdot\alpha_Q}P_{\GL(V_Q)}(t;\theta).$$
Here $\theta$ runs through the set of dominant coweights of $\GL(V_Q)$,
and $\nu$ stands for the connected component of $\Gr_{\GL(V_Q)}$ containing
$\theta$,
and $\Delta(\theta)$ is defined in~\cite[(3.2)--(3.4)]{Cremonesi:2014xha}
(see also~\secref{sec:twist} below).
A proof follows from the realization of $\oW^{\lambda_Q}_{G_Q,\mu_Q}$ as the folding
of an appropriate $\oW^\lambda_{\mu}$ and~Remark~\ref{compare_degrees_slice}.
\end{Remark}

\begin{Remark}
\label{Braverm}
Similarly to~\secref{positive}, we define $\CR^{\lambda+}_{\mu}$ as the preimage
of $\Gr_{\GL(V)}^+\subset\Gr_{\GL(V)}$ under $\CR^{\lambda}_{\mu}\to\Gr_{\GL(V)}$.
Then $H_*^{\GL(V)_\CO}(\CR^{\lambda+}_{\mu})$ forms a convolution subalgebra of
$H_*^{\GL(V)_\CO}(\CR^{\lambda}_{\mu})$. On the other hand, the pullback under
$s^{\lambda^*}_{\mu^*}\colon \oW^{\lambda^*}_{\mu^*}\to Z^\alpha$ realizes
$\BC[Z^\alpha]$ as a subalgebra of $\BC[\oW^{\lambda^*}_{\mu^*}]$.
The proof of~Theorem~\ref{Coulomb_quivar} repeated essentially word for word
shows (cf.~\corref{Pestun}) that the isomorphism
$\BC[\oW^{\lambda^*}_{\mu^*}]\iso H_*^{\GL(V)_\CO}(\CR^{\lambda}_{\mu})$ takes
$\BC[Z^\alpha]\subset\BC[\oW^{\lambda^*}_{\mu^*}]$ onto
$H_*^{\GL(V)_\CO}(\CR^{\lambda+}_{\mu})\subset H_*^{\GL(V)_\CO}(\CR^{\lambda}_{\mu})$,
and in particular induces an isomorphism
$\BC[Z^\alpha]\iso H_*^{\GL(V)_\CO}(\CR^{\lambda+}_{\mu})$.
\end{Remark}

\begin{Remark}
\label{Joel}
Let us consider the opposite orientation $\ol\Qo$ of our quiver and
the representation $\bN^\lambda_\mu=\bigoplus_{h\in\ol\Qo}\Hom(V_i^*,V_j^*)
\oplus\bigoplus_{i\in \II}\Hom(V_i^*,W_i^*)$ of $\GL(V^*)$. Similarly
to~Theorem~\ref{Coulomb_quivar}, we have an isomorphism
$\BC[\oW^{\lambda^*}_{\mu^*}]\iso H_*^{\GL(V^*)_\CO}(\CR_{\GL(V^*),\bN^\lambda_\mu})$.
Similarly to~Remark~\ref{Brav}, we have a convolution algebra isomorphism
$$\fri^\lambda_{\mu*}\colon H_*^{\GL(V)_\CO}(\CR_{\GL(V),\bN^\lambda_\mu})\iso
H_*^{\GL(V^*)_\CO}(\CR_{\GL(V^*),\bN^\lambda_\mu}).$$ The composition
$\BC[\oW^{\lambda^*}_{\mu^*}]\simeq H_*^{\GL(V)_\CO}(\CR_{\GL(V),\bN^\lambda_\mu})\stackrel
{\fri^\lambda_{\mu*}}{\longrightarrow}H_*^{\GL(V^*)_\CO}(\CR_{\GL(V^*),\bN^\lambda_\mu})
\simeq\BC[\oW^{\lambda^*}_{\mu^*}]$ is an involution of the algebra
$\BC[\oW^{\lambda^*}_{\mu^*}]$. Similarly to~Remark~\ref{Brav}, this involution
arises from the involution $\iota^{\lambda^*}_{\mu^*}$ of
$\BC[\oW^{\lambda^*}_{\mu^*}]$ (see~\secref{involution}) composed with the
involution $\varkappa_{-1}$ of $\oW^{\lambda^*}_{\mu^*}$ induced by an automorphism
$z\mapsto-z\colon \BP^1\iso\BP^1$ and finally composed with the action $a(h)$
of a certain element of the Cartan torus $h=\beta(-1)\in T$.
Here $\beta$ is a cocharacter of $T$ equal to
$\sum_{i\in \II}b_i\omega_i$ where $b_i=a_i-\sum_{h\in\Qo:\vout{h}=i}a_{\vin{h}}$.
\end{Remark}

\begin{Remark}
\label{trigo}
According to~\ref{rem:Steinberg}(3), we can consider the convolution
algebra $K^{\GL(V)_\CO}(\CR^\lambda_\mu)$. Then, similarly
to~Theorem~\ref{Coulomb_quivar}, one can construct an isomorphism
$\BC[^\dagger\oW^{\lambda^*}_{\mu^*}]\simeq K^{\GL(V)_\CO}(\CR^\lambda_\mu)$
where $^\dagger\oW^{\lambda^*}_{\mu^*}$ stands for the moduli space of the triples
$(\scP,\sigma,\phi)$ where $\scP$ is a $G$-bundle on $\BP^1;\ \sigma$ is a
trivialization of $\scP$ off $1\in\BP^1$ having a pole of degree
$\leq\lambda^*$ at $1\in\BP^1$, and
$\phi$ is a $B$-structure on $\scP$ of degree $-\mu$ having the fiber $B_-$ at
$\infty\in\BP^1$ and transversal to $B$ at $0\in\BP^1$ (a trigonometric slice),
cf.~\cite[1.5]{fkr}.
\end{Remark}

\subsection{Poisson structures}
\label{symplectic}
The convolution algebra $H_*^{\GL(V)_\CO}(\CR)$
(resp.\ $H_*^{\GL(V)_\CO}(\CR^\lambda_\mu))$ carries a Poisson structure $\{,\}_C$
because of the deformation $H_*^{\GL(V)_\CO\rtimes\BC^\times}(\CR)$
(resp.\ $H_*^{\GL(V)_\CO\rtimes\BC^\times}(\CR^\lambda_\mu)$). The algebra
$\BC[\oZ^{\alpha}]$ carries (a nondegenerate, i.e.\ symplectic) Poisson structure $\{,\}_Z$
defined in~\cite{fkmm}. In case $\mu$ is dominant, the algebra
$\BC[\oW^{\lambda^*}_{\mu^*}]$ carries a Poisson structure $\{,\}_\CW$ defined in~\cite{kwy}.

\begin{Proposition}
\label{prop:poi}
\textup{(1)} The isomorphism of~Theorem~\ref{pestun}
takes $\{,\}_Z$ to $-\{,\}_C$.

\textup{(2)} The isomorphism of~Theorem~\ref{Coulomb_quivar}
takes $\{,\}_\CW$ to $-\{,\}_C$.
\end{Proposition}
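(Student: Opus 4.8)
The plan is to prove both statements by reducing the comparison of Poisson brackets to a dense open subset where everything is computable. Both $\{,\}_C$ and $\{,\}_Z$ (resp.\ $\{,\}_\CW$) are Poisson brackets on the same commutative ring under the isomorphism of \thmref{pestun} (resp.\ \thmref{Coulomb_quivar}), and since $\oZ^\alpha$ (resp.\ $\oW^{\lambda^*}_{\mu^*}$) is a variety with symplectic, hence reduced and irreducible, smooth locus of full dimension, it suffices to check the identity of brackets after restricting to the dense open subset $\pi_\alpha^{-1}(\oA^\alpha)\cap\oZ^\alpha\cong(\oA^{|\alpha|}\times\BG_m^{|\alpha|})/S_\alpha$ (resp.\ its analogue over $\oG_m^\alpha$). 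A Poisson bracket is determined by its values on a set of functions generating the ring after localization, and $w_{i,r}$, $y_{i,r}^{\pm1}$ (resp.\ together with the extra generators coming from $W$) generate the coordinate rings of these open subsets. So the whole proposition comes down to computing $\{w_{i,r},w_{j,s}\}$, $\{w_{i,r},y_{j,s}\}$, $\{y_{i,r},y_{j,s}\}$ on both sides.

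The side of $\BC[\oZ^\alpha]$ is handled by the explicit formulas of \cite{fkmm}: in the $(w_{i,r},y_{i,r})$ coordinates on $\uoZ^\alpha$ the symplectic form has a standard logarithmic shape, so $\{w_{i,r},w_{j,s}\}_Z=0$, $\{w_{i,r},y_{j,s}\}_Z=\delta_{ij}\delta_{rs}\,y_{i,r}$ (up to sign conventions), and $\{y_{i,r},y_{j,s}\}_Z$ is the correction term dictated by the factorization formulas of \remref{4dim} — for instance in the $\gamma=\alpha_i+\alpha_j$ edge case the relation $y_iy_j=y_{ij}(w_j-w_i)$ forces a specific bracket among $y_i,y_j$. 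On the Coulomb side one uses the deformation $H_*^{\GL(V)_\CO\rtimes\BC^\times}(\CR)$ and the already-established dictionary: over $\oA^\alpha$ the localized algebra is identified (via $\iota_*$ and the localization computation recalled in the \texttt{NB} block and in \ref{sec:chang-repr}) with $(\BC[\ft(V)\times T^\vee(V)])$-type data, and $\{,\}_C$ on the abelianized/localized side is the obvious bracket on $T^*T(V)^\vee/\Weyl$ coming from the $\BC^\times$-deformation. The map $\Xi^\circ$ sends $y_{i,r}\mapsto \iota_*\sy_{i,r}=\sfu_{i,r}\cdot\prod_{h:\vout{h}=i}\prod_s(w_{\vin{h},s}-w_{i,r})$, and since the bracket of $\sfu_{i,r}^{\pm1}$ with $w_{j,s}$ is the canonical one while the product of root factors is polynomial in the $w$'s, one computes $\{w,\Xi^\circ y\}_C$ and $\{\Xi^\circ y,\Xi^\circ y\}_C$ directly by the Leibniz rule and compares with the $\{,\}_Z$ formulas, tracking the overall sign $-1$.

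Concretely the steps are: (i) reduce to the dense open locus and to checking brackets on the generators $w_{i,r}, y_{i,r}$ (for part (2), also the extra coordinate, e.g.\ $x_{i,r}=y_{i,r}^{-1}w_{i,r}^{l_i}$ of the proof of \thmref{Coulomb_quivar}); (ii) recall/derive the $\{,\}_Z$ (resp.\ $\{,\}_\CW$) formulas in these coordinates from \cite{fkmm} (resp.\ \cite{kwy}); (iii) recall the $\{,\}_C$ formulas on the localized/abelianized Coulomb branch from the $\BC^\times$-equivariant deformation and the identifications of \ref{sec:chang-repr} and \ref{abel}; (iv) transport via $\Xi^\circ$ using the explicit formula $y_{i,r}\mapsto\sfu_{i,r}\prod(w_{\vin{h},s}-w_{i,r})$ and Leibniz, and read off the sign. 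For part (2), the additional input is that $\{,\}_\CW$ on $\oW^{\lambda^*}_{\mu^*}$ restricts, under the birational identification $(s^{\lambda^*}_{\mu^*})^{-1}(\oZ^\alpha)\cong\oZ^\alpha$ of \remref{open_piece}, compatibly with $\{,\}_Z$ — this is essentially the statement that the slice and zastava Poisson structures agree on their common open subset — so part (2) follows from part (1) together with a density/continuity argument on $\oW^{\lambda^*}_{\mu^*}$.

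The main obstacle I expect is step (iii): pinning down the Poisson bracket on the Coulomb side precisely enough, with the correct normalization and sign, from the graded deformation $H_*^{\GL(V)_\CO\rtimes\BC^\times}(\CR)$. One has to make sure the deformation parameter $\hbar$ is identified the same way on both sides so that the signs match (the proposition asserts $\{,\}_Z\mapsto -\{,\}_C$, not $+$), and one must check that the localization isomorphism used to compute the bracket is Poisson, i.e.\ compatible with the deformations, not merely with the underlying commutative algebras. Once the abelianized bracket is correctly identified with the canonical symplectic bracket on $T^*(T(V)^\vee)$ up to the claimed sign, the remaining computation via $\Xi^\circ$ is a routine Leibniz-rule verification and matching with the zastava formulas of \remref{4dim}; no further subtlety arises because the correction factors $\prod(w_{\vin{h},s}-w_{i,r})$ are invertible on the relevant locus and their $w$-brackets vanish.
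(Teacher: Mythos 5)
Your proposal follows essentially the same route as the paper's proof: reduce to the generic locus, compare explicit brackets there, and for part~(2) use that $s^{\lambda^*}_{\mu^*}$ is generically a symplectomorphism (this is \cite[Theorem~4.9]{fkr}), together with the Poisson compatibility of the Coulomb-side localization (\ref{lem:convolution}). The paper streamlines the Leibniz bookkeeping you sketch by switching to the coordinates $u_{i,r}:=y_{i,r}\cdot\prod_{j\leftarrow i}\prod_{1\leq s\leq a_j}(w_{j,s}-w_{i,r})^{-1}$ on $\uoZ^\alpha$, which are exactly $\Xi^{-1}(\sfu_{i,r})$ and in which $\{,\}_Z$ has the single nonvanishing bracket $\{w_{i,r},u_{j,s}\}=\delta_{ij}\delta_{rs}\,u_{j,s}$ by \cite[Proposition~2]{fkmm}; and the Coulomb-side step you flag as the main obstacle --- identifying the abelianized bracket with the canonical one on $\ft(V)\times T^\vee(V)$ --- is precisely the content of \ref{cor:coordinate}, so no further work is needed there.
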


\begin{proof}
\textup{(1)} It is enough to check the claim generically, over $\oA^{\alpha}$.
We consider
the new coordinates $u_{i,r}:=y_{i,r}\cdot\prod_{j\leftarrow i}\prod_{1\leq s\leq a_j}
(w_{j,s}-w_{i,r})^{-1}$ on $\uoZ^{\alpha}$ (note that the new coordinates depend
on the choice of orientation). It is easy to check that the only
nonvanishing Poisson brackets on $\uoZ^{\alpha}$ are
$\{w_{i,r},u_{j,s}\}=\delta_{i,j}\delta_{r,s}u_{j,s}$
(see~\cite[Proposition~2]{fkmm}). Hence the generic isomorphism
$\Xi\colon \BC[\oZ^{\alpha}]\otimes_{\BC[\BA^{\alpha}]}\BC[\oA^{\alpha}]\iso
\BC[\ft(V)\times T^\vee(V)]\otimes_{\BC[\BA^\alpha]}\BC[\oA^\alpha]$
of \cref{Co_bra} takes the Poisson structure on $\oZ^\alpha$ to the
negative of the standard Poisson structure on $\ft(V)\times T^\vee(V)$. According
to~\ref{cor:coordinate}, $\bz^*\iota_*^{-1}$ takes the latter structure to
the one on $H_*^{\GL(V)_\CO}(\CR)$.


\textup{(2)} Again it suffices to check the claim generically where it follows
from (1). In effect, $s^{\lambda^*}_{\mu^*}$ generically is a symplectomorphism
according to~\cite[Theorem~4.9]{fkr}, while the identification of
$H_*^{\GL(V)_\CO}(\CR^\lambda_\mu)|_{\oG_m^{\alpha}}$ and $H_*^{\GL(V)_\CO}(\CR)|_{\oG_m^\alpha}$
is symplectic by~\ref{lem:convolution}.
\end{proof}

\begin{Remark}
\label{stabilization}
Let $\cW^{\lambda^*}_{\mu^*}$ denote the open subvariety of
$\oW^{\lambda^*}_{\mu^*}$ consisting of triples $(\scP,\sigma,\phi)$ such that $\sigma$ has a pole exactly of order $\lambda^*$. It is the preimage of the orbit $\Gr^{\lambda^*}_G$ under $\bp$.
We have the decomposition
\begin{equation*}
    \oW^{\lambda^*}_{\mu^*} =
    \bigsqcup_{\substack{\mu\le\nu\le\lambda\\ \text{$\nu$ : dominant}}}
    \cW^{\nu^*}_{\mu^*}.
\end{equation*}
In \cite[\S5(iii)]{2015arXiv151003908N} the third named author showed
that this is the decomposition into symplectic leaves. However the
argument is based on the description of $\cW^{\lambda^*}_{\mu^*}$ as
the moduli space of singular monopoles, which is not justified yet. In
particular, we do not know how to show that $\cW^{\nu^*}_{\mu^*}$ is
smooth in our current definition. In fact, this is the only problem:
(1) We know that the Poisson structure on the Coulomb branch is
symplectic on its smooth locus by \ref{prop:symplectic}. (2) The
embedding $\oW^{\nu^*}_{\mu^*}\to \oW^{\lambda^*}_{\mu^*}$ is Poisson,
as it is so when $\mu$ is dominant by \cite[Th.~2.5]{kwy} and the
birational isomorphisms $\oW^{\lambda^*}_{\mu^*}\to
\oW^{\lambda^*+\varsigma^*}_{\mu^*+\varsigma^*}$, $\oW^{\nu^*}_{\mu^*}\to
\oW^{\nu^*+\varsigma^*}_{\mu^*+\varsigma^*}$ in \cref{open_piece} is Poisson
by \ref{lem:convolution}.

It was also shown that a transversal slice to $\cW^{\nu^*}_{\mu^*}$ in
$\oW^{\lambda^*}_{\mu^*}$ is isomorphic to $\oW^{\lambda^*}_{\nu^*}$
in \cite[\S5(iii)]{2015arXiv151003908N}. We do not know how to justify
this either.

These two assertions are true if $\nu$ is dominant, as
$\oW^{\lambda^*}_{\mu^*}$ is a transversal slice to $\Gr^{\mu^*}_G$ in
$\overline{\Gr}^{\lambda^*}_G$ in this case.

We also know these under the following condition:
Let $\mu\leq w_0\lambda$ be antidominant.
Then the projection $\bp\colon
\oW^{\lambda^*}_{\mu^*}\to\ol\Gr{}_G^{\lambda^*}$ is smooth and its
image intersects nontrivially all the $G_\CO$-orbits
$\Gr_G^{\nu^*}\subset\ol\Gr{}_G^{\lambda^*}$.
%
%
Indeed, the smoothness of $\bp$ for antidominant $\mu$
follows by the base change from the
smoothness of $\on{Bun}_B^{-\mu}(\BP^1)\to{}'\!\on{Bun}_G(\BP^1)$
(see~\subsecref{nondom}). To check the latter smoothness at a point
$(\scP,\phi)\in\on{Bun}_B^{-\mu}(\BP^1)$ we have to prove the surjectivity
at the level of tangent spaces, and this follows from the long exact
sequence of cohomology and the vanishing of
$H^1(\BP^1,(\fg/{\mathfrak b})_\phi(-1))$. The latter vanishing holds since
the vector bundle $(\fg/{\mathfrak b})_\phi$ is filtered with the associated
graded $\bigoplus_{\alphavee\in R^\vee_+}\CO_{\BP^1}(-\langle\mu,\alphavee\rangle)$:
a direct sum of ample line bundles.
\begin{NB}
An example of nonsmoothness for $G=\SL(3)$: we consider flags of degree
$2\alpha_2$, that is $0\subset\CL_1\subset\CL_2\subset\CV$ such that
$\CL_1\simeq\CO_{\BP^1}$, and $\CL_2/\CL_1\simeq\CO_{\BP^1}(-2)$.
If we take $\CV=\CO^3_{\BP^1}$, then the fiber is the usual space of maps
to the flag variety of $\SL(3)$; it has dimension $3+4=7$.
If we take $\CV=\CO_{\BP^1}(-2)\oplus\CO_{\BP^1}(1)\oplus\CO_{\BP^1}(1)$,
then the fiber has dimension $3+5=8>7$, hence no smoothness.
\end{NB}%
\begin{NB}
  An example of nonsurjectivity for $G=\GL(2)\colon \lambda=(1,0),\ \mu=(0,1)$.
  Then $\ol\Gr_G^\lambda=\Gr^\lambda$ is formed by subsheaves
  $\CV\subset\CO_Ce_1\oplus\CO_Ce_2$ such that the quotient is 1-dimensional
  supported at $0\in C=\BP^1$. Incidentally, $\Gr_G^\lambda$ is isomorhic to $\BP^1$.
  The slice $\ol\CW{}^\lambda_\mu$ is formed by line subbundles
  $\CL\subset\CV\subset\CO_Ce_1\oplus\CO_Ce_2$ with value $\BC e_2$ at
  $\infty\in C$. If $\CV=\CO_C(-0)e_1\oplus\CO_Ce_2$, then $\CL$ is the
  graph of a morphism from $\CO_Ce_1$ to $\CO_Ce_2$ and can not take value
  $\BC e_2$ at $\infty\in C$. So the image of
  $\bp\colon\CW{}^\lambda_\mu\to\ol\Gr{}_G^\lambda$ is
  $\BA^1\subset\BP^1\simeq\ol\Gr{}_G^\lambda$.
\end{NB}%
To see that $\bp(\ol\CW{}^{\lambda^*}_{\mu^*})\cap\Gr_G^{\nu^*}\ne\emptyset$
for $\mu\leq w_0\lambda,\ \nu\leq\lambda$, recall that
$'\!\on{Bun}_G(\BP^1)=\bigsqcup_\text{$\chi$ : dominant}{}'\!\on{Bun}_G(\BP^1)_\chi$
is stratified according to the isomorphism types of $G$-bundles.
The image of $\ol\Gr{}^{\lambda^*}_G$ in $'\!\on{Bun}_G(\BP^1)$ lies in the open
substack $'\!\on{Bun}_G(\BP^1)_{\leq\lambda^*}:=\bigsqcup_{\chi\leq\lambda^*}{}
'\!\on{Bun}_G(\BP^1)_\chi$. Finally, the image of
$\on{Bun}_B^{-\mu}(\BP^1)\to{}'\!\on{Bun}_G(\BP^1)$ contains the open substack
$'\!\on{Bun}_G(\BP^1)_{\leq\lambda^*}$ if $\mu\leq w_0\lambda$.

Hence $\cW^{\nu^*}_{\mu^*}$ is smooth and its transversal slice is
isomorphic to $\oW^{\lambda^*}_{\nu^*}$.

This assumption is not artificial. The above stratification is
\emph{dual} to one for the corresponding Higgs branches (quiver
varieties ${\mathfrak M}_0(V,W)$).
It is known that ${\mathfrak M}_0(V,W)$ stabilizes for sufficiently
large $V$, more precisely if $\mu\le\overline\lambda$, where
$\overline\lambda$ is the minimal dominant weight $\le\lambda$. (See
e.g.,~\cite[Rem.~3.28]{Na-alg}.) This condition is weaker than
$\mu\leq w_0\lambda$ and antidominant,
but we at least see that singularities of both
Higgs and Coulomb branches do not change if $\mu$ is sufficiently
antidominant.
\end{Remark}

\subsection{Deformations}
\label{defo}
Recall the setup of~\ref{subsec:flavor}. We choose a Cartan torus
$T(W)=\prod_{i\in \II}T(W_i)\subset\prod_{i\in \II}\GL(W_i)=\GL(W)$ (notations
of \cref{quivar}). We consider the extended group
$1\to\GL(V)\to\GL(V)\times T(W)\to T(W)\to1$ acting on $\bN^\lambda_\mu$,
so that $T(W)$ is the flavor symmetry group. We choose a basis
$z_1,\ldots,z_N$ of the character lattice of $T(W)$
(compatible with the product decomposition $T(W)=\prod_{i\in \II}T(W_i)$),
and view it as
a basis of ${\mathfrak t}^*(W)$, i.e.\ the coordinate functions on
${\mathfrak t}(W)=\BA^N$. According to~\ref{subsec:flavor},
$H_*^{\GL(V)_\CO\times T(W)_\CO}(\CR^\lambda_\mu)$ is a deformation of
$H_*^{\GL(V)_\CO}(\CR^\lambda_\mu)$ over the base
$\on{Spec}(H^*_{T(W)}(\on{pt}))={\mathfrak t}(W)=\BA^N$.

We denote the intersection of the open subsets
$\oA^{\alpha}\times\BA^N\subset\BA^{\alpha}\times\BA^N$ and
$\bigcap_{i\in \II}f_{i,\unl\lambda^*}^{-1}(\BG_m)\subset\BA^{\alpha}\times\BA^N$ by
$\oA^{\alpha,N}$. We define a generic isomorphism
$\Xi^\circ\colon \BC[\oW^{\unl\lambda^*}_{\mu^*}]|_{\oA^{\alpha,N}}\iso
H_*^{\GL(V)_\CO\times T(W)_\CO}(\CR^\lambda_\mu)|_{\oA^{\alpha,N}}$ as
in \cref{quivar}: identical on $z_s$ and $w_{i,r}$, and sending
$y_{i,r}$ to $\iota_*\bar\sy_{i,r}$. Here we view the fundamental class of the
fiber of $\CR^\lambda_{\mu,T(V),\bN_{T(V)}}$ over the point $w^*_{i,r}\in\Gr_{T(V)}$
as an element $\bar\sy_{i,r}\in H_*^{T(V)_\CO\times T(W)_\CO}(\CR^\lambda_{\mu,T(V),\bN_{T(V)}})$.

\begin{Theorem}
\label{Coulomb_defo}
The isomorphism $\Xi^\circ\colon \BC[\oW^{\unl\lambda^*}_{\mu^*}]|_{\oA^{\alpha,N}}\iso
H_*^{\GL(V)_\CO\times T(W)_\CO}(\CR^\lambda_\mu)|_{\oA^{\alpha,N}}$ extends to a biregular
isomorphism
$\BC[\oW^{\unl\lambda^*}_{\mu^*}]\iso H_*^{\GL(V)_\CO\times T(W)_\CO}(\CR^\lambda_\mu)$.
\end{Theorem}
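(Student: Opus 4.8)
The plan is to repeat, \emph{mutatis mutandis}, the proofs of Theorems~\ref{pestun} and~\ref{Coulomb_quivar}, now over the larger base $\BA^\alpha\times\BA^N=\ft(V)/\Weyl\times\ft(W)$ and using the criterion~\ref{prop:flat} in the flavor-deformation form of~\ref{subsec:flavor}. First one checks its hypotheses for $\oW^{\unl\lambda^*}_{\mu^*}$: it is an affine variety (as in Lemma~\ref{lem:affine}), it is Cohen--Macaulay by Lemma~\ref{lem:CMBD}, and the composition $\pi_\alpha\circ s^{\unl\lambda^*}_{\mu^*}\colon\oW^{\unl\lambda^*}_{\mu^*}\to\BA^\alpha\times\BA^N$ is flat --- by the Cohen--Macaulayness of the source, the smoothness of the target, and the equidimensionality of the fibres, the latter following from the semismallness of the convolution morphism $\bq\colon\CG\CZ^{-\mu^*}_{\unl\lambda^*}\to Z^\alpha\times\BA^N$ exactly as in Lemma~\ref{semismall}. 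Then, by~\ref{prop:flat} and~\ref{rem:conditions}, it suffices to identify the localizations $\BC[\uoW^{\unl\lambda^*}_{\mu^*}]_t$ and $\bigl(H_*^{T(V)_\CO\times T(W)_\CO}(\CR^\lambda_\mu)\bigr)_t$ as $\BC[\ft(V)\times\ft(W)]_t$-modules, for $t$ a general point of each divisorial component of the complement of $\oA^{\alpha,N}$ in $\BA^\alpha\times\BA^N$ (lifted to $\BA^{|\alpha|}\times\BA^N$); here $\uoW^{\unl\lambda^*}_{\mu^*}:=\oW^{\unl\lambda^*}_{\mu^*}\times_{\BA^\alpha}\BA^{|\alpha|}$.

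Since $\oA^{\alpha,N}=(\oA^\alpha\times\BA^N)\cap\bigcap_i f_{i,\unl\lambda^*}^{-1}(\BG_m)$, there are exactly two types of such components. The first consists of the diagonal divisors $w_{i,r}=w_{j,s}$; at a general such $t$ all the flavor parameters $z_s$ and all the differences $w_{k,p}-z_s$ are invertible, so the framing summand $\bigoplus_i\Hom(W_i,V_i)$ stays entirely non-resonant and contributes only units. Thus this case is treated exactly as the diagonal-divisor case in the proof of Theorem~\ref{pestun} (the ``former case'' of Theorem~\ref{Coulomb_quivar}): the required $\Xi^t$ is a regular isomorphism by the factorization~\textup{(\ref{149})} of zastava and~\cite[5.5,~5.6]{bdf}. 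The second, genuinely new, type is a component of $\{f_{i,\unl\lambda^*}=0\}$, i.e.\ $w_{i,r}=z_s$ for a unique pair $(r,s)$, all other coordinates and differences being generic. Observe that the coordinate hyperplanes $w_{i,r}=0$ of the proof of Theorem~\ref{Coulomb_quivar} no longer occur: after the deformation the framing weight differentials are the $w_{i,r}-z_s$ and not the $w_{i,r}$, so a coordinate hyperplane is now non-resonant; and the diagonals $z_s=z_{s'}$ are harmless since they lie inside $\oA^{\alpha,N}$.

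On the new component exactly one copy of the framing character is resonant, so the fixed-point set $(\CR^\lambda_\mu)^t$ is isomorphic to a product $\Gr_{T_1}\times\CR_{T_2,\bN'}$, where $T_2\subset T(V)$ is the one-dimensional torus with coordinate $\sw_{i,r}$, $T_1$ its complement, and $\bN'$ the one-dimensional representation with weight differential $w_{i,r}-z_s$. By~\ref{abel}, the associated localized algebra is $(\BC[{\mathfrak t}_1\times T_1^\vee]\otimes C')_t$, where $C'$ is generated by $w_{i,r},z_s$ and the fundamental classes $\sx_{i,r},\bar\sy'_{i,r}$ of the fibres of $\CR_{T_2,\bN'}$ over $-w^*_{i,r},w^*_{i,r}\in\Gr_{T_2}$, subject to $\sx_{i,r}\cdot\bar\sy'_{i,r}=w_{i,r}-z_s$ (up to sign); and by~\ref{sec:chang-repr}, $\bz'^*\bar\sy'_{i,r}=\sfu_{i,r}$ while $\bz'^*\sx_{i,r}=(w_{i,r}-z_s)\sfu_{i,r}^{-1}$. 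On the slice side, the factorization property of $\oW^{\unl\lambda^*}_{\mu^*}$ --- which follows from Proposition~\ref{prop:blowBD} the way Proposition~\ref{prop:factor} follows from Proposition~\ref{prop:blow} --- identifies $\oW^{\unl\lambda^*}_{\mu^*}$, locally near $w_{i,r}=z_s$, with $\oZ^{\beta^*}\times\cS_{\langle\omega_{i^*},\alphavee_{i^*}\rangle}=\oZ^{\beta^*}\times\cS_1$ for $\beta=\alpha-\alpha_i$. As $\cS_1\cong\BA^2$ is smooth and $\on{div}(xy)$ corresponds to $\on{div}(w_{i,r}-z_s)$, the rational map $\Xi^t$ defined by the usual rule (identical on the $w_{k,p},z_s$ and sending $y_{k,p}$ to $\iota'_*\sfu_{k,p}\cdot\prod_{h\in\Qo:\vout{h}=k}\prod_{1\le q\le a_{\vin{h}}}(w_{\vin{h},q}-w_{k,p})$, so that $x_{i,r}=y_{i,r}^{-1}\prod_{s'}(w_{i,r}-z_{s'})$ goes to $\sx_{i,r}$ times units) is a regular isomorphism: all the correction factors $w_{\vin{h},q}-w_{k,p}$, as well as all the $w_{i,r}-z_{s'}$ other than $w_{i,r}-z_s$, are invertible at $t$. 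This verifies the condition of~\ref{prop:flat}(2) at every codimension-one point, and the theorem follows.

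The main obstacle is the new case $w_{i,r}=z_s$: one has to be sure that, together with the diagonals (which reduce to Theorem~\ref{pestun}), it exhausts the codimension-one behaviour of the complement of $\oA^{\alpha,N}$, and that the one-dimensional abelianized computation ($xy=w-z$) genuinely matches the local Beilinson--Drinfeld factorization model; the match hinges on $\langle\omega_{i^*},\alphavee_{i^*}\rangle=1$, which makes that model the \emph{smooth} surface $\cS_1$ rather than a Kleinian singularity $\cS_n$, $n>1$.
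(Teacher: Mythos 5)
Your proposal is correct and takes exactly the approach the paper intends: the paper's own proof consists of the single sentence ``Same as the proof of~Theorem~\ref{Coulomb_quivar},'' and your write-up faithfully and accurately unpacks what that means, in particular correctly observing that the deformed framing weights $w_{i,r}-z_s$ replace the coordinate hyperplanes $w_{i,r}=0$ of Theorem~\ref{Coulomb_quivar} with divisors $w_{i,r}=z_s$, on which only one framing character is resonant and the local model is the smooth surface $\cS_1$ (via Proposition~\ref{prop:blowBD}).
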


\begin{proof}
Same as the proof of~Theorem~\ref{Coulomb_quivar}.
\end{proof}

\begin{Remark}
The Poisson structure of $H_*^{\GL(V)_\CO\times T(W)_\CO}(\CR^\lambda_\mu)$
transferred to $\oW^{\unl\lambda^*}_{\mu^*}$ via $\Xi^\circ$ is still given
by the formulas of~\propref{prop:poi}. But we do not know its modular
definition, and we cannot see a priori that these generic formulas extend
regularly to the whole of $\oW^{\unl\lambda^*}_{\mu^*}$.
\end{Remark}

\subsection{Affine case}
\label{affine}
We change the setup of \cref{Co_bra}. The Dynkin graph of $G$ is
replaced by its {\em affinization}, so that $\Omega$ is an orientation of
this affinization; $\bN$ is a representation space of $\Omega$ in the new sense,
and so on. We change the setup of \cref{zastava} accordingly: now
$Z^\alpha$ stands for the zastava space of the affine group $G_\aff$, denoted
by $\fU^\alpha_{G;B}$ in~\cite[9.2]{BFG}, and $\oZ^\alpha\subset Z^\alpha$ stands for
its open subscheme denoted by $\oU$ in~\cite[11.8]{BFG}: it is formed by all
the points of $Z^\alpha$ with defects allowed only in the open subset
$\BA^1_{\operatorname{horizontal}}\times(\BA^1_{\operatorname{vertical}}\setminus\{0\})
\subset\BA^1_{\operatorname{horizontal}}\times\BA^1_{\operatorname{vertical}}=\BA^2$.

Similarly to~Theorem~\ref{pestun} and~Corollary~\ref{Pestun}, we have the
following conditional

\begin{Theorem}
\label{conditional}
Assume $\oZ^\alpha$ is normal. The isomorphism $\Xi^\circ\colon
\BC[\oZ^\alpha]|_{\oA^\alpha_{\operatorname{horizontal}}}\iso H_*^{\GL(V)_\CO}(\CR)|_{\oA^{\alpha}}$
defined as in \cref{Co_bra} extends to a biregular isomorphism
$\BC[\oZ^\alpha]\simeq H_*^{\GL(V)_\CO}(\CR)$ and to
$\BC[Z^\alpha]\simeq H_*^{\GL(V)_\CO}(\CR^+)$.
\end{Theorem}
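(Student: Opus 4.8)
The plan is to imitate the proof of Theorem~\ref{pestun} and Corollary~\ref{Pestun} as closely as possible, isolating exactly the places where the affine setting differs. First I would recall that the criterion of~\ref{prop:flat} requires: (a) $\oZ^\alpha$ Cohen--Macaulay; (b) the composition $\varPi = \pi_\alpha \circ (\text{projection}) \colon \oZ^\alpha \to \ft(V)/\Weyl$ flat; and (c) a generic isomorphism $\Xi^\circ$ biregular up to codimension $2$. For (a), since $Z^\alpha = \fU^\alpha_{G;B}$ has rational singularities and is normal by assumption, it is Cohen--Macaulay by~\cite[Proposition~5.2]{brfi}, and $\oZ^\alpha$ is an open subscheme, hence also Cohen--Macaulay. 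For (b), I would invoke the factorization structure of the affine zastava space together with the fact that all fibers of $\pi_\alpha$ over $\BA^\alpha_{\operatorname{horizontal}}$ have the same dimension $|\alpha|$ (this is where the restriction to the horizontal direction, built into the definition of $\oZ^\alpha$ via $\BA^1_{\operatorname{horizontal}} \times (\BA^1_{\operatorname{vertical}} \setminus \{0\})$, is used: it removes the vertical defects which would otherwise break equidimensionality).

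For (c), the generic isomorphism $\Xi^\circ \colon \BC[\oZ^\alpha]|_{\oA^\alpha_{\operatorname{horizontal}}} \iso H_*^{\GL(V)_\CO}(\CR)|_{\oA^\alpha}$ is constructed exactly as in \cref{Co_bra}: identical on $w_{i,r}$ and sending $y_{i,r}$ to $\sfu_{i,r} \cdot \prod_{h\in\Qo: \vout{h}=i} \prod_{1\le s\le a_{\vin{h}}} (w_{\vin{h},s} - w_{i,r})$, using that $\iota_*$ becomes an isomorphism over $\ft^\circ(V)$ (\ref{lem:iotahom}) and the identification $H_*^{T(V)_\CO}(\Gr_{T(V)}) \cong \BC[\ft(V) \times T^\vee(V)]$ (\ref{prop:integrable}). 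Then I would carry out the codimension-one extension by the identical case analysis as in the proof of Theorem~\ref{pestun}: localize at a general point $t$ of a diagonal hyperplane $w_{i,r} = w_{j,s}$ (for $i,j$ connected or not in the \emph{affine} Dynkin graph) or $w_{i,r} = w_{i,s}$, in each case the fixed-point locus $\CR^t_\bN$ decomposes as $\Gr_{T_1} \times \CR_{T_2,\bN'}$ with $\bN'$ of rank $0$ or $1$, and the local model $\uoZ^\gamma$ for $|\gamma| = 2$ from Remark~\ref{4dim} matches the local structure of $H_*^{T(V)_\CO}(\CR)$ there. The only new feature is that the affine Dynkin graph of an $ADE$ type is again simply laced, so Remark~\ref{4dim} applies verbatim; the arguments of~\cite[5.5, 5.6]{bdf} on the local structure of affine zastava near codimension-one boundary are the relevant inputs.

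The main obstacle — and the reason the theorem is only conditional — is precisely the normality hypothesis on $\oZ^\alpha$. In the finite type case normality of $Z^\alpha$ is a theorem, which feeds into Cohen--Macaulayness via~\cite{brfi}; for affine type it is known only in type $A$ (this is exactly the dependence flagged in the introduction, cf.\ Theorem~\ref{conditional} being stated as conditional and the discussion around $\fU^\alpha_{G,B}$). Assuming normality, everything else goes through, and the $\CR^+$ statement follows as in Corollary~\ref{Pestun}: the proof of Theorem~\ref{pestun} repeated word for word (again using normality of $Z^\alpha$) identifies $\BC[Z^\alpha]$ with $H_*^{\GL(V)_\CO}(\CR^+)$ as the subalgebra corresponding to the ``positive'' part $\Gr^+_{\GL(V)}$ of the affine Grassmannian, with the augmentation homomorphism $H_*^{\GL(V)_\CO}(\CR^+) \to H^*_{\GL(V)}(\on{pt})$ matching the section $s_\alpha \colon \BA^\alpha \hookrightarrow Z^\alpha$.
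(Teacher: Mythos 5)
Your overall plan---repeat the proof of Theorem~\ref{pestun}, verify the hypotheses of~\ref{prop:flat}, and flag normality as the key assumption---is the right one, but two steps are wrong or missing.

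First, your verification of hypothesis (a) is not what the paper does and, as stated, is not available. You claim that $Z^\alpha=\fU^\alpha_{G;B}$ has rational singularities (citing~\cite[Proposition~5.2]{brfi}) and is therefore Cohen--Macaulay. But that reference establishes rational singularities only for \emph{finite-type} zastava; in the affine case rational singularities are not known (they would imply normality, making the hypothesis redundant), and the paper explicitly states that the Cohen--Macaulay property of $Z^\alpha$, $\oZ^\alpha$ is \emph{not} known in the affine setting. The actual argument is that Cohen--Macaulayness is not needed: the relevant condition in~\ref{prop:flat}, namely $\CO_\cM\iso j_*\CO_{\cM^\bullet}$, only requires the $S_2$-property, which follows directly from normality (this is precisely what~\ref{rem:conditions} is for). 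So (a) should be replaced by: ``normality gives $S_2$, which is enough for~\ref{prop:flat}''---not a detour through Cohen--Macaulayness. Hypothesis (b) (equidimensionality of the fibers of the horizontal factorization) is then supplied by~\cite[Corollary~15.4 of~Conjecture~15.3 proved in~15.6]{BFG}, which you do not cite.

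Second, you assert that ``the affine Dynkin graph of an $ADE$ type is again simply laced, so Remark~\ref{4dim} applies verbatim.'' This misses a new local model that appears only in the affine case: for $G_\aff=\SL(2)_\aff$ (the Kronecker quiver, with two edges between $i$ and $j$) and $\gamma=\alpha_i+\alpha_j$, one has $\BC[\uoZ^\gamma]=\BC[w_i,w_j,y_i,y_j,y_{ij}^{\pm1}]/(y_iy_j-y_{ij}(w_i-w_j)^2)$, by~\cite[Example~2.8.3]{fra}. This quadratic factor in $(w_i-w_j)$ is genuinely different from the two-vertex local models in Remark~\ref{4dim} and must be matched against the corresponding localization of $H_*^{T(V)_\cO}(\cR)$ at a generic point of that hyperplane; without this check the codimension-one extension argument is incomplete for affine type~$A_1$.

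The $\cR^+$ part of your argument, modeled on Corollary~\ref{Pestun} and again using normality of $Z^\alpha$, is fine.
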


\begin{proof}
We essentially repeat the proof of~Theorem~\ref{pestun}. The only problem
arises with the application of~\ref{prop:flat} whose assumptions can be
verified only conditionally. Namely, we do not know if $Z^\alpha,\oZ^\alpha$
are Cohen-Macaulay. We do know that all the fibers of horizontal factorization
$\pi_\alpha\colon Z^\alpha\to\BA_{\operatorname{horizontal}}^\alpha$ are of the same
dimension $|\alpha|$~(\cite[Corollary~15.4 of~Conjecture~15.3 proved in~15.6]{BFG}).
Note that for the condition $\pi_*\CO_\cM\iso j_*\pi_*\CO_{\cM^\bullet}$ i.e.
$\CO_\CM\iso j_*\CO_{\CM^\bullet}$
of~\ref{prop:flat} it suffices to use the $S_2$-property
i.e.\ the normality of $Z^\alpha,\oZ^\alpha$
(see~\ref{rem:conditions}).
The normality of $Z^\alpha$ (and in fact the Cohen-Macaulay property and even
the Gorenstein property) is proved in type $A$ in~\cite[Corollary~3.6]{bf14}.

Note that if we redefine $\oZ^\alpha$ as the affinization of the space of degree
$\alpha$ based maps from $\BP^1$ to the Kashiwara flag scheme of $G_\aff$,
then the normality (and hence the first part of the theorem) would follow
unconditionally.

Note also that in the affine case one more possibility for $\uoZ^\gamma,\
|\gamma|=2$, arises; namely, $G_\aff=\SL(2)_\aff,\ \gamma=\alpha_i+\alpha_j$.
Then according to~\cite[Example~2.8.3]{fra},
$\BC[\uoZ^\gamma]=\BC[w_i,w_j,y_i,y_j,y_{ij}^{\pm1}]/(y_iy_j-y_{ij}(w_i-w_j)^2)$.

If~\cite[Conjecture~15.3]{BFG} holds for a symmetric Kac-Moody Lie algebra $\fg$, then
the above argument shows that the spectrum of $H_*^{\GL(V)_\CO}(\CR)$ is isomorphic
to the affinization of $\oZ^\alpha$.
\end{proof}

\subsection{Jordan quiver}
\label{Jordan}
We start with a general result. For a reductive group $G$ and its adjoint
representation $\bN=\fg$ we consider the variety of triples $\CR\to\Gr_G$.
Its equivariant Borel-Moore homology $H_*^{G_\CO}(\CR)$ equipped with the
convolution product forms a commutative algebra, and its spectrum is the
Coulomb branch $\cM_C(G,\fg)$.

\begin{Proposition}
\label{prop:ad}
The birational isomorphism
$(\ft^\circ\times T^\vee)/\Weyl\simeq\CM_C(G,\fg)|_{\Phi^{-1}(\ft^\circ/\Weyl)}$
of~\ref{cor:coordinate} extends
to a biregular isomorphism $(\ft\times T^\vee)/\Weyl\iso\CM_C(G,\fg)$.
\end{Proposition}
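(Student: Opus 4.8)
The plan is to verify the hypotheses of the criterion \ref{prop:flat} for $\mathcal M=(\ft\times T^\vee)/\Weyl$ and $\mathcal M_C=\CM_C(G,\fg)$, mapping to $\ft/\Weyl$ via the first projection (resp.\ the natural map $\Phi$), and then to check that the given birational isomorphism $\Xi^\circ$ is biregular away from codimension $2$. First I would observe that $(\ft\times T^\vee)/\Weyl$ is a normal affine variety with a free $\Weyl$-action generically, hence Cohen--Macaulay (it is a quotient of a smooth variety by a finite group; in fact $\ft\times T^\vee$ is smooth, so the invariants are Cohen--Macaulay by Hochster--Eagon), and the projection to $\ft/\Weyl$ is flat since $\ft\times T^\vee\to\ft$ is a trivial bundle and flatness descends along the finite surjection $\ft\to\ft/\Weyl$ (equivalently, all fibers are copies of $T^\vee$ modulo stabilizers, of constant dimension, over a smooth base, with Cohen--Macaulay total space). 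This furnishes the required Cohen--Macaulay/flat data on the $\mathcal M$-side; on the $\mathcal M_C$-side these are known from \cite{main}.

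Next I would nail down the codimension-one behavior. By \ref{cor:coordinate}, $\Xi^\circ$ is already a biregular isomorphism over $\Phi^{-1}(\ft^\circ/\Weyl)$, i.e.\ over the complement of the (generalized) root hyperplanes in $\ft/\Weyl$. So it remains to check that $\Xi^\circ$ extends biregularly over a generic point of each root hyperplane $\{\alpha=0\}\subset\ft/\Weyl$. At such a generic point the relevant local model is governed by the $\SL(2)$ (or $\mathrm{PGL}(2)$) subgroup attached to $\alpha$: the fixed-point analysis reduces, exactly as in the proof of \thmref{pestun}, to comparing the localization of $H^{T_\CO}_*(\CR)$ (which by the abelianization/change-of-representation results \ref{abel}, \ref{sec:chang-repr} and the rank-one computation becomes an explicit algebra — the coordinate ring of $T^*T^\vee$ for the rank-one torus, possibly with the adjoint weight contributing a factor) with the localization of $\BC[(\ft\times T^\vee)/\Weyl]$ along that hyperplane. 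The key point is that for the adjoint representation the weight of the rank-one root subgroup appears with the right multiplicity so that the two localized modules over $\BC[\ft]_t$ agree; this is the rank-one computation underlying \cite{MR2135527}.

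Then I would invoke the second part of \ref{prop:flat}: since $\Xi^\circ$ is biregular in codimension $1$ and both sides are Cohen--Macaulay (hence $S_2$) and flat over $\ft/\Weyl$, the isomorphism extends over the whole of $\ft/\Weyl$, i.e.\ $\Xi^\circ$ extends to a biregular isomorphism $(\ft\times T^\vee)/\Weyl\iso\CM_C(G,\fg)$. I would also remark that this matches the expected description of the Coulomb branch of the pure-adjoint theory as (a cover related to) the moduli space of $G^\vee$-monopoles, consistent with the discussion in \subsecref{subsec:adjoint}.

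The main obstacle I expect is the codimension-one check at a generic point of a root hyperplane: one must pin down precisely which rank-one convolution algebra arises for the adjoint representation (the adjoint weight $\alpha$ enters the formula for the fundamental classes $\sy$), and confirm that after localization the extra factors are invertible so that the naive identification $y\leftrightarrow$ (rank-one class times root factors) is already regular and invertible there. Everything else — Cohen--Macaulayness of $(\ft\times T^\vee)/\Weyl$, flatness of the projection, and the codimension-$2$ extension step — is routine given the cited results.
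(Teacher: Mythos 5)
Your proposal is correct in substance, but it takes a genuinely different route from the one the paper gives. The paper notes first that the statement is already a special case of a general result in the companion paper (\ref{prop:ad2}), and then supplies a second, self-contained proof by directly adapting Bezrukavnikov--Finkelberg--Mirkovi\'c's Theorem~7.3 in \cite{MR2135527} from equivariant $K$-theory to equivariant Borel--Moore homology: the role of the map $SS=\jmath_*\circ\mathrm{gr}$ on $K^T(D_M)$ is taken over by the map $SS'$ sending a weakly $T$-equivariant holonomic $D$-module to the pushforward of its characteristic cycle, and then their Lemmas~7.6 and 7.8 carry over essentially word for word. This is a purely local argument on the flag variety and does not pass through the flatness criterion at all. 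Your proof, by contrast, reuses the codimension-two extension machinery \ref{prop:flat} that the paper deploys for \thmref{pestun}, \thmref{Coulomb_quivar} and indeed \propref{prop:ad_taut} (which is proved this way and explicitly reduces to the $\GL(2)$ case of \propref{prop:ad} at the root hyperplanes). This is a legitimate and arguably more modular approach: Cohen--Macaulayness of $(\ft\times T^\vee)/\Weyl$ by Hochster--Eagon, flatness over $\ft/\Weyl$ by miracle flatness, and then a localization check at a generic point of each root hyperplane, where the adjoint representation of $G$ restricted to $T$ contributes only the root $\pm\alpha$ after the other weight spaces $\g_\beta$ become invertible. You should be explicit, though, that the residual rank-one verification (that $\CM_C(\GL(2),\gl(2))\cong(\ft_2\times T_2^\vee)/S_2$) cannot itself be obtained by another codimension reduction; it is exactly the rank-one instance of the BFM-type argument the paper replays, or equivalently an input from \ref{prop:ad2}. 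So your approach does not avoid the BFM-style computation, it isolates it in rank one, which is a reasonable trade-off but should be stated as an explicit dependency rather than waved at. With that caveat, both proofs are valid; the paper's is shorter given \ref{prop:ad2}, while yours is closer in spirit to the rest of \secref{QGT} and would make \propref{prop:ad_taut} and \propref{prop:ad} into instances of the same mechanism.
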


Here we denote the Weyl group by $\Weyl$ in order to avoid a conflict
with the vector space $W$.

This is nothing but \ref{prop:ad2}. We give another proof.

\begin{proof}
It is but a slight variation of the proof of~\cite[Theorem~7.3]{MR2135527}.
We have to replace the equivariant $K$-theory in~\cite[Theorem~7.3]{MR2135527}
by the equivariant Borel-Moore homology. More precisely, in notations
of~\cite[Theorem~7.3]{MR2135527}, replacing the torus $H$ by our Cartan torus
$T$, we have to prove the following analogue of~\cite[Lemma~7.6]{MR2135527}:
In $H^T_n(M)$ we have an equality
$\imath^*\jmath_*(H^T_{n+2\dim M'}(\CL'))=\bigoplus_\mu i_{\mu*}(H^T_n(\mu))$ where
$\imath$ stands for the closed embedding $M\hookrightarrow T^*M$, while
$\jmath$ stands for the closed embedding $\CL'\hookrightarrow T^*M$
Recall that the proof of~\cite[Lemma~7.6]{MR2135527} used a homomorphism
$SS\colon K^T(D_M)\to K^T(T^*M)$ from the Grothendieck group of weakly
$T$-equivariant holonomic $D$-modules on $M$. By definition,
$SS=\jmath_*\circ\on{gr}$ where $\on{gr}$ stands for
the associated graded with respect to a good filtration.
Instead, we will use a homomorphism
$SS'\colon K^T(D_M)\to H^T_{2\dim M'}(T^*M)$ associating to a weakly $T$-equivariant
$D$-module $\CF$ the pushforward $\jmath_*$ of the fundamental
class $CC(\CF)\in H_{2\dim M'}^T(\CL')$ of its characteristic cycle. Note that
$CC=\on{symb}\circ\on{gr}$ where $\on{symb}$ stands for the $(2\dim M')$-th (top)
graded component of the Chern character (in the {\em homological} grading).
With this replacement, the proof of~\cite[Lemma~7.6]{MR2135527} carries over
to our homological situation. E.g.\ an equality
$i^*_\nu SS'(j_{\mu!}\CO_{M_\mu})=0\in H^T_0(T^*_\nu M')$ follows from the similar one
in $K$-theory of the above cited proof and the fact that $i^*_\nu$ commutes with the
Chern character (defined with respect to the smooth ambient variety $T^*M'$)
and shifts the homological grading by $2\dim M'$, while $\jmath_*$ commutes
with the top part of the Chern character.

Also, the proof of~\cite[Lemma~7.8]{MR2135527} carries over to our homological
situation essentially word for word. The proposition is proved.
\end{proof}

Now let $V$ be an $n$-dimensional vector space. We consider the adjoint
action of $G=\GL(V)$ on $\on{End}(V)=\fg$. We choose a base in $V$; it gives
rise to a Cartan torus $T(V)$ along with an identification
$T^\vee(V)\simeq\BG_m^n,\ \ft(V)\simeq\BA^n$. From~\propref{prop:ad} we obtain
an isomorphism $\on{Sym}^n\cS_0\iso\CM_C(\GL(V),\on{End}(V))$ where
$\cS_0=\BG_m\times\BA^1$ (see \cref{factorization}). This is the Coulomb branch
of the pure quiver gauge theory for the Jordan quiver. Now we consider the Coulomb branch
of the Jordan quiver gauge theory with framing $W=\BC^l$. Recall that $\cS_l$ is the
hypersurface in $\BA^3$ given by the equation $xy=w^l$.

\begin{Proposition}
\label{prop:ad_taut}
The birational isomorphism
$(\ft^\circ(V)\times T^\vee(V))/S_n\simeq
\CM_C(\GL(V),\on{End}(V)\oplus V\otimes\BC^l)|_{\Phi^{-1}(\ft^\circ(V)/S_n)}$
of~\ref{cor:coordinate} extends to a biregular isomorphism
$\on{Sym}^n\cS_l\iso\linebreak[3]\CM_C(\GL(V),\on{End}(V)\oplus V\otimes\BC^l)$.
The projection $\cM_C\to\ft(V)/S_n=\on{Sym}^n\BA^1$ is the $n$-th
symmetric power of the projection $\cS_l\to\BA^1\colon (x,y,w)\mapsto w$.
\end{Proposition}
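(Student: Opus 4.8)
The plan is to apply the criterion~\ref{prop:flat} (together with the refinement~\ref{rem:conditions}), taking $\cM=\on{Sym}^n\cS_l$ and $\varpi=\on{Sym}^n(\on{pr}_w)\colon \on{Sym}^n\cS_l\to\on{Sym}^n\BA^1=\ft(V)/S_n$, where $\on{pr}_w\colon \cS_l\to\BA^1$ sends $(x,y,w)$ to $w$. First I would verify the hypotheses. The surface $\cS_l$ is smooth for $l\leq1$ and is the Kleinian singularity $\BC^2/\mu_l$ for $l\geq2$, so $\on{Sym}^n\cS_l=(\cS_l)^n/S_n$ is in all cases a finite quotient of a variety with at worst quotient singularities; it therefore has rational singularities and in particular is Cohen--Macaulay. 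The morphism $\on{pr}_w$ is surjective with all fibres of dimension $1$, so every fibre of $\varpi$ has dimension $n$; since $\on{Sym}^n\BA^1\cong\BA^n$ is smooth, miracle flatness shows $\varpi$ is flat. (For $l=0$ the proposition is nothing but~\propref{prop:ad} for $\GL(V)$ acting on $\fgl(V)$, because $(\ft(V)\times T^\vee(V))/S_n=\on{Sym}^n(\BA^1\times\BG_m)=\on{Sym}^n\cS_0$; so assume $l\geq1$ from now on.)

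Next I would write down the birational isomorphism over $\ft(V)/S_n$ provided by~\ref{cor:coordinate}. The generalized roots of $(\GL(V),\on{End}(V)\oplus V\otimes\BC^l)$ are $w_r-w_s$ $(r\neq s)$ and $w_r$, so $\ft^\circ(V)$ is the locus of pairwise distinct nonzero eigenvalues. Over $\ft^\circ(V)/S_n$ the Coulomb branch equals $(\ft^\circ(V)\times T^\vee(V))/S_n$ by~\ref{cor:coordinate}, while $\cS_l$ over $\{w\neq0\}$ is $\{(x,w^l/x,w)\}\cong\BG_m\times\BG_m$, so $\on{Sym}^n\cS_l$ over $\varpi^{-1}(\ft^\circ(V)/S_n)$ is also $(\ft^\circ(V)\times T^\vee(V))/S_n$. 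The resulting birational isomorphism between $\on{Sym}^n\cS_l$ and $\CM_C(\GL(V),\on{End}(V)\oplus V\otimes\BC^l)$ is the identity on the coordinates $w_r$, hence lies over $\ft(V)/S_n$; by~\ref{prop:flat} it is enough to check that it extends biregularly at a generic point $t$ of each of the two divisors of $\ft(V)/S_n$, namely $\{w_r=w_s\}$ and $\{w_r=0\}$, and by~\ref{prop:flat}(2) and~\ref{rem:conditions} this amounts to matching the localizations at $t$ as modules over $\BC[\ft(V)]_t$.

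At a generic point of $\{w_r=w_s\}$ (the common value being nonzero) the fixed-point analysis reduces the question to the local theory of $\GL(V'')$ on its adjoint representation $\fgl(V'')$ with $V''$ two-dimensional, since every remaining weight of $\on{End}(V)\oplus V\otimes\BC^l$ --- in particular the weights $w_r,w_s$ of the matter $V\otimes\BC^l$ --- is invertible at $t$; by~\propref{prop:ad} that theory is $\on{Sym}^2\cS_0$, which matches $\on{Sym}^n\cS_l$ near $t$ (two smooth copies of $\cS_l$ colliding), exactly as in the proof of~\thmref{pestun}. At a generic point of $\{w_r=0\}$ the adjoint loop $\on{End}(V)$ contributes \emph{no} nontrivial character vanishing at $t$ (every weight $w_p-w_q$ of $\on{End}(V)$ with $p\neq q$ stays invertible there), so the only interacting summand is $\on{Hom}(W_r,V_r)\cong\BC^l$, on which the corresponding factor $\BG_m\subset T(V)$ acts with weight $1$; by~\ref{abel} the local Coulomb branch $\CM_C(\BG_m,\BC^l)$ is $\cS_l=\Spec\BC[x,y,w_r]/(xy-w_r^l)$, which matches $\on{Sym}^n\cS_l$ near $t$ (its factor at the vanishing coordinate being exactly $\cS_l$ and the other $n-1$ factors smooth), the extension being checked on coordinates just as in the proof of~\thmref{Coulomb_quivar}. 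Applying~\ref{prop:flat} we obtain $\on{Sym}^n\cS_l\iso\CM_C(\GL(V),\on{End}(V)\oplus V\otimes\BC^l)$; and as this isomorphism lies over $\ft(V)/S_n$ and is the identity on the $w$-coordinates, it carries $\Phi$ to $\varpi=\on{Sym}^n(\on{pr}_w)$, which is the final assertion.

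I expect the only delicate point to be the analysis at $\{w_r=0\}$: one must make sure the adjoint loop $\on{End}(V)$ genuinely decouples there, so that the transverse local structure is the bare surface $\cS_l$ rather than a twisted version of it, and that the \emph{given} birational map extends across this divisor --- which is forced once the $\BC[\ft(V)]_t$-module structure is pinned down, using that the fibre of $\varpi$ over $w_r=0$ is the node $\{xy=0\}$ on both sides.
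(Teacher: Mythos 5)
Your proof is correct and is essentially the same as the paper's: the paper's (two-sentence) proof says to repeat the argument of Theorem~\ref{Coulomb_quivar}, checking at generic points of generalized root hyperplanes, reducing to $\GL(2)$ on its adjoint (via Proposition~\ref{prop:ad}) on the divisors $\{w_r=w_s\}$ and to the abelian case on the divisors $\{w_r=0\}$, which is exactly what you do. You have usefully spelled out the auxiliary hypotheses of \ref{prop:flat} (Cohen--Macaulayness of $\on{Sym}^n\cS_l$ via rational singularities, flatness of $\varpi$ by miracle flatness) that the paper leaves implicit.
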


\begin{proof}
Similar to the one of~Theorem~\ref{Coulomb_quivar}. More precisely, the proof is reduced
to a consideration at the generic points of the generalized root hyperplanes.
If a generalized root is a root $w_i-w_j$ of $\GL(V)$, then we are reduced to
the $G=\GL(2)$ case of~\propref{prop:ad}. If a generalized root is $w_i$,
we are in the abelian case.
\end{proof}


\subsection{Towards geometric Satake correspondence for Kac-Moody Lie algebras}

In this subsection we formulate conjectural geometric Satake
correspondence for Kac-Moody Lie algebras using Coulomb
branches.\footnote{When the quiver is of affine type $A$, the
  conjecture was given in \cite[\S7.8]{2016arXiv160602002N} in terms of
  bow varieties. This subsection is written afterwards, but the origin
  of the conjecture in \cite{2016arXiv160602002N} is what we explain
  here.} See \cite{fnkl_icm} for more thorough historical accounts.

Let us assume that a quiver $Q=(\II,\Qo)$ has no edge loops, but is
not necessarily of finite nor affine type. We have the associated
symmetric Kac-Moody Lie algebra $\fg_{\mathrm{KM}}$.

Taking $\II$-graded vector spaces $V = \bigoplus_i V_i$,
$W = \bigoplus_i W_i$, we define $\bN$ as above. Taking simple roots
$\alpha_i$ and fundamental weights $\omega_i$ ($i\in\II$), we assign
two weights $\lambda = \sum_{i\in Q_0} \dim W_i\omega_i$,
$\mu = \lambda - \sum_{i\in Q_0} \dim V_i\alpha_i$.
Let $\bM = \bN\oplus\bN^*$, and
$\bmu\colon\bM\to \operatorname{Lie}\GL(V_Q)$ be the moment map with
respect to the natural $\GL(V_Q)$-action on $\bM$.
Let $\fM(\lambda,\mu)$, $\fM_0(\lambda,\mu)$ be quiver varieties
defined by the third-named author \cite{Na-quiver,Na-alg}:
\begin{equation*}
  \fM_0(\lambda,\mu) = \bmu^{-1}(0)\dslash \GL(V_Q),\quad
  \fM(\lambda,\mu) = \bmu^{-1}(0)\dslash_\chi \GL(V_Q),
\end{equation*}
where $\chi\colon\GL(V_Q)\to\CC^\times$ is the character given by
$\chi(g) = \prod_{i\in Q_0}\det g_i$, and $\dslash_\chi$ is the
geometric invariant theory quotient with respect to the polarization
$\chi$. By its construction $\fM_0(\lambda,\mu)$ is an affine variety,
and we have a projective morphism
$\pi\colon\fM(\lambda,\mu)\to\fM_0(\lambda,\mu)$.
It is known that $\fM(\lambda,\mu)$ is nonsingular and $\pi$ is
semi-small.
Let $\La(\lambda,\mu)$ be the inverse image of $0$ under $\pi$. It is
known to be a half-dimensional subvariety in $\fM(\lambda,\mu)$.

Then the main result in \cite{Na-quiver,Na-alg} says that
\begin{equation*}
  \bigoplus_\mu H_{\mathrm{top}}(\La(\lambda,\mu))
\end{equation*}
has a structure of an integrable highest weight representation
$V(\lambda)$ of $\fg_{\mathrm{KM}}$ with highest weight
$\lambda$. Moreover the summand $H_{\mathrm{top}}(\La(\lambda,\mu))$
corresponds to the weight space of $V(\lambda)_\mu$ with weight
$\mu$. Here `$\mathrm{top}$' denotes the top degree homology group,
i.e.\ of degree $2\dim\La(\lambda,\mu)$.
Since $\pi$ is semi-small, we can identify
$H_{\mathrm{top}}(\La(\lambda,\mu))$ with the isotypical component of
$\mathrm{IC}(\{0\})$ in the direct image
$\pi_*(\CC_{\fM(\lambda,\mu)}[\dim\fM(\lambda,\mu)])$.

Let us turn to the corresponding Coulomb branch
$\cM_C(\lambda,\mu) = \cM_C(\GL(V),\bN)$. By
\cite[Remark~4.5]{2015arXiv150303676N} and references therein, we
identify $\chi\colon\GL(V_Q)\to\CC^\times$ with the cocharacter
$\chi = \pi_1(\chi)^\wedge\colon \CC^\times = \pi_1(\CC^\times)^\wedge
\to \pi_1(\GL(V_Q))^\wedge$,
where $(\ )^\wedge$ denotes the Pontryagin dual. Recall that
$\pi_1(\GL(V_Q))^\wedge$, which is a torus of dimension $\# \II$, acts
naturally on $\cM_C(\lambda,\mu)$ as in Remarks~\ref{cartan_grading},
\ref{Cartan_grading}. In physics terminology a K\"ahler parameter for Higgs branch is an equivariant parameter for Coulomb branch.

Let us define the corresponding attracting set
\begin{equation*}
  \fA_\chi(\lambda,\mu) \defeq \left\{ x\in\cM_C(\lambda,\mu)
  \middle| \exists \lim_{t\to 0} \chi(t) x \right\},
\end{equation*}
which is a closed subvariety in $\cM_C(\lambda,\mu)$, possibly empty in
general.

These $\cM_C(\lambda,\mu)$, $\fA_\chi(\lambda,\mu)$ are related to
representation theory in many situations:

(a) Recall that $\cM_C(\lambda,\mu)$ is the (usual) transversal slice
$\oW^{\lambda}_{\mu}$ in the affine Grassmannian $\Gr_G$ if $Q$ is of
type $ADE$ and $\mu$ is dominant. (We ignore the diagram
automorphism~$*$.)  Then $\fA_\chi(\lambda,\mu)$ was studied in
\cite{MV2}. It is nonempty if and only if the corresponding weight
space $V(\lambda)_\mu$ is nonzero, it is of pure dimension
$2|\lambda-\mu|$, and $H_{\mathrm{top}}(\fA_\chi(\lambda,\mu))$ is
naturally isomorphic to $V(\lambda)_\mu$. It can be also considered as
a stalk of the hyperbolic restriction \cite{Braden} of the
intersection cohomology complex $\mathrm{IC}(\oW^{\lambda}_{\mu})$ of
$\oW^{\lambda}_{\mu}$ with respect to $\chi$.
Moreover the stalk of $\mathrm{IC}(\oW^{\lambda}_{\mu})$ at $\mu$ is
the associated graded $\gr V(\lambda)_\mu$ with respect to the
Brylinski-Kostant filtration up to shift~\cite{Lus-ast,1995alg.geom.11007G}.

(b) Suppose that $Q$ is of affine type. Then $\cM_C(\lambda,\mu)$ is
conjecturally the Uhlenbeck partial compactification of a moduli space
of instantons on the Taub-NUT space invariant under a cyclic group
action, which is proved in affine type $A$
\cite{2016arXiv160602002N}. When $\mu$ is dominant, it is
conjecturally isomorphic to the Uhlenbeck partial compactification of
a moduli space of instantons on $\RR^4$ invariant under a cyclic group
action, which is again proved in affine type A
\cite{2016arXiv160602002N}. In
\cite{braverman-2007,MR2900549,MR3134906} the first and second-named
authors conjectured that statements as in (a) hold for affine Kac-Moody
Lie algebras,\footnote{Strictly speaking, only instantons for
  simply-connected groups are considered. Correspondingly
  representations descend to the adjoint group.} where the definition
of the affine Brylinski-Kostant filtration was later corrected
in~\cite{MR2855083}.

(c) When $\cM_C(\lambda,\mu)$ is isomorphic to a quiver variety (for
different quiver, and $V$, $W$), the attracting set
$\fA_\chi(\lambda,\mu)$ is the tensor product variety in
\cite{Na-Tensor}. In particular, the number of its irreducible
components is given by the tensor product multiplicities. In this way,
some of conjectures in \cite{braverman-2007,MR2900549,MR3134906} can
be proved for affine type $A$ by being combined with I.~Frenkel's
level-rank duality.

(d) Hyperbolic restrictions of the intersection cohomology complex
$\on{IC}$ of the Uhlenbeck partial compactification of a moduli space
of instantons on $\RR^4$ was studied in
\cite{2014arXiv1406.2381B}. This Uhlenbeck space is conjecturally
isomorphic to $\cM_C(\omega_0, \omega_0 - n\delta)$
($n\in\ZZ_{\ge 0}$) for an affine quiver, where $\omega_0$ is the
fundamental weight corresponding to the special vertex $0$, and
$\delta$ is the primitive positive imaginary root.
It follows that the direct sum (over $n$) of hyperbolic restriction of
$\on{IC}$ is isomorphic to
$\on{Sym}(\bigoplus_{d>0} z^{-d}\otimes\mathfrak h)$, where
$\mathfrak h$ is the Cartan subalgebra of the underlying finite
dimensional Lie algebra. (See also \cite[\S7.5]{2016arXiv160406316N}.)

In above examples, we assume that $\mu$ is dominant. (It is so in all
known examples in (c).) For the original geometric Satake
correspondence, we \emph{do} understand all weight spaces
$V(\lambda)_\mu$ not necessarily dominant.
In (b) there is a candidate for a space which we should consider when
$\mu$ is not necessarily dominant in \cite{MR3134906}, which was later
found out to be close to the quiver description of bow varieties in
\cite{2016arXiv160602002N}, but the conjecture there was not checked
even for affine type $A$.

Since Coulomb branches are defined for any quiver without assumption
$\mu$ dominant, not necessarily of finite or affine types, we propose
the following conjecture, which makes the situation much simpler:

\begin{Conjecture}\label{conj:satake}
    \textup{(1)} $\fA_\chi(\lambda,\mu)$ is empty if and only if
    $V(\lambda)_\mu = 0$. Moreover
    $\cM_C(\lambda,\mu)^{\chi(\CC^\times)}$ is a single point if it is
    nonempty.

    \textup{(2)} The intersection of $\fA_\chi(\lambda,\mu)$ with
    symplectic leaves of $\cM_C(\lambda,\mu)$ are lagrangian. Hence
    the hyperbolic restriction functor $\Phi$ for $\chi$ \cite{Braden}
    is hyperbolic semi-small in the sense of
    \cite[3.5.1]{2014arXiv1406.2381B}. In particular,
    $\Phi(\mathrm{IC}(\cM_C(\lambda,\mu)))$ remains perverse, and is
    isomorphic to $H_{\mathrm{top}}(\cA_\chi(\lambda,\mu))$.

    \textup{(3)} The direct sum
    \begin{equation*}
        \bigoplus_\mu \Phi(\mathrm{IC}(\cM_C(\lambda,\mu))) =
        \bigoplus_\mu H_{\mathrm{top}}(\cA_\chi(\lambda,\mu))
    \end{equation*}
    has a structure of a $\fg_{\mathrm{KM}}$-module, isomorphic to
    $V(\lambda)$ so that each summand is isomorphic to
    $V(\lambda)_\mu$.
\end{Conjecture}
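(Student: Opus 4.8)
\textbf{A strategy for \cref{conj:satake}.} The plan is to reduce the three assertions to the settings where they are essentially known — type $ADE$ with $\mu$ dominant, governed by \cite{MV2}, and the cases (c) in which $\cM_C(\lambda,\mu)$ is itself a quiver variety \cite{Na-quiver,Na-alg,Na-Tensor} — and then to propagate them to arbitrary $\mu$ and arbitrary symmetric Kac--Moody type by means of the structural results on generalized slices proved above. The tools are the identifications $\cM_C(\lambda,\mu)\cong\oW^{\lambda^*}_{\mu^*}$ of \thmref{Coulomb_quivar} (and its conditional Kac--Moody analogue \thmref{conditional} where it applies), the factorization isomorphism \propref{prop:factor} together with the rank~$\le2$ local models of \remref{4dim}, the embedding into $G(z)$ of \subsecref{G(z)}, the multiplication and stabilization morphisms of \subsecref{multiplication}, and the $\pi_1(\GL(V))^\wedge$-action of \remref{Cartan_grading}, which is built into the construction of $\cM_C$.

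\textbf{Part (1).} Since $\chi$ is a \emph{regular} cocharacter of $\pi_1(\GL(V))^\wedge$, the locus $\cM_C(\lambda,\mu)^{\chi(\CC^\times)}$ coincides with the fixed locus of the whole torus, which under \subsecref{G(z)} is the Cartan $T\subset G$ acting by conjugation on $\oW^{\lambda^*}_{\mu^*}\cong B_1[[z^{-1}]]z^{\mu^*}B_{-,1}[[z^{-1}]]\cap\ol{G[z]z^{\lambda^*}G[z]}$. A short computation with the decomposition $g=b_+z^{\mu^*}b_-$ shows that the conjugation-fixed locus of this intersection is $\{z^{\mu^*}\}$ when $z^{\mu^*}\in\ol{G[z]z^{\lambda^*}G[z]}$ and is empty otherwise; and $z^{\mu^*}\in\ol{G[z]z^{\lambda^*}G[z]}$ exactly when the dominant conjugate of $\mu$ is $\le\lambda$, i.e.\ precisely when $V(\lambda)_\mu\ne0$. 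As the attracting set of a nonempty affine variety with a $\CC^\times$-action is nonempty if and only if the fixed locus is, this proves (1) in type $ADE$. For a general Kac--Moody quiver, where this loop-group model is not available, I would instead derive the same dichotomy from the factorization structure of $\cM_C$ and \remref{4dim}, together with the classical description of the set $\{\mu:V(\lambda)_\mu\ne0\}$.

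\textbf{Part (2).} Granting the symplectic-leaf description of \remref{stabilization} — that $\cM_C(\lambda,\mu)=\bigsqcup_{\mu\le\nu\le\lambda,\ \nu\text{ dominant}}\cW^{\nu^*}_{\mu^*}$ is the leaf stratification, with transversal slice to $\cW^{\nu^*}_{\mu^*}$ equal to $\oW^{\lambda^*}_{\nu^*}$ — the remaining content of (2) is the equality $\dim\bigl(\fA_\chi(\lambda,\mu)\cap\cW^{\nu^*}_{\mu^*}\bigr)=\tfrac12\dim\cW^{\nu^*}_{\mu^*}$. For $\mu$ dominant this is the Mirkovi\'c--Vilonen dimension estimate for $\ol{\Gr}{}^{\lambda^*}\cap S^-_{\mu^*}\cap S_{\nu^*}$; for arbitrary $\mu$ I would transport it along the Poisson, $\chi$-equivariant birational morphisms $\oW^{\lambda^*}_{\mu^*}\to\oW^{\lambda^*+\varsigma^*}_{\mu^*+\varsigma^*}$ of \subsecref{multiplication} and pass to the limit $\varsigma\to\infty$, using that the attracting sets and the leaf dimensions stabilize (the Coulomb-branch shadow of the stabilization of quiver varieties recalled in \remref{stabilization}). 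Lagrangianness of $\fA_\chi$ along each leaf is precisely hyperbolic semismallness in the sense of \cite[3.5.1]{2014arXiv1406.2381B}; perversity of $\Phi(\mathrm{IC}(\cM_C(\lambda,\mu)))$ and its identification with $H_{\mathrm{top}}(\fA_\chi(\lambda,\mu))$ then follow from the general formalism of hyperbolic restriction.

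\textbf{Part (3), and the main obstacle.} For (3) I would realize the Chevalley generators geometrically. The minuscule two-dimensional slice $\oW^{\omega_i}_{\omega_i-\alpha_i}\cong\cS_1\cong\BA^2$ (\propref{prop:factor}, since $\langle\omega_i,\alphavee_i\rangle=1$), combined with the multiplication and stabilization morphisms of \subsecref{multiplication}, yields Hecke-type correspondences between $\cM_C(\lambda,\mu)$ and $\cM_C(\lambda,\mu\pm\alpha_i)$ — the Coulomb-branch counterparts of the box-adding and box-removing correspondences on quiver varieties — whose action on the $H_{\mathrm{top}}$ of the attracting sets defines $f_i$, with $e_i$ obtained by transposition via the involution $\iota^{\lambda^*}_{\mu^*}$ of \subsecref{involution} and $h_i$ acting through the $\pi_1$-grading. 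The relations $[e_i,f_j]=\delta_{ij}h_i$, the Serre relations and local finiteness involve only sub-data of rank $\le2$, so by \propref{prop:factor} and \remref{4dim} they reduce to $\fg=\mathfrak{sl}_2$, $\mathfrak{sl}_2\oplus\mathfrak{sl}_2$, $\mathfrak{sl}_3$ and affine $\mathfrak{sl}_2$, where they can be checked directly as in \subsecref{an example}; the resulting integrable module is $V(\lambda)$ since it is generated by the class of the point $\cM_C(\lambda,\lambda)$, a highest-weight vector of weight $\lambda$, and its character equals $\sum_\mu\dim V(\lambda)_\mu\,e^\mu$ by Parts~(1)--(2) and the matchings in cases (a)--(c). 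I expect the principal obstacle to be Part~(2) in types that are neither finite nor affine: there one has no geometric Satake or quiver-variety model from which to import the dimension estimate, and even the symplectic-leaf decomposition of $\cM_C(\lambda,\mu)$ and the smoothness of the strata $\cW^{\nu^*}_{\mu^*}$ are so far only conjectural (\remref{stabilization}). Proving (2) in that generality seems to require either establishing the leaf structure and the lagrangianness of $\fA_\chi$ intrinsically from the BFN construction — presumably through the monopole formula and a stratum-by-stratum analysis organized by factorization — or first proving the conjectural Uhlenbeck- and bow-variety models of \remref{rem:Takayama} in greater generality; either route demands considerably more than the codimension-$\le1$ arguments that suffice for the theorems proved above.
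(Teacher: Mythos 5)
The statement you are addressing is a \emph{conjecture}, not a theorem: the paper does not prove it. The authors record after the statement that Conjecture~\ref{conj:satake} (except part~(2)) is checked in finite type in~\cite{2017arXiv170900391K}, give the toric hyper-K\"ahler case as further evidence via~\cite{2017arXiv170903004B}, and leave the Kac--Moody case open. So there is no ``paper's own proof'' to compare against; you have written a strategy for an open problem.

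As a strategy, your outline is sensible and in several places aligned with what is known, and you are candid about the main gap, but let me flag some places where the sketch glosses over real difficulties. In Part~(1), the reduction to the $G(z)$ model of \subsecref{G(z)} is only available in type $ADE$; for a general symmetric Kac--Moody quiver there is no analogue of $\oW^{\lambda^*}_{\mu^*}$, and the factorization/rank-$\le2$ fallback you propose would have to reconstruct the fixed locus entirely from the BFN presentation $H^{\GL(V)_\cO}_*(\cR)$, which has not been done. Even in the $ADE$ case, the assertion that the $T$-conjugation-fixed locus of $B_1[[z^{-1}]]z^{\mu^*}B_{-,1}[[z^{-1}]]\cap\ol{G[z]z^{\lambda^*}G[z]}$ is exactly $\{z^{\mu^*}\}$ is not a ``short computation'' when $\mu$ is nondominant: uniqueness of the Gauss decomposition gives that $b_\pm\in T_1[[z^{-1}]]$, but one still has to rule out nontrivial elements of $T_1[[z^{-1}]]z^{\mu^*}$ lying in $\ol{G[z]z^{\lambda^*}G[z]}$, which uses the closure condition in a nontrivial way. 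In Part~(2), the leaf decomposition $\cM_C(\lambda,\mu)=\bigsqcup_\nu\cW^{\nu^*}_{\mu^*}$, the smoothness of the strata, and the transversal-slice identification $\oW^{\lambda^*}_{\nu^*}$ are precisely what \remref{stabilization} leaves unresolved for nondominant $\mu$ (and are not even formulated outside finite type), so building the dimension estimate on that stratification is circular unless you first supply those inputs. Your ``pass to the limit $\varsigma\to\infty$'' is suggestive, but the stabilization morphisms $\oW^{\lambda^*}_{\mu^*}\to\oW^{\lambda^*+\varsigma^*}_{\mu^*+\varsigma^*}$ of \subsecref{multiplication} are open embeddings that do \emph{not} cover the whole target, so one must control how much of $\fA_\chi$ escapes as $\varsigma$ grows. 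In Part~(3), the factorization isomorphism of \propref{prop:factor} is an isomorphism over a disjointness locus and does not by itself give a convolution-compatible rank-$\le2$ reduction of the Hecke-type correspondences you want to define on $H_{\mathrm{top}}(\fA_\chi)$; establishing that the correspondences compose in a way that localizes would require an argument that is not present. None of this contradicts your plan, but it means the ``reduction to rank $2$'' step is itself a substantive claim rather than a citation. Given that the conjecture is open, the most useful thing you could do next is pin down the $ADE$ case, where \cite{2017arXiv170900391K} proves (1) and (3), and try to supply the missing part~(2) there via MV-cycle techniques before attempting the Kac--Moody generalization.
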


We naively expect that the usual stalk
$\mathrm{IC}(\cM_C(\lambda,\mu))$ at the fixed point is `naturally'
isomorphic to the associated graded of $V(\lambda)_\mu$ with respect
to a certain filtration which has a representation theoretic
origin. But we do not know what we mean `natural' nor how we define
the filtration in general. Also we could take another generic
cocharacter $\chi$ of $\pi_1(\GL(V_Q))^\wedge$, but we do not know how
to relate it to a representation theoretic object.

This conjecture is checked (except (2)) for finite type
\cite{2017arXiv170900391K}.

As another evidence, we consider the following example, which is not
necessarily finite or affine. Let us suppose $\dim V_i = 1$ for any
$i\in Q_0$. The Higgs branch $\fM_0(\lambda,\mu)$ is a quiver variety,
but it is also an example of a Goto-Bielawski-Dancer toric
hyper-K\"ahler manifold. The Coulomb branch $\cM_C(\lambda,\mu)$ is
also. By a recent work of Braden-Mautner
\cite{2017arXiv170903004B},\footnote{The authors of
  \cite{2017arXiv170903004B} call Goto-Bielawski-Dancer toric
  hyper-K\"ahler manifolds as hypertoric manifolds.} we have a Ringel
duality between perverse sheaves on $\cM_C(\lambda,\mu)$ and those of
$\fM_0(\lambda,\mu)$. In particular,
$\Phi(\mathrm{IC}(\cM_C(\lambda,\mu)))$ is isomorphic to
$H_{\mathrm{top}}(\La(\lambda,\mu))$, hence is isomorphic to the
weight space $V(\lambda)_\mu$.

In fact, \cite{2017arXiv170903004B} and the above conjecture both come
from a `meta conjecture' saying the category of perverse sheaves on a
Higgs branch (e.g.\ $\fM_0(\lambda,\mu)$ and one on the corresponding
Coulomb branch (e.g.\ $\cM_C(\lambda,\mu)$) should be dual in an
appropriate way. We do not know how strata of $\cM_C(\lambda,\mu)$
look like in general, and the category is probably not highest weight
as studied briefly in \cite{2015arXiv151003908N}.
Nevertheless it is expected that the pushforward from
$\fM(\lambda,\mu)$ and the hyperbolic restriction for $\chi$ are
exchanged under the duality.
It should be also related to the symplectic duality
\cite{2014arXiv1407.0964B}.

\begin{Remarks}
  (1)
  Let us take just $V = \bigoplus_i V_i$ and consider the
  corresponding Coulomb branch $\cM_C(\alpha) = \cM_C(\GL(V),\bN)$
  with $\alpha = \sum_{i\in\II} \dim V_i\alpha_i$. It is expected that
  $\cM_C(\alpha)$ has no fixed point with respect to
  $\chi(\CC^\times)$, hence the above construction does not
  work. Instead we consider
  $\cM_C^+(\alpha) \defeq H^{\GL(V)_\cO}_*(\cR^+)$ as in
  \cref{positive}. This is supposed to be a Kac-Moody generalization
  of the zastava space. The same construction with $W$ gives the same
  space $\cM_C^+(\alpha)$, hence we have a morphism
  $\cM_C(\lambda,\mu)\to \cM_C^+(\alpha)$ as in~\cref{Braverm}. It is
  expected that $\cM_C^+(\alpha)$ is a limit of $\cM_C(\lambda,\mu)$
  when $\lambda$, $\mu\to\infty$ keeping $\lambda-\mu = \alpha$.

  We define the attracting set $\fA_\chi^+(\alpha)$ as the set of points
  contracted to $s_\alpha(0)$ by the action of $\chi$, where
  $s_\alpha\colon \BA^\alpha\hookrightarrow\cM_C^+(\alpha)$ is the section as
  in~Corollary~\ref{Pestun}. Note that the action of $\chi$ contracts the whole 
  of $\cM_C^+(\alpha)$ to $s_\alpha(\BA^\alpha)=\cM_C^+(\alpha)^{\chi(\BC^\times)}$,
  cf.~Remark~\ref{cartan_grading}, so that for any $\phi\in\cM_C^+(\alpha)$,
  there exists $\lim\limits_{t\to 0} \chi(t)$. The integrable system
  $\varpi_\alpha^+\colon \cM_C^+(\alpha)\to\BA^\alpha$ is 
  $\chi(\BC^\times)$-equivariant, so $\fA_\chi^+(\alpha)$ coincides with the
  fiber $(\varpi_\alpha^+)^{-1}(0)$ over $0\in\BA^\alpha$. Furthermore,
  since we expect $\cM_C(\lambda,\mu)^{\chi(\BC^\times)}$ to consist of one point
  if nonempty, this point must be fixed with respect to another action of 
  $\BC^\times$ corresponding to the cohomological grading of the Coulomb branch.
  Thus the image of the fixed point under the morphism 
  $\cM_C(\lambda,\mu)\to\cM_C^+(\alpha)$ must be 
  $s_\alpha(0)\in s_\alpha(\BA^\alpha)\subset\cM_C^+(\alpha)$. It follows that
  the image of $\fA_\chi(\lambda,\mu)$ lies in $\fA^+_\chi(\alpha)$.

  Then we expect that the corresponding statements in
  Conjecture~\ref{conj:satake} are true. In particular, the direct sum
  \begin{equation*}
    \bigoplus_\alpha \Phi(\mathrm{IC}(\cM_C^+(\alpha)) =
    \bigoplus_\alpha H_{\mathrm{top}}(\fA_\chi^+(\alpha))
  \end{equation*}
  is isomorphic to $U(\mathfrak n_-)$ where $\mathfrak n_-$ is the
  negative half of $\fg_{\mathrm{KM}}$. Moreover the pull-back
  homomorphism
  $H_{\mathrm{top}}(\fA_\chi^+(\alpha))\to
  H_{\mathrm{top}}(\fA_\chi(\lambda,\mu))$ corresponds to the quotient map
  $U(\mathfrak n_-)\to V(\lambda)$. When $Q$ is of finite type, these
  statements will be proved in a forthcoming paper by J.~Kamnitzer, P.~Baumann 
  and A.~Knutson.

  (2) Let $\on{Irr}(\fA_\chi(\lambda,\mu))$,
  $\on{Irr}(\fA_\chi^+(\alpha))$ be the set of irreducible components
  of $\fA_\chi(\lambda,\mu)$, $\fA_\chi^+(\alpha)$ respectively. Then
  we conjecture that
  \begin{equation*}
    \bigsqcup_\mu \on{Irr}(\fA_\chi(\lambda,\mu)), \qquad
    \bigsqcup_\alpha \on{Irr}(\fA_\chi^+(\alpha))
  \end{equation*}
  have structures of Kashiwara crystal, isomorphic to crystals
  $B(\infty)$, $B(\lambda)$ of $U_q(\mathfrak n_-)$, $V_q(\lambda)$ of
  the quantized enveloping algebra $U_q(\fg_{\mathrm{KM}})$
  respectively. Moreover the inclusion
  $\on{Irr}(\fA_\chi(\lambda,\mu))\subset
  \on{Irr}(\fA_\chi^+(\alpha))$ corresponds to the embedding
  $B(\lambda)\subset B(\infty)$. When $Q$ is of finite type, these statements
  follow from the comparison of the crystal structures defined 
  in~\cite[Section~13]{BFG} and in~\cite{bragai,2017arXiv170900391K}, 
  cf.~\cite[Proposition~4.3]{baga}.

  Furthermore, we expect that 
  the zero level $\varpi_\alpha^{-1}(0)$ of the integrable system
  $\varpi_\alpha\colon \cM_C(\alpha)\to\BA^\alpha$ is a dense open subset
  of $\fA_\chi^+(\alpha)$, so we have a canonical bijection
  $\on{Irr}(\fA_\chi^+(\alpha))=\on{Irr}(\varpi_\alpha^{-1}(0))$.
  Now the Cartan involution of $\cM_C(\alpha)$ described in~\ref{Brav}
  induces an involution of $\on{Irr}(\varpi_\alpha^{-1}(0))$ and we conjecture
  that the latter involution corresponds to Kashiwara's involution
  $*\colon B(\infty)\to B(\infty)$~\cite[8.3]{kash-crys}. If $Q$ is of finite 
  type, this conjecture follows from the definition of crystal structure 
  in~\cite[Section~13.5]{BFG}, cf.~\cite[Remark~1.7]{bdf}.

  (3) It is conjectured that there is a natural bijection between
  symplectic leaves of $\cM_C(\lambda,\mu)$ and $\fM_0(\lambda,\mu)$
  \cite{2015arXiv151003908N}. When $Q$ is of finite type, closures of
  strata are of the forms $\cM_C(\nu,\mu)$ and $\fM_0(\lambda,\nu)$
  respectively, where $\nu$ runs through dominant weights between $\mu$ and
  $\lambda$. This is known for quiver varieties
  \cite[Prop.6.7]{Na-quiver}, while it is only conjectural for Coulomb
  branches. See \cref{stabilization}. In this case the bijection is
  given by $\cM_C(\nu,\mu)\leftrightarrow\fM_0(\lambda,\nu)$. In
  particular, $\cM_C(\lambda,\mu)$ corresponds to
  $\fM_0(\lambda,\lambda)$, which is a point.
  For more general $Q$, the description of the strata of
  $\cM_0(\lambda,\mu)$ are given in \cite[\S6]{Na-quiver}, by being
  combined with \cite{CB}. For an affine type $Q$, extra strata come
  from symmetric products of simple singularities, which can be
  checked easily. We do not have any description of strata of
  $\cM_C(\lambda,\mu)$ if $Q$ is neither finite nor affine.

  This bijection should be upgraded to a bijection between pairs of
  strata and simple local systems on them, but it becomes even more
  speculative. Assuming this bijection, we conjecture the following:
  Suppose $(S_C,\phi_C)$ and $(S_H,\phi_H)$ are strata of
  $\cM_C(\lambda,\mu)$ and $\fM_0(\lambda,\mu)$ and simple local
  system on them respectively, corresponding under the conjectural
  bijection. Then the isotypical component of
  $\mathrm{IC}(S_H,\phi_H)$ in
  $\pi_*(\CC_{\fM(\lambda,\mu)}[\dim \fM(\lambda,\mu)])$ is isomorphic
  to $\Phi(\mathrm{IC}(S_C,\phi_C))$. The above conjecture studies the
  case $(S_C,\phi_C) = (\cM_C(\lambda,\mu), \mathrm{triv})$,
  $(S_H,\phi_H) = (\fM_0(\lambda,\lambda), \mathrm{triv})$, where
  $\mathrm{triv}$ denotes the trivial local system.

\end{Remarks}

Next we consider a structure giving tensor products of integrable
modules. For quiver varieties it is a tensor product variety
$\mathfrak Z(\lambda^1;\lambda^2)$ corresponding to a decomposition
$W = W^1\oplus W^2$ with $\lambda^a = \sum_i \dim W^a_i\omega_i$
($a=1,2$). It is defined as an attracting set in
$\bigsqcup_\mu \fM(\lambda,\mu)$ with respect to the cocharacter
$\rho\colon \CC^\times\to \GL(W)$ given by
$\rho(t) = \on{id}_{W^1}\oplus t\on{id}_{W^2}$.
We introduce a smaller subvariety
$\widetilde{\mathfrak Z}(\lambda^1;\lambda^2)$ requiring the limit
$\lim_{t\to 0}$ lies in the lagrangian
$\bigsqcup_{\mu^1,\mu^2}
\La(\lambda^1,\mu^1)\times\La(\lambda^2,\mu^2)$. Then \cite{Na-Tensor} says
\begin{equation*}
  H_{\mathrm{top}}(\widetilde{\mathfrak Z}(\lambda^1;\lambda^2))
\end{equation*}
is isomorphic to the tensor product $V(\lambda^1)\otimes V(\lambda^2)$
under the convolution product. (See \cite{2012arXiv1211.1287M} for a
better conceptual construction.) For tensor products
$V(\lambda^1)\otimes\cdots\otimes V(\lambda^N)$, we just take
$W = W^1\oplus\cdots\oplus W^N$ and repeat the same construction.

\begin{NB}
    In some situation we can consider a larger group action than one
    coming from $W = W^1\oplus\cdots\oplus W^N$. Consider a quiver
    variety associated with Jordan quiver (or of affine type
    $A$). Then we have $\CC^\times$ action
    $(B_1,B_2,a,b)\mapsto (tB_1, t^{-1} B_2, a, b)$.
\end{NB}%

Let us turn to the Coulomb branch side. We take a maximal torus $T(W)$
of $\GL(W)$ and regard $\bN$ as a representation of
$\tilde G\defeq \GL(V)\times T(W)$. This gives a deformation of
$\cM_C(\lambda,\mu)$ parametrized by $\operatorname{Lie}(T(W))$ as
$H^{\tilde G_\cO}_*(\cR_{G,\bN})$ as in \cref{defo}. We restrict it to
the direction of $d\rho$, that is
$H^{G_\cO\times\rho(\CC^\times)}(\cR_{G,\bN})$, and denote it by
$\underline{\cM}_C(\lambda,\mu)$. Thus we have a morphism
$\underline{\cM}_C(\lambda,\mu)\to\CC$.
We also consider the variety of
triples $\cR_{\tilde G,\bN}$ for the larger group $\tilde G$ and the
corresponding Coulomb branch
$\cM_C(\tilde G,\bN) = H^{\tilde G_\cO}_*(\cR_{\tilde G,\bN})$. By
\ref{prop:reduction} the original $\cM_C(\lambda,\mu)$ is the
Hamiltonian reduction of $\cM_C(\tilde G,\bN)$ by
$\pi_1(\CC^\times)^\wedge$. Note that $\pi_1(T(W))^\wedge$ is the dual
torus $T(W)^\vee$ of the original torus $T(W)$. Therefore the
cocharacter $\rho\colon \CC^\times\to T(W)$ can be regarded as a
character $T(W)^\vee\to\CC^\times$. Therefore we can consider the
corresponding geometric invariant theory quotient
\begin{equation*}
    \widetilde{\cM}_C(\lambda,\mu) \defeq \bmu^{-1}(0)\dslash_{\rho} T(W)^\vee,
\end{equation*}
as in \ref{prop:GITquotient}, where $\bmu$ denotes the moment map
$\cM_C(\tilde G,\bN)\to \on{Lie} T(W) = \Spec H^*_{T(W)}(\mathrm{pt})$
for the $T(W)^\vee$ action. It is equipped with a projective morphism
$\pi_C\colon \widetilde{\cM}_C(\lambda,\mu)\to \cM_C(\lambda,\mu)$.
If we replace the equation $\bmu=0$ by $\bmu\in\CC d\rho$, we have a
family version $\widetilde{\underline{\cM}}_C(\lambda,\mu)$ equipped
with a projective morphism
$\widetilde{\underline{\cM}}_C(\lambda,\mu)\to
\underline{\cM}_C(\lambda,\mu)$. We conjecture that this is a small
birational morphism and $\widetilde{\underline{\cM}}_C(\lambda,\mu)$
is a topologically trivial family, as for quiver varieties.
Therefore
$\psi (\mathrm{IC}(\underline{\cM}_C(\lambda,\mu))) =
\pi_{C,*}(\mathrm{IC}(\widetilde{\cM}_C(\lambda,\mu)))$, where
$\psi$ is the nearby cycle functor for
$\underline{\cM}_C(\lambda,\mu)\to\CC$ \cite[\S8.6]{KaSha}.
Moreover it contains $\mathrm{IC}(\cM_C(\lambda,\mu))$ with
multiplicity one.

\begin{NB}
    Even if we consider a finer decomposition
    $W = W^1\oplus\cdots\oplus W^N$ to $1$-dimensional subspaces, this
    construction gives only \emph{partial} resolution of singularities
    for affine type $A$. In this case,
    $\widetilde{\cM}_C(\lambda,\mu)$ is a Uhlenbeck partial
    compactification of $\U(n)$ instantons on the resolution
    $\widetilde{\CC^2/(\ZZ/N)}$ of $\CC^2/(\ZZ/N)$. In order to obtain
    a moduli space of framed torsion free sheaves, we need to add an
    extra $\CC^\times$-action as in the above \color{red}{\bf NB}.
\end{NB}%

\begin{NB}
We consider attracting sets with respect to $\chi$,
$\underline{\fA}_\chi(\lambda,\mu)$,
$\widetilde{\fA}_\chi(\lambda,\mu)$,
$\widetilde{\underline{\fA}}_\chi(\lambda,\mu)$ in
$\underline{\cM}_C(\lambda,\mu)$, $\widetilde{\cM}_C(\lambda,\mu)$,
$\widetilde{\underline{\cM}}_C(\lambda,\mu)$ respectively. Let us
denote all hyperbolic restriction functors simply by $\Phi$. Note that
we have morphisms
$\widetilde{\underline{\fA}}_\chi(\lambda,\mu)\to
\underline{\fA}_\chi(\lambda,\mu)\to\CC$
and also
$\widetilde{\fA}_\chi(\lambda,\mu)\to \fA_\chi(\lambda,\mu) \to \{0\}$
as their restriction.
We conjecture that the attracting set
$\widetilde{\underline{\fA}}_\chi(\lambda,\mu)$ is a topologically
trivial family.
\end{NB}%

\begin{Conjecture}
  \textup{(1)}
  $\widetilde{\underline{\cM}}_C(\lambda,\mu)^{\chi(\CC^\times)}$ is a
  disjoint union of finitely many copies of $\CC$ such that the
  restriction of the morphism
  $\widetilde{\underline{\cM}}_C(\lambda,\mu)^{\chi(\CC^\times)}\to\CC$
  to each summand is the identity map.
  And $\underline{\cM}_C(\lambda,\mu)^{\chi(\CC^\times)}$ is obtained
  from $\widetilde{\underline{\cM}}_C(\lambda,\mu)^{\chi(\CC^\times)}$
  by identifying the origin of each summand.

  \textup{(2)} A summand in \textup{(1)} corresponds, in bijection, to
  a decomposition $\mu = \mu^1+\mu^2$ with $V(\lambda^1)_{\mu^1}$,
  $V(\lambda^2)_{\mu^2}\neq 0$. The hyperbolic restriction of
  $\mathrm{IC}(\widetilde{\underline{\cM}}_C(\lambda,\mu))$ is the
  direct sum
  $\bigoplus \Phi(\mathrm{IC}(\cM_C(\lambda^1,\mu^1))\otimes
  \Phi(\mathrm{IC}(\cM_C(\lambda^2,\mu^2))$, where each summand is
  considered as a trivial local system on $\CC$. Hence
  \[
    \psi \circ \Phi
    (\mathrm{IC}(\underline{\cM}_C(\lambda,\mu))) =
    \Phi\pi_{C,*}(\mathrm{IC}(\widetilde{\cM}_C(\lambda,\mu)))
    \cong \bigoplus_{\mu=\mu^1+\mu^2}
    V(\lambda^1)_{\mu^1}\otimes V(\lambda^2)_{\mu^2}.
  \]
  In the first equality we use the commutativity of the nearby cycle
  and hyperbolic restriction functors (see e.g.,
  \cite[Prop.~5.4.1]{2016arXiv160406316N}).
\begin{NB} cf.\ https://arxiv.org/pdf/1611.01669.pdf
\end{NB}%

  \textup{(3)} The sum of homomorphisms
  $\Phi(\mathrm{IC}(\cM_C(\lambda,\mu)))\to
  \Phi(\pi_{C,*}(\mathrm{IC}(\widetilde{\cM}_C(\lambda,\mu))))$ over
  $\mu$ is the homomorphism
  $V(\lambda)\to V(\lambda^1)\otimes V(\lambda^2)$ of
  $\fg_{\mathrm{KM}}$-modules, sending $v_\lambda$ to
  $v_{\lambda^1}\otimes v_{\lambda^2}$, where $v_\lambda$ is the
  highest weight vector corresponding to the fundamental class of the
  point $\cM_C(\lambda,\lambda)$.
\end{Conjecture}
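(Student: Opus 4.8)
The plan is to transport the whole statement into the Beilinson--Drinfeld degeneration picture for slices in the affine Grassmannian and then feed in the (in general conjectural) geometric Satake statements of Conjecture~\ref{conj:satake}. First I would restrict to the case in which $\cM_C(\lambda,\mu)$ is literally a slice $\oW^{\lambda^*}_{\mu^*}$, i.e.\ $Q$ of finite type, using \thmref{Coulomb_quivar}; by \thmref{Coulomb_defo} the larger Coulomb branch $\cM_C(\tilde G,\bN)$ with $\tilde G=\GL(V)\times T(W)$ is then the BD slice $\oW^{\unl{\lambda}^*}_{\mu^*}$ over $\BA^N$, and the blowup description of \propref{prop:blowBD} together with the $G(z)$-embedding of \subsecref{G(z)} lets one identify $\widetilde{\cM}_C(\lambda,\mu)=\bmu^{-1}(0)\dslash_\rho T(W)^\vee$ with the Beilinson--Drinfeld convolution space $\ol{\Gr}{}^{\unl{\lambda}^1}\widetilde\times\ol{\Gr}{}^{\unl{\lambda}^2}$ cut against $Z^{\alpha^*}$, and the deformation parameter $t=d\rho$ with the relative position of the two groups of framing points. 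Under this dictionary $\pi_C$ is the convolution morphism onto $\oW^{\lambda^*}_{\mu^*}$, the family $\underline{\cM}_C(\lambda,\mu)\to\CC$ has generic fibre the twin product $\oW^{\lambda^1}_{\mu^1}\widetilde\times\oW^{\lambda^2}_{\mu^2}$ and special fibre $\oW^{\lambda}_{\mu}$, and $\psi$ is its nearby-cycle functor.

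Granting this model, part (1) is a fixed-point computation: $\chi$ acts through the Cartan torus $T\subset G$ on each slice (\remref{Cartan_grading}), and on the convolution space its fixed points away from $t=0$ are pairs of $T$-fixed points in the two factors, i.e.\ decompositions $\mu=\mu^1+\mu^2$; such a pair occurs exactly when both attracting sets are nonempty, which by Conjecture~\ref{conj:satake}(1) means $V(\lambda^1)_{\mu^1}\ne0\ne V(\lambda^2)_{\mu^2}$, and each corresponding component is a copy of $\BA^1$ projecting isomorphically to the base (the framing points move freely, while the Hecke datum at a torus-fixed point is rigid). Letting $t\to0$ the two groups of framing points collide and each such section specializes to the unique $\chi$-fixed point of $\oW^{\lambda}_{\mu}$, which is the asserted gluing at the origin. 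Part (2) then follows by combining three inputs: hyperbolic restriction commutes with nearby cycles (\cite[Prop.~5.4.1]{2016arXiv160406316N}); the $\Phi$ of an $\mathrm{IC}$ sheaf on a factorizable convolution space over two disjoint points is the external tensor product of the factors' $\Phi$'s (the fusion property behind geometric Satake, cf.\ \cite{MV2,Braden}); and Conjecture~\ref{conj:satake}(2), which identifies $\Phi(\mathrm{IC}(\cM_C(\lambda^a,\mu^a)))$ with $V(\lambda^a)_{\mu^a}$. This yields $\Phi\pi_{C,*}(\mathrm{IC}(\widetilde{\cM}_C(\lambda,\mu)))\cong\bigoplus_{\mu=\mu^1+\mu^2}V(\lambda^1)_{\mu^1}\otimes V(\lambda^2)_{\mu^2}$, and applying $\psi$ gives the displayed isomorphism.

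For part (3), the map $\Phi(\mathrm{IC}(\cM_C(\lambda,\mu)))\to\Phi\pi_{C,*}(\mathrm{IC}(\widetilde{\cM}_C(\lambda,\mu)))$ is the image under $\Phi$ of the inclusion of $\mathrm{IC}(\cM_C(\lambda,\mu))$ as the multiplicity-one summand of $\psi(\mathrm{IC}(\underline{\cM}_C(\lambda,\mu)))=\pi_{C,*}(\mathrm{IC}(\widetilde{\cM}_C(\lambda,\mu)))$, i.e.\ of the canonical specialization morphism of the nearby-cycle functor. Summed over $\mu$ it is a map of $\fg_{\mathrm{KM}}$-modules, so it is determined by where it sends the highest weight vector $v_\lambda$, the class of the point $\cM_C(\lambda,\lambda)$; since the $\pi_C$-preimage of that point is again a point lying over the diagonal locus of the base, the assertion becomes the standard fact that the specialization map $V(\lambda)\to V(\lambda^1)\otimes V(\lambda^2)$ attached to the Beilinson--Drinfeld Grassmannian is the coproduct and carries $v_\lambda$ to $v_{\lambda^1}\otimes v_{\lambda^2}$.

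\textbf{Main obstacle.} The crux is to prove that $\widetilde{\underline{\cM}}_C(\lambda,\mu)\to\underline{\cM}_C(\lambda,\mu)$ is a \emph{small} birational morphism with \emph{topologically trivial} total space over $\CC$: smallness is exactly what licenses the clean decomposition of $\pi_{C,*}\mathrm{IC}$ used repeatedly above, and it should come from a fibre-dimension estimate for $\pi_C$ in the spirit of the semismallness of $\bq$ used in \lemref{semismall} (\cite[Lemma~12.9.1]{mf}), but now for the GIT quotient by $T(W)^\vee$, where a convenient modular description of the stratification is not yet available; topological triviality of the family would in finite type follow from the BD picture via a contracting $\CC^\times$-action, but still has to be set up carefully. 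In the genuine Kac--Moody generality neither Conjecture~\ref{conj:satake} nor the requisite normality and fibre-dimension statements (compare \thmref{conditional}) are known, so there the proposal remains strictly conditional; for $Q$ of finite type it should go through using the established cases of Conjecture~\ref{conj:satake} \cite{2017arXiv170900391K} together with the tensor-product structure on BD slices.
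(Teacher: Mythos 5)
This statement is a \emph{Conjecture} in the paper, with no proof supplied; the comparison therefore cannot be with the paper's own argument. The only indication the authors give is the two‐sentence heuristic immediately preceding the conjecture: they \emph{conjecture} that $\widetilde{\underline{\cM}}_C(\lambda,\mu)\to\underline{\cM}_C(\lambda,\mu)$ is small and that $\widetilde{\underline{\cM}}_C(\lambda,\mu)$ is a topologically trivial family, and they deduce from this $\psi(\mathrm{IC}(\underline{\cM}_C(\lambda,\mu)))=\pi_{C,*}(\mathrm{IC}(\widetilde{\cM}_C(\lambda,\mu)))$ together with the multiplicity--one claim for $\mathrm{IC}(\cM_C(\lambda,\mu))$. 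Your proposal expands that sketch in a way compatible with the authors' intent, and you correctly identify the same central obstacle (smallness and topological triviality of the GIT--quotient family) that the authors explicitly flag as open. So the honest assessment is: you have written a plausible strategy for the conjecture, not a proof of it, and neither does the paper.

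A few remarks on the details of your strategy. First, the claim that the generic fibre of $\underline{\cM}_C(\lambda,\mu)\to\CC$ factorizes into (a twisted product of) $\oW^{\lambda^1}_{\mu^1}$ and $\oW^{\lambda^2}_{\mu^2}$ is not automatic even in finite type; what \thmref{Coulomb_defo} gives is the BD slice over $\BA^N$, and you are restricting along the one--parameter direction $d\rho$, which moves the $W^2$--framing points uniformly away from $0$ but keeps them colliding with one another. The factorization you want is therefore only partial, in the spirit of \propref{prop:factor} and \propref{prop:blowBD}; establishing that the generic fibre splits cleanly enough for the fusion argument (external tensor product for hyperbolic restriction) is itself a piece of work you would need to supply. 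Second, your appeal to Conjecture~\ref{conj:satake}(2) to identify $\Phi(\mathrm{IC}(\cM_C(\lambda^a,\mu^a)))$ with $V(\lambda^a)_{\mu^a}$ is, strictly speaking, appealing to the part of that conjecture that the paper says is \emph{not} yet verified even in finite type (the paper states that \cite{2017arXiv170900391K} checks all parts except~(2)); in finite type you can instead route this through the classical Mirković--Vilonen results on slices, as you half--suggest, but then the argument no longer applies to genuine Kac--Moody $\fg_{\mathrm{KM}}$. Third, your identification of $\pi_C$ with a convolution morphism after the GIT quotient by $T(W)^\vee$ needs a modular description of the quotient that is not given anywhere in this paper, and in particular the semismallness estimate from \lemref{semismall} (for $\bq$) does not directly transfer. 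None of these is a ``wrong step''--- they are the gaps you yourself name; the point is simply that they are precisely the gaps the paper leaves open, so your proposal and the paper end up at the same frontier.

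Summarizing: there is no gap between your proposal and the paper's argument, because the paper makes no argument. Your outline is a reasonable and honest account of what a proof would look like in finite type, conditional on (i) smallness and topological triviality of the GIT family, (ii) a usable factorization of the generic fibre compatible with hyperbolic restriction, and (iii) the relevant parts of Conjecture~\ref{conj:satake}. If you want to push further, the most tractable target is (ii) in finite type, since the zastava and BD--slice machinery of \cref{zastava}, \cref{subsec:bd-slices} should give the required local product structure away from the diagonal of framing points.
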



\section{Non-simply-laced case}\label{sec:twist}

In order to describe instanton moduli spaces for non-simply-laced
groups as Coulomb branches, Cremonesi, Ferlito, Hanany and Mekareeya
have introduced a modification of the monopole formula
\cite{Cremonesi:2014xha}. See also \cite{Mekareeya2015} for more
examples.

Let us consider the case of $G_2$ $k$-instantons on 
the Taub-NUT space for brevity. (See \cite[\S4]{Cremonesi:2014xha}.)
\begin{NB}
This is no longer necessary:

We suppose that the
reader is familiar with the monopole formula, reviewed in
\cite[\S4]{2015arXiv150303676N}.
\end{NB}%
We suppose that we already know that a quiver gauge theory
associated with a symmetric affine Dynkin diagram ($D_4^{(1)}$ in this
case) has the Coulomb branch isomorphic to an instanton moduli space
of the corresponding group.
%
This is a special case of the conjecture mentioned in the introduction.
Moreover, it is also conjectured that moduli spaces of instantons on
$\RR^4$ and on the Taub-NUT spaces are isomorphic as affine algebraic
varieties. We do not have a proof of this assertion either for $\RR^4$
of the Taub-NUT space,
but the following argument works more generally.
\begin{NB}
    Added on Oct. 15 by HN
\end{NB}%

As for simply-laced cases, the mirror of an instanton moduli space is,
roughly, a quiver gauge theory associated with the corresponding
affine Dynkin diagram of type $G_2^{(1)}$ with dimension vectors
$\mathbf v = k\delta$, $\mathbf w = \Lambda_0$. See
Figure~\ref{fig:G2} left, where we put the numbering $0$, $1$, $2$ on
vertices.
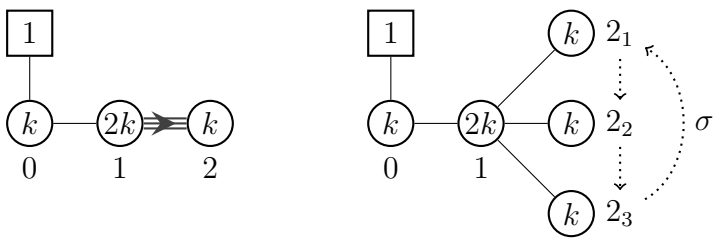
\begin{figure}[htbp]
    \centering
\begin{tikzpicture}[scale=1.2,
circled/.style={circle,draw
,thick,inner sep=0pt,minimum size=6mm},
squared/.style={rectangle,draw
,thick,inner sep=0pt,minimum size=6mm},
triplearrow/.style={
  draw=black!75,
  color=black!75,
  thick,
  double distance=3pt, 
  decoration={markings,mark=at position .75 with {\arrow[scale=.7]{>}}},
  postaction={decorate},
  >=stealth}, 
thirdline/.style={draw=black!75, color=black!75, thick, -
}
]
\node[circled] (v0) at ( 0,0)  {$k$};
\node[circled] (v1) at ( 1,0)  {$2k$};
\node[circled] (v2) at ( 2,0)  {$k$};
\node[squared] (w0) at ( 0,1)  {$1$};
\node [below] at (v0.south) {$0$};
\node [below] at (v1.south) {$1$};
\node [below] at (v2.south) {$2$};
\draw [-] (v0.east) -- (v1.west);
\draw[triplearrow] (v1.east) -- (v2.west);
\draw[thirdline] (v1.east) -- (v2.west);
\draw [-] (w0.south) -- (v0.north);

\node[circled] (vv0) at ( 4,0)  {$k$};
\node[circled] (vv1) at ( 5,0)  {$2k$};
\node[circled] (vv2) at ( 6,1)  {$k$};
\node[circled] (vv3) at ( 6,0)  {$k$};
\node[circled] (vv4) at ( 6,-1)  {$k$};
\node[squared] (ww0) at ( 4,1)  {$1$};
\node [below] at (vv0.south) {$0$};
\node [below] at (vv1.south) {$1$};
\node [right] (21) at (vv2.east) {$2_1$};
\node [right] (23) at (vv4.east) {$2_3$}
   edge [->,bend right=60,dotted,thick] node[auto,swap] {$\sigma$} (21);
\draw [-] (vv0.east) -- (vv1.west);
\draw [-] (vv1) -- (vv2);
\draw [-] (vv1) -- (vv3);
\draw [-] (vv1) -- (vv4);
\draw [-] (ww0.south) -- (vv0.north);
\node [right] (22) at (vv3.east) {$2_2$}
  edge [<-,dotted,thick] (21)
  edge [->,dotted,thick] (23);
\end{tikzpicture}
\caption{$\mathcal M_C$ : $G_2$, $D_4$ $k$-instantons on $\RR^4$ and 
  folding.}
    \label{fig:G2}
\end{figure}

Let $G = \GL(k)\times\GL(2k)\times \GL(k)$, product of general linear
groups for circled vertices as usual. We take a triple $(\la^0, \la^1,
\la^2)$ of coweights of $\GL(k)$, $\GL(2k)$, $\GL(k)$. Let us denote a
triple by $\la$, considered as a coweight of $G$. Let $Y$ be the
coweight lattice of $G$, and $W$ the Weyl group of $G$. The monopole
formula in \eqref{eq:19} says the Hilbert series of the Coulomb branch is
\begin{equation*}
    \sum_{\la\in Y/W} t^{2\Delta(\la)} P_G(t;\la).
\end{equation*}
The definition of $P_G(t;\la) = P_{\GL(k)}(t;\la^0)
P_{\GL(2k)}(t;\la^1) P_{\GL(k)}(t;\la^2)$ is the same as usual.
The term $\Delta(\la)$ has two parts (see \eqref{eq:18}). The first
part is the pairing between $\la$ and positive roots of $G$. This
needs no modification.
The second part, in this example, comes from bi-fundamental
representations on edges. For a usual edge, the contribution is
given by the pairing between its weight with coweights of groups at
two ends. Concretely we write $\la^i = (\la^i_1,\dots,\la^i_k)$
($i=0,2$), $\la^1=(\la^1_1,\dots,\la^1_{2k})$, 
the edge between vertices $0$ and $1$ gives the
contribution \( |\la^0_a - \la^1_b| \) for $a = 1,\dots,k$,
$b=1,\dots,2k$. On the squared vertex, one should put the coweight
$0$, hence there is also \( |\la^0_a| \) for $a=1,\dots,k$.

A modification of the rule is required only for the edge between $1$
and $2$. The rule introduced in \cite{Cremonesi:2014xha} is
\(
   |3\la^1_b - \la^2_c|
\)
for $b=1,\dots,2k$, $c = 1,\dots,k$. Thus
\begin{multline*}
    2\Delta(\la) =
    - 2 \sum_{a\neq a'}|\la^0_a - \la^0_{a'}|
    - 2 \sum_{b\neq b'}|\la^1_b - \la^1_{b'}|
    - 2 \sum_{c\neq c'}|\la^2_c - \la^2_{c'}|
\\
    +
    \sum_{a=1}^k |\la^0_a| + \sum_{a=1}^k\sum_{b=1}^{2k}
    |\la^0_a - \la^1_b| + \sum_{b=1}^{2k}\sum_{c=1}^k |3\la^1_b - \la^2_c|.
\end{multline*}

Now let us explain how to modify our definition of the Coulomb branch
to recover this \emph{twisted monopole formula}.
\begin{NB}
    Corrected on Oct. 15 by HN.
\end{NB}%

We consider the unfolding of our affine Dynkin diagram as
in~\cite[14.1.5(f)]{Lu-book}.\footnote{It should be noted that the non
  simply-laced Lie algebra obtained by the folding in \cite{Lu-book}
  is the Langlands dual of what we get.} It is a $D_4^{(1)}$ affine
Dynkin graph with circled vertices $0,1,2_1,2_2,2_3$ (and $0$ is
connected to a squared vertex).  See Figure~\ref{fig:G2} right.  The
corresponding vector spaces are of dimensions $k,2k,k,k,k$ (and 1).
We orient all the edges from the vertex 1 (and from the squared
vertex).  We consider an automorphism $\sigma$ rotating cyclically the
vertices $2_1,2_2,2_3$. We set
$\bN=V_0\oplus\Hom(V_1,V_0)\oplus\Hom(V_1,V_{2_1})
\oplus\Hom(V_1,V_{2_2})\oplus\Hom(V_1,V_{2_3})$ (a representation of
$\hat{G}:=\GL(V_0)\times\GL(V_1)\times\GL(V_{2_1})\times\GL(V_{2_2})
\times\GL(V_{2_3})$). Then $\sigma$ acts naturally on $\hat G$ and on
$\bN$, hence on $\cM_C(\hat{G},\bN)$. We consider the fixed point set
$\cM_C(\hat{G},\bN)^\sigma$. We have a surjection $\varphi\colon
H^{\hat{G}_\CO}(\CR_{\hat{G},\bN})\twoheadrightarrow\BC[\cM_C(\hat{G},\bN)^\sigma]$.
Thus the grading of $H^{\hat{G}_\CO}(\CR_{\hat{G},\bN})$ whose Hilbert
series is given by the monopole formula induces a grading of
$\BC[\cM_C(\hat{G},\bN)^\sigma]$.

The above formulation and the following proposition work for any
quiver gauge theory with diagram automorphisms. In particular, they
work for quiver gauge theories studied in \secref{QGT}, where their
Coulomb branches are moduli spaces of bundles of an $ADE$ group over
$\proj^1$ with additional structures. The fixed point subscheme
$\mathcal M_C(\hat{G},\bN)^\sigma$ is identified with a moduli space
of bundles of a non simply-laced group.
\begin{NB}
    Added on Oct. 15 by HN
\end{NB}%

\begin{Proposition}
\label{prop:twimon}
The Hilbert series of the induced grading on $\BC[\cM_C(\hat{G},\bN)^\sigma]$
is given by the twisted monopole formula.
\end{Proposition}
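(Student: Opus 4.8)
The plan is to identify both sides of the claimed equality as graded dimensions of explicit vector spaces and to reduce the statement to a combinatorial bookkeeping exercise about $\sigma$-fixed monopole configurations. First I would recall the structure of $H^{\hat G_\CO}_*(\CR_{\hat G,\bN})$ as analyzed in \cite{main}: it carries the grading $\deg_r$ by loop rotations, its graded Hilbert series is the (ordinary) monopole formula $\sum_{\la\in Y_{\hat G}/W_{\hat G}} t^{2\Delta(\la)} P_{\hat G}(t;\la)$, and $\sigma$ acts on it compatibly with this grading. The surjection $\varphi\colon H^{\hat G_\CO}_*(\CR_{\hat G,\bN})\twoheadrightarrow \BC[\cM_C(\hat G,\bN)^\sigma]$ comes from restriction of functions to the fixed locus; since taking $\sigma$-invariants of a graded $\BC$-algebra is exact in characteristic zero, $\BC[\cM_C(\hat G,\bN)^\sigma]$ as a \emph{graded vector space} is a subquotient, but more precisely I would argue that its Hilbert series equals that of the $\sigma$-invariant part. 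Concretely: $\cM_C(\hat G,\bN)$ is normal affine with $\sigma$ acting, $\BC[\cM_C(\hat G,\bN)^\sigma]$ is the image of $\BC[\cM_C(\hat G,\bN)]$ under restriction, and the kernel is the ideal generated by $f-\sigma f$; hence the Hilbert series of $\BC[\cM_C(\hat G,\bN)^\sigma]$ is computed from the trace of $\sigma$ on each graded piece. By the standard molien-type argument, $\dim\BC[\cM_C(\hat G,\bN)^\sigma]_d$ need not equal $\dim\big(\BC[\cM_C(\hat G,\bN)]_d\big)^\sigma$ in general, but for a fixed-point locus of a \emph{finite} group acting on a normal affine variety it \emph{does}, because $\BC[\cM_C^\sigma] = \BC[\cM_C]/\!\sqrt{I}$ and one checks the reduced structure contributes no extra; this point I would state carefully and attribute to the standard fact that $\cM_C^\sigma$ is reduced (so $\BC[\cM_C^\sigma]$ is the image of $\BC[\cM_C]$, whose graded pieces surject onto $\BC[\cM_C]_d$ and whose kernels are spanned by $f-\sigma f$, giving exactly the $\sigma$-coinvariants $= \sigma$-invariants over $\BC$).

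Next I would compute $\sum_d \operatorname{tr}(\sigma \mid H^{\hat G_\CO}_*(\CR_{\hat G,\bN})_d)\, t^d$ using the geometric/combinatorial model. The Coulomb branch homology has a cell-type decomposition indexed by dominant coweights $\la$ of $\hat G$, with the contribution of the stratum over $\la$ being $t^{2\Delta(\la)} P_{\hat G}(t;\la)$ (the Poincaré series of the equivariant cohomology of the relevant $\hat G_\CO$-orbit, i.e.\ a classifying-space contribution of the stabilizer). The automorphism $\sigma$ permutes these strata according to its action on dominant coweights: $\la = (\la^0,\la^1,\la^{2_1},\la^{2_2},\la^{2_3})$ is sent to $(\la^0,\la^1,\la^{2_3},\la^{2_1},\la^{2_2})$. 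Only the $\sigma$-fixed $\la$ — those with $\la^{2_1}=\la^{2_2}=\la^{2_3}=:\la^2$ — contribute to the trace, and each such contributes $t^{2\Delta(\la)}\operatorname{tr}(\sigma\mid P_{\hat G}(t;\la))$. The set of $\sigma$-fixed dominant coweights $\la$ of $\hat G$ is in natural bijection with dominant coweights $(\la^0,\la^1,\la^2)$ of $G=\GL(k)\times\GL(2k)\times\GL(k)$, so I would match this with the index set $Y/W$ of the $G_2$ twisted monopole formula.

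The core computation is then twofold. (i) The exponent: I would verify $2\Delta_{\hat G}(\la)\big|_{\la^{2_i}=\la^2}$ equals the twisted exponent $2\Delta(\la)$ from \cite{Cremonesi:2014xha}. The vector-multiplet part $-2\sum|\la^0_a-\la^0_{a'}| - 2\sum|\la^1_b-\la^1_{b'}| - \sum_{i}2\sum|\la^{2_i}_c-\la^{2_i}_{c'}|$ becomes $-2\sum|\la^0_a-\la^0_{a'}| - 2\sum|\la^1_b-\la^1_{b'}| - 6\sum|\la^2_c-\la^2_{c'}|$; the three hypermultiplet edges $V_1\to V_{2_i}$ each contribute $\sum_{b,c}|\la^1_b-\la^2_c|$, summing to $3\sum_{b,c}|\la^1_b-\la^2_c| = \sum_{b,c}|3\la^1_b - 3\la^2_c|$, and this must be reconciled with the twisted rule $\sum_{b,c}|3\la^1_b - \la^2_c|$ — here the factor-$3$ mismatch on the $\la^2$ side is exactly absorbed by rescaling the coweight lattice of the folded short-root vertex, i.e.\ by the Langlands-dual identification flagged in the footnote to \cite{Lu-book}, so I would carry out the change of variables $\la^2 \mapsto 3\la^2$ (together with its effect $-6\sum \mapsto$ matching the $G_2$ vector part after rescaling, since the $G_2$ root pairing weights the short-coroot directions) and check the exponents agree on the nose. (ii) The classifying-space factor: $\operatorname{tr}(\sigma\mid P_{\hat G}(t;\la))$ for $\sigma$-fixed $\la$ equals $P_{\GL(k)}(t;\la^0)P_{\GL(2k)}(t;\la^1)\cdot\operatorname{tr}\big(\sigma\mid P_{\GL(k)\times\GL(k)\times\GL(k)}(t;(\la^2,\la^2,\la^2))\big)$, and the cyclic permutation $\sigma$ of the three $\GL(k)$-factors acts on the equivariant cohomology $H^*_{(\text{stab})}(\mathrm{pt})$; a direct computation (or the observation that the trace of a cyclic permutation of $n$ identical graded factors $A^{\otimes n}$ is the Hilbert series of $A$ in the variable $t^n$) gives $\operatorname{tr}(\sigma\mid P_{\GL(k)}(t;\la^2)^{\otimes 3}) = P_{\GL(k)}(t^3;\la^2)$, which after the $\la^2\mapsto 3\la^2$ rescaling is precisely the factor appearing in $P_{G_2^{\,\mathrm{folded}}}(t;\la)$ in the twisted formula.

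The main obstacle I anticipate is not any single computation above but the first step: justifying rigorously that the induced grading on $\BC[\cM_C(\hat G,\bN)^\sigma]$ has Hilbert series equal to the \emph{graded trace} of $\sigma$ on $H^{\hat G_\CO}_*(\CR_{\hat G,\bN})$, rather than merely being dominated by it. This requires knowing that $\cM_C(\hat G,\bN)$ is normal (so that the scheme-theoretic fixed locus is reduced and $\BC[\cM_C^\sigma]$ is literally the image of $\BC[\cM_C]$ under restriction) and that the short exact sequence $0\to I\to \BC[\cM_C]\to \BC[\cM_C^\sigma]\to 0$ is strict for the grading with $I_d$ spanned by $\{f-\sigma f : f\in \BC[\cM_C]_d\}$ — equivalently that $\BC[\cM_C]_d = \BC[\cM_C]_d^\sigma \oplus (I_d\cap\BC[\cM_C]_d)$, which is the standard decomposition of a finite-group representation into isotypic pieces and forces $\dim \BC[\cM_C^\sigma]_d = \dim \BC[\cM_C]_d^\sigma$. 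Once this is in place the proposition follows by assembling (i) and (ii) stratum by stratum and summing over $\sigma$-fixed $\la$, which reproduces the twisted monopole formula verbatim.
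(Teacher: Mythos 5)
Your step~(i)---the claim that $\dim\BC[\cM_C(\hat G,\bN)^\sigma]_d = \dim\BC[\cM_C(\hat G,\bN)]_d^\sigma$---is false, and the whole argument collapses at that point. Functions on the fixed locus form the quotient $A/I_\sigma$ where $I_\sigma$ is the \emph{ideal} generated by $\{f-\sigma f\}$; in degree $d$ this ideal contains not just $\operatorname{span}\{f-\sigma f: f\in A_d\}$ but also $g\cdot(f-\sigma f)$ for $g\in A_{d'}$, $f\in A_{d-d'}$, so it is strictly larger than the complement of the $\sigma$-invariants. The already-relevant test case is a polynomial ring $\CC[w_1,w_2,w_3]$ with $\sigma$ the $3$-cycle (which is precisely what appears in $\BC[\hat\ft^{2_1}\oplus\hat\ft^{2_2}\oplus\hat\ft^{2_3}]$ here): the fixed-locus ring is $\CC[w]$ with Hilbert series $1/(1-t)$, while the graded trace of $\sigma$ is $1/(1-t^3)$ (eigenvalues $1,\omega,\omega^2$ in degree $1$), and $A^\sigma$ has yet another series. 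All three disagree, so neither the Molien-type trace nor the invariants computes the fixed-locus Hilbert series. Consistently with this, even your own trace formula in step~(ii) would yield $P_{\GL(k)}(t^3;\cdot)$ rather than $P_{\GL(k)}(t;\la^2)$, and the index set would be the sublattice $\la^2\in 3Y^+_{\GL(k)}$ rather than all dominant coweights of $\GL(k)$; neither matches the twisted monopole formula.

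The paper's proof is genuinely different and avoids any trace. It passes to the associated graded $\on{gr}H^{\hat G_\CO}(\CR_{\hat G,\bN}) = \bigoplus_{\hat\lambda\in\hat Y^+}\BC[\hat\ft]^{W_{\hat\lambda}}[\CR_{\hat\lambda}]$, where the ideal $\ol I_\sigma$ of the fixed locus makes sense, and then constructs an explicit graded vector-space complement $E$ to $\ol I_\sigma$. The crucial point is the choice of index set: $E$ is supported on $\hat Y' = \{\hat\la : \hat\la^{2_1}_c\ge\hat\la^{2_2}_c\ge\hat\la^{2_3}_c\ge\hat\la^{2_1}_c-1\}$, which is a fundamental domain under $\ol I_\sigma$-equivalence and is in bijection, via $\psi(\hat\la)^2 = \hat\la^{2_1}+\hat\la^{2_2}+\hat\la^{2_3}$, with \emph{all} dominant coweights of the folded group $G=\GL(k)\times\GL(2k)\times\GL(k)$---not just the $\sigma$-fixed $\hat\la$. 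One checks $\Delta(\hat\la)=\Delta(\psi\hat\la)$ and that the stabilizer pieces contribute the correct $P_G(t;\psi\hat\la)$, so the character of $E$ is the twisted monopole formula; the isomorphism $\ol\varphi|_E\colon E\iso\BC[\ol\cM_C^\sigma]$ is then proved first for $\bN=0$ by explicitly moving $[\Gr_{\hat G}^{\hat\lambda}]$ into $\hat Y'$ modulo $\widetilde I_\sigma$, and then for general $\bN$ via $\bz^*$ and a generic-point deformation argument. The combinatorics of $\hat Y'$ is the content you would need to recover; the trace heuristic cannot see it.
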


\begin{proof}
Recall the multifiltration on $\BC[\cM_C(\hat{G},\bN)]$ introduced 
in~\ref{sec:filtra}. Let us denote the spectrum of the associated
graded algebra by $\ol{\cM}_C(\hat{G},\bN)$. The filtration is 
$\sigma$-invariant, so we have the induced automorphism $\sigma$ of
$\ol{\cM}_C(\hat{G},\bN)$. Moreover, the associated graded of the ideal
$I_\sigma\subset\BC[\cM_C(\hat{G},\bN)]$ of functions vanishing on 
$\cM_C(\hat{G},\bN)^\sigma$ is the ideal 
$\ol{I}_\sigma\subset\on{gr}\BC[\cM_C(\hat{G},\bN)]$ of functions vanishing
on $\ol{\cM}_C(\hat{G},\bN)^\sigma$. Hence it suffices to prove that the
Hilbert series of the induced monopole grading on 
$\BC[\ol{\cM}_C(\hat{G},\bN)^\sigma]$ is given by the twisted monopole formula.

We fix a $\sigma$-invariant Cartan torus $\hat{T}\subset\hat{G}$ corresponding 
to a $\sigma$-invariant decomposition
of $V_i$ into a direct sum of lines. 
We have $\hat\ft^\sigma=\ft$ (the Lie algebra of the Cartan torus 
$T=\hat{T}^\sigma\subset\hat{G}^\sigma=G$). 
Let us specify a vector subspace $E$ of 
$\on{gr}H^{\hat{G}_\CO}(\CR_{\hat{G},\bN})$
such that the restriction $\ol{\varphi}|_E$ is an isomorphism onto
$\BC[\ol{\cM}_C(\hat{G},\bN)^\sigma]$.
Recall from~\ref{sec:filtra} that $\on{gr}H^{\hat{G}_\CO}(\CR_{\hat{G},\bN})=
\bigoplus_{\hat\lambda\in\hat{Y}^+}\BC[\hat\ft]^{W_{\hat\lambda}}[\CR_{\hat\lambda}]$.
Here $\hat{Y}^+\subset\hat{Y}$ is the cone of dominant coweights of 
$\hat{T},\ \hat{Y}^+\iso\hat{Y}/\hat{W}$.
We define $\hat{Y}'\subset\hat{Y}^+$ as the set of collections 
$(\hat\lambda^0,\hat\lambda^1,\hat\lambda^{2_1},\hat\lambda^{2_2},\hat\lambda^{2_3})$ 
such that $\hat\lambda^{2_1}_c\geq\hat\lambda^{2_2}_c\geq\hat\lambda^{2_3}_c\geq
\hat\lambda^{2_1}_c-1$ for any $c=1,\ldots,k$. There is a bijection
$\psi\colon \hat{Y}'\iso Y^+$ (the dominant weights of $T$):
$(\hat\lambda^0,\hat\lambda^1,\hat\lambda^{2_1},\hat\lambda^{2_2},\hat\lambda^{2_3})
\mapsto(\lambda^0,\lambda^1,\lambda^2):=(\hat\lambda^0,\hat\lambda^1,  
\hat\lambda^{2_1}+\hat\lambda^{2_2}+\hat\lambda^{2_3})$. Note that for 
$\hat\lambda\in\hat{Y}'$ we have 
$\Delta(\hat\lambda)=\Delta(\psi\hat\lambda)$ (the RHS $\Delta$ is the
twisted one). Finally note that 
$W_{\psi\hat\lambda}=W_{\lambda^0}\times W_{\lambda^1}\times W_{\lambda^2}$, and
$W_{\lambda^0}=W_{\hat\lambda^0},\ W_{\lambda^1}=W_{\hat\lambda^1},\
W_{\lambda^2}= W_{\hat\lambda^{2_1}}\cap W_{\hat\lambda^{2_2}}\cap W_{\hat\lambda^{2_3}}$.
\begin{NB} Old wrong version:
$W_{\lambda^2}=W_{\hat\lambda^{2_1}}$ (but $W_{\hat\lambda^{2_2}}$ and $W_{\hat\lambda^{2_3}}$
may be bigger than $W_{\lambda^2}$). 
\end{NB}
\begin{NB}
    HN. Oct.\ 17. 

    Suppose $k=2$. If $\la^2 = (1\ge 0)$, then $\la^{2_1} = (1\ge 0)$,
    $\la^{2_2} = (0\ge 0)$, $\la^{2_3} = (0\ge 0)$. On the other hand,
    if $\la^2 = (2\ge 1)$, then $\la^{2_1} = (1\ge 1)$, $\la^{2_2} =
    (1\ge 0)$, $\la^{2_3} = (0\ge 0)$. If $\la^2 = (3\ge 2)$,
    $\la^{2_1} = (1\ge 1)$, $\la^{2_2} = (1\ge 1)$, $\la^{2_3} = (1\ge
    0)$. Therefore we cannot determine which $2_1$, $2_2$, $2_3$ has
    the same stabilizer.
\end{NB}%
The diagonal embedding 
$\ft^2\hookrightarrow\hat\ft^{2_1}\oplus\hat\ft^{2_2}\oplus\hat\ft^{2_3}$
induces a surjection $\BC[\hat\ft^{2_1}]^{W_{\hat\lambda^{2_1}}}\otimes
\BC[\hat\ft^{2_2}]^{W_{\hat\lambda^{2_2}}}\otimes\BC[\hat\ft^{2_3}]^{W_{\hat\lambda^{2_3}}}
\twoheadrightarrow\BC[\ft^2]^{W_{\lambda^2}}$.
We choose a homogeneous section $\varepsilon$ of this surjection, and 
denote by $E_{\lambda^2}\subset\BC[\hat\ft^{2_1}]^{W_{\hat\lambda^{2_1}}}\otimes
\BC[\hat\ft^{2_2}]^{W_{\hat\lambda^{2_2}}}\otimes\BC[\hat\ft^{2_3}]^{W_{\hat\lambda^{2_3}}}$
the image of $\varepsilon$.
Now we define 
$E:=\bigoplus_{\hat\lambda\in\hat{Y}'}\BC[\ft]^{W_{\psi\hat\lambda}}[\CR_{\hat\lambda}]$
where $\BC[\ft]^{W_{\psi\hat\lambda}}=\BC[\ft^0]^{W_{\lambda^0}}
\otimes\BC[\ft^1]^{W_{\lambda^1}}\otimes E_{\lambda^2}$ is embedded into 
$\BC[\hat\ft]^{W_{\hat\lambda}}=\BC[\hat\ft^0]^{W_{\hat\lambda^0}}
\otimes\BC[\hat\ft^1]^{W_{\hat\lambda^1}}\otimes\BC[\hat\ft^{2_1}]^{W_{\hat\lambda^{2_1}}}
\otimes\BC[\hat\ft^{2_2}]^{W_{\hat\lambda^{2_2}}}
\otimes\BC[\hat\ft^{2_3}]^{W_{\hat\lambda^{2_3}}}$. 
%
The character of $E$ is given by the twisted monopole formula.

It remains to check that 
$\ol{\varphi}\colon E\iso\BC[\ol{\cM}_C(\hat{G},\bN)^\sigma]$.
First we consider the similar problem for $\bN=0$. Namely, let
$\widetilde{I}_\sigma\subset\on{gr}H^{\hat{G}_\CO}(\Gr_{\hat{G}})$ be the
ideal generated by the expressions $f-\sigma^*f$, and let 
$\widetilde{E}\subset\on{gr}H^{\hat{G}_\CO}(\Gr_{\hat{G}})$ be defined the same way
as $E$. We will prove 
$\on{gr}H^{\hat{G}_\CO}(\Gr_{\hat{G}})=\widetilde{E}\oplus\widetilde{I}_\sigma$.
To check the surjectivity of 
$\widetilde{E}\to\on{gr}H^{\hat{G}_\CO}(\Gr_{\hat{G}})/\widetilde{I}_\sigma$ we
will find for any $\hat\lambda\in\hat{Y}^+$ a coweight $\hat\mu\in\hat{Y}'$ 
such that $[\Gr_{\hat{G}}^{\hat\lambda}]-[\Gr_{\hat{G}}^{\hat\mu}]\in\widetilde{I}_\sigma$.
In effect, if the maximum of $|\hat\lambda^{2_1}_1-\hat\lambda^{2_2}_1|,\
|\hat\lambda^{2_2}_1-\hat\lambda^{2_3}_1|,\ 
|\hat\lambda^{2_3}_1-\hat\lambda^{2_1}_1|$ is bigger
than 1, and is equal to say $\hat\lambda^{2_2}_1-\hat\lambda^{2_1}_1$, then
we have 
\begin{multline*}
[\Gr_{\hat{G}}^{(\hat\lambda^0,\hat\lambda^1,\hat\lambda^{2_1},\hat\lambda^{2_2},
\hat\lambda^{2_3})}]-[\Gr_{\hat{G}}^{(\hat\lambda^0,\hat\lambda^1,\hat\lambda^{2_1}+(1,\ldots,1),
\hat\lambda^{2_2}-(1,\ldots,1),\hat\lambda^{2_3})}]
\\
=[\Gr_{\hat{G}}^{(\hat\lambda^0,\hat\lambda^1,\hat\lambda^{2_1}+(1,\ldots,1),
\hat\lambda^{2_2},\hat\lambda^{2_3})}]\cdot([\Gr_{\hat{G}}^{(0,0,-(1,\ldots,1),0,0)}]-
[\Gr_{\hat{G}}^{(0,0,0,-(1,\ldots,1),0)}])\in\widetilde{I}_\sigma.
\end{multline*}
Proceeding like this we can replace the initial $[\Gr_{\hat{G}}^{\hat\lambda}]$ with 
the one that is equal to it modulo $\widetilde{I}_\sigma$ but has the absolute
value of differences $\hat\lambda^{2_i}_1-\hat\lambda^{2_j}_1$ at most 1. 
Now if say $\hat\lambda^{2_2}_1-\hat\lambda^{2_3}_1=-1$ we repeat the above
replacement once more to swap $\hat\lambda^{2_2}_1$ and $\hat\lambda^{2_3}_1$
and make sure $\hat\lambda^{2_2}_1-\hat\lambda^{2_3}_1=1$. This way we replace
the initial $[\Gr_{\hat{G}}^{\hat\lambda}]$ with the one that is equal to it modulo 
$\widetilde{I}_\sigma$, and has $\hat\lambda^{2_1}_1\geq\hat\lambda^{2_2}_1\geq
\hat\lambda^{2_3}_1\geq\hat\lambda^{2_1}_1-1$. To take care of the second
coordinate $\hat\lambda^{2_i}_2$, instead of $-(1,\ldots,1)$ above we use
$-(0,1,\ldots,1)$ (that does not change the first coordinate 
$\hat\lambda^{2_i}_1$) in the above replacement procedure. Proceeding like this
we arrive at the desired coweight $\hat\mu\in\hat{Y}'$ 
such that $[\Gr_{\hat{G}}^{\hat\lambda}]-[\Gr_{\hat{G}}^{\hat\mu}]\in\widetilde{I}_\sigma$.
The surjectivity of 
$\widetilde{E}\to\on{gr}H^{\hat{G}_\CO}(\Gr_{\hat{G}})/\widetilde{I}_\sigma$ 
is proved.

Since
$\on{gr}H^{\hat{G}_\CO}(\Gr_{\hat{G}})=\bigotimes_i\on{gr}H^{\GL(V_i)_\CO}(\Gr_{\GL(V_i)})$
(the product over $i=0,1,2_1,2_2,2_3$), and $\sigma$ rotates cyclically the
last three factors, we see that 
$$\on{gr}H^{\hat{G}_\CO}(\Gr_{\hat{G}})/\widetilde{I}_\sigma\simeq
\on{gr}H^{\GL(V_0)_\CO}(\Gr_{\GL(V_0)})\otimes\on{gr}H^{\GL(V_1)_\CO}(\Gr_{\GL(V_1)})
\otimes\on{gr}H^{\GL(V_2)_\CO}(\Gr_{\GL(V_2)}),$$ and the graded dimension of the RHS
coincides with the one of $\widetilde{E}$. Here the grading is by the
cone of dominant coweights of $G$ times ${\mathbb Z}$ (the homological grading).
Hence the surjectivity established in the previous paragraph implies the
isomorphism
$\widetilde{E}\iso\on{gr}H^{\hat{G}_\CO}(\Gr_{\hat{G}})/\widetilde{I}_\sigma$.

We return to the proof of isomorphism 
$\ol{\varphi}\colon E\iso\BC[\ol{\cM}_C(\hat{G},\bN)^\sigma]$.
Consider the following commutative diagram:
$$\begin{CD}
\on{gr}H^{\hat{G}_\CO}(\CR_{\hat{G},\bN}) @>>> 
\on{gr}H^{\hat{G}_\CO}(\CR_{\hat{G},\bN})/\ol{I}_\sigma\\
@VV{\bz^*}V  @VV{\bz^*}V\\
\on{gr}H^{\hat{G}_\CO}(\Gr_{\hat{G}}) @>>>
\on{gr}H^{\hat{G}_\CO}(\Gr_{\hat{G}})/\widetilde{I}_\sigma
\end{CD}$$
Here 
$\bz^*$ is the restriction
to the zero section (see~\ref{sec:z}). Thus $\bz^*$ is injective, 
$\bz^*E\subset\widetilde{E}$, and
$\ol{I}_\sigma=(\bz^*)^{-1}\widetilde{I}_\sigma$, and the right vertical arrow
is injective as well. Hence the injectivity 
$\widetilde{E}\hookrightarrow\on{gr}H^{\hat{G}_\CO}(\CR_{\hat{G},\bN})/\ol{I}_\sigma$.

To prove the surjectivity, recall the setup of~\ref{sec:another}.
We use the flavor symmetry group $\BC^\times$, and instead of
$E\subset\on{gr}H^{\hat{G}_\CO}(\CR_{\hat{G},\bN})\supset\ol{I}_\sigma$ we consider 
the similarly defined subspace and ideal 
$E'\subset\on{gr}H^{\BC^\times\times\hat{G}_\CO}(\CR_{\hat{G},\bN})\supset\ol{I}'_\sigma$.
It suffices to prove the surjectivity $\BC(\bt)\otimes_{\BC[\bt]}E'
\twoheadrightarrow\BC(\bt)\otimes_{\BC[\bt]}
\on{gr}H^{\BC^\times\times\hat{G}_\CO}(\CR_{\hat{G},\bN})/\ol{I}'_\sigma$
because the $\bt$-deformation of $\on{gr}H^{\hat{G}_\CO}(\CR_{\hat{G},\bN})$
is trivial due to~\ref{rem:W-cover}(2).
This generic surjectivity follows from~\ref{prop:another} and the 
surjectivity at $\bt=\infty$
which was already established earlier during the proof.
\begin{NB} Old correspondence:
Now $\bz^*[\CR_{\hat\lambda}]=f_{\hat\lambda}[\Gr_{\hat{G}}^{\hat\lambda}]$ for certain
$f_{\hat\lambda}\in\BC[\hat\ft]^{W_{\hat\lambda}}$. So it suffices to check that
$\bigoplus_{\hat\lambda\in\hat{Y}'}\BC[\ft]^{W_{\psi\hat\lambda}}[\Gr_{\hat{G}}^{\hat\lambda}]$
projects isomorphically onto 
$\on{gr}H^{\hat{G}_\CO}(\Gr_{\hat{G}})/\widetilde{I}_\sigma$. 
\begin{NB2}
    $E$ is not mapped to this space as the factor in
    $\CC[\hat\ft^{2_2}]$ is changed from $1$ to the Euler class of the
    normal bundle. Therefore the argument does not work.....
\end{NB2}%
This is clear since
$\on{gr}H^{\hat{G}_\CO}(\Gr_{\hat{G}})=\bigotimes_i\on{gr}H^{\GL(V_i)_\CO}(\Gr_{\GL(V_i)})$
(the product over $i=0,1,2_1,2_2,2_3$), and $\sigma$ rotates cyclically the
last three factors.
\begin{NB2}
    It is something like $\sigma f(t_1) F(x_1)\otimes g(t_2) G(x_2)
    \otimes h(t_3) H(x_3) = h(t_1) H(x_1)\otimes f(t_2) F(x_2) \otimes
    g(t_3) G(x_3)$. The quotient is given by $t_1 = t_2 = t_3$, $x_1 =
    x_2 = x_3$.
    We take $E = \{ f(t_1) x_1^{\la^1} \otimes x_2^{\la^2}\otimes
    x_3^{\la^3}\mid \la^1\ge\la^2\ge\la^3\ge\la^1-1\}$.
\end{NB2}%
\end{NB}%
\end{proof}

\begin{Remark}
\label{C_n}
The proof of~\propref{prop:twimon} works for all the twisted cases whose
unfolding has no cycles (because we use~\ref{rem:W-cover}(2)). This 
excludes the $C_n^{(1)}$ case whose unfolding is the cyclic quiver 
$A_{2n-1}^{(1)}$. In this case the fixed point set of the automorphism $\sigma$
of $A_{2n-1}^{(1)}$ consists of two points, and we choose a $\sigma$-invariant
orientation from the first one to the second one. Then the dilatation action
of $\BC^\times$ on $\bN$ factors through $\hat G$ again, and the proof 
of~\propref{prop:twimon} goes through as well.\footnote{We are grateful
to L.~Rybnikov for this observation.}
\end{Remark}



\specialsection*{{\large Appendices}
\\
    By 
    Alexander~Braverman,
    Michael~Finkelberg,
    Joel~Kamnitzer,
    Ryosuke~Kodera,
    Hiraku~Nakajima,
    Ben~Webster,
and    
    Alex~Weekes
}
\appendix

In the first appendix we write certain elements of quantized Coulomb
branches $\cAh$ as explicit difference operators. These elements are
homology classes lived on closed $G_\cO$-orbits, i.e., orbits
$\Gr^\la_G$ for \emph{minuscule} coweights $\la$, and their slight
generalization corresponding to \emph{quasi-minuscule} and \emph{small
  fundamental} coweights. The first class is called minuscule monopole
operators in physics literature (see \ref{rem:BDG3}).

Examples of explicit difference operators include Macdonald operators
\cite[Chap.~VI, \S3]{Mac} and ones in representations of Yangian in
the work of Gerasimov-Kharchev-Lebedev-Oblezin \cite{GKLO} and its
generalization \cite{kwy}.

We hope that these elements, together with $H^*_G(\mathrm{pt})$,
generate quantized Coulomb branches $\cAh$ in many situations, possibly after
inverting $\hbar$ (and variables for flavor symmetry groups).
If this would happen, it identifies $\cAh$ as a subalgebra in the ring
of difference operators, generated by explicit elements. It gives a
purely algebraic characterization of $\cAh$.
We will show that this happens for quiver gauge theories of Jordan and
$ADE$ types. In particular, we will show that the quantized Coulomb
branches for quiver gauge theories of type $ADE$ are isomorphic to
truncated shifted Yangian under the dominance condition in the second
appendix. (See \corref{cor:shiftedYangian}.)

\section{Minuscule monopole operators as difference operators
}\label{sec:minuscule}

\begin{NB}
    Here are comparison of notations in \cite{kwy} with ours:
    \begin{enumerate}
          \item $m_i = \langle\la-\mu,\omega_{i^*}\rangle$ in \cite{kwy}
        is our $a_i = \dim V_i$.
          \item $\la_i = \langle\la,\alpha_{i^*}\rangle$ in \cite{kwy}
        is our $l_i=\dim W_i$.
          \item $z_{i,k}$ in \cite{kwy} is our $w_{i,r}$.
        \item $\beta_{i,k}^\pm$ in \cite{kwy} is our $\sfu_{i,r}^\pm$.
        \item $c_i^{(r)}$ in \cite{kwy} is the $r$th elementary
      symmetric function in our $z_{s}$ ($i_s=s$).
    \end{enumerate}
\end{NB}%

\subsection{Embedding to the ring of difference operators}
\label{embed_diff_op}
Let us return back to general notation conventions in \cite{main}.
Let $(G,\bN)$ be a pair of a complex reductive group and its
representation. Let $T$ be a maximal torus of $G$ and $\bNT$ the
restriction of $\bN$ to $T$ as usual. Let $W$ denote the Weyl group.
Let us consider the quantized Coulomb branches for $(G,\bN)$,
$(T, \bNT)$ and $(T,0)$.
If we simply write $\cAh$, it means the quantized Coulomb branch for
$(G,\bN)$. We indicate a group and its representation for other
two.
We will add flavor symmetry groups in examples below, but we omit them
for brevity now.

Recall we have an embedding $\cAh\hookrightarrow\cAh[T,0][\hbar^{-1},
(\text{root}+m\hbar)^{-1}]_{m\in{\mathbb Z}}$ in \ref{rem:BDG1}. 
\begin{NB}
We have a ring homomorphism $\iota_*\colon \cAh[T,\bNT]^{W}\to\cAh$, which
becomes an isomorphism if we invert the expressions $\hbar, \alpha+m\hbar$ 
where $\alpha$ is a root of $G$ and $m$ is an integer, considered
as elements in $H^*_{T\times{\mathbb C}^\times}(\mathrm{pt})$ (\ref{sec:bimodule}).
(Here $(\cAh[T,\bNT], \hbar, (\text{roots}+{\mathbb Z}\hbar))$ 
satisfies the Ore condition as
remarked in \ref{rem:BDG1}. Therefore the localization as a
$\CC[\ft]$-module has an algebra structure.)
Indeed, it suffices to check that for any dominant coweight $\lambda$,
the homomorphism of $(H^*_{T\times\BC^\times}(\on{pt}))^W$-modules
$\iota_*\colon (H_*^{T_\CO\rtimes\BC^\times}(\cR_{T,\bN_T})_{W\lambda})^W\to
H_*^{G_\CO\rtimes\BC^\times}(\cR_\lambda)$ becomes an isomorphism after inverting
the expressions $\hbar, \alpha+m\hbar$. To this end note that at a $T$-fixed
point $w\lambda\in\cR_\lambda,\ w\in W$, the quotient of the tangent space to 
$\cR_\lambda$ modulo the tangent space to $\cR_{T,\bN_T}$ equals the tangent space
to $\Gr^\lambda_G$, and the weights of $T\times\BC^\times$ in the latter tangent
space are all of the form $\hbar, \alpha+m\hbar$. 

We further have a ring homomorphism $\bz^*\colon \cAh[T,\bNT]\to
\cAh[T,0]$ (\ref{sec:z}).
By \ref{prop:integrable} $\cAh[T,0]$ is a $\CC[\hbar]$-algebra
generated by $w_{r}$, $\sfu^{\pm 1}_{r}$ ($1\le r\le \dim T$) with
relations $[\sfu_{r}^{\pm1}, w_{s}] = \pm\delta_{r,s} \hbar
\sfu_{r}^{\pm1}$. (Here we take coordinates of $T^\vee$ and the induced
coordinates on $\ft$.)
\end{NB}%
We thus have an algebra embedding
\begin{equation*}
    \bz^* (\iota_*)^{-1}\colon \cAh\hookrightarrow
    \begin{aligned}[t]
      \tilde\cAh &\defeq \cAh[T,0][\hbar^{-1}, (\text{root}+m\hbar)^{-1}]_{m\in{\mathbb Z}} \\
      &\overset{\phantom{\operatorname{\scriptstyle def.}}}{=} \CC[\hbar] \langle w_{r}, \sfu^{\pm 1}_{r}, \hbar^{-1},
      (\alpha+m\hbar)^{-1} \rangle\quad (\text{$\alpha$:\ root},\
      m\in{\mathbb Z}).
    \end{aligned}
\end{equation*}
We consider $\tilde\cAh$ as the localized ring of $\hbar$-difference operators
on $\ft$: $\sfu_{r}^{\pm 1}$ is the operator
\begin{equation*}
    (\sfu_{r}^{\pm 1} f)(\dots,w_{s},\dots)
    = f(\dots,w_{s} \pm \hbar \delta_{r,s},\dots).
\end{equation*}
\begin{NB}
\begin{equation*}
   [\sfu_{r}^{\pm 1}, w_{s}] f(w_{t})
   = (w_{s} \pm \hbar \delta_{rs} - w_{s}) 
   	f(w_{t} + \hbar \delta_{rt})
   	= \pm \delta_{rs} \hbar (\sfu_{r}^{\pm 1} f)(w_{t}).
\end{equation*}
\end{NB}%

\begin{Remark}
    We could also consider $\cAh[T,0]$ as the ring of differential
    operators on $T^\vee$: $\sfu_r^{\pm1}$ is a coordinate of $T^\vee$,
    and $w_s$ is $-\hbar \sfu_s \partial/\partial \sfu_s$. But it is
    natural for us to consider difference operators on $\ft$, as we
    invert roots.
\end{Remark}

In general, we do not know how to characterize the image of $\cAh$ in
$\tilde\cAh$ explicitly. Nevertheless, the image of a homology class
associated with a closed $G_\cO$-orbit $\Gr_G^\la$ can be explicitly
written down. (See 
\ref{prop:useful} for
$(\iota_*)^{-1}$ and \ref{sec:chang-repr} for $\bz^*$.)
Note that $\Gr_G^\la$ is closed if and only if $\la$ is minuscule.
(Since $\ol\Gr{}^\la_G\supset\Gr^\mu$ if and only if $\la\geq\mu$, and the
minuscule coweights are minimal in this order.)
\begin{NB}
    Reference ?
\end{NB}%

\begin{Proposition}\label{prop:minuscule}
    Let $\la$ be a minuscule dominant coweight and $W_\la$ its
    stabilizer in $W$. Let $f\in\CC[\ft]^{W_\la}$. Let
    $\cR_\la = \pi^{-1}(\Gr_G^\la)$, where $\pi\colon \cR\to\Gr_G$ is
    the projection. Then
    \begin{equation*}
        \bz^*(\iota_*)^{-1}f[\cR_\la] = 
        \sum_{\la'=w\la\in W\la} \frac{wf \times
          e\left( z^{\la'}\bN_\cO/z^{\la'}\bN_\cO\cap\bN_\cO\right)}
        {e(T_{\la'}\Gr^\la_G)} \sfu_{\la'},
    \end{equation*}
    where $T_{\la'}\Gr^\la_G$ is the tangent space of $\Gr^\la_G$ at
    the point $z^{\la'}$ and $\sfu_{\la'}$ is the shift operator
    corresponding to $\la'$, i.e., $(\sfu_{\la'} f)(\bullet) =
    f(\bullet+\hbar\la')$ for $f\in\CC[\ft]$.
\end{Proposition}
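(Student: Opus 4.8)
The plan is to compute the right-hand side of the embedding $\bz^*(\iota_*)^{-1}$ termwise, using the localization theorem for $T_\cO\rtimes\BC^\times$-equivariant Borel--Moore homology and the explicit descriptions of $(\iota_*)^{-1}$ and $\bz^*$ recalled (from \cite{main}) in \ref{prop:useful} and \ref{sec:chang-repr}. First I would observe that since $\la$ is minuscule, $\Gr^\la_G$ is a single closed $G_\cO$-orbit, hence a smooth projective variety (a partial flag variety $G/P_\la$), with $T$-fixed points exactly the points $z^{\la'}$ for $\la'\in W\la$, and $W_\la=\Stab_W(\la)$ is the Weyl group of the Levi $P_\la$. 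In particular the $T$-fixed locus $(\Gr^\la_G)^T$ is isolated, so the fixed locus $\cR_\la^T$ is a union of copies of the fibers $\cR|_{z^{\la'}}$ over these points; this is what makes the localization sum finite and the formula explicit.

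Next I would apply the localization theorem: the class $[\cR_\la]$, after passing to the fraction field, equals $\sum_{\la'\in W\la}\iota_{\la'*}\bigl([\cR_\la^{T}|_{z^{\la'}}]/e(N_{\la'})\bigr)$ where $N_{\la'}$ is the normal space to the fixed component. The normal space decomposes as $T_{\la'}\Gr^\la_G$ (the tangential directions in the base) plus the difference of the fiber directions. The key input from \ref{prop:useful} is that $(\iota_*)^{-1}$ of the fundamental class of $\cR_\la$ pushed to the $T$-picture records, at each $\la'$, exactly the multiplicity class coming from $\bN$: namely the Euler class $e(z^{\la'}\bN_\cO/(z^{\la'}\bN_\cO\cap\bN_\cO))$ of the ``extra'' $\bN$-directions created by the shift $z^{\la'}$, divided by the Euler class $e(T_{\la'}\Gr^\la_G)$ of the tangential directions. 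Then I would invoke \ref{sec:chang-repr}: applying $\bz^*$ (restriction to the zero section in $\CR_{T,0}$) converts the fixed-point class supported at $z^{\la'}\in\Gr_T$ into the shift operator $\sfu_{\la'}$ on $\CC[\ft]$. Finally, the factor $f\in\CC[\ft]^{W_\la}$ contributes $wf$ at the fixed point $w\la$ (well-defined precisely because $f$ is $W_\la$-invariant, so it depends only on the coset $w W_\la$, equivalently on $\la'=w\la$), yielding the stated sum $\sum_{\la'=w\la}\dfrac{wf\cdot e(z^{\la'}\bN_\cO/z^{\la'}\bN_\cO\cap\bN_\cO)}{e(T_{\la'}\Gr^\la_G)}\,\sfu_{\la'}$.

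The main obstacle I anticipate is a careful bookkeeping issue rather than a conceptual one: one must verify that the a priori rational expression obtained from localization is in fact \emph{regular}, i.e.\ lies in $\tilde\cAh$ with the prescribed denominators (only $\hbar$ and factors $\alpha+m\hbar$), and that the termwise identification of the pushforward $\iota_{\la'*}$ of a point class with the operator $\sfu_{\la'}$ is compatible with the convolution product structure used to define $\bz^*(\iota_*)^{-1}$ as an \emph{algebra} homomorphism. Concretely, one needs the identification of the fiber $\cR_\la^T|_{z^{\la'}}$, the computation of the two Euler classes in terms of $T\times\BC^\times$-weights (the tangent weights of $G/P_\la$ at $w\la$, and the weights of $\bN$ shifted by $\la'$), and the check that after summing over $W\la$ the poles in the individual terms either cancel or are of the allowed form. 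This is the ``routine calculation'' part: it follows the template of \ref{prop:useful} and \ref{sec:chang-repr} applied to the smooth base $\Gr^\la_G$, for which the Atiyah--Bott/localization formalism applies cleanly since the orbit is closed and smooth.
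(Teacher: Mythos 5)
Your plan is essentially the paper's implicit argument: the paper doesn't supply an explicit proof of Proposition~\ref{prop:minuscule}, but the discussion preceding it (and the proof of Proposition~\ref{prop:ad_minuscule}) makes clear it is obtained exactly by the localization computation you describe — use that $\la$ minuscule makes $\Gr_G^\la$ a closed smooth partial flag variety with isolated $T$-fixed points $z^{\la'}$, localize, and identify the two Euler classes. One small bookkeeping slip worth correcting: you attribute to $(\iota_*)^{-1}$ both the denominator $e(T_{\la'}\Gr^\la_G)$ \emph{and} the numerator $e\bigl(z^{\la'}\bN_\cO/z^{\la'}\bN_\cO\cap\bN_\cO\bigr)$, with $\bz^*$ only converting fixed-point classes into shift operators. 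In fact $(\iota_*)^{-1}$ accounts only for the tangential directions of $\Gr_G^\la$ (hence the denominator $e(T_{\la'}\Gr^\la_G)$), because the comparison is between $\cR_{G,\bN}$ over $\Gr^\la_G$ and $\cR_{T,\bN_T}$ over the fixed points, which already carry the $\bN$-fiber; the numerator $e\bigl(z^{\la'}\bN_\cO/z^{\la'}\bN_\cO\cap\bN_\cO\bigr)$ arises from $\bz^*$, which records the codimension of $\cR_{T,\bN_T}$ inside $\cT_{T,\bN_T}$ when restricting to the zero section (this is what \ref{sec:chang-repr} gives, and it is also what produces the replacement of fiber classes by $\sfu_{\la'}$). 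The final formula is unaffected, but if you expand each step the reasoning for the two factors is different, so it's worth keeping them separate.
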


\subsection{Quiver gauge theories}
\label{Qgt}

\renewcommand{\bNT}{\bN_{\TV}}

Let us return back to the notational convention in this paper.

Let $(\GV,\bN)$ be a quiver gauge theory, which is not
necessarily of either finite $ADE$ or affine type.
\begin{NB}
We slightly change the notation 
to avoid a conflict with the group $G$ for zastava.
\end{NB}%
Let $\TV$ be a maximal torus of $\GV$, and $\bNT$ is the restriction
of $\bN$ to $\TV$.
We add the flavor symmetry group $\TW$ as in \cref{defo}. Thus we mean
$\cAh = H^{(\GV\times \TW)_\cO\rtimes\CC^\times}_*(\cR_{\GV,\bN})$,
and $\cAh[\TV,\bNT]$, $\cAh[\TV,0]$ are similar.

When there are several loops in the underlying graph (e.g., the Jordan
quiver or an affine quiver of type $A$), we should also add additional flavor symmetries rescaling entries in $\bN$ in loops. But we omit them for brevity except in \subsecref{subsec:jordan-quiver}.

Recall $w^*_{i,r}$ is the cocharacter of $\GV = \prod \GL(V_i)$,
which is equal to $0$ except at the vertex $i$, and is
$(0,\dots,0,1,0,\dots,0)$ at $i$. Here $1$ is at the $r$th entry
($r=1,\dots,a_i = \dim V_i$).
We take corresponding coordinates $w_{i,r}$, $\sfu_{i,r}$ ($i\in I$,
$1\le r\le a_i$) of $\operatorname{Lie}\TV$ and $\TV^\vee$.
The roots are $w_{i,r} - w_{i,s}$ ($r\neq s$).
Furthermore, $\cAh[\TV,0]$ is a $\CC[\hbar,z_1,\dots,z_N]$-algebra
generated by $w_{i,r}$, $\sfu^{\pm 1}_{i,r}$ ($i\in I$, $1\le r\le
a_i$) with relations $[\sfu_{j,s}^{\pm1},w_{i,r}] =
\pm\delta_{i,j}\delta_{r,s} \hbar \sfu_{i,r}^{\pm1}$. We thus have an algebra
embedding
\begin{equation*}
    \cAh\hookrightarrow
     \tilde\cAh \defeq 
    \CC[\hbar,z_1,\dots, z_N]
    \langle w_{i,r}, \sfu^{\pm 1}_{i,r}, \hbar^{-1}, (w_{i,r}-w_{i,s}+m\hbar)^{-1}
    (r\neq s,\ m\in{\mathbb Z})\rangle.
\end{equation*}
We consider $\tilde\cAh$ as the localized ring of $\hbar$-difference operators
on $\operatorname{Lie}\TV$ as above, and $z_1$, \dots, $z_N$ are parameters.



Let $\varpi_{i,n}$ be the $n$th fundamental coweight of the factor
$\GL(V_i)$, i.e., $(1,\dots,1,0,\dots,0) = w^*_{i,1}+\dots+w^*_{i,n}$,
where $1$ appears $n$ times ($1\le n\le a_i$). Then
$\Gr_{\GV}^{\varpi_{i,n}}$ is closed and isomorphic to the
Grassmannian $\Gr(V_i,n)$ of $n$-dimensional quotients of $V_i$.
In fact, $\Gr^{\varpi_{i,n}}_{\GV}$ is identified with the moduli space
of $\cO$-modules $L$ such that
\begin{equation*}
     z\cO\otimes V_i \subset L \subset \cO\otimes V_i, \qquad
     	\dim_\CC 
		{\cO\otimes V_i}/{L} = n,
\end{equation*}
hence $\cO\otimes V_i/L$ is the corresponding quotient space
of $\cO\otimes V_i/ z\cO\otimes V_i\cong V_i$.
\begin{NB}
    $\varpi_{i,n}$ corresponds to $L = \operatorname{diag}(z,\dots,z,1,\dots,1)
    (\cO\otimes V_i)$.
\end{NB}%

Let $\uQ_i$ be the vector bundle over $\Gr_{\GV}^{\varpi_{i,n}}$ whose
fiber at $L$ is $\cO\otimes V_i/L$. It is the universal rank $n$
quotient bundle of $\Gr(V_i,n)\times V_i$. Its pull-back to
$\cR_{\varpi_{i,n}}$ is denoted also by $\uQ_i$ for brevity. Let
$c_p(\uQ_i)$ denote its $p$th Chern class.
More generally we can consider a class $f(\uQ_i)$ for a symmetric
function $f$ in $n$ variables so that $c_p(\uQ_i)$ corresponds
to the $p$th elementary symmetric polynomial.

The $\TV$ fixed points in $\Gr_{\GV}^{\varpi_{i,n}}$ are in bijection to subsets
$I\subset \{ 1,\dots, a_i\}$ with $\# I = n$.  
The bijection is given by assigning a cocharacter $\la_I \defeq \sum_{r\in I} w^*_{i,r}$ of $\GL(V_i)$ to $I$. The fixed point formula implies
\begin{equation*}
    (\iota_*)^{-1} f(\uQ_i)\cap [\cR_{\varpi_{i,n}}]
    = 
    \sum_{\substack{I\subset \{1,\dots, a_i\} \\ \# I = n}} 
    f(w_{i,I}) \prod_{r\in I, s\notin I}
    \frac{r^{\la_I}}{w_{i,r} - w_{i,s}},
\end{equation*} 
where $f(w_{i,I})$ means that we substitute $(w_{i,r})_{r\in I}$ to
the symmetric function $f$, and
$r^{\la_I}$ denote the fundamental class of the fiber of $\cR_{\TV,\bNT}\to\Gr_{\TV}$ at $\la_I$.
In view of \propref{prop:minuscule},
the $\TV$-fixed point set is the Weyl group orbit
$\Weyl\varpi_{i,n}$, and $\prod (w_{i,r}-w_{i,s})$ is the
equivariant Euler class $e(T_{\la_I} \Gr_{\GV}^{\varpi_{i,n}})$ of the tangent space of $\Gr_{\GV}^{\varpi_{i,n}}$ at the fixed point $\la_I$.
\begin{NB}
In fact, $T_{\la_I}\Gr_{\GV}^{\varpi_{i,n}} = \bigoplus_{\alpha\in\Delta}
\bigoplus_{n=0}^{\max(0,\langle\alpha,\la_I\rangle)-1} \gl(V)_\alpha z^n$ as 
is mentioned in the proof of \ref{lem:orbit}, where $\Delta$ is the set of roots
of $\gl(V)$ and $\gl(V)_\alpha$ is the root subspace for $\alpha\in\Delta$.
And $\langle\alpha,\la_I\rangle > 0$ if and only if $\alpha = w_{i,r} - w_{i,s}$
with $r\in I$, $s\notin I$.
\end{NB}%

Furthermore 
\begin{equation*}
   e\left( z^{\la_I}\bN_\cO/ z^{\la_I}\bN_\cO\cap\bN_\cO\right) =
   \prod_{\substack{h\in\Qo: \vout{h}=i\\ r\in I}}\prod_{\substack{s=1 \\ \text{$\vin{h}\neq i$ or $s\notin I$}}}^{a_j} 
    (-w_{i,r} + w_{\vin{h},s} - \nicefrac{\hbar}2)
\end{equation*}
followed by the replacement $r^{\la_I}$ by $\prod_{r\in I}\sfu_{i,r}$.
Here `$\vin{h}\neq i$ or $s\notin I$' means that the product excludes $s\in I$ if
$h$ 
is an edge loop.
We thus get
\begin{multline}
    \label{eq:82}
        \bz^* (\iota_*)^{-1} f(\uQ_i)\cap[\cR_{\varpi_{i,n}}] \\
    = 
    \sum_{\substack{I\subset \{1,\dots, a_i\} \\ \# I = n}} 
    f(w_{i,I})
    \frac{\displaystyle\prod_{\substack{h\in\Qo:\vout{h}=i\\ r\in I}}
      \prod_{\substack{s=1 \\ \text{$\vin{h}\neq i$ or $s\notin I$}}}^{a_j} 
      (-w_{i,r} + w_{\vin{h},s} - \nicefrac{\hbar}2)}
    {\displaystyle\prod_{r\in I, s\notin I} (w_{i,r} - w_{i,s})}
    \prod_{r\in I} \sfu_{i,r}.
\end{multline}

Instead of $f(\uQ_i)$, we can also consider the class $f(\uS_i)$, a
polynomial in Chern classes of the universal subbundle $\uS_i$ over
$\Gr_{\GV}^{\varpi_{i,n}}$. Then variables $(w_{i,r})_{r\in I}$ in
$f(w_{i,I})$ are replaced by $(w_{i,s})_{s\notin I}$. We will consider
symmetric functions in the full variables $w_{i,r}$ ($r=1,\dots, a_i$)
later, so the difference between $\uQ_i$ and $\uS_i$ are not
essential: the algebra generated by \eqref{eq:82} and one by
$f(\uS_i)$ are the same if we add symmetric functions in the full
$w_{i,r}$.

Let us recall the $\Delta$-degree, defined in
\eqref{eq:18}. Its value for $\varpi_{i,n}$ is
\begin{equation}\label{eq:Deltadeg}
  \Delta(\varpi_{i,n}) =
  (\# \{i\to i\in\Qo\} - 1)n(\dim V_i - n)
  + \frac{n}{2} \sum_{\substack{h\in\Qo\sqcup\overline{\Qo}\\\vout{h}=i\\\vin{h}\neq i}}
  \dim V_{\vin{h}}
  + \frac{n}{2} \dim W_i,
\end{equation}
where $\# \{i\to i\in\Qo\}$ is the number of edge loops at $i$.
For a finite quiver gauge theory of type $ADE$ and $n=1$, this is equal to
$1+\frac12\langle\mu,\alphavee_i\rangle$. For Jordan quiver and $n=1$, we have
$\frac12\dim W_i$.

Similarly we consider $\varpi_{i,n}^* = -w_0\varpi_{i,n}$, where the
corresponding orbit $\Gr^{\varpi_{i,n}^*}_{\GV}$ is also isomorphic to
the Grassmannian $\Gr(n,V_i)$ of $n$-planes in $V_i$. In fact,
$\Gr^{\varpi_{i,n}^*}_{\GV}$ is the moduli space of $\cO$-modules $L$
such that $\cO\otimes V_i\subset L\subset z^{-1}\cO\otimes V_i$ with
$\dim_\CC L/\cO\otimes V_i = n$.
\begin{NB}
    $\varpi_{i,n}^*$ corresponds to $L= \operatorname{diag}(z^{-1},\dots,z^{-1},1,\dots,1)(\cO\otimes V_i)$. Therefore $L/\cO\otimes V_i = (z^{-1}\cO/\cO)^{\oplus n} \oplus (\cO/\cO)^{\oplus a_i - n}$.
\end{NB}%
Let $\uS_i$ be the rank $n$ vector bundle over
$\Gr^{\varpi_{i,n}^*}_{\GV}$ whose fiber over $L$ is $L/\cO\otimes
V_i$. Its pull-back to $\cR_{\varpi_{i,n}^*}$ is also denoted by
$\uS_i$. Then
\begin{multline}
    \label{eq:83}
        \bz^* (\iota_*)^{-1} f(\uS_i)\cap[\cR_{\varpi_{i,n}^*}] \\
    = 
    \sum_{\substack{I\subset \{1,\dots, a_i\} \\ \# I = n}} 
    f(w_{i,I}-\hbar)
    \prod_{\substack{r\in I\\ k:i_k=i}} (w_{i,r} - z_k - \nicefrac{\hbar}2)
    \frac{\displaystyle\prod_{\substack{h\in\Qo: \vin{h}=i\\ r\in I}}
      \prod_{\substack{s=1 \\ \text{$\vout{h}\neq i$ or $s\notin I$}}}^{a_j} 
      (w_{i,r} - w_{\vout{h},s} - \nicefrac{\hbar}2)}
    {\displaystyle\prod_{r\in I, s\notin I} (-w_{i,r} + w_{i,s})}
    \prod_{r\in I} \sfu_{i,r}^{-1},
\end{multline}
where $f(w_{i,I}-\hbar)$ means that we substitute
$(w_{i,r}-\hbar)_{r\in I}$ to $f$. The extra factor $w_{i,r} - z_k -
\nicefrac{\hbar}2$ came from $\Hom(W_i,V_i)$.

Thus elements in the right hand sides of \textup{(\ref{eq:82},
  \ref{eq:83})} are in the image of $\bz^*(\iota_*)^{-1}\colon
\cAh\hookrightarrow \tilde\cAh
$.

\begin{Remark}
\label{Gaiotto-Witten}
We assume that $(\GV,\bN)$ is a quiver gauge theory of ADE type, so that
the Coulomb branch is isomorphic to a BD slice 
$\ol\CW{}^{\underline{\lambda}^*}_{\mu^*},\
\lambda-\mu=\alpha$. Recall the function $\chi^\lambda_{i,+}$ 
of~\cite[Theorem~1.6(5),~Theorem~6.4]{bdf}. It is a function on 
$\oZ^\alpha\times\BA^N$ measuring $\on{Ext}^1$ of certain line bundles on 
$\BP^1$ coming from the flags in $\oZ^\alpha$. More conceptually, it is the
crucial part (the 4-th summand of~\cite[(1.5)]{bdf}) of the {\em Gaiotto-Witten
superpotential}, or else the $i$-th summand of the {\em Whittaker function} 
(see e.g.~\cite[6.3]{bdf}). Composing $\chi^\lambda_{i,+}$ with the projection
$\ol\CW{}^{\underline{\lambda}^*}_{\mu^*}\to Z^\alpha\times\BA^N$ we can view
$\chi^\lambda_{i,+}$ as a rational function on 
$\ol\CW{}^{\underline{\lambda}^*}_{\mu^*}$. Now a direct comparison of 
formulas \textup{(\ref{eq:83})} and~\cite[(1.3)]{bdf}
shows that up to sign, $\chi^\lambda_{i,+}$ coincides with 
$\bz^*(\iota_*)^{-1}[\cR_{\varpi_{i,1}^*}]|_{\hbar=0}$; in particular, it is a 
{\em regular} function on $\ol\CW{}^{\underline{\lambda}^*}_{\mu^*}$.

Recall that the logarithmic part $\log F_\alpha$ of the Gaiotto-Witten 
superpotential (the 3-rd summand of~\cite[(1.5)]{bdf}) is also expressed in
terms of Coulomb branch, see~Remark~\ref{Ober}.
\end{Remark}

\subsection{Jordan quiver}\label{subsec:jordan-quiver}

Consider the case of Jordan quiver. We omit the index $i$ as we only
have one vertex. For example, let $\dim V = a$, $\dim W = l$. Hence
$\GV = \GL(a)$. As we mentioned before, we add the dilatation on $\bN$
as the flavor symmetry $\CC^\times$. Let us denote the corresponding
equivariant variable by $\bt$. 
By \propref{prop:ad_taut}, the Coulomb branch $\cA = \CC[\mathcal
M_C]$ with $\hbar = \bt = z_k = 0$ is $\operatorname{Sym}^a\cS_l$
where $\cS_l$ is the hypersurface $xy=w^l$ in $\BA^3$.

Note that the equivariant variable $\bt$ will be added each factor in
the numerator of (\ref{eq:82}, \ref{eq:83}). Since it always appears
with $-\hbar/2$, let us absorb $-\hbar/2$ to $\bt$. Also we replace
$z_k$ by $z_k + \hbar + \bt$ so that $w_r - z_k + \bt$ becomes $w_r -
\hbar - z_k$.
\begin{NB}
On the other hand, the action of the diagonal $\CC^\times\subset \TW$
can be absorbed into the action of $\GV$. Therefore we may assume
$z_1+\dots+z_l = 0$. (In this case $N=l$.)

This is probably possible, but let us not to assume, as we shift
equivariant variables.
\end{NB}%

Then (\ref{eq:82}, \ref{eq:83}) become
\begin{equation}\label{eq:79}
    \begin{split}
    & E_{n}[f] \defeq
    \sum_{\substack{I\subset \{1,\dots,a\}\\ \# I =
        n}} 
    f(w_I)
    \prod_{r\in I, s\notin I}
    \frac{
      w_{r} - w_{s} - \bt}
    {w_{r}-w_{s}}
    \prod_{r\in I} \sfu_{r},
\\
    & 
        F_{n}[f] \defeq
    \sum_{\substack{I\subset \{1,\dots,a\}\\ \# I = n}}
    f(w_I - \hbar)
    \prod_{r\in I, s\notin I}
    \frac{
      w_{r}  - w_{s} + \bt}
    {w_{r}-w_{s}}
    \prod_{r\in I} \left(
    \prod_{k=1}^l (w_{r} - \hbar - z_k)
    \cdot\sfu_{r}^{-1}\right),
    \end{split}
\end{equation}
where $f(w_I)$, $f(w_I - \hbar)$ are $f\left((w_r)_{r\in I}\right)$,
$f\left((w_r-\hbar)_{r\in I}\right)$ respectively. We also multiply
$(-1)^{n(a-n)}$ to omit the sign.

\begin{NB}
This is the record of an old version of operators.
\begin{equation*}
    E_{n}[f] \defeq
    \sum_{\substack{I\subset \{1,\dots,a\}\\ \# I = n}} 
    f\left( (w_r)_{r\in I}\right)
    \prod_{r\in I, s\notin I}
    \frac{
      - w_{r} + w_{s} - \nicefrac{\hbar}2 + \mathbf t}
    {w_{r}-w_{s}}
    \prod_{r\in I} \sfu_{r}.
\end{equation*}
\begin{equation*}
    F_{n}[f] \defeq
    \sum_{\substack{I\subset \{1,\dots,a\}\\ \# I = n}}
    f\left((w_r-\hbar)_{r\in I}\right)
    \prod_{r\in I, s\notin I}
    \frac{
      w_{r}  - w_{s} - \nicefrac{\hbar}2 + \mathbf t}
    {-w_{r}+w_{s}}
    \prod_{r\in I} \left(
    \prod_{k=1}^l (w_{r} - z_k - \nicefrac{\hbar}2)
    \cdot\sfu_{r}^{-1}\right).
\end{equation*}
I probably forgot to add $\bt$ to $(w_{r} - z_k - \nicefrac{\hbar}2)$,
though it is harmless as it can be absorbed to $z_k$ anyway.
\end{NB}%

If $f\equiv 1$, $E_n[1]$ is a rational version of the $n$th Macdonald
operator, once $\sfu_r$ is understood as the $\hbar$-difference
operator: $f(w_1,\dots,w_a) \mapsto f(w_1,\dots, w_r+\hbar, \dots,
w_a)$.
A little more precisely, the Macdonald operator is
\begin{equation*}
    \sum_{\substack{I\subset \{1,\dots,a\}\\ \# I = n}} 
    \prod_{r\in I, s\notin I}
    \frac{t x_r - x_s}{x_{r}-x_{s}}
    \prod_{r\in I} T_{r},
\end{equation*}
for $(T_{r}f)(x_1,\dots,x_a) = f(x_1,\dots,q x_r, \dots, x_a)$.
(See e.g., \cite[Chap.~VI,~\S3]{Mac}.) We recover $E_n[1]$ if we set
$x_r = \exp(\boldsymbol\beta w_r)$, $q = \exp(\boldsymbol\beta
\hbar)$, $t = \exp(-\boldsymbol\beta\bt)$ and take the limit
$\boldsymbol\beta\to 0$.
\begin{NB}
Since the operator for $f\equiv 1$ is important, let us simply denote
it by $E_n$. We also set $F_n = F_n[1]$.
\end{NB}%

\begin{Remark}
    Let us consider the operator $E_n$ for the $K$-theoretic version
    of the quantized Coulomb branch. The computation is the same, we
    just replace Euler classes by $K$-theoretic ones, e.g., $w_r -
    w_s$ by $1 - x_s x_r^{-1} = 1 - \exp(-(w_r-w_s))$ under the
    identification $x_r = \exp w_r$. Then $(- w_{r} + w_{s} + \bt)/
    ({w_{r}-w_{s}})$ is replaced by
    \begin{equation*}
        \frac{1 - x_r x_s^{-1} \exp(-\bt)}
        {1 - x_s x_r^{-1}}
        = - \exp(-\bt) \frac{x_r}{x_s}
        \frac{x_r - x_s \exp\bt}
        {x_r - x_s}.
    \end{equation*}
    If we compare this with the Macdonald operator, we see the extra
    factor $x_r/x_s$. It can be regarded as the canonical bundle of
    $\Gr_{\GL(a)}^{\varpi_n}$, and absorbed into the symmetric
    function $f$ for our purpose. However if we want to check the
    commutativity $\left[E_m, E_n\right] = 0$, it is true for
    the homology case, and need to put the extra factor for the
    $K$-theory.

    By the way, we do not see {\it a priori\/} geometric reason why we
    have $\left[E_m, E_n\right] = 0$.
\end{Remark}


\begin{Theorem}\label{thm:generate-Jordan}
    Operators $E_{n}[f]$, $F_{n}[f]$ \textup($1\le n\le a$, $f$:
    a symmetric function in $n$ variables\textup) in \eqref{eq:79}
    together with symmetric functions in $w_s$ generate the quantized
    Coulomb branch $\cAh$ over $\CC[\hbar,\bt, z_1,\dots,z_l]$.
\end{Theorem}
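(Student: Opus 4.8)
The strategy is to reduce the statement to the known description of $\cAh$ as a localized subalgebra of $\tilde\cAh$, and then exhibit enough explicit elements in the subalgebra generated by $E_n[f]$, $F_n[f]$ and $\CC[\ft]^W = \CC[\hbar,\bt,z_\bullet][w_s]^{S_a}$ to recover all of $\cAh$. Recall from \ref{rem:BDG1} and \subsecref{embed_diff_op} that $\cAh\hookrightarrow\tilde\cAh$ with image equal to (the $W$-invariants of) $\cAh[\TV,\bNT]$ localized only at $\hbar$ and at $\alpha+m\hbar$. So it suffices to show that the stated generators produce, inside $\tilde\cAh$, every class $\bz^*(\iota_*)^{-1}([\cR_\lambda]\cdot g)$ for $\lambda$ a dominant coweight of $\GL(a)$ and $g\in\CC[\ft]^{W_\lambda}$, since these span $\cAh[\TV,\bNT]^W$ by \ref{prop:integrable}/\ref{sec:filtra} (the associated graded is $\bigoplus_\lambda \CC[\ft]^{W_\lambda}[\cR_\lambda]$).

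First I would set up the filtration: equip $\cAh$ with the ``monopole'' (homological or coweight) filtration whose associated graded is $\bigoplus_{\lambda}\CC[\ft]^{W_\lambda}[\cR_\lambda]$, and observe that it is enough to hit the associated graded, i.e.\ to show each $[\cR_\lambda]\cdot g$ lies in the subalgebra modulo lower-order terms. The key computation is a product formula: $\bz^*(\iota_*)^{-1}$ of a convolution product $[\cR_\mu]\star[\cR_\nu]$ equals, to leading order in the filtration, the product of difference operators in $\tilde\cAh$, by the general ``leading-term = commutative product'' principle underlying \ref{prop:integrable}. Concretely, a product of operators of the form $E_{n}[f]$ (supported at $\varpi_n = (1^n,0^{a-n})$, shift $\sum_{r\in I}\sfu_r$) gives, at leading order, a sum over sequences of subsets; choosing $\lambda = \sum m_k\varpi_{n_k}$ as a sum of fundamental coweights and taking appropriate products of the $E$'s (and $F$'s for the anti-dominant/negative directions), the leading symbol is supported exactly on $\Gr^\lambda$ with coefficient a prescribed $W_\lambda$-symmetric function. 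Since every dominant coweight of $\GL(a)$ is a $\ZZ_{\ge0}$-combination of the fundamental coweights $\varpi_1,\dots,\varpi_{a-1}$ together with the central ones $\pm\varpi_a$ (the latter coming from $E_a[f]$ and $F_a[f]$, which are invertible shift operators $\prod_r\sfu_r^{\pm1}$ times explicit functions), and since multiplying by symmetric functions in all $w_s$ (our extra generators) adjusts the polynomial coefficient freely over $\CC[\ft]^W$, an induction on the filtration degree shows every $[\cR_\lambda]\cdot g$ is reached. The role of $F_n[f]$ is precisely to supply the negative-coweight directions: $E$'s alone generate only $H_*^{\GL(a)_\cO}(\cR^+)$ (cf.\ \corref{Pestun}), and one needs the $F$'s (which carry the extra $\prod_k(w_r-\hbar-z_k)$ factors from $\Hom(W,V)$) to invert the boundary and reach all of $\cAh$.

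I would then check compatibility with the base ring $\CC[\hbar,\bt,z_1,\dots,z_l]$: all the structure constants above are polynomial in these parameters, so no inversion of $\hbar$ or of roots is secretly needed — this is the content of asserting generation \emph{over} $\CC[\hbar,\bt,z_\bullet]$ rather than over its localization, and it is exactly where one uses that the leading symbols are honest polynomials, not Laurent expressions. Finally, I would remark that the cases $\hbar=\bt=z_k=0$ recover generators of $\CC[\operatorname{Sym}^a\cS_l]$ via \propref{prop:ad_taut}, giving a consistency check: $E_1[1]$, $F_1[1]$ and power sums in $w_s$ degenerate to the coordinates $\sum y_r$, $\sum x_r$, $\sum w_r^k$ on the symmetric power, which generate by the usual Newton's-identity argument on each $\cS_l$ factor.

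\textbf{Main obstacle.} The hard part will be proving that the leading symbol of a product $E_{n_1}[f_1]\star\cdots\star E_{n_j}[f_j]\star F_{m_1}[g_1]\star\cdots$ is genuinely supported on the single orbit $\Gr^\lambda$ with $\lambda=\sum\varpi_{n_a}-\sum\varpi_{m_b}$ and does not acquire contributions from smaller orbits that would obstruct the inductive step — equivalently, controlling the ``collision'' terms in $\tilde\cAh$ where two shift operators $\sfu_r$ overlap and the denominators $w_r-w_s$ meet their numerators. One must show these collisions only produce strictly-lower-filtration terms (handled by induction) and never cancel the leading coefficient. This is a careful but essentially combinatorial analysis of the difference-operator products, analogous to the computation of the Satake/Macdonald-operator algebra; I expect it to occupy the bulk of the proof, with the rest (filtration setup, base-ring bookkeeping, the degeneration check) being routine.
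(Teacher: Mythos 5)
Your proposal takes a genuinely different, and much harder, route than the paper, and there is a real gap in it.

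The paper's proof is a two-line reduction: since $\cAh$ is a nonnegatively graded $\CC[\hbar,\bt,z_1,\dots,z_l]$-algebra (with the parameters in positive degree), the graded Nakayama lemma says that generation over $\CC[\hbar,\bt,z_1,\dots,z_l]$ follows from generation of the fiber $\cAh/(\hbar,\bt,z_\bullet) = \CC[\operatorname{Sym}^a\cS_l]$ by the specialized operators in \eqref{eq:81}. At that specialization all the collision subtleties disappear: the operators become the plain symmetrized monomials $\sum_{|I|=n}f(w_I)\prod_{r\in I}x_r$ and $\sum_{|I|=n}f(w_I)\prod_{r\in I}w_r^l\,y_r$ (with $x_r=\sfu_r$, $y_r=w_r^l\sfu_r^{-1}$), and the paper then cites a classical theorem of Etingof--Ginzburg \cite[\S2.2]{MR1488158} that these classes, together with $\CC[\vec w]^{S_a}$, generate $\CC[\vec x,\vec w]^{S_a}$ and $\CC[\vec y,\vec w]^{S_a}$, which surject onto $\CC[\operatorname{Sym}^a\cS_l]$. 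In contrast, you attempt to keep the parameters generic, use the monopole filtration instead, and control leading terms of genuine difference-operator products in $\tilde\cAh$. The step you flag as the ``main obstacle''--- showing that products $E_{n_1}[f_1]\star\cdots$ land on the correct orbit modulo lower terms, with no cancellation from overlapping shift supports --- is precisely the content of the theorem you are trying to prove; acknowledging it without carrying it out leaves the argument incomplete. You identified the paper's actual strategy only as a ``consistency check'' at the end, whereas it is the whole proof.

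Your consistency check is also incorrect as stated. You claim that $E_1[1]$, $F_1[1]$ and power sums in $w_s$, specializing to $\sum_r x_r$, $\sum_r y_r$, $\sum_r w_r^k$, generate $\CC[\operatorname{Sym}^a\cS_l]$ ``by the usual Newton's-identity argument on each $\cS_l$ factor.'' This fails already for $l=0$, $a=2$: the subalgebra of $\CC[x_1^{\pm1},x_2^{\pm1},w_1,w_2]^{S_2}$ generated by $x_1+x_2$, $x_1^{-1}+x_2^{-1}$ and $\CC[w_1,w_2]^{S_2}$ does not contain $x_1w_1+x_2w_2$ (nor $x_1x_2$ without inverting $(w_1-w_2)^2$). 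One genuinely needs all the operators $E_n[f]$, $F_n[f]$ for every $n$ and every symmetric $f$ --- in particular $E_1[w^k]\mapsto\sum_rw_r^kx_r$ and $E_n[1]\mapsto e_n(\vec x)$ --- and the non-obvious generation statement in the commutative fiber is exactly what the citation \cite[\S2.2]{MR1488158} supplies.
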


This result identifies $\cAh$ as a subalgebra in $\tilde\cAh$,
generated by explicit elements, as we have remarked after
\propref{prop:minuscule}. It is purely an algebraic problem to
identify this subalgebra with the spherical part of cyclotomic
rational Cherednik algebra. We will return back to this problem in
\cite{2016arXiv160800875K,2016arXiv161110216B}

Let us use the vector notation $\vec{w}$ for
$(w_1,\dots,w_a)$. Therefore symmetric functions $f$ in the full $w_r$
are denoted by $f(\vec{w})$. On the other hand, symmetric functions in
less variables as still denoted like in \eqref{eq:79}.

\begin{proof}
    At $\bt = \hbar = z_k = 0$, $E_{n}[f]$, $F_{n}[f]$ are
    specialized to
\begin{equation}\label{eq:81}
    \sum_{\substack{I\subset \{1,\dots,a\}\\ \# I = n}} 
    f(w_I)
    \prod_{r\in I} \sfu_{r},
\qquad
    \sum_{\substack{I\subset \{1,\dots,a\}\\ \# I = n}}
    f(w_I)
    \prod_{r\in I} w_{r}^l \sfu_{r}^{-1}.
\end{equation}
It is enough to show that these elements together with symmetric
functions in $\vec{w}$ generate $\cA$ at $\bt = z_k = 0$ by
graded Nakayama lemma.

If $a=1$, i.e., $\dim V = 1$, we have $\mathcal M_C = \cS_l = \{ xy =
w^l\}\subset\BA^3$, where $w = w_1$, $x = \sfu_1$,
$y=w_1^l\sfu_1^{-1}$ in the current notation. (See \ref{abel}.)
The above elements with $f=1$ are $x=\sfu_1$, $y=w_1^l\sfu_1^{-1}$
respectively. Therefore they together with $w$ generate $\cA =
\CC[\mathcal M_C]$.

Let us write $x_r = \sfu_r$, $y_r = w_r^l\sfu_r^{-1}$. Then we have a
surjective homomorphism 
\[
\CC[\mathcal M_C] = \operatorname{Sym}^a \cS_l\twoheadleftarrow
\CC[\vec x,\vec w]^{S_a}\otimes
\CC[\vec y,\vec w]^{S_a},
\]
where $\vec x = (x_1,\dots, x_a)$ and $\vec y$, $\vec w$ are similar.
\begin{NB}
    They are generated by $\sum_r x_r^s y_r^q w_r^p$, but $x_r y_r = w_r^l$.
\end{NB}%
It is a classical result that the left and right elements in
\eqref{eq:81} and symmetric polynomials in $\vec{w}$ generate
$\CC[\vec x,\vec w]^{S_a}$ and $\CC[\vec y,\vec w]^{S_a}$
respectively. (See \cite[\S2.2]{MR1488158}.)
\end{proof}

\begin{NB}
\newenvironment{NB3}{\color{purple}{\bf NB3}. \footnotesize
}{}

Consider the commutator of the class $E_{n}[f]$ and the $k$th power
sum $p_k(\vec{w}) = \sum_{t=1}^a w_t^k$. It is
\begin{equation*}
    \left[ p_k(\vec{w}), E_{n}[f]\right]
    = - 
      \sum_{\substack{I\subset \{1,\dots,a\}\\ \# I = n}}
      \left( p_k((w_r + \hbar)_{r\in I})
        - p_k(w_I)\right)
      f(w_I)
      \prod_{r\in I, s\notin I}
      \frac{
        w_{r} - w_{s} - \bt}
      {w_{r}-w_{s}}
      \prod_{r\in I} \sfu_{r}, 
\end{equation*}
thanks to the commutation relation
\(
   [\prod_{r\in I} \sfu_r,w_t] = \hbar \prod_{r\in I} \sfu_r
\)
if $t\in I$ and $0$ otherwise.
Therefore we can generate elements $E_n[f]$ for general $f$
inductively from $E_n$ and symmetric functions in $w_t$ by taking
commutators divided by $\hbar$. The same is true for $F_n[f]$.
(In particular, $\cA$ is generated by $E_n$, $F_n$ and symmetric
functions in $\vec{w}$ \emph{as a Poisson algebra}.)

\begin{Corollary}
    Suppose $l=0$. Then the quantized Coulomb branch $\cAh$ is
    isomorphic to the spherical part $\SDAHA_a$ of the graded
    Cherednik algebra \textup(alias trigonometric DAHA\textup) for
    $\GL(a)$.
\end{Corollary}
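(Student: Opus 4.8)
The plan is to deduce this Corollary from Theorem~\ref{thm:generate-Jordan} together with the explicit presentations of the relevant algebras. When $l=0$, i.e.\ $W=0$, there is no framing, so the flavor variables $z_k$ disappear and we work purely with the dilatation parameter $\bt$; the quantized Coulomb branch $\cAh$ is then a $\CC[\hbar,\bt]$-algebra which, by Theorem~\ref{thm:generate-Jordan}, is generated inside $\tilde\cAh$ by the operators $E_n[f]$ and $F_n[f]$ (now with $l=0$, so $F_n[f] = \sum_{\#I=n} f(w_I-\hbar)\prod_{r\in I,s\notin I}\frac{w_r-w_s+\bt}{w_r-w_s}\prod_{r\in I}\sfu_r^{-1}$) together with the symmetric functions $f(\vec w)$ in the full set of variables.

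First I would recall the standard Dunkl-type realization of the spherical trigonometric DAHA (equivalently, the spherical subalgebra $\SDAHA_a$ of the degenerate/graded Cherednik algebra) for $\GL(a)$: it embeds into the ring of $\hbar$-difference operators in the $w_r$ with rational coefficients having poles only along $w_r-w_s\in\hbar\ZZ$, and it is generated by the symmetric functions in $\vec w$, by the ``raising'' Macdonald-type operators $\sum_{\#I=n}\bigl(\prod_{r\in I,s\notin I}\tfrac{w_r-w_s-\bt}{w_r-w_s}\bigr)f(w_I)\prod_{r\in I}\sfu_r$, and by the ``lowering'' operators obtained by replacing $\sfu_r$ with $\sfu_r^{-1}$ and conjugating the Dunkl weights accordingly. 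This is precisely the content invoked in the proof of Theorem~\ref{thm:generate-Jordan} via \cite[\S2.2]{MR1488158}, where at $\bt=\hbar=z_k=0$ the operators degenerate to $\sum f(w_I)\prod\sfu_r$ and $\sum f(w_I)\prod\sfu_r^{-1}$. So the generators of $\cAh$ listed above are term-by-term identified with the standard generators of $\SDAHA_a$ sitting in the same ambient ring $\tilde\cAh$ of difference operators. This gives a surjection $\SDAHA_a\twoheadrightarrow\cAh$ (or the reverse), both realized as subalgebras of $\tilde\cAh$, hence an inclusion of one into the other.

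To promote this to an isomorphism I would use a dimension/character comparison. At $\hbar=\bt=0$, Proposition~\ref{prop:ad_taut} (with $l=0$) identifies $\Spec\cA \cong \operatorname{Sym}^a\cS_0 = \operatorname{Sym}^a(\BG_m\times\BA^1)$, and this is exactly the classical limit $\Spec(\gr\SDAHA_a)$ of the spherical trigonometric Cherednik algebra for $\GL(a)$ — the associated graded is $\CC[T^\vee\times\ft]^{S_a}=\CC[(\BG_m\times\BA^1)^a]^{S_a}$. Since both $\cAh$ and $\SDAHA_a$ are flat $\CC[\hbar,\bt]$-algebras (flatness of $\cAh$ is built into its construction via the $\BC^\times$-equivariant deformation, flatness of $\SDAHA_a$ is classical) with the same Hilbert series and the same classical limit, the inclusion of one into the other in $\tilde\cAh$ must be an equality: an injective map of flat modules over $\CC[\hbar,\bt]$ which is an isomorphism modulo $(\hbar,\bt)$ is an isomorphism, by graded Nakayama.

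The main obstacle I anticipate is getting the \emph{matching of the two sets of generators exactly right}, including all the weight factors, the $\bt$-shifts, and especially reconciling conventions: the Coulomb-branch operators carry factors $w_r-w_s\pm\bt$ (after absorbing $-\hbar/2$ into $\bt$) in the numerators and $w_r-w_s$ in the denominators, while different sources normalize the Cherednik/Macdonald operators with the shift on the opposite side or with a symmetrization factor. One must check that the $E_n[f]$, $F_n[f]$ satisfy the defining relations of $\SDAHA_a$ — or, more cleanly, invoke that both algebras are \emph{generated} inside the \emph{same} difference-operator ring by \emph{visibly equal} elements, so no relation-checking is needed once the generator identification is literal. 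A secondary point to be careful about is that $\SDAHA_a$ for $\GL(a)$ (as opposed to $\mathfrak{sl}_a$) includes the full polynomial ring $\CC[\vec w]^{S_a}$ and the central $\sum_r\sfu_r$, matching the $\BG_m$-factor in $\operatorname{Sym}^a(\BG_m\times\BA^1)$; this is consistent with our setup where $\GV=\GL(a)$ has a one-dimensional center acting by overall translation on $T^\vee$.
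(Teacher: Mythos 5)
Your proposal takes essentially the same route as the paper's own argument: realize both $\cAh$ and $\SDAHA_a$ as subalgebras of $\tilde\cAh$, identify the Coulomb-branch generators from \thmref{thm:generate-Jordan} with (images of) Macdonald operators, establish a containment, and then use the common classical limit $\operatorname{Sym}^a(\BG_m\times\BA^1)$ at $\hbar=\bt=0$ together with flatness and graded Nakayama to upgrade the containment to equality. One small point worth tightening: the paper does not claim the generator sets are ``visibly equal'' --- it shows only that $E_n=E_n[1]$ and $F_n=F_n[1]$ are in $\SDAHA_a$ (as the rational limits of $S(e_n(X))S$ and $S(e_n(X^{-1}))S$), and then obtains $E_n[f],F_n[f]\in\SDAHA_a$ for general symmetric $f$ via the iterated-commutator argument with the power sums $p_k(\vec w)$; this yields the one containment $\cAh\subset\SDAHA_a$ that the Nakayama step actually requires, without appealing to a generation statement for $\SDAHA_a$ by the full two-parameter family of operators.
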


\begin{proof}
    Consider the embedding of $\SDAHA_a$ to the ring of rational
    difference operators on $\ft = \CC^a$, obtained as the
    trigonometric degeneration of the usual embedding of the spherical
    part of the DAHA. Symmetric functions in $w_t$ are considered as
    functions on $\ft$. See \secref{sec:DAHA}.

    Since $E_n$, $F_n$ are trigonometric degeneration of Macdonald
    operators, they are contained in $\SDAHA_a$.
    (In the notation in \secref{sec:DAHA}, $E_n$ and $F_n$ correspond
    to $S(e_n(X))S$ and $S(e_n(X^{-1}))S$ respectively, where $S$ is
    the symmetrizer. cf.\ Example~\ref{ex:gl2})
    \begin{NB2}
        The assertion for $E_n$ is well-known. For $F_n$, it is
        probably possible to compute directly $S(e_n(X^{-1}))S$, but I
        consider as follows. Note $F_n$ is diagonalized by Macdonald
        polynomials as $P_\lambda(q,t) =
        P_\lambda(q^{-1},t^{-1})$. Also the eigenvalues are
        eigenvalues of $E_n$ with $q\mapsto q^{-1}$, $t\mapsto
        t^{-1}$. This means the assertion.
    \begin{NB3}
    For $F_n$, observe that $F_n$ is equal to $E_n$ if we replace
    $w_r$ by $-w_r$. In other words, $F_n = \omega E_n \omega$, where
    $(\omega f)(\vec{w}) = f(-\vec{w})$.
        Or, it is also true that $F_n = E_n|_{\substack{\hbar\mapsto
            -\hbar\\ \bt\mapsto -\bt}}$.
    \end{NB3}%
    \end{NB2}%

    By the explanation preceding the corollary, all operators
    $E_n[f]$, $F_n[f]$ in \eqref{eq:79} are also contained $\SDAHA_a$.

    By \thmref{thm:generate-Jordan}, the image of $\cAh$ is contained
    in $\SDAHA_a$. Therefore we have an injective homomorphism
    $\cAh\to \SDAHA_a$. We know that both $\cAh$, $\SDAHA_a$
    degenerate to $\operatorname{Sym}^a (\CC\times\CC^\times)$ at
    $\hbar=0=\bt$, we are done.
\end{proof}

The same argument applies to $l>0$ if we would know that the
cyclotomic rational Cherednik algebra has an embedding to the ring of
difference operators such that its image contains \eqref{eq:79} with
$f=1$ (and symmetric functions).
\end{NB}%

\subsection{Adjoint}\label{subsec:adjoint}

We consider the case $\bN = \g$, the adjoint representation of a
reductive group $G$. When $G=\GL(a)$, it corresponds to the case
studied in the previous subsection with $W=0$.

We add the flavor symmetry $\CC^\times$, the
dilatation on $\bN$, and denote the corresponding equivariant variable
by $\bt$.


\subsubsection{Minuscule coweights}
The minuscule monopole operator in \propref{prop:minuscule} for the
adjoint is given by
\begin{Proposition}\label{prop:ad_minuscule}
\begin{equation*}
    \sum_{w\la\in\Weyl\la} wf \times
    \prod_{\substack{\alphavee\in\Delta^\vee \\ \langle\alphavee,w\la\rangle = 1}}
    \frac{-\alphavee-\nicefrac{\hbar}2+\bt}{\alphavee}
    \sfu_{w\la}.
\end{equation*}
\end{Proposition}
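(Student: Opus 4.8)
The plan is to specialize the general formula of \propref{prop:minuscule} to the pair $(G,\bN)=(G,\g)$, with the flavor $\CC^\times$ acting by scaling on $\g$ contributing the parameter $\bt$. Since $\la$ is assumed minuscule, $\Gr_G^\la$ is closed and the sum in \propref{prop:minuscule} runs over the Weyl orbit $\Weyl\la$. The only work is to identify the two Euler-class factors appearing there in this particular case. First I would compute the tangent space $T_{z^{\la'}}\Gr_G^\la$ at the $T$-fixed point $z^{\la'}$ for $\la'=w\la$: as recalled in the excerpt (proof of \ref{lem:orbit}), $T_{z^{\la'}}\Gr_G^\la=\bigoplus_{\alphavee\in\Delta^\vee}\bigoplus_{n=0}^{\max(0,\langle\alphavee,\la'\rangle)-1}\g_{\alphavee}z^n$. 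Because $\la$ is minuscule, $\langle\alphavee,\la'\rangle\in\{-1,0,1\}$ for every root $\alphavee$, so only the terms with $\langle\alphavee,\la'\rangle=1$ survive and each contributes a single copy of $\g_{\alphavee}$ (the $n=0$ term). Hence $e(T_{z^{\la'}}\Gr_G^\la)=\prod_{\langle\alphavee,\la'\rangle=1}\alphavee$, which is the denominator in the claimed formula.

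Next I would compute the numerator factor $e\!\left(z^{\la'}\bN_\cO/(z^{\la'}\bN_\cO\cap\bN_\cO)\right)$ for $\bN=\g$. Decomposing $\g=\ft\oplus\bigoplus_{\alphavee\in\Delta^\vee}\g_{\alphavee}$, the Cartan part and the root spaces with $\langle\alphavee,\la'\rangle\le 0$ satisfy $z^{\la'}\g_{\alphavee,\cO}\subseteq\g_{\alphavee,\cO}$, so they contribute nothing to the quotient; for $\langle\alphavee,\la'\rangle=1$ (again the only positive value by minusculeness) the quotient $z^{\la'}\g_{\alphavee,\cO}/(z^{\la'}\g_{\alphavee,\cO}\cap\g_{\alphavee,\cO})$ is one-dimensional, with $T\times\CC^\times$-weight equal to the root $\alphavee$ shifted by the loop-rotation/$\hbar$ contribution and the $\bt$-weight coming from scaling of $\g$. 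Following the sign and $\nicefrac{\hbar}{2}$ bookkeeping exactly as in the derivation of \eqref{eq:82} (where each edge-factor $\Hom$-summand contributes $-w_{i,r}+w_{\vin{h},s}-\nicefrac{\hbar}{2}$), the weight of this line is $-\alphavee-\nicefrac{\hbar}{2}+\bt$, so $e\!\left(z^{\la'}\bN_\cO/(z^{\la'}\bN_\cO\cap\bN_\cO)\right)=\prod_{\langle\alphavee,\la'\rangle=1}(-\alphavee-\nicefrac{\hbar}{2}+\bt)$.

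Substituting these two evaluations into \propref{prop:minuscule}, together with $wf$ for the chosen $W_\la$-invariant $f$ and the shift operator $\sfu_{w\la}$, yields precisely
\[
\sum_{w\la\in\Weyl\la} wf\times
\prod_{\substack{\alphavee\in\Delta^\vee\\ \langle\alphavee,w\la\rangle=1}}
\frac{-\alphavee-\nicefrac{\hbar}{2}+\bt}{\alphavee}\,\sfu_{w\la},
\]
which is the assertion. I expect the only genuinely delicate point to be the precise normalization of the $\hbar$-shift and the $\bt$-weight in the numerator — i.e.\ justifying that the single surviving weight is $-\alphavee-\nicefrac{\hbar}{2}+\bt$ rather than, say, $\alphavee+\nicefrac{\hbar}{2}-\bt$ or with a different loop-rotation shift; this should be pinned down by comparing with the already-established formula \eqref{eq:82} applied to the adjoint-type (Jordan) building block, where the conventions are fixed. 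Everything else is a direct specialization and uses only minusculeness of $\la$ to guarantee $\langle\alphavee,\la'\rangle\in\{-1,0,1\}$, which is exactly what makes all higher $z$-power terms and all multiplicities collapse.
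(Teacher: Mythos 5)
Your evaluation of the tangent space $T_{z^{\la'}}\Gr_G^\la$ and hence the denominator is correct and matches the paper's argument. However, your identification of which roots contribute to the normal-bundle factor is reversed. You claim that for $\langle\alphavee,\la'\rangle\le 0$ one has $z^{\la'}\g_{\alphavee,\cO}\subseteq\g_{\alphavee,\cO}$ and that the quotient is one-dimensional precisely for $\langle\alphavee,\la'\rangle=1$. Both statements are backwards: since $\mathrm{Ad}(z^{\la'})$ scales $\g_\alphavee$ by $z^{\langle\alphavee,\la'\rangle}$, we get $z^{\la'}\g_{\alphavee,\cO}=z\,\g_{\alphavee,\cO}\subset\g_{\alphavee,\cO}$ when $\langle\alphavee,\la'\rangle=1$ (so the quotient vanishes), and $z^{\la'}\g_{\alphavee,\cO}=z^{-1}\g_{\alphavee,\cO}\supsetneq\g_{\alphavee,\cO}$ when $\langle\alphavee,\la'\rangle=-1$, which is the one-dimensional case. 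This is exactly what the paper states in its proof (``roots $\alphavee$ with $\langle\alphavee,\la'\rangle=-1$ contribute''), and it is also consistent with \eqref{eq:82}, where the contributing $\Hom$-entries are those pairing to $-1$ against $\la_I$.

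You then arrive at the correct numerator only because of a second, compensating slip: for a line coming from $\langle\alphavee,\la'\rangle=1$ (were there such a line) the $T$-weight would be $\alphavee$, not $-\alphavee$; instead the actual contributing line sits over $-\alphavee$ (which pairs to $-1$ with $\la'$), and re-indexing the product by $\alphavee\mapsto-\alphavee$ — so that it runs over $\langle\alphavee,\la'\rangle=1$ as in the statement — turns the weight $(-\alphavee)-\nicefrac{\hbar}{2}+\bt$ into the displayed factor $-\alphavee-\nicefrac{\hbar}{2}+\bt$. To repair the proof you should: identify the contributing roots as those with $\langle\alphavee',\la'\rangle=-1$; record that each such root gives a line of Euler-class weight $\alphavee'-\nicefrac{\hbar}{2}+\bt$; and then substitute $\alphavee'=-\alphavee$ to match the index set $\langle\alphavee,\la'\rangle=1$ used in the denominator and in the statement. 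Otherwise the approach is the same as the paper's.
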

This is a rational version of the Macdonald operator for a minuscule
coweight for $f\equiv 1$. (See e.g., \cite{MR1441642}.)

\begin{proof}
    Let $\la' = w\la\in \Weyl\la$. As above $(\iota_*)^{-1}$ is given
    by the equivariant Euler class $e(T_{\la'}\Gr^\la_G)$ of the
    tangent space at $\la'$. It is given by
\begin{equation*}
    e(T_{\la'}\Gr^\la_G) =
    \prod_{\substack{\alphavee\in\Delta^\vee \\ \langle\alphavee,\la'\rangle = 1}}
    \alphavee.
\end{equation*}
In fact, $T_{\la'}\Gr_{G}^{\la} = \bigoplus_{\alphavee\in\Delta^\vee}
\bigoplus_{n=0}^{\max(0,\langle\alphavee,\la'\rangle)-1} \g_\alphavee z^n$
as is mentioned in the proof of \ref{lem:orbit}. Since $\la$ is
minuscule, $\langle\alphavee,\la'\rangle = 0$, $\pm 1$. Therefore only
roots with $\langle\alphavee,\la'\rangle = 1$ contribute.

Next consider $\bz^*$. It is the multiplication of the equivariant Euler
class of $z^{\la'}\bN_\cO/z^{\la'}\bN_\cO\cap\bN_\cO$ by
\ref{sec:chang-repr} as before. We consider the decomposition
$\bN=\g=\bigoplus_{\alphavee\in\Delta} \g_\alphavee \oplus \ft$, and
conclude that roots $\alphavee$ with $\langle\alphavee,\la'\rangle = -1$
contribute. It gives the numerator $-\alphavee-\hbar/2+\bt$ in the
formula.
\end{proof}

\subsubsection{Quasi-minuscule coweights}
We consider a generalization of \propref{prop:minuscule} to the case
when $\la$ is a \emph{quasi-minuscule} coweight, i.e.,
$\la=\alpha_0$ where $\alphavee_0$ is the highest root.
\begin{NB}
    In the literature, a quasi-minuscule \emph{weight} is usually
    defined so that $\alphavee_0$ is the highest weight of the dual
    root system.
\end{NB}%
Then $\langle\alphavee,\la\rangle \le 2$ for any positive root
$\alphavee\in\Delta_+^\vee$, and the equality holds if and only if $\alphavee =
\alphavee_0$. Therefore
\begin{equation}\label{eq:85}
    \begin{split}
        & e(T_{\la}\Gr_G^{\la}) = (\alphavee_0 + \hbar)
        \prod_{\substack{\alphavee\in\Delta^\vee \\
            \langle\alphavee,\la\rangle > 0}} \alphavee,
        \\
        & e(z^\la\bN_\cO/z^\la\bN_\cO\cap \bN_\cO)
        = 
        (-\alphavee_0 - \frac{3\hbar}2 + \mathsf t)
        \prod_{\substack{\alphavee\in\Delta^\vee \\
            \langle\alphavee,\la\rangle > 0}} (- \alphavee -
        \frac{\hbar}2 + \mathsf t).
    \end{split}
\end{equation}
In fact, $\Gr^\la_G$ is a line bundle $L$ over $G/P_\la$. The factor
$\alphavee_0+\hbar$ corresponds to the tangent direction to the
fiber.
The space $z^\la\bN_\cO/z^\la\bN_\cO\cap \bN_\cO$ is the fiber of the
quotient $\cT/\cR$ at $z^\la\in \Gr_G$. For $\bN = \g$, the quotient
is the cotangent bundle of $\Gr^\la_G$. Therefore the second formula
in \eqref{eq:85} is obtained from the first one by changing the sign,
and then adding $-\hbar/2+\mathsf t$ for each factor, which
corresponds to the action on fibers.

The closure $\overline{\Gr}^\la_G = \Gr^\la_G \sqcup \Gr^0_G$ has a
singularity at $1 = \Gr^0_G$ (isomorphic to the singularity of the
closure of the minimal nilpotent orbit in $\g$ at $0$), but it has a
resolution $\CP(\shfO\oplus L)$ the projective bundle associated with
$L$. (See \cite[Lemma~7.3]{MR1832331}.)
Also the vector bundle $\cT/\cR$ over $\Gr^\la_G$ extends to
$\CP(\shfO\oplus L)$ as it is the cotangent bundle. 
More precisely, let us denote by $p\colon \BP(\CO\oplus L)\to\ol\Gr{}^\lambda_G$
the above resolution. Then the vector subbundle $\cR_\lambda\subset\cT_\lambda$
over $\Gr^\lambda_G$ extends to a vector subbundle in $p^*\cT$ over the whole
of $\BP(\CO\oplus L)$, to be denoted $\widetilde\cR_{\leq\lambda}$, such that
$p^*\cT/\widetilde\cR_{\leq\lambda}$ is the cotangent bundle
$T^*\BP(\CO\oplus L)$. We have a proper projection
$p\colon \widetilde\cR_{\le\la}\to\cT$ with the image lying in
$\cR_{\le\la}$. By base change we can compute $\bz^*(\iota_*)^{-1}$ of a
class $p_*(f [\widetilde\cR_{\le\la}])$ over $\widetilde\cR_{\le\la}$,
where $f\in \CC[\ft]^{\Weyl_\la}$ viewed as a class in
$H^*_{\Stab_G(\la)}(\mathrm{pt}) \cong H^*_G(G/P_\la)$ pull-backed to
$\widetilde\cR_{\le\la}$.
\begin{NB}
    It seems that it does not give new elements even if we consider
    $p_*(f c_1(H)^k [\widetilde\cR_{\le\la}])$, where $H$ is the
    hyperplane bundle of $\CP(\shfO\oplus L)$. First of all, it is
    enough to consider $k=0,1$, as $c_1(H)^2$ can be put into $f$. The
    difference for $k=0$, $1$ seems to be just constant (without
    shift).
\end{NB}%

The torus fixed points in $\widetilde\cR_{\le\la}$ come in pairs, $0$ and
$\infty$ in $\proj^1$ for each $T$-fixed point in $G/P_\la$, i.e., a
point in the orbit $\Weyl\la$. Let us denote them by $0_{\la'}$,
$\infty_{\la'}$ for $\la'\in\Weyl\la$. The points $0_{\la'}$ are in
$\Gr^\la_G$, hence the Euler classes are given by the formula
\eqref{eq:85}, after applying $w$ with $\la' = w\la$.
At $\infty_{\la'}$, the Euler class of the tangent space
$e(T_{\infty_{\la'}}\CP(\shfO\oplus L))$ is almost the same as
$e(T_{\la'}\Gr^\la_G)$, but the factor $\alpha_0+\hbar$ corresponding
to the fiber of the projective bundle changes the sign. The second
Euler class $e((\cT/\cR)_{\infty_{\la'}})$ is obtained from
$e(T_{\infty_{\la'}}\CP(\shfO\oplus L))$ by the same process as
before. We thus get

\begin{Theorem}\label{thm:quasiminuscule}
    Let $\la = \alpha_0$, the quasi-minuscule coroot. Then
    \begin{equation*}
        \bz^* (\iota_*)^{-1} p_* (f
        [\widetilde\cR_{\le\la}]) =
        \begin{aligned}[t]
            \sum_{w\la\in\Weyl\la} wf \times & \Big( \frac{-w\alphavee_0 -
              \nicefrac{3\hbar}2 + \mathsf t}{w\alphavee_0 + \hbar}
            \prod_{\substack{\alphavee\in\Delta\\
                \langle\alphavee,w\la\rangle > 0}}
            \frac{-\alphavee-\nicefrac{\hbar}2 + \mathsf t}{\alphavee}
            \sfu_{w\la}
            \\
            & \quad + \frac{w\alphavee_0 + \nicefrac{\hbar}2 + \mathsf
              t}{-w\alphavee_0 - \hbar} \prod_{\substack{\alphavee\in\Delta\\
                \langle\alphavee,w\la\rangle > 0}}
            \frac{-\alphavee-\nicefrac{\hbar}2 + \mathsf t}{\alphavee} \Big).
        \end{aligned}
    \end{equation*}
\end{Theorem}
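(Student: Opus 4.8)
The plan is to compute $\bz^*(\iota_*)^{-1}p_*(f[\widetilde\cR_{\le\la}])$ by localization with respect to the torus $T\times\CC^\times$ (the $\CC^\times$ being loop rotation, and with the flavor $\CC^\times$ tracked by $\mathsf t$), following the same recipe as in \propref{prop:minuscule} and \propref{prop:ad_minuscule} but now on the resolution $p\colon \BP(\CO\oplus L)\to\ol\Gr{}^\la_G$ rather than on $\Gr^\la_G$ itself. First I would record the structure of $\widetilde\cR_{\le\la}$ as already set up in the text: it is a vector subbundle of $p^*\cT$ with $p^*\cT/\widetilde\cR_{\le\la}\cong T^*\BP(\CO\oplus L)$, and $p\colon \widetilde\cR_{\le\la}\to\cT$ is proper with image in $\cR_{\le\la}$. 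Because $p$ is proper, $\bz^*(\iota_*)^{-1}$ of the pushforward can be evaluated by base change, i.e.\ as a sum over $T$-fixed points of $\widetilde\cR_{\le\la}$; these fixed points come in the pairs $0_{\la'}$, $\infty_{\la'}$ indexed by $\la'\in\Weyl\la$, sitting over the fixed points of $G/P_\la$.

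The key steps are then: (1) at each fixed point, identify the weight of the cocharacter/shift operator: since $0_{\la'}$ and $\infty_{\la'}$ both lie over the point $z^{\la'}\in\Gr_G$, both contribute the shift $\sfu_{w\la}$ with $\la'=w\la$ (the projective-bundle fiber direction does not change the underlying lattice point). (2) Compute $e(T_{0_{\la'}}\BP(\CO\oplus L))$ and $e(T_{\infty_{\la'}}\BP(\CO\oplus L))$: by \eqref{eq:85} and the discussion following it, $e(T_{\la}\Gr^\la_G)=(\alphavee_0+\hbar)\prod_{\langle\alphavee,\la\rangle>0}\alphavee$, and passing to $\infty$ flips only the fiber factor, giving $(-\alphavee_0-\hbar)\prod_{\langle\alphavee,\la\rangle>0}\alphavee$; then apply $w$. (3) Compute the $\bz^*$ factor $e\bigl(z^{\la'}\bN_\cO/z^{\la'}\bN_\cO\cap\bN_\cO\bigr)$, which by \ref{sec:chang-repr} for $\bN=\g$ is obtained from the tangent Euler class by negating each root factor and shifting by $-\hbar/2+\mathsf t$ — at $0_{\la'}$ this yields $(-\alphavee_0-\tfrac{3\hbar}2+\mathsf t)\prod(-\alphavee-\tfrac\hbar2+\mathsf t)$ and at $\infty_{\la'}$ it yields $(\alphavee_0+\tfrac\hbar2+\mathsf t)\prod(-\alphavee-\tfrac\hbar2+\mathsf t)$, after applying $w$. (4) Finally, $f\in\CC[\ft]^{\Weyl_\la}$ viewed in $H^*_G(G/P_\la)$ restricts to $wf$ at the fixed point over $w\la$, and is pulled back unchanged to both $0_{\la'}$ and $\infty_{\la'}$. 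Assembling $\dfrac{wf\cdot e(z^{\la'}\bN_\cO/\cdots)}{e(T\,\BP(\CO\oplus L))}\,\sfu_{w\la}$ over all fixed points and summing the $0$- and $\infty$-contributions produces exactly the two-term expression in the statement.

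I would be somewhat careful about two bookkeeping points. One is the precise half-integer shift conventions ($\hbar/2$ versus $3\hbar/2$): the factor $\alphavee_0$ enters the tangent Euler class of $\Gr^\la_G$ with the loop-rotation shift $+\hbar$ (it sits in $\g_{\alphavee_0}z^0$ but the \emph{fiber} of the line bundle $L$ over $G/P_\la$ is the $z^1$-component, hence the extra $\hbar$), so under the $\cT/\cR$ sign flip plus $-\hbar/2+\mathsf t$ shift it becomes $-(\alphavee_0+\hbar)-\hbar/2+\mathsf t = -\alphavee_0-\tfrac{3\hbar}2+\mathsf t$; this matches \eqref{eq:85} and must be propagated consistently to $\infty$. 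The other is verifying that $\widetilde\cR_{\le\la}$ is genuinely a vector bundle over all of $\BP(\CO\oplus L)$ so that the fixed-point contributions at $\infty_{\la'}$ have the clean form claimed — but this is exactly what the text asserts in setting up the resolution (citing \cite[Lemma~7.3]{MR1832331} and \cite[Lemma~7.3]{MR1832331}-style arguments), so I would simply invoke it.

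\emph{Main obstacle.} The genuinely nontrivial input is not the localization arithmetic but the claim that the subbundle $\cR_\la\subset\cT_\la$ over $\Gr^\la_G$ extends to a \emph{subbundle} $\widetilde\cR_{\le\la}\subset p^*\cT$ over the entire resolution with $p^*\cT/\widetilde\cR_{\le\la}\cong T^*\BP(\CO\oplus L)$, together with the identification of $z^\la\bN_\cO/z^\la\bN_\cO\cap\bN_\cO$ with the cotangent fiber of $\Gr^\la_G$ in the adjoint case. Granting this (as the excerpt does), the rest is a routine two-fixed-point-per-Weyl-orbit-element Atiyah–Bott computation, and the only real care needed is consistency of the $\mathsf t$ and $\hbar$ shifts between the $0$ and $\infty$ strata.
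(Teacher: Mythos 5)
Your localization strategy is the right one and matches the paper's proof in outline, but step~(1) contains a concrete error that would change the final answer. You claim that both fixed points $0_{\la'}$ and $\infty_{\la'}$ lie over $z^{\la'}\in\Gr_G$ and therefore both contribute the shift operator $\sfu_{w\la}$. That is not the case. The resolution $p\colon\CP(\shfO\oplus L)\to\ol\Gr{}^\la_G$ is an isomorphism onto the open orbit $\Gr^\la_G$ away from the $\infty$-section, and it contracts the $\infty$-section (a copy of $G/P_\la$) to the single point $\Gr^0_G=\{1\}$; that $G/P_\la$ is precisely the exceptional divisor over the conical singularity. Thus $p(0_{\la'})=z^{\la'}$, which does give $\sfu_{\la'}=\sfu_{w\la}$, but $p(\infty_{\la'})=1=z^0$, which gives the \emph{trivial} shift $\sfu_0=1$. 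This is exactly why the second summand in the statement of \thmref{thm:quasiminuscule} carries no $\sfu$. As written, your assembly in the final step would attach $\sfu_{w\la}$ to both terms and therefore would not reproduce the theorem.

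Everything else you do is consistent with the paper's argument: the extension of $\cR_\la\subset\cT_\la$ to a subbundle $\widetilde\cR_{\le\la}\subset p^*\cT$ with quotient $T^*\CP(\shfO\oplus L)$, the two-fixed-points-per-Weyl-orbit-element structure, the Euler class at $\infty_{\la'}$ differing from the one at $0_{\la'}$ only in the sign of the fiber factor $w\alphavee_0+\hbar$, the sign-flip-then-shift rule (negate each weight and add $-\hbar/2+\mathsf t$) for passing from the tangent Euler class to that of the cotangent fiber via \ref{sec:chang-repr}, and the restriction of $f$ to the fixed point giving $wf$. Once you correct the shift operator at $\infty_{\la'}$ to $\sfu_0=1$, the rest of your proposal assembles into the claimed two-term formula and agrees with the paper's proof.
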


When $f=1$, this is a rational version of the Macdonald operator for a
quasi-minuscule weight \cite{MR1817334} up to constant in
$\CC[\ft]^{\Weyl_\la}$. The constant vanishes if we use the form in
\cite{MR2822182}.

\begin{Remark}
   For general $\bN$, we are not certain whether we have a resolution
   $\tilde\cR_{\le\la}$ of $\cR_{\le\la}$ for which we can calculate
   $\bz^*(\iota_*)^{-1}$. Nevertheless it is clearly possible for
   $\bN=0$: we have a resolution $\CP(\shfO\oplus L)$ of
   $\overline{\Gr}_G^\la$. In this case, we get a formula as in
   \thmref{thm:quasiminuscule}, where the numerator is replaced by
   $1$. Its proof is contained in one in \thmref{thm:quasiminuscule}.

\end{Remark}

\subsubsection{Small fundamental}
\label{small}
Recall that apart from type $A$, the quasi-minuscule coweight is fundamental.
More generally, we consider a \emph{small} fundamental coweight $\omega$, i.e.\
$\langle\alphavee,\omega\rangle\leq2$ for any $\alphavee\in\Delta^\vee$ (see e.g.~\cite{MR2822182}).\footnote{Some authors use another definition of small
coweights: $\omega$ is small if in the corresponding irreducible representation
$V^\omega$ of $G^\vee$ the zero weight has a nonzero multiplicity, but the weight
$2\alpha$ has zero multiplicity for any $\alpha\in\Delta$.} 
According to~\cite[Table~1]{MR2822182}, any dominant coweight $\mu\leq\omega$ is
also small fundamental, and all such coweights are totally ordered:
$\omega^{(0)}<\omega^{(1)}<\ldots<\omega^{(n)}=\omega$, and $\omega^{(0)}$ is either
minuscule or zero. Moreover, there is a chain of connected subdiagrams of the
Dynkin diagram of $G\colon D^{(1)}\supset D^{(2)}\supset\ldots\supset D^{(n)}$
with the corresponding Levi subgroups $G\supset L^{(1)}\supset\ldots\supset L^{(n)}\supset T$
such that $\omega^{(i)}-\omega^{(i-1)}$ is the quasiminuscule coweight $\alpha_0^{(i)}$
of $L^{(i)}$. Note that $\alpha_0^{(n)}$ is a fundamental coweight of $L^{(n)}$,
and we define $D^{(n+1)}$ as the complement in $D^{(n)}$ of the corresponding vertex;
$L^{(n+1)}\subset L^{(n)}$ is the corresponding Levi subgroup.
According to~\cite[Lemma~3.1.1]{mov}, there is a natural isomorphism
of slices $\oW^{\omega^{(i)}}_{G,\omega^{(i-1)}}\simeq\oW^{\alpha_0^{(i)}}_{L^{(i)},0}$.
Hence the above resolution $\widetilde\Gr{}^{\alpha_0}_G$ admits the following generalization:
a resolution $\widetilde\Gr{}^\omega_G\to\ol\Gr{}^\omega_G$ constructed as an
iterated blowup. We first take the blowup $\on{Bl}^{(1)}:=
\on{Bl}_{\ol\Gr{}^{\omega^{(0)}}_G}\ol\Gr{}^\omega_G$ at the closed
$G_\CO$-orbit $\ol\Gr{}^{\omega^{(0)}}_G$. The strict transform of
$\ol\Gr{}^{\omega^{(1)}}_G\subset\ol\Gr{}^\omega_G$ is a resolution
$\widetilde\Gr{}^{\omega^{(1)}}_G\subset\on{Bl}^{(1)}$. The preimage of
$\ol\Gr{}^{\omega^{(0)}}_G\subset\ol\Gr{}^\omega_G$ fibers over
$\ol\Gr{}^{\omega^{(0)}}_G$ with fibers isomorphic to the partial flag variety
$L^{(1)}/P^{(1)}_{\alpha_0^{(1)}}$ of the Levi group $L^{(1)}$. We define
$\on{Bl}^{(2)}:=\on{Bl}_{\widetilde\Gr{}^{\omega^{(1)}}_G}\on{Bl}^{(1)}$.
The strict transform of
$\ol\Gr{}^{\omega^{(2)}}_G\subset\ol\Gr{}^\omega_G$ is a resolution
$\widetilde\Gr{}^{\omega^{(2)}}_G\subset\on{Bl}^{(2)}$. The preimage of
$\widetilde\Gr{}^{\omega^{(1)}}_G\subset\on{Bl}^{(1)}$ fibers over
$\widetilde\Gr{}^{\omega^{(1)}}_G$ with fibers isomorphic to the partial flag
variety $L^{(2)}/P^{(2)}_{\alpha_0^{(2)}}$.
We continue like this till we arrive at $\widetilde\Gr{}^\omega_G:=\on{Bl}^{(n)}:=
\on{Bl}_{\widetilde\Gr{}^{\omega^{(n-1)}}_G}\on{Bl}^{(n-1)}\stackrel{p}{\to}\ol\Gr{}^\omega_G$. 
The preimage of
$\widetilde\Gr{}^{\omega^{(n-1)}}_G\subset\on{Bl}^{(n-1)}$ fibers over
$\widetilde\Gr{}^{\omega^{(n-1)}}_G$ with fibers isomorphic to the partial flag
variety $L^{(n)}/P^{(n)}_{\alpha_0^{(n)}}$. The vector subbundle $\cR_\omega\subset\cT_\omega$
over $\Gr^\omega_G$ extends to a vector subbundle in $p^*\cT$ over the whole of
$\widetilde\Gr{}^\omega_G$, to be denoted $\widetilde\cR_{\leq\omega}$,
such that $p^*\cT/\widetilde\cR_{\leq\omega}=T^*\widetilde\Gr{}^\omega_G$. 
We have a proper projection
$p\colon \widetilde\cR_{\leq\omega}\to\cT$ with the image lying in
$\cR_{\le\omega}$.

Since $\ol\Gr{}_G^\omega=\bigsqcup_{0\leq i\leq n}\Gr_G^{\omega^{(i)}}$, the $T$-fixed points
set $(\ol\Gr{}_G^\omega)^T$ decomposes into a disjoint union
$\bigsqcup_{0\leq i\leq n}(\Gr_G^{\omega^{(i)}})^T=\bigsqcup_{0\leq i\leq n}\Weyl\omega^{(i)}$.
From the above description of the resolutions we get
$(\widetilde\cR_{\leq\omega})^T=(\widetilde\Gr{}^\omega_G)^T=
\bigsqcup_{0\leq i\leq n}\Weyl\omega^{(i)}\times\Weyl^{(i+1)}/\Weyl^{(i+2)}\times\ldots\times
\Weyl^{(n-1)}/\Weyl^{(n)}\times\Weyl^{(n)}/\Weyl^{(n+1)}$.
We rewrite the RHS in the following form:
$(\widetilde\cR_{\leq\omega})^T=(\widetilde\Gr{}^\omega_G)^T=
\bigsqcup_{0\leq i\leq n}\{(\nu\in\Weyl\omega^{(i)},\ \eta^{(i+1)}\in
w_i\Weyl^{(i+1)}\omega^{(i+1)},\ldots,\ \eta^{(n)}\in w_{n-1}\Weyl^{(n)}\omega^{(n)})\}$
where $w_i$ is an element of $\Weyl$ such that $\nu=w_i\omega^{(i)},\
w_{i+1}$ is an element of $\Weyl$ such that $\eta^{(i+1)}=w_{i+1}\omega^{(i+1)}$,
and so on, and finally $w_{n-1}$ is an element of $\Weyl$ such that
$\eta^{(n-1)}=w_{n-1}\omega^{(n-1)}$.

The Euler class of the tangent space
$e(T_{(\nu,\eta^{(i+1)},\ldots,\eta^{(n)})}\widetilde\Gr{}^\omega_G)$ equals
$$\prod_{\substack{\alphavee\in\Delta^\vee\\ \langle\nu,\alphavee\rangle=2}}(\alphavee+\hbar)\cdot
\prod_{\substack{\alphavee\in\Delta^\vee\\ \langle\nu,\alphavee\rangle>0}}\alphavee\cdot
\prod_{i+1\leq j\leq n}\left(\left(w_{j-1}(\alpha_0^{(j)})^{\!\scriptscriptstyle\vee}+\hbar\right)
\prod_{\substack{\alphavee\in w_{j-1}\Delta^\vee_{(j)}\\
\langle\eta^{(j)},\alphavee\rangle>0}}\alphavee\right).$$
Here the second product arises from the tangent bundle to the
partial flag variety $(\Gr_G^{\omega^{(i)}})^{\BC^\times}$ (fixed points set of the
loop rotations); the first product arises from the normal bundle
${\mathcal N}_{(\Gr_G^{\omega^{(i)}})^{\BC^\times}/\Gr_G^{\omega^{(i)}}}$;
the last product arises from the tangent bundle to the fiber of blowup,
and its prefactor arises from the normal bundle to the fiber of blowup.
The Euler class of the cotangent bundle fiber at
$(\nu,\eta^{(i+1)},\ldots,\eta^{(n)})\in\widetilde\Gr{}^\omega_G$ is obtained from
the above one by changing the sign of each factor and then adding $-\hbar+\sft$
to each factor. The result is
$$\prod_{\substack{\alphavee\in\Delta^\vee\\
\langle\nu,\alphavee\rangle=2}}(\sft-\alphavee-\frac{3\hbar}{2})\times
\prod_{\substack{\alphavee\in\Delta^\vee\\
\langle\nu,\alphavee\rangle>0}}(\sft-\alphavee-\frac{\hbar}{2})\times
\prod_{i+1\leq j\leq n}\left(\left(\sft-w_{j-1}
(\alpha_0^{(j)})^{\!\scriptscriptstyle\vee}-\frac{3\hbar}{2}\right)
\prod_{\substack{\alphavee\in w_{j-1}\Delta^\vee_{(j)}\\
\langle\eta^{(j)},\alphavee\rangle>0}}(\sft-\alphavee-\frac{\hbar}{2})\right).$$
We thus get (cf.~\cite[Section~3]{MR2822182})

\begin{Theorem}\label{thm:small}
    Let $\omega$ be a small fundamental coweight. Then
    \begin{equation*}
        \bz^* (\iota_*)^{-1}p_*
        [\widetilde\cR_{\leq\omega}] =
        \begin{aligned}[t]
\sum_{0\leq i\leq n} & \sum_{(\nu,\eta^{(i+1)},\ldots,\eta^{(n)})}
\prod_{\substack{\alphavee\in\Delta^\vee\\
\langle\nu,\alphavee\rangle=2}}\frac{\sft-\alphavee-\nicefrac{3\hbar}2}{\alphavee+\hbar}\times
\prod_{\substack{\alphavee\in\Delta^\vee\\
\langle\nu,\alphavee\rangle>0}}\frac{\sft-\alphavee-\nicefrac{\hbar}2}{\alphavee}
\\
& \times\prod_{i+1\leq j\leq n}\left(\frac{\sft-w_{j-1}
(\alpha_0^{(j)})^{\!\scriptscriptstyle\vee}-\nicefrac{3\hbar}2}
{w_{j-1}(\alpha_0^{(j)})^{\!\scriptscriptstyle\vee}+\hbar}
\prod_{\substack{\alphavee\in w_{j-1}\Delta^\vee_{(j)}\\
\langle\eta^{(j)},\alphavee\rangle>0}}
\frac{\sft-\alphavee-\nicefrac{\hbar}2}{\alphavee}\right)
\sfu_\nu.
        \end{aligned}
    \end{equation*}
\end{Theorem}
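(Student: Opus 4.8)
The statement is the explicit computation of $\bz^*(\iota_*)^{-1}p_*[\widetilde\cR_{\leq\omega}]$ for a small fundamental coweight $\omega$, and it is a direct generalization of \thmref{thm:quasiminuscule} (which handled the case $n=0$, i.e.\ $\omega=\alpha_0$ quasi-minuscule). The plan is to combine three ingredients: (1) the iterated-blowup resolution $p\colon\widetilde\Gr{}^\omega_G\to\ol\Gr{}^\omega_G$ together with the extended subbundle $\widetilde\cR_{\leq\omega}\subset p^*\cT$ with $p^*\cT/\widetilde\cR_{\leq\omega}=T^*\widetilde\Gr{}^\omega_G$, already constructed in \subsecref{small}; (2) the base-change / projection-formula computation of $(\iota_*)^{-1}$ and $\bz^*$ on a pushforward class, exactly as in the proofs of \propref{prop:ad_minuscule} and \thmref{thm:quasiminuscule}, valid because the relevant varieties are smooth and $T$-fixed points are isolated; (3) the fixed-point decomposition of $(\widetilde\cR_{\leq\omega})^T=(\widetilde\Gr{}^\omega_G)^T$ into tuples $(\nu,\eta^{(i+1)},\ldots,\eta^{(n)})$ and the Euler-class formulas that \subsecref{small} derives for the tangent space and for the cotangent fiber at such a point.

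\textbf{Key steps.} First I would invoke the localization theorem for $\widetilde\cR_{\leq\omega}$: since $\widetilde\Gr{}^\omega_G$ is smooth (iterated blowup of smooth centers) and $p^*\cT/\widetilde\cR_{\leq\omega}$ is a vector bundle over it, $(\iota_*)^{-1}p_*[\widetilde\cR_{\leq\omega}]$ is computed by summing over the isolated $T$-fixed points $(\nu,\eta^{(i+1)},\ldots,\eta^{(n)})$, with weight the reciprocal of the equivariant Euler class $e(T_{(\nu,\ldots)}\widetilde\Gr{}^\omega_G)$ — this uses that $p$ is proper onto $\cR_{\le\omega}$ and $\iota\colon\cR\hookrightarrow\cT$ is the zero section, so the fundamental class of the cotangent bundle pushes to the fundamental class of the base. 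Second, I would insert the tangent Euler-class formula from \subsecref{small} (the product over $\langle\nu,\alphavee\rangle=2$ of $(\alphavee+\hbar)$, over $\langle\nu,\alphavee\rangle>0$ of $\alphavee$, and the blowup-fiber factors $(w_{j-1}(\alpha_0^{(j)})^\vee+\hbar)\prod_{\alphavee\in w_{j-1}\Delta^\vee_{(j)},\langle\eta^{(j)},\alphavee\rangle>0}\alphavee$), which is itself proved by decomposing $T\widetilde\Gr{}^\omega_G$ at the fixed point into the tangent to the flag variety $(\Gr_G^{\omega^{(i)}})^{\BC^\times}$, its normal bundle inside $\Gr_G^{\omega^{(i)}}$, and the blowup-fiber directions with their normal bundles. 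Third, I would compute $\bz^*$ as multiplication by $e(z^\la\bN_\cO/z^\la\bN_\cO\cap\bN_\cO)$ (from \ref{sec:chang-repr}), which for $\bN=\g$ is the Euler class of the cotangent fiber $T^*_{(\nu,\ldots)}\widetilde\Gr{}^\omega_G$: by the general principle (already used in \thmref{thm:quasiminuscule}) the cotangent Euler class is obtained from the tangent one by negating each factor and then shifting by $-\hbar+\sft$, giving exactly the numerator products stated. Finally, I would collect the shift operator $\sfu_\nu$ — the fixed point $(\nu,\eta^{(i+1)},\ldots,\eta^{(n)})$ lies over the point $z^\nu\in\Gr_G$, so the corresponding term carries $\sfu_\nu$ — and record that after the substitution $r^\nu\mapsto\sfu_\nu$ the sum is precisely the claimed expression; the Weyl-equivariance of the summand (the replacement $\alpha_0^{(j)}\mapsto w_{j-1}(\alpha_0^{(j)})^\vee$ etc.) is forced by the action of $\Weyl$ permuting fixed points, which also shows the total is $\Weyl$-invariant and hence a genuine element of $\tilde\cAh$.

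\textbf{Main obstacle.} The routine localization bookkeeping is not the hard part; the genuine difficulty is organizing the combinatorics of the fixed-point set of the iterated blowup and verifying that the Euler-class contribution of each successive blowup center and its fiber matches the factors $\bigl(w_{j-1}(\alpha_0^{(j)})^\vee+\hbar\bigr)\prod_{\alphavee\in w_{j-1}\Delta^\vee_{(j)}}\alphavee$ uniformly across the strata indexed by $i=0,\ldots,n$. Concretely, one must check that at a fixed point lying over $\Gr_G^{\omega^{(i)}}$ the normal bundle to $\widetilde\Gr{}^{\omega^{(i-1)}}_G$ inside the $i$-th blowup contributes exactly the quasi-minuscule factor for $L^{(i)}$ in the coordinate $w_{i-1}$, using the isomorphism $\oW^{\omega^{(i)}}_{G,\omega^{(i-1)}}\simeq\oW^{\alpha_0^{(i)}}_{L^{(i)},0}$ from \cite[Lemma~3.1.1]{mov} to reduce each blowup step to the already-treated quasi-minuscule case \thmref{thm:quasiminuscule}. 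Once this reduction is in place the formula assembles by induction on $n$, with the base case $n=0$ being \propref{prop:ad_minuscule} and the inductive step being \thmref{thm:quasiminuscule} applied fiberwise along the last blowup; this inductive structure is, I expect, the cleanest way to present the argument and to avoid a direct global Euler-class computation on $\widetilde\Gr{}^\omega_G$.
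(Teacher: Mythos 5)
Your plan and key steps reproduce the paper's argument: the theorem follows by localizing $p_*[\widetilde\cR_{\leq\omega}]$ at the $T$-fixed points $(\nu,\eta^{(i+1)},\ldots,\eta^{(n)})$ of the iterated blowup $\widetilde\Gr{}^\omega_G$, dividing by the tangent Euler class, multiplying by the cotangent Euler class (which is what $\bz^*$ contributes, since $p^*\cT/\widetilde\cR_{\leq\omega}=T^*\widetilde\Gr{}^\omega_G$ for $\bN=\fg$), and recording $\sfu_\nu$ at each fixed point --- precisely the assembly carried out in \subsecref{small}, of which the theorem statement is the ``we thus get'' conclusion.

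Your \emph{Main obstacle} paragraph proposes an alternative organization of the Euler-class verification: induct on $n$, peeling off one quasi-minuscule blowup at a time via the slice isomorphism of \cite[Lemma~3.1.1]{mov}. The paper does not do this; it derives the tangent Euler-class formula in one step by decomposing $T_{(\nu,\eta^{(i+1)},\ldots,\eta^{(n)})}\widetilde\Gr{}^\omega_G$ into the tangent to $(\Gr_G^{\omega^{(i)}})^{\BC^\times}$, its normal bundle inside $\Gr_G^{\omega^{(i)}}$, and, for each $j>i$, the tangent to the exceptional fiber $L^{(j)}/P^{(j)}_{\alpha_0^{(j)}}$ together with the normal line of weight $w_{j-1}(\alpha_0^{(j)})^{\!\scriptscriptstyle\vee}+\hbar$. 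If you pursue the induction, note that the $n$-th blowup center is $\widetilde\Gr{}^{\omega^{(n-1)}}_G$ sitting inside the ambient $\on{Bl}^{(n-1)}$, not inside $\widetilde\Gr{}^{\omega^{(n-1)}}_G$ regarded abstractly as a resolution in its own right; so \thmref{thm:quasiminuscule} cannot be cited verbatim at each step --- the slice isomorphism only makes the \emph{normal} direction to the center a quasi-minuscule local model, and one still has to glue the tangential contribution from the inductive hypothesis. The paper's single tangent-space decomposition is arranged precisely to avoid that gluing bookkeeping, at the cost of carrying the full product formula all at once.
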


\begin{Remark}
Note that the fundamental class $[\widetilde\cR_{\leq\omega}]$ does not have a 
coefficient $f\in\BC[\ft]^{\Weyl_\omega}$ as opposed 
to~\thmref{thm:quasiminuscule}, because there is no projection
$[\widetilde\cR_{\leq\omega}]\to G/P_\omega$ for arbitrary small fundamental 
$\omega$, so we do not have a way to produce natural homology classes on
$\widetilde\cR_{\leq\omega}$ except its fundamental class.
\end{Remark}

\begin{Question}
    \begin{NB}
(1)    Consider $\hbar = 0 = \mathbf t$. We get
    \begin{equation*}
        \sum_{w\la\in\Weyl\la} wf \times \sfu_{w\la} \quad
        (f\in \CC[\ft]^{\Weyl_\la})
    \end{equation*}
    for $\la$ a minuscule or quasi-minuscule coweight \textup(up to
    $\CC[\ft]^\Weyl$ for the latter\textup). Together with
    $\CC[\ft]^\Weyl$, do these elements generate $\CC[\mathcal M_C]
    \cong \CC[\ft\times T^\vee]^\Weyl$ as an associative algebra ?

    (2) We do not have many (quasi-)minuscule weights in
    general. Therefore let us just set $\mathbf t = \hbar/2$. Then we get
    \begin{equation*}
        \sum_{w\la\in\Weyl\la} wf \times \sfu_{w\la}  \quad
        (f\in \CC[\ft]^{\Weyl_\la})
    \end{equation*}
    up to sign. (In the quasi-minuscule, we get an additional term
    $\sum_{w\la\in\Weyl\la} wf$, which is in $\CC[\ft]^\Weyl$, hence
    can be omitted.) Here $\sfu_{w\la}$ is an $\hbar$-difference
    operator in this case. Together with $\CC[\ft]^\Weyl$, do these
    elements generate $\mathcal D_\hbar[\ft]^\Weyl$ at $\hbar\neq 0$ ?
    Here $\mathcal D_\hbar[\ft]$ is the ring of $\hbar$-difference
    operators on $\ft$, and $\mathcal D_\hbar[\ft]^\Weyl$ is its
    $\Weyl$-invariant part.

    (3) 
        
    \end{NB}%
    We know that $\CC[\mathcal M_C]\cong \CC[\ft\times T^\vee]^\Weyl$
    as a Poisson algebra. We do not know elements in
    \propref{prop:ad_minuscule}, Theorems~\ref{thm:quasiminuscule},
    \ref{thm:small} with $\CC[\ft]^\Weyl$ generate $\CC[\ft\times
    T^\vee]^\Weyl$ as a Poisson algebra at $\hbar = \bt = 0$, or they
    generate $\cAh$ if we invert $\hbar$. Recall (see
    \ref{subsec:prev}\oldref{Coulomb2-subsub:adjoint})
    that it is conjectured that $\cAh$ is
    isomorphic to the spherical part of the graded Cherednik
    algebra. We do not know the corresponding statement for the
    spherical part either. These are true for type $A$, as we will show
    in a separate publication.
\end{Question}

\begin{NB}
Consider type $A$ with $\mathbf t = \hbar/2$. We have
\begin{equation*}
    \left[\sum_i \sfu_i^p, \sum_j w_j \sfu_j\right]
    = p \hbar \sum_i \sfu_i^{p+1}.
\end{equation*}
Therefore starting from $p=1$, we can inductively obtain $\sum_i \sfu_i^p$ for $p=2,3,\dots$.

On the other hand,
\begin{equation*}
    \left[\sum_i \sfu_i, \sum_j w_j(w_j-\hbar) \right]
    = 2\hbar \sum_i w_i \sfu_i.
\end{equation*}
\end{NB}%


\begin{NB}
The old manuscript is kept in \verb+GKLO_temp.tex+.
\renewenvironment{NB}{
\color{blue}{\bf NB2}. \footnotesize
}{}
\renewenvironment{NB2}{
\color{purple}{\bf NB3}. \footnotesize
}{}
\end{NB}%





\newcommand{\nc}{\newcommand}
\newcommand{\renc}{\renewcommand}

\nc{\fk}{\mathfrak k}
\nc{\fh}{\mathfrak h}
\nc{\Z}{\mathbb{Z}}
\nc{\R}{\mathbb{R}}
\nc{\Q}{\mathbb{Q}}
\nc{\N}{\mathbb{N}}
\nc{\Pone}{\mathbb{P}^1}
\nc{\cC}{\mathcal{C}}
\nc{\cB}{\mathcal{B}}
\nc{\bd}{\mathbf{d}}
\nc{\bc}{\mathbf{c}}
\nc{\bb}{\mathbf{b}}
\nc{\ba}{\mathbf{a}}
\nc{\Sym}{\operatorname{Sym}}
\nc{\fmod}{\operatorname{-fmod}}
\nc{\con}{-}

\nc{\om}{\varpi}
\nc{\lv}{{\gamma}}
\nc{\Rees}{{\mathop{\operatorname{Rees}}}}
\nc{\bY}{{\mathbf{Y}}}

\newcommand{\jcom}{\color{red} Joel: }
\newcommand{\acom}{\color{green} Alex: }

\section{Shifted Yangians and quantization of generalized slices
}\label{sec:quantization}

In this section, we study quiver gauge theory coming from the Dynkin diagram of a simple algebraic group $ G$.  As usual we fix an orientation of the Dynkin diagram and we fix  a dominant coweight $\lambda$ and a coweight $ \mu $ such that $ \lambda - \mu = \sum a_i \alpha_i $ with $ a_i \in \bN$.  We also fix a sequence of fundamental coweights $ \unl\lambda = (\omega_{i_1}, \dots, \omega_{i_N}) $ such that $ \sum_{s=1}^N \omega_{i_s} = \lambda $.  We will relate the quantized Coulomb branch to a generalization of the truncated shifted Yangians from \cite{kwy}.

\subsection{Shifted Yangians}

In this section, we will work with filtered algebras.  We begin by recalling some basic facts about filtered algebras and the Rees construction.

Let $ A $ be a $\BC$-algebra and let $ F^\bullet A = \dots \subseteq F^{-1} A \subseteq F^0 A \subseteq F^1 A \subseteq \dots$ be a separated and exhaustive filtration, meaning that $ \cap_k F^k A = 0 $ and $\cup_k F^k A = A $.  We assume that this filtration is compatible with the algebra structure in the sense that $ F^k A \cdot F^l A \subset F^{k+l} A $ and $1 \in F^0 A$.

In this case, we define the {\em Rees algebra} of $ A$ to be the graded $ \BC[\hbar]$--algebra $ \Rees^F A := \oplus_k \hbar^k F^k A $, viewed as a subalgebra of $ A[\hbar,\hbar^{-1}]$. We also define the associated graded of $ A $ to be the graded algebra $ \gr^F A := \bigoplus F^k A / F^{k-1} A $.  Note that we have a canonical isomorphism of graded algebras $ \Rees^F A/ \hbar \Rees^F A \cong \gr^F A  $.

We say that the filtered algebra $ A $ is {\em almost commutative} if $ \gr^F A $ is commutative.  In this case, for any $ a \in F^k A $, $ b \in F^l A$, we have $ ab - ba \in F^{k+l - 1} A $.  Thus in $ \Rees^F$, we can define a Poisson bracket by $ \{ a, b\} := \frac{1}{\hbar}(ab - ba) $.

\begin{Definition}
We define the ``Cartan doubled Yangian'' $ Y_{\infty} $ to be the $ \BC $-algebra with generators $ E_i^{(q)}, F_i^{(q)}, H_i^{(p)} $ for $ q > 0 $ and $ p \in \Z $ and $ i \in \II $
\begin{align*}
[H_i^{(p)}, H_j^{(p)}] &= 0,  \\
[E_i^{(p)}, F_j^{(q)}] &=   \delta_{ij} H_i^{(p+q-1)}, \\
[H_i^{(p+1)},E_j^{(q)}] - [H_i^{(p)}, E_j^{(q+1)}] &= \frac{ \alpha_i \cdot \alpha_j}{2} (H_i^{(p)} E_j^{(q)} + E_j^{(q)} H_i^{(p)}) , \\
[H_i^{(p+1)},F_j^{(q)}] - [H_i^{(p)}, F_j^{(q+1)}] &= -\frac{ \alpha_i \cdot \alpha_j}{2} (H_i^{(p)} F_j^{(q)} + F_j^{(q)} H_i^{(p)}) , \\
[E_i^{(p+1)}, E_j^{(q)}] - [E_i^{(p)}, E_j^{(q+1)}] &= \frac{ \alpha_i \cdot \alpha_j}{2} (E_i^{(p)} E_j^{(q)} + E_j^{(q)} E_i^{(p)}), \\
[F_i^{(p+1)}, F_j^{(q)}] - [F_i^{(p)}, F_j^{(q+1)}] &= -\frac{ \alpha_i \cdot \alpha_j}{2} (F_i^{(p)} F_j^{(q)} + F_j^{(p)} F_i^{(q)}),\\
i \neq j, N = 1 - \alpha_i \cdot \alpha_j \Rightarrow
\operatorname{sym} &[E_i^{(p_1)}, [E_i^{(p_2)}, \cdots [E_i^{(p_N)}, E_j^{(q)}]\cdots]] = 0, \\
i \neq j, N = 1 - \alpha_i \cdot \alpha_j \Rightarrow
\operatorname{sym} &[F_i^{(p_1)}, [F_i^{(p_2)}, \cdots [F_i^{(p_N)}, F_j^{(q)}]\cdots]] = 0, 
\end{align*}
\end{Definition}

\begin{Definition}
The {\em shifted Yangian} $Y_\mu$ is the quotient of $ Y_{\infty} $ by the relations $ H_i^{(p)} = 0 $ for $ p < -\langle \mu, \alpha_i^\vee \rangle $ and $ H_i^{(-\langle \mu, \alpha_i^\vee \rangle)} = 1 $.
\end{Definition}

\begin{Remark}
When $ \mu = 0 $, then it is easy to see that  $Y = Y_0$ coincides with the Yangian, as defined in~\cite[Section~3.4]{kwy}.  On the other hand, suppose that $ \mu $ is dominant.  Then the map $ Y_\mu \rightarrow Y $ defined by
\begin{equation*}
H_i^{(s)} \mapsto H_i^{(s + \langle \mu, \alpha_i^\vee \rangle)}, \ E_i^{(s)} \mapsto E_i^{(s)}, \ F_i^{(s)} \mapsto F_i^{(s + \langle \mu, \alpha_i^\vee \rangle)}
\end{equation*}
gives an isomorphism between $Y_\mu $ and the subalgebra of $ Y $ which is also denoted $Y_\mu $ in \cite{kwy}.

To be a bit more precise, in \cite{kwy}, we worked with the corresponding graded $\BC[\hbar]$-algebras. In fact, we made a mistake concerning these presentations of these algebras; \cite[Theorem~3.5]{kwy} is incorrect.  We claimed to give a presentation of $ (U_{\hbar} \fg[z])'$, using generators $ E_\alpha^{(p)}, H_i^{(p)}, F_\alpha^{(p)}$, but we are definitely missing relations involving the $ E_\alpha^{(p)}, F_\beta^{(q)} $ for $ \alpha, \beta $ not simple roots.  At this time, we do not know an explicit description of all the relations.  In this paper, we will work with Rees algebras to avoid this problem.
\end{Remark}

Denote the generators of the shifted Yangian $Y_\mu$ by $E_i^{(r)}, H_i^{(r)}, F_i^{(r)}$, and form their respective generating series
$$ E_i(z) = \sum_{r> 0 } E_i^{(r)} z^{-r}, \quad H_i(z) = z^{\langle\mu, \alpha_i^\vee\rangle} + \sum_{r>-\langle \mu, \alpha_i^\vee\rangle} H_i^{(r)} z^{-r}, \quad F_i(z) = \sum_{r>0} F_i^{(r)} z^{-r}.$$

The relations for $Y_\mu$ can be written as identities of formal series.  First, given a series $ X(z) = \sum_{r \in \mathbb Z} X_i^{(r)} z^{-r} $, we write $\underline{X(z)} = \sum_{r > 0} X_i^{(r)} z^{-r} $ for the principal part.

Then for all $i,j\in \II$ we have relations
\begin{align}
[H_i(z), H_j(y)] &= 0, \label{eq: Y relation H and H} \\
(z - y - a ) H_i(z) E_j(y) &= (z-y+a) E_j(y) H_i(z) - 2a E_j(z-a) H_i(z), \label{eq: Y relation H and E}\\
(z-y-a) E_i(z) E_j(y) &= (z-y+a)E_j(y) E_i(z) + [E_i^{(1)}, E_j(y)] - [E_i(z), E_j^{(1)}], \label{eq: Y relation E and E}\\
(z - y + a ) H_i(z) F_j(y) &= (z-y-a) F_j(y) H_i(z) + 2a F_j(z+a) H_i(z), \\
(z-y+a) F_i(z) F_j(y) &= (z-y-a)F_j(y) F_i(z) + [F_i^{(1)}, F_j(y)] - [F_i(z), F_j^{(1)}], \\
(z-y)[E_i(z), F_j(y)] &= \delta_{i,j} \left( \underline{H_i(y)} - \underline{H_i(z)} \right), \label{eq: Y relation E and F}
\end{align}
where we denote $a = \tfrac{1}{2} \alpha_i\cdot \alpha_j$.
We also have the Serre relations.  First when $ a_{ij} = 0 $. we have
\begin{align}
[E_i(z), E_j(y)] &= 0  \\
[F_i(z), F_j(y)] &= 0
\end{align}
and for $a_{ij}=-1$ we have
\begin{align}
[E_i(z_1), [E_i(z_2), E_j(y)]] + [E_i(z_2), [E_i(z_1), E_j(y)]] &= 0, \label{eq: Y relation Serre}\\
[F_i(z_1), [F_i(z_2), F_j(y)]] + [F_i(z_2),[ F_i(z_1), F_j(y)]] &= 0.
\label{eq: Y relation Serre F}
\end{align}

Let $ \mu_1, \mu_2 $ be two coweights such that $\mu_1 + \mu_2 = \mu $.  In \cite{fkprw}, we defined filtrations $ F_{\mu_1, \mu_2} Y_\mu $ of $ Y_\mu $.  In this filtration, the degrees of the generators are given
$$ \deg E_i^{(r)} = \langle \mu_1, \alpha_i^\vee \rangle + r, \ \deg F_i^{(r)} = \langle \mu_2, \alpha_i^\vee \rangle + r, \ \deg H_i^{(r)} = \langle \mu, \alpha_i^\vee \rangle + r 
$$
However, we note that these degrees do not determine the filtration because we also specify the degrees of certain PBW variables, see \cite[section 5.4]{fkprw} for more details.

In \cite{fkprw}, we proved that $Y_\mu $ is almost commutative with this filtration.  We also proved that for any pair $ \mu_1, \mu_2 $ as above, the Rees algebras $ \Rees^{F_{\mu_1, \mu_2}} Y_\mu $ are canonically isomorphic (as $ \BC[\hbar]$-algebras). 

For the purposes of this paper, we will choose $ \mu_1, \mu_2 $ as follows
$$
\langle \mu_1, \alpha_i^\vee \rangle = \langle \lambda, \alpha_i^\vee \rangle - a_i +  \sum_{h:\vin{h}=i} a_{\vout{h}}, \ \langle \mu_2, \alpha_i^\vee \rangle = -a_i + \sum_{h:\vout{h}=i} a_{\vin{h}}
$$
where the sums are taken over all arrows $h$ to $i$ or from $i$ respectively.
We write $ \bY_\mu := \Rees^{F_{\mu_1, \mu_2}} Y_\mu $ for this Rees algebra (with the induced grading).  

\subsection{A representation using difference operators}
We will work with the larger algebra $Y_\mu[z_1,\ldots,z_N] = Y_\mu \otimes \CC[z_1,\ldots,z_N]$.  We extend the filtration $ F_{\mu_1, \mu_2} $ to $ Y_\mu[z_1,\ldots,z_N] $ by placing all generators in degree 1.

Denote
$$Z_i(z) = \prod_{k: i_k = i} (z - z_k-\tfrac{1}{2}),$$
and define new ``Cartan'' elements $A_i^{(p)}$ for $p>0$ by
\begin{equation} \label{eq: H and A}
H_i(z) = Z_i(z) \frac{\prod_{\substack{h\in\Qo\sqcup\overline\Qo\\ \vout{h}= i}} (z-\tfrac{1}{2})^{a_{\vin{h}}}}{z^{a_i} (z-1)^{a_i}} \frac{\prod_{\substack{h\in\Qo\sqcup\overline\Qo\\ \vout{h}= i}} A_{\vin{h}}(z-\tfrac{1}{2} )}{A_i(z) A_i(z-1)}.
\end{equation}

Consider also the $\CC$-algebra $\tilde\cA\defeq 
    \CC[z_1,\dots, z_N]
    \langle w_{i,r}, \sfu^{\pm 1}_{i,r}, (w_{i,r}-w_{i,s}+m)^{-1}
    (r\neq s,\ m\in{\mathbb Z})\rangle$, 
with relations $[\sfu_{i,r}^\pm, w_{j,s}] = 
\pm \delta_{i,j}\delta_{r,s}  \sfu_{i,r}^\pm$.  
Denote
$$W_i (z) = \prod_{r=1}^{a_i}(z - w_{i,r}) \ \ \text{ and } \ \ W_{i,r}(z) = \prod_{\substack{s=1 \\ s\neq r}}^{a_i} (z - w_{i,s}).$$

We define a filtration on $ \tilde \cA $ by setting by setting the degree of each $ w_{i,r} $ to be $1$ and the degree of $ \sfu_{i,r}^\pm $ to be $0 $.  The filtration degree of each $(w_{i,r}-w_{i,s}+m)^{-1}$ is also set to be $-1$.

Note that $ \tilde \cA $ is almost commutative and we have $ \Rees \tilde \cA = \tilde \cA_\hbar$, the algebra defined in~\ref{embed_diff_op}.

The following result generalizes~\cite[Theorem~4.5]{kwy} which was a generalization of a construction of Gerasimov-Kharchev-Lebedev-Oblezin \cite{GKLO}.

\begin{Theorem} \label{th:filGKLO} There is a homomorphism of filtered $\BC$-algebras $\Phi_\mu^\lambda\colon Y_\mu[z_1,\ldots,z_N] \longrightarrow\tilde\cA$, defined by
\begin{align*}
A_i(z) & \mapsto z^{-a_i} W_i(z), \\
E_i(z) & \mapsto -\sum_{r=1}^{a_i}  \frac{Z_i(w_{i,r}) \prod_{h\in\Qo:\vin{h}=i} W_{\vout{h}}(w_{i,r} - \tfrac{1}{2})}{(z-w_{i,r}) W_{i,r}(w_{i,r})} \sfu_{i,r}^{-1}, \\
F_i(z) & \mapsto \sum_{r=1}^{a_i} \frac{\prod_{h\in\Qo:\vout{h}=i} W_{\vin{h}}(w_{i,r} + \tfrac{1}{2} )}{(z-w_{i,r} - 1) W_{i,r}(w_{i,r})} \sfu_{i,r}.
\end{align*}
\end{Theorem}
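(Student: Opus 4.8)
The plan is to verify that the assignment $\Phi_\mu^\lambda$ respects all the defining relations of $Y_\mu[z_1,\ldots,z_N]$ (as written in the series form \eqref{eq: Y relation H and H}--\eqref{eq: Y relation Serre F}), and that it is filtered. Since we already have the GKLO-type homomorphism of~\cite[Theorem~4.5]{kwy} in the dominant-$\mu$ case, and the shifted Yangian $Y_\mu$ for non-dominant $\mu$ differs only in which $H_i^{(p)}$ are killed/normalized, the first step is to observe that the proposed formulas for $A_i(z)$, $E_i(z)$, $F_i(z)$ are \emph{the same rational expressions} as in~\cite{kwy}; what changes is only the relation \eqref{eq: H and A} expressing $H_i(z)$ in terms of $A_i(z)$, because the exponents $a_{\vin{h}}$ now run over both orientations $\Qo\sqcup\overline\Qo$ and the power of $z(z-1)$ is $a_i$ rather than a quantity involving $\langle\mu,\alphavee_i\rangle$. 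So the bulk of the relation-checking — the $EE$, $FF$, $EF$, and Serre relations \eqref{eq: Y relation E and E}, \eqref{eq: Y relation E and F}, \eqref{eq: Y relation Serre}, \eqref{eq: Y relation Serre F}, which do not involve $H_i(z)$ directly — is \emph{formally identical} to the computation in \cite{kwy} (and ultimately \cite{GKLO}), because those relations only see $E_i(z)$, $F_i(z)$, and the difference operators $\sfu_{i,r}^{\pm1}$. I would cite that computation rather than repeat it.

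The genuinely new content is the pair of relations \eqref{eq: Y relation H and E} and its $F$-counterpart, i.e.\ that $H_i(z)$ as \emph{defined} by the right-hand side of \eqref{eq: H and A} satisfies the correct commutation with $E_j(y)$ and $F_j(y)$, together with the boundary conditions defining $Y_\mu$: that $H_i(z) = z^{\langle\mu,\alphavee_i\rangle} + O(z^{\langle\mu,\alphavee_i\rangle-1})$ as a series in $z^{-1}$, i.e.\ $H_i^{(p)}\mapsto 0$ for $p<-\langle\mu,\alphavee_i\rangle$ and $H_i^{(-\langle\mu,\alphavee_i\rangle)}\mapsto 1$. The second point I would handle by a direct degree count: under $A_i(z)\mapsto z^{-a_i}W_i(z)$ one has $A_i(z)\to 1$ with leading correction of order $z^{-1}$, the factor $Z_i(z)$ contributes $z^{l_i}$ with $l_i = \#\{k: i_k=i\} = \langle\lambda,\alphavee_i\rangle$ (using the identification of $\unl\lambda$ with the $z_k$'s), the explicit rational prefactor in \eqref{eq: H and A} contributes $z^{\sum_{h\in\Qo\sqcup\overline\Qo,\,\vout{h}=i}a_{\vin{h}} - 2a_i}$, and the ratio of $A$'s contributes nothing to the leading term; summing the exponents gives $\langle\lambda,\alphavee_i\rangle - 2a_i + \sum_{h}a_{\vin{h}}$, which is exactly $\langle\mu,\alphavee_i\rangle$ by the defining relation $\mu = \lambda - \sum a_i\alpha_i$ and the symmetrized Cartan pairing (here $\sum_{h\in\Qo\sqcup\overline\Qo,\,\vout{h}=i}a_{\vin{h}} = -\sum_j(\alpha_i\cdot\alpha_j)a_j + 2a_i$ since $\alpha_i\cdot\alpha_i = 2$). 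The leading coefficient being $1$ follows since each factor is monic. For \eqref{eq: Y relation H and E}: this is a purely formal identity in $\tilde\cA$ relating the rational function $\Phi(H_i(z))$ and the operator $\Phi(E_j(y))$; I would check it by the standard GKLO trick, namely that conjugating a shift operator $\sfu_{j,s}^{-1}$ past a rational function $R(w)$ shifts its argument, so \eqref{eq: Y relation H and E} reduces to a rational-function identity in the variables $z,y,w_{i,r},w_{j,s}$ which one verifies factor-by-factor; the new two-orientation prefactor in \eqref{eq: H and A} is exactly designed so that the $W_{\vout{h}}(w_{i,r}-\tfrac12)$ appearing in $E_i$ (over incoming $h$) and the $W_{\vin{h}}$ appearing in $F_i$ (over outgoing $h$) conspire correctly — this matching is the heart of the verification.

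Finally, for the filtration statement I would check that each generator's image lies in the claimed filtered degree: $A_i^{(p)}$ and $H_i^{(p)}$ have degree $\le \langle\mu,\alphavee_i\rangle + p$ (wait — in the $F_{\mu_1,\mu_2}$ filtration $\deg H_i^{(r)} = \langle\mu,\alphavee_i\rangle + r$), $\deg E_i^{(r)} = \langle\mu_1,\alphavee_i\rangle + r$ and $\deg F_i^{(r)} = \langle\mu_2,\alphavee_i\rangle + r$, and one matches these against the degrees of the rational expressions in $\tilde\cA$ where $\deg w_{i,r}=1$, $\deg \sfu_{i,r}^{\pm}=0$, $\deg(w_{i,r}-w_{i,s}+m)^{-1}=-1$; the specific choice of $\mu_1,\mu_2$ in terms of the arrow counts is precisely what makes this work, and the computation is bookkeeping on the numerators and denominators of the displayed formulas for $E_i(z)$ and $F_i(z)$ (e.g.\ $E_i$'s $r$-th term has numerator of degree $l_i + \sum_{h:\vin h=i}a_{\vout h}$ from $Z_i$ and the $W_{\vout h}$'s, denominator of degree $a_i-1$ from $W_{i,r}$, so the coefficient of $z^{-r}$ has degree $l_i + \sum_{h:\vin h=i}a_{\vout h} - a_i - 1 + r$, and one checks $l_i - a_i + \sum_{h:\vin h=i}a_{\vout h} = \langle\mu_1,\alphavee_i\rangle$ up to the shift). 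The main obstacle I anticipate is not any single relation but the careful treatment of the extended algebra $Y_\mu[z_1,\ldots,z_N]$ and the bookkeeping needed to align the new prefactor in \eqref{eq: H and A} — with its sum over \emph{both} orientations — with the asymmetric-looking formulas for $E_i$ (summing over incoming arrows) and $F_i$ (summing over outgoing arrows), so that \eqref{eq: Y relation H and E} and \eqref{eq: Y relation H and F} both come out right; getting the half-integer shifts ($\pm\tfrac12$, and the $z-1$ versus $z$) consistent throughout is where errors would creep in, exactly as the erratum remark about \cite[Theorem~3.5]{kwy} warns.
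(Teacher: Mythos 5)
Your overall strategy — verify the series-form relations by rational-function identities after conjugating shift operators, and check the filtration by degree bookkeeping — matches the paper's, but there is a genuine gap in how you triage the relations. You place the $E$--$F$ relation \eqref{eq: Y relation E and F} in the group of relations that ``do not involve $H_i(z)$ directly'' and propose to cite it from \cite{kwy}. This is wrong: the right-hand side of \eqref{eq: Y relation E and F} is $\delta_{ij}\bigl(\underline{H_i(y)} - \underline{H_i(z)}\bigr)$, so the $i=j$ case is exactly where the definition \eqref{eq: H and A} of $\Phi_\mu^\lambda(H_i(z))$ (with its two-orientation prefactor, the $Z_i$ factor, and the $W_i(z)W_i(z-1)$ denominator) must be matched against $[\,\Phi(E_i(z)),\Phi(F_i(y))\,]$. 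The paper's proof makes this the heart of the calculation: after collecting diagonal terms one has to prove $\sum_r(L_{i,r}(y)-R_{i,r}(y)) = H_{i,+}(y)$, and that identity is precisely a test of whether the quiver-dependent rational function assigned to $H_i$ is correct. Citing \cite{kwy} for this relation would skip the single step that validates the new $H_i$ formula.

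There is a second, related problem: you propose to cite the $EE$, $FF$, and Serre relation verifications from \cite{kwy}, but the paper explicitly states — in the remark preceding the theorem and again in the first sentence of the proof — that the presentation of the shifted Yangian claimed in \cite[Theorem~3.5]{kwy} is incorrect (missing relations), so that the proof of \cite[Theorem~4.5]{kwy} ``should be considered incomplete.'' You note this erratum yourself at the end of your write-up, but the awareness does not propagate back into your plan: given that warning, one cannot treat the \cite{kwy} relation-checks as a black box, and the whole point of stating relations \eqref{eq: Y relation H and H}--\eqref{eq: Y relation Serre F} in series form here is to have a self-contained presentation against which the images are verified from scratch. The paper therefore goes through each case ($a_{ij}=0$, $a_{ij}=-1$, $i=j$) directly for the $H$--$E$, $E$--$E$, and $E$--$F$ relations and the Serre relation (with $F$-analogues by symmetry/involution). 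Your degree-count argument for the boundary condition on $H_i(z)$ and your sketch of the filtration bookkeeping are sound and agree with the paper's final subsection, but the relation verification needs to be done, not delegated.
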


\begin{proof}
The argument is basically the same as in~\cite[Theorem~4.5]{kwy}.  The proof in \cite{kwy} should be considered incomplete, since we didn't have a complete presentation.

We verify the relations~(\ref{eq: Y relation H and H})--(\ref{eq: Y relation Serre F})
which involve $E_i(z)$, those involving $F_i(z)$ being similar (they can also 
be deduced from the $E_i(z)$ cases by using certain involutions of $Y_\mu$ and 
$\tilde\cA$).

Note that by (\ref{eq: H and A}), under $\Phi_\mu^\lambda$ we have
$$ H_i(z) \mapsto \frac{ Z_i(z) \prod_{\substack{h\in\Qo\sqcup\overline\Qo\\ \vout{h}= i}} W_{\vin{h}}(z-\tfrac{1}{2} )}{W_i(z) W_i(z-1)},$$
and these images clearly satisfy equation (\ref{eq: Y relation H and H}).

\subsection{Relation (\ref{eq: Y relation H and E}) between
\texorpdfstring{$H_i(z)$}{Hi(z)} and
\texorpdfstring{$E_j(y)$}{Ej(y)}}

\subsubsection{ The case \texorpdfstring{$a_{ij} = 0$}{aij=0}:} Equation (\ref{eq: Y relation H and E}) simply says that $H_i(z)$ and $E_j(y)$ commute.  It is clear that this holds true for their images under $\Phi_\mu^\lambda$.

\subsubsection{The case \texorpdfstring{$a_{ij} = -1$}{aij=-1}:} Here, equation (\ref{eq: Y relation H and E}) reads
$$ (z-y + \tfrac{1}{2}) H_i(z) E_j(y) = (z-y-\tfrac{1}{2}) E_j(y) H_i(z) +  E_j(z+\tfrac{1}{2}) H_i(z).$$
This is an $\mathfrak{sl}_3$ relation, and we can assume that $\II = \{i,j\}$.  We may also assume that $\Qo$ consists of a single arrow $j\rightarrow i$.

Then, the image of the left-hand side under $\Phi_\mu^\lambda$ is
$$- (z - y +\tfrac{1}{2} ) \cdot \frac{ Z_i(z) W_j(z-\tfrac{1}{2} )}{W_i(z) W_i(z-1)} \cdot \sum_{r=1}^{a_j}  \frac{Z_j(w_{j,r})}{(y-w_{j,r}) W_{j,r}(w_{j,r})} \sfu_{j,r}^{-1} $$
$$ =-\sum_{r=1}^{a_j} \frac{(z-y+\tfrac{1}{2})(z- w_{j,r}-\tfrac{1}{2})}{y-w_{j,r}} \frac{Z_i(z) W_{j,r}(z-\tfrac{1}{2})}{W_i(z) W_i(z-1)}\frac{Z_j(w_{j,r})}{W_{j,r}(w_{j,r})} \sfu_{j,r}^{-1}.$$
On the other hand, the image of the right-hand side under $\Phi_\mu^\lambda$ is
$$  -\sum_{r=1}^{a_j}  \left( \frac{z-y-\tfrac{1}{2}}{y- w_{j,r}} + \frac{1}{z-w_{j,r}+\tfrac{1}{2}}\right) \frac{Z_j(w_{j,r})}{W_{j,r}(w_{j,r})} \sfu_{j,r}^{-1} \cdot \frac{ Z_i(z) W_j(z-\tfrac{1}{2} )}{W_i(z) W_i(z-1)}.$$
Commuting the $\Phi_\mu^\lambda(H_i(z))$ factor to the left, this is equal to
$$ -\sum_{r=1}^{a_j} \left( \frac{z-y-\tfrac{1}{2}}{y- w_{j,r}} + \frac{1}{z-w_{j,r}+\tfrac{1}{2}}\right) (z-w_{j,r}+\tfrac{1}{2}) \frac{Z_i(z) W_{j,r}(z-\tfrac{1}{2})}{W_i(z) W_i(z-1)} \frac{Z_j(w_{j,r})}{W_{j,r}(w_{j,r})} \sfu_{j,r}^{-1}.$$
So, the relation (\ref{eq: Y relation H and E}) follows in this case from an equality of rational functions:
$$ \frac{(z-y+\tfrac{1}{2})(z- w_{j,r}-\tfrac{1}{2})}{y-w_{j,r}} =  \left( \frac{z-y-\tfrac{1}{2}}{y- w_{j,r}} + \frac{1}{z-w_{j,r}+\tfrac{1}{2}}\right) (z-w_{j,r}+\tfrac{1}{2}).$$

\subsubsection{The case \texorpdfstring{$i = j$}{i=j}:}
Here, equation (\ref{eq: Y relation H and E}) says that
$$ (z-y -1) H_i(z) E_i(y) = (z-y+1) E_i(y) H_i(z) - 2  E_i(z-1) H_i(z).$$
In this case we may assume that $\fg = \mathfrak{sl}_2$, and so we will temporarily drop the index $i$ from our notation.

The image of the left-hand side under $\Phi_\mu^\lambda$ is then
$$ -(z-y-1) \cdot \frac{Z(z)}{W(z) W(z-1)} \cdot \sum_{r=1}^{a} \frac{Z(w_r)}{(y-w_r) W_r(w_r)} \sfu_r^{-1} $$
$$ = -\sum_{r=1}^a \frac{(z-y - 1)}{(y-w_r) (z-w_r)(z-w_r-1)}\frac{Z(z)}{W_r(z) W_r(z-1)} \frac{Z(w_r)}{W_r(w_r)} \sfu_r^{-1},$$
while the image of the right-hand side under $\Phi_\mu^\lambda$ is
$$ -\sum_{r=1}^a \left( \frac{z-y+1}{y- w_r} + \frac{-2}{z-w-1} \right) \frac{Z(w_r)}{W_r(w_r)} \sfu_r^{-1} \cdot \frac{Z(z)}{W(z) W(z-1)} $$
$$ = -\sum_{r=1}^a \left( \frac{z-y+1}{y- w_r} + \frac{-2}{z-w-1} \right) \frac{1}{(z-w_r + 1) (z- w_r)} \frac{Z(z)}{W_r(z) W_r(z-1)} \frac{Z(w_r)}{W_r(w_r)} \sfu_r^{-1}.$$
So, the relation follows from the equality
$$ \frac{z-y-1}{(y - w_r)(z - w_r)(z-w_r-1)} = \left( \frac{z-y+1}{y- w_r} + \frac{-2}{z-w-1} \right) \frac{1}{(z-w_r + 1) (z- w_r)}.$$

\subsection{Relation (\ref{eq: Y relation E and E}) between
\texorpdfstring{$E_i(z)$}{Ei(z)} and
\texorpdfstring{$E_j(y)$}{Ej(y)}}
We will verify that for all $i,j$ we have
$$ (z- y - a) E_i(z) E_j(y) + E_i(z) E_j^{(1)} - E_i^{(1)} E_j(y) $$
\begin{equation} \label{eq: E and E}
= (z-y+a) E_j(y) E_i(z) +  E_j^{(1)} E_i(z)- E_j(y) E_i^{(1)},
\end{equation}
where $a = \tfrac{1}{2}  a_{ij}$.

\subsubsection{The case \texorpdfstring{$a_{ij}=0$}{aij=0}:} We need to check that $[\Phi_\mu^\lambda(E_i(z)), \Phi_\mu^\lambda(E_j(y))]=0$, which is clear.

\subsubsection{The case \texorpdfstring{$a_{ij} = -1$}{aij=-1}:} In this case we can assume that $\II = \{i, j\}$, and by the symmetry of (\ref{eq: E and E}) we may also assume that $\Qo$ consists of a single arrow $i\rightarrow j$.  After collecting terms, the image of the left-hand side of (\ref{eq: E and E}) is
$$ \sum_{r=1}^{a_i} \sum_{s=1}^{a_j} \left( \frac{z-y+\tfrac{1}{2}}{(z-w_{i,r})(y-w_{j,s})} + \frac{1}{z-w_{i,r}}- \frac{1}{y-w_{j,s}}  \right) \frac{Z_i(w_{i,r})}{W_{i,r}(w_{i,r})} \sfu_{i,r}^{-1} \frac{Z_j(w_{j,s}) W_i(w_{j,s}-\tfrac{1}{2})}{W_{j,s}(w_{j,s})} \sfu_{j,s}^{-1} $$
$$ = \sum_{r,s} \frac{(w_{i,r}-w_{j,s}+\tfrac{1}{2})(w_{j,s} - w_{i,r}+\tfrac{1}{2}) }{(z-w_{i,r})(y-w_{j,s})} \frac{Z_i(w_{i,r}) Z_j(w_{j,s}) W_{i,r}(w_{j,s}-\tfrac{1}{2})}{W_{i,r}(w_{i,r}) W_{j,s}(w_{j,s})} \sfu_{i,r}^{-1} \sfu_{j,s}^{-1}.$$
The image of the right-hand side of (\ref{eq: E and E}) reduces to the same expression, so the relation holds.

\subsubsection{The case \texorpdfstring{$i = j$}{i=j}:}
Here we may assume that $\fg = \mathfrak{sl}_2$, and we will drop the index $i$ from our notation for this calculation.  In this case, the left-hand side of (\ref{eq: E and E}) is
$$ \sum_{r=1}^a \sum_{s=1}^a \left( (z-y-1) \frac{Z(w_r)}{(z-w_r) W_r(w_r)}\sfu_r^{-1} \frac{Z(w_s)}{(y-w_s) W_s(w_s)}\sfu_s^{-1} \right.  $$
$$\left. + \frac{Z(w_r)}{(z-w_r) W_r(w_r)}\sfu_r^{-1}\frac{Z(w_s)}{W_s(w_s) }\sfu_s^{-1} - \frac{Z(w_r)}{W_r(w_r)}\sfu_r^{-1} \frac{Z(w_s)}{(y-w_s) W_s(w_s)}\sfu_s^{-1} \right).$$
Collecting terms, we express this as two sums:
\begin{align*}&\sum_r  \left( \frac{z-y-1}{(z-w_r)(y-w_r+1)} +\frac{1}{z-w_r} - \frac{1}{y-w_r+1}\right) \frac{Z(w_r) Z(w_r-1)}{W_r(w_r) W_r(w_r-1)} \sfu_r^{-2} \\
&+ \sum_{r\neq s}\left(\frac{z-y-1}{(z-w_r)(y-w_s)} +\frac{1}{z-w_r}-  \frac{1}{y-w_s} \right) \frac{Z(w_r) Z(w_s)}{W_r(w_r) (w_s - w_r+1) W_{rs}(w_s)} \sfu_r^{-1} \sfu_s^{-1}.
\end{align*}
The term in brackets in the first sum is zero, while the second sum is
$$ - \sum_{r\neq s} \frac{1}{(z-w_r)(y-w_s)}\frac{Z(w_r) Z(w_s)}{W_r(w_r) W_{rs}(w_s)} \sfu_r^{-1} \sfu_s^{-1},$$
where $W_{rs}(z) = \sum_{t\neq r,s} (z-w_t)$.  We get the same expression for the right-hand side of (\ref{eq: E and E}), so this relation holds.

\subsection{Relation (\ref{eq: Y relation E and F}) between
\texorpdfstring{$E_i(z)$}{Ei(z)} and
\texorpdfstring{$F_j(y)$}{Fj(y)}}

\subsubsection{The case \texorpdfstring{$i\neq j$}{i≠j}:}
Here, we must check that $[\Phi^\lambda_\mu(E_i(z)), \Phi^\lambda_\mu(F_j(y))] =0$. We may assume that $\II = \{i, j\}$. The only interesting case is when $a_{ij} = -1$ and $j\rightarrow i$.  Then $[\Phi^\lambda_\mu(E_i(z)), \Phi^\lambda_\mu(F_j(y))]$ is equal to
$$ - \sum_{r=1}^{a_i}\sum_{s=1}^{a_j} \left[\frac{Z_i(w_{i,r})W_j(w_{i,r}-\tfrac{1}{2})}{(z-w_{i,r}) W_{i,r}(w_{i,r})} \sfu_{i,r}^{-1},  \frac{W_i(w_{j,s}+\tfrac{1}{2})}{(y-w_{j,s}-1)W_{j,s}(w_{j,s})} \sfu_{j,s} \right] $$
$$ =- \sum_{r=1}^{a_i}\sum_{s=1}^{a_j} \frac{Z_i(w_{i,r})}{(z-w_{i,r}) W_{i,r}(w_{i,r})}  \frac{1}{(y-w_{j,s}-1)W_{j,s}(w_{j,s})}\left[W_j(w_{i,r}-\tfrac{1}{2} ) \sfu_{i,r}^{-1}, W_i(w_{j,s}+\tfrac{1}{2}) \sfu_{j,s}\right].$$
This is indeed zero, as the commutator in each summand is zero.

\subsubsection{The case \texorpdfstring{$i = j$}{i=j}:}
The proof in this case is almost identical to that of \cite[Theorem~4.5]{kwy}. Recall that $(z-y)[\Phi_\mu^\lambda(E_i(z)), \Phi_\mu^\lambda(F_i(y))]$ is equal to
$$ (z-y ) \left[ -\sum_{r=1}^{a_i} \frac{Z_i(w_{i,r}) \prod_{\substack{h\in\Qo\\\vin{h}=i}} W_{\vout{h}}(w_{i,r} -\tfrac{1}{2})}{(z-w_{i,r}) W_{i,r}(w_{i,r})} \sfu_{i,r}^{-1},  \sum_{s=1}^{a_i}\frac{ \prod_{\substack{h\in\Qo\\ \vout{h}=i}} Z_{\vin{h}}(w_{i,s}+\tfrac{1}{2})}{(y-w_{i,s}-1) W_{i,s}(w_{i,s})} \sfu_{i,s} \right].$$
All terms where $r\neq s$ vanish, and what remains can be rewritten as
$$ \sum_{r=1}^{a_i}\big( (L_{i,r}(y) - R_{i,r}(y) ) - (L_{i,r}(z)-R_{i,r}(z)) \big),$$
where
\begin{align*}
L_{i,r}(y) & = \frac{Z_i(w_{i,r}+1) \prod_{\substack{h\in\Qo\sqcup\overline\Qo\\ \vout{h}= i}} W_{\vin{h}}(w_{i,r}+\tfrac{1}{2})}{(y - w_{i,r}-1) W_{i,r}(w_{i,r}+1) W_{i,r}(w_{i,r})},\\
R_{i,r}(y) & = \frac{Z_i(w_{i,r}) \prod_{\substack{h\in\Qo\sqcup\overline\Qo\\ \vout{h}= i}} W_{\vin{h}}(w_{i,r}-\tfrac{1}{2})}{(y-w_{i,r}) W_{i,r}(w_{i,r}) W_{i,r}(w_{i,r}-1)}.
\end{align*}
Therefore, it remains to verify that
$$ \sum_{r=1}^{a_i}( L_{i,r}(y) - R_{i,r}(y) ) =  H_{i,+}(y).$$
As in \cite{kwy}, this is done by comparing coefficients at all $y^{-k}$ for $k>0$ between the left-hand side and $ H_i(y)$, using partial fractions to compute the case of $ H_i(y)$.

\subsection{The Serre relations}
When $ a_{ij} = 0 $, the relation is immediate, so we concentrate on the $ a_{ij} = -1 $ case and in particular, the version with $ E$s, see (\ref{eq: Y relation Serre}) above.  The proof of this relation is sketched out in \cite{GKLO}.  Following their notation, let us denote
$$ \chi_{i,r} = - \frac{Z_i(w_{i,r}) \prod_{h\in Q_1: \vin{h}=i} W_{\vout{h}}(w_{i,r}-\tfrac{1}{2}) }{W_{i,r}(w_{i,r})} \sfu_{i,r}^{-1},$$
so that $\Phi_\mu^\lambda(E_i(y)) = \sum_{r=1}^{a_i} \frac{1}{y-w_{i,r}} \chi_{i,r}$.

These elements satisfy the relations $[\chi_{i,r}, w_{i,s}] = - \delta_{r,s} \chi_{i,r}$ and
\begin{align*}
(w_{i,r} - w_{i,s} -1) \chi_{i,r} \chi_{i,s} &= (w_{i,r} - w_{i,s} +1) \chi_{i,s} \chi_{i,r}, \qquad \text{ for } r\neq s,\\
(w_{i,r} - w_{j,t} +\tfrac{1}{2}) \chi_{i,r} \chi_{j,t} &= (w_{i,r} - w_{j,t}-\tfrac{1}{2}) \chi_{j,t} \chi_{i,r}.
\end{align*}
Using the above relations, we find that
$$[\Phi_\mu^\lambda(E_i(y_1)), [ \Phi_\mu^\lambda(E_i(y_2)), \Phi_\mu^\lambda(E_j(z))]]$$
\begin{align*}
&= \left[ \sum_{r=1}^{a_i} \frac{1}{y_1-w_{i,r}} \chi_{i,r}, \sum_{s=1}^{a_i}\sum_{t=1}^{a_j} \frac{1}{(y_2-w_{i,s})(z-w_{j,t})} \frac{-1}{w_{i,s}-w_{j,t}-\tfrac{1}{2}} \chi_{i,s} \chi_{j,t} \right] \\
&= \sum_r\sum_t 
  \begin{multlined}[t]
\left( \frac{1}{(y_1-w_{i,r})(y_2-w_{i,r}+1)} - \frac{1}{(y_1-w_{i,r}+1)(y_2-w_{i,r})}\right)
\\
\qquad
\times
\frac{1}{z-w_{j,t}} \frac{-1}{w_{i,r}-w_{j,t}-\tfrac{3}{2}} \chi_{i,r}^2 \chi_{j,t}      
\end{multlined}
 \\
&+ \sum_{r\neq s} 
  \begin{multlined}[t]
\sum_t\frac{1}{(y_1-w_{i,r})(y_2-w_{i,s})(z-w_{j,t})}\frac{-1}{(w_{i,s} - w_{j,t}-\tfrac{1}{2})}
\\
\qquad\times \frac{w_{i,r}+w_{i,s}-2w_{j,t}}{(w_{i,r}-w_{j,t}-\tfrac12)(w_{i,r}-w_{i,s}+1)} \chi_{i,r} \chi_{i,s} \chi_{j,t}.
  \end{multlined}
\end{align*}
The first sum is clearly skew-symmetric in $y_1, y_2$.  The second sum is as well, which one can see by applying the above relation between $\chi_{i,r}$ and $\chi_{i,s}$. This proves the Serre relation along with the theorem.

\subsection{The filtration}
We are left to verify the claim that the filtrations match.  To do this, it suffices to check that each PBW variable $ E_\beta^{(p)}, H_i^{(q)}, F_\beta^{(p)} $ (see \cite[Remark 3.4]{fkprw} for their definition) is sent to the correct filtered degree.  When $ \beta $ is a simple root, this is immediate.

Now suppose that $ \beta $ is not a simple root.  Then $E_\beta^{(p)} $ is defined by commutators.  Since $ \tilde \cA $ is almost commutative, this immediately implies that $ E_\beta^{(p)} $ is mapped into the correct filtered piece.

\end{proof}

Applying the Rees functor to Theorem~\ref{th:filGKLO}, we deduce the following result. 
\begin{Corollary} \label{cor:GKLO}
 There exists a unique graded $ \BC[\hbar,z_1, \dots, z_N]$-algebra homomorphism $ \bY_\mu \rightarrow \tilde \cA_\hbar $, such that
\begin{align*}
A_i(z) & \mapsto z^{-a_i} W_i(z), \\
E_i(z) & \mapsto -\sum_{r=1}^{a_i}  \frac{Z_i(w_{i,r}) \prod\limits_{h\in\Qo:\vin{h}=i} W_{\vout{h}}(w_{i,r} - \tfrac{1}{2}\hbar)}{(z-w_{i,r}) W_{i,r}(w_{i,r})} \sfu_{i,r}^{-1}, \\
F_i(z) & \mapsto \sum_{r=1}^{a_i} \frac{\prod\limits_{h\in\Qo:\vout{h}=i} W_{\vin{h}}(w_{i,r} + \tfrac{1}{2} \hbar)}{(z-w_{i,r} - \hbar) W_{i,r}(w_{i,r})} \sfu_{i,r}.
\end{align*}
\end{Corollary}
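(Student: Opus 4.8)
Corollary~\ref{cor:GKLO} is obtained from Theorem~\ref{th:filGKLO} by a purely formal application of the Rees functor, so the plan is essentially to unwind the definitions and track how $\hbar$ enters. First I would recall the setup: $\tilde\cA_\hbar = \Rees^{F}\tilde\cA$ for the filtration on $\tilde\cA$ in which $\deg w_{i,r}=1$, $\deg\sfu_{i,r}^{\pm1}=0$ and $\deg(w_{i,r}-w_{i,s}+m)^{-1}=-1$; similarly $\bY_\mu = \Rees^{F_{\mu_1,\mu_2}}Y_\mu[z_1,\dots,z_N]$ with all generators $E_i^{(r)},H_i^{(r)},F_i^{(r)},z_k$ placed in the degrees specified in~\secref{sec:quantization}. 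The Rees functor is functorial: a filtered algebra homomorphism $\Phi\colon Y_\mu[z_1,\dots,z_N]\to\tilde\cA$ (one that respects the filtrations, i.e.\ $\Phi(F^kB)\subseteq F^k A$) induces a homogeneous $\BC[\hbar]$-algebra homomorphism $\Rees^F\Phi\colon \Rees^FB\to\Rees^FA$ sending $\hbar^k b\mapsto\hbar^k\Phi(b)$ for $b\in F^kB$. Theorem~\ref{th:filGKLO} provides exactly such a $\Phi=\Phi_\mu^\lambda$, and its last paragraph (``The filtration'') verifies that $\Phi_\mu^\lambda$ is filtered, i.e.\ that each PBW generator lands in the correct filtered piece. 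So the existence of the map $\bY_\mu\to\tilde\cA_\hbar$ is immediate once we know $\Phi_\mu^\lambda$ is filtered.

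The only substantive content is to write down the formulas for the Rees map on the generating series. Here I would proceed generator by generator. The series $A_i(z)$, $E_i(z)$, $F_i(z)$ are built from homogeneous elements once we assign $z$ the appropriate degree; concretely, under Rees-ification a monomial of filtered degree $d$ acquires a factor $\hbar^d$, and absorbing powers of $\hbar$ into $z$ (so that $z$ carries degree $1$) turns each numerator/denominator linear factor $z-\text{const}$ into $z-\hbar\cdot\text{const}$. Thus the substitutions $Z_i(w_{i,r})$, $W_{\vout{h}}(w_{i,r}-\tfrac12)$, $(z-w_{i,r})$ etc.\ in Theorem~\ref{th:filGKLO} become $Z_i(w_{i,r})$ (no change, as the $z_k$ are already of degree $1$), $W_{\vout{h}}(w_{i,r}-\tfrac12\hbar)$, $(z-w_{i,r})$, and the shift operator $\sfu_{i,r}$ still acts by $w_{i,r}\mapsto w_{i,r}+\hbar$ (matching the relation $[\sfu_{i,r}^\pm,w_{j,s}]=\pm\delta_{ij}\delta_{rs}\hbar\sfu_{i,r}^\pm$ in $\tilde\cA_\hbar$). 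Similarly $F_i(z)\mapsto\sum_r \frac{\prod_h W_{\vin{h}}(w_{i,r}+\tfrac12\hbar)}{(z-w_{i,r}-\hbar)W_{i,r}(w_{i,r})}\sfu_{i,r}$. I would double-check the half-integer shifts: in the non-graded statement they are $\tfrac12$, and since the relevant bifundamental factors have filtered degree $1$, they rescale to $\tfrac12\hbar$, which is exactly what appears in the corollary.

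For uniqueness, I would note that $\bY_\mu$ is generated over $\BC[\hbar,z_1,\dots,z_N]$ by the coefficients of $A_i(z)$ (equivalently $H_i(z)$, via~\eqref{eq: H and A}), $E_i(z)$, $F_i(z)$ — this follows since $Y_\mu$ is generated by $E_i^{(r)},H_i^{(r)},F_i^{(r)}$ and the $A_i^{(p)}$ are obtained from the $H_i^{(p)}$ by the invertible change of variables~\eqref{eq: H and A} — so a $\BC[\hbar,z_1,\dots,z_N]$-algebra map out of $\bY_\mu$ is determined by its values on these generating series. Hence the displayed formulas determine the homomorphism uniquely, provided it exists, and existence is the content of the previous paragraph.

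\textbf{Main obstacle.} There is essentially no obstacle of a conceptual nature: the entire mathematical weight is carried by Theorem~\ref{th:filGKLO} (both the verification of the Yangian relations and the verification that the map is filtered), which is already proved in the excerpt. The only thing requiring genuine care is bookkeeping — correctly inserting the powers of $\hbar$ when passing from $\tilde\cA$ to its Rees algebra, in particular confirming that the $\tfrac12$'s become $\tfrac12\hbar$'s and the $1$'s in $(z-w_{i,r}-1)$ become $\hbar$'s, and checking that this is consistent with the grading on $\bY_\mu$ in which $H_i(z)$, $E_i(z)$, $F_i(z)$ are (shifted-)homogeneous. This is routine but must be done carefully to get the signs and shifts exactly right.
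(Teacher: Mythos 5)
Your proposal is correct and follows the same route as the paper, which proves the corollary precisely by applying the Rees functor to the filtered homomorphism $\Phi_\mu^\lambda$ of Theorem~\ref{th:filGKLO}; your extra detail on tracking the $\hbar$-factors and the generation argument for uniqueness simply unwinds that one-line deduction. One small slip in a side remark: you say $Z_i(w_{i,r})$ undergoes ``no change, as the $z_k$ are already of degree~$1$,'' but in fact the $\tfrac12$ inside $Z_i$ must become $\tfrac12\hbar$ for the same degree reason as in $W_{\vout h}(w_{i,r}-\tfrac12\hbar)$ — the paper's corollary statement implicitly redefines $Z_i(z)=\prod_{k:i_k=i}(z-z_k-\tfrac12\hbar)$ in the Rees context, so the notational appearance of ``no change'' is an artifact of that redefinition, not of the degree of $z_k$.
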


In the above corollary, we are using a slight abuse of notation.  For a
generator $ x $ (such as $ E_i^{(p)} $ or $ w_{i,r} $) of the algebra $ Y_\mu $
or $\tilde\cA$ which lives in filtered degree $ k $ (but not in filtered
degree $ k-1 $) we write $ x $ for the element $ \hbar^k x \in \Rees Y_\mu $
or $ \Rees\tilde\cA$.

\subsection{Relation to quantization of Coulomb branch}
Recall the setup of~\ref{Qgt}: we have $\cAh = 
H^{(\GV\times \TW)_\cO\rtimes\CC^\times}_*(\cR_{\GV,\bN})\hookrightarrow\tilde\cAh$,
the quantized Coulomb branch 
associated to the pair $(\GL(V),\bN)$ with flavor symmetry.  This inclusion takes the homological grading on $ \cAh $ (not the $ \Delta$-grading) to the above grading on $ \tilde \cAh$.

\begin{Theorem} \label{th:PhiBar}
There exists a unique graded $ \BC[\hbar, z_1, \dots, z_N] $-algebra homomorphism
$$ \overline{\Phi}_\mu^\lambda\colon \bY_\mu[z_1, \ldots, z_N] \rightarrow 
\cA_\hbar,$$ such that
\begin{align*}
A_i^{(p)} &\mapsto (-1)^p e_p(\{ w_{i,r} \}), \\
F_i^{(p)} &\mapsto  (-1)^{\sum_{\vout{h}=i} a_{\vin{h}}} (c_1(\uQ_i)+\hbar)^{p-1}\cap[\cR_{\varpi_{i,1}}], \\
E_i^{(p)} &\mapsto (-1)^{a_i} (c_1(\uS_i)+\hbar)^{p-1} \cap [\cR_{\varpi^*_{i,1}}].
\end{align*}
\end{Theorem}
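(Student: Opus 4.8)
The plan is to construct $\overline{\Phi}_\mu^\lambda$ by combining the two GKLO-type homomorphisms we already have: the algebraic one $\Phi_\mu^\lambda\colon\bY_\mu[z_1,\ldots,z_N]\to\tilde\cA_\hbar$ of \corref{cor:GKLO}, and the geometric embedding $\bz^*(\iota_*)^{-1}\colon\cAh\hookrightarrow\tilde\cAh$ of \subsecref{embed_diff_op}. The key point is that the explicit difference operators appearing as the images of $A_i(z)$, $E_i(z)$, $F_i(z)$ under $\Phi_\mu^\lambda$ are \emph{precisely} the images under $\bz^*(\iota_*)^{-1}$ of the minuscule monopole classes computed in formulas \textup{(\ref{eq:82})} and \textup{(\ref{eq:83})} with $n=1$. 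So first I would write down explicitly $\bz^*(\iota_*)^{-1}$ applied to $e_p(\{w_{i,r}\})$ (which is just $e_p(\{w_{i,r}\})$ itself, a symmetric function, hence a regular function on the Coulomb branch), to $(c_1(\uQ_i)+\hbar)^{p-1}\cap[\cR_{\varpi_{i,1}}]$, and to $(c_1(\uS_i)+\hbar)^{p-1}\cap[\cR_{\varpi_{i,1}^*}]$, using \propref{prop:minuscule} / the special cases \textup{(\ref{eq:82})}, \textup{(\ref{eq:83})} for $n=1$. Setting $f(w)=(w+\hbar)^{p-1}$ in those formulas and comparing with the generating-series definitions $E_i(z)=\sum E_i^{(r)}z^{-r}$ etc.\ (via the partial-fraction expansion $\frac{1}{z-w_{i,r}}=\sum_{p\ge1}w_{i,r}^{p-1}z^{-p}$), I would check term by term that $\bz^*(\iota_*)^{-1}$ of the proposed target elements agrees with $\Phi_\mu^\lambda$ of $A_i^{(p)}$, $E_i^{(p)}$, $F_i^{(p)}$, up to the stated signs $(-1)^p$, $(-1)^{a_i}$, $(-1)^{\sum_{\vout{h}=i}a_{\vin{h}}}$, which come from the Euler-class denominators $\prod_{r\in I,s\notin I}(w_{i,r}-w_{i,s})$ picking up a sign $(-1)^{a_i-1}$ relative to $W_{i,r}(w_{i,r})$ and from the relabeling of $\uQ_i$ versus $\uS_i$ variables.

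Once that identification is in place, the construction is immediate: since $\Phi_\mu^\lambda$ is an algebra homomorphism by \thmref{th:filGKLO}, and since the elements $e_p(\{w_{i,r}\})$, $(c_1(\uQ_i)+\hbar)^{p-1}\cap[\cR_{\varpi_{i,1}}]$, $(c_1(\uS_i)+\hbar)^{p-1}\cap[\cR_{\varpi_{i,1}^*}]$ all lie in the subalgebra $\cAh\subset\tilde\cAh$ (the first because $H^*_{\GL(V)}(\mathrm{pt})\subset\cAh$, the latter two by the discussion after \textup{(\ref{eq:82})}, \textup{(\ref{eq:83})} that these classes are in the image of $\bz^*(\iota_*)^{-1}$), the homomorphism $\Phi_\mu^\lambda$ factors through $\cAh$: for every relation among the GKLO generators, applying $\bz^*(\iota_*)^{-1}$ of the proposed images gives the same relation inside $\cAh$ (because $\bz^*(\iota_*)^{-1}$ is injective and $\tilde\cAh$-linear). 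Uniqueness is automatic because $\bY_\mu[z_1,\ldots,z_N]$ is generated by $A_i^{(p)},E_i^{(p)},F_i^{(p)}$ over $\BC[\hbar,z_1,\ldots,z_N]$ (the $H_i^{(p)}$ being expressed via \textup{(\ref{eq: H and A})}), and their images are prescribed. Grading-compatibility: I would check that the homological degree of $[\cR_{\varpi_{i,1}}]$ (resp.\ $[\cR_{\varpi_{i,1}^*}]$) together with the degree of $c_1$ matches the $F_{\mu_1,\mu_2}$-degree $\langle\mu_2,\alpha_i^\vee\rangle+p$ of $F_i^{(p)}$ (resp.\ $\langle\mu_1,\alpha_i^\vee\rangle+p$ of $E_i^{(p)}$); this is exactly the reason for the particular choice of $\mu_1,\mu_2$ recorded before \corref{cor:GKLO}, with $\langle\mu_2,\alpha_i^\vee\rangle=-a_i+\sum_{h:\vout{h}=i}a_{\vin{h}}$ being the degree shift produced by the Euler class $e(z^{\varpi_{i,1}}\bN_\cO/z^{\varpi_{i,1}}\bN_\cO\cap\bN_\cO)$ in \propref{prop:minuscule}.

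The main obstacle is the bookkeeping of signs and of the $\hbar$-shifts between the "subbundle" normalization $\uS_i$ and the "quotient" normalization $\uQ_i$, together with making sure the half-integer shifts $\pm\hbar/2$ in \textup{(\ref{eq:82})}, \textup{(\ref{eq:83})} correctly reproduce the integer shifts $\pm\hbar/2$, $\pm\hbar$, $\pm 1$ appearing in \corref{cor:GKLO} after the absorption of $-\hbar/2$ into the flavor/equivariant variables (as in the Jordan quiver discussion of \subsecref{subsec:jordan-quiver}, where $z_k$ was replaced by $z_k+\hbar+\bt$). Concretely, one must verify that $f(\uS_i)$ with $f(w)=(w+\hbar)^{p-1}$ produces, via \textup{(\ref{eq:83})} with $f(w_{i,I}-\hbar)$, exactly the numerator $\prod_{h:\vout{h}=i}W_{\vin{h}}(w_{i,r}+\tfrac{\hbar}{2})$ over $(z-w_{i,r}-\hbar)W_{i,r}(w_{i,r})$ of the $F_i$ formula — i.e., that the $-\hbar$ in $f(w_{i,I}-\hbar)$ and the shift $\sfu_{i,r}^{-1}$ combine to the pole at $w_{i,r}+\hbar$. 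I would organize this as a single lemma (one per generator type) stating the equality of difference operators $\bz^*(\iota_*)^{-1}\big((c_1(\uS_i)+\hbar)^{p-1}\cap[\cR_{\varpi_{i,1}^*}]\big)=(-1)^{a_i}\,\Phi_\mu^\lambda(E_i^{(p)})$ and its $F$-counterpart, reducing the theorem to \thmref{th:filGKLO} and \corref{cor:GKLO}; everything else is then formal.
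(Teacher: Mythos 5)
Your proposal is correct and follows essentially the same route as the paper: it leverages the already-constructed homomorphism $\Phi_\mu^\lambda\colon\bY_\mu[z_1,\ldots,z_N]\to\tilde\cA_\hbar$ from \corref{cor:GKLO}, uses the injectivity of $\bz^*(\iota_*)^{-1}\colon\cAh\hookrightarrow\tilde\cAh$, and reduces everything to the observation that $\Phi_\mu^\lambda$ of the generators coincides with $\bz^*(\iota_*)^{-1}$ of the proposed classes, via (\ref{eq:82}) and (\ref{eq:83}) at $n=1$ with $f(w)=(w+\hbar)^{p-1}$.

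The one place where you depart (slightly, and harmlessly) from the paper's argument is the final step establishing that $\mathrm{im}(\Phi_\mu^\lambda)\subset\bz^*(\iota_*)^{-1}(\cAh)$. You argue that $A_i^{(p)},E_i^{(p)},F_i^{(p)}$ generate $\bY_\mu[z_1,\ldots,z_N]$ as an \emph{associative} $\BC[\hbar,z_1,\ldots,z_N]$-algebra (which requires noting that (\ref{eq: H and A}) can be inverted to recover the $H_i^{(p)}$ from the $A_j^{(q)}$), and then use the matching of \emph{all} modes $E_i^{(p)}, F_i^{(p)}$ with geometric classes. The paper, although it also records the match for all $s$, instead invokes that $A_i^{(s)},E_i^{(1)},F_i^{(1)}$ alone generate $\bY_\mu[z_1,\ldots,z_N]$ as a \emph{Poisson} algebra and that $\cAh$ is almost commutative, hence closed under the bracket $\tfrac1\hbar[\,\cdot\,,\cdot\,]$ of $\tilde\cAh$. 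Both arguments are correct and of comparable effort; the Poisson version is slightly more robust (it needs only the lowest mode of $E_i,F_i$ to be recognized as geometric) and also records a Poisson-compatibility that is useful elsewhere, whereas your version is more elementary. Your bookkeeping of the signs $(-1)^{a_i}$ and $(-1)^{\sum_{\vout{h}=i}a_{\vin{h}}}$ coming from $\prod_{s\neq r}(\pm(w_{i,r}-w_{i,s}))$ and from $W_{\vin{h}}(\cdot)$ versus the Euler-class numerator, and of the $-\hbar$ shift in $f(w_{i,I}-\hbar)$ matching $(w_{i,r}+\hbar)^{p-1}\to w_{i,r}^{p-1}$, is exactly what is needed and is consistent with the filtration choice $(\mu_1,\mu_2)$ recorded before \corref{cor:GKLO}.
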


\begin{Remark}
This homomorphism is analogous to (and was inspired by) the action of the Yangian of $\mathfrak{gl}_n $ on the cohomology of Laumon spaces, constructed by Feigin-Finkelberg-Negut-Rybnikov \cite{FFNR}.
\end{Remark}

\begin{Remark}\label{rem:Delta-grading}
In the above Theorem, we use the $ (\mu_1, \mu_2) $-grading on $ \bY_\mu[z_1, \ldots, z_N] $ (where $ \mu_1, \mu_2 $ are defined above) and the homological grading on $ \cA_\hbar $.  On the other hand, if we want to use the $\Delta$-grading on $ \cAh $ (as defined in~\ref{discrepancy}(2)), then we should use the $ (\mu/2, \mu/2) $-grading on $ \bY_\mu[z_1, \ldots, z_N] $.

Recall that the $(\mu/2,\mu/2)$-grading is defined so that PBW variables
$E_\beta^{(p)}$, $F_\beta^{(p)}$, $H_i^{(q)}$  have degree
\begin{equation*}
    \deg E_\beta^{(p)} = \frac12 \langle\mu,\beta\rangle + p,\quad
    \deg F_\beta^{(p)} = \frac12 \langle\mu,\beta\rangle + p,\quad
    \deg H_i^{(q)} = \langle\mu,\alpha_i\rangle + q,
\end{equation*}
where $\beta$ is a positive root. See \cite[section
5.4]{fkprw}. Therefore $\bY_\mu[z_1, \ldots, z_N]$ is
$\ZZ_{\ge 0}$-graded and the degree $0$ part consists only of scalars
(with respect to the $(\mu/2,\mu/2)$-grading) if and only if
$\langle\mu,\beta\rangle \ge -1$ for any positive root $\beta$.
Note that $\cA$ is called \emph{good or ugly} when its
$\Delta$-grading satisfies the same property. See \cite[Remark
4.2]{2015arXiv150303676N}. One of the authors show that
$\langle\mu,\beta\rangle \ge -1$ if $\cA$ is good or ugly. See
\cite[Proof of Prop.~5.9]{2015arXiv151003908N}. The converse is also
true if $\overline{\Phi}_\mu^\lambda$ is surjective.
\end{Remark}

\begin{proof}
We have the graded $ \BC[\hbar, z_1, \dots, z_N] $-algebra homomorphisms $ \Phi_\mu^\lambda : \bY_\mu[z_1, \ldots, z_N] \rightarrow 
\cA_\hbar $ and $  \bz^* (\iota_*)^{-1} : \cAh \rightarrow \tilde\cAh $, the second of which is injective.  So we just need to verify that the image $ \Phi_\mu^\lambda $ is contained in the image of $ \bz^* (\iota_*)^{-1} $.

It follows immediately from equations (\ref{eq:82}) and (\ref{eq:83}) that $ \bz^* (\iota_*)^{-1}(\overline{\Phi}_\mu^\lambda(X_i^{(s)})) = \Phi_\mu^\lambda(X_i^{(s)}) $ for $ X = A, E, F $.  Now, the elements $ A_i^{(s)}, E_i^{(1)}, F_i^{(1)} $ generate $ \bY_\mu \otimes \BC[z_1, \dots, z_N] $ as a $ \BC[\hbar,z_1, \dots, z_N]$ Poisson algebra (where the Poisson bracket is $ \{ a, b \} = \tfrac{1}{\hbar}(ab - ba) $).   Since $\cAh$ is almost commutative, $\cAh$ is closed under the Poisson bracket and so the image of $ \Phi_\mu^\lambda$ is contained in $ \bz^* (\iota_*)^{-1}(\cAh) $.
\end{proof}

The image of $ \overline{\Phi}^\lambda_\mu $ is called the truncated shifted Yangian and is denoted $\bY^\lambda_\mu $.

\begin{Remark}
It is easy to see that the elements $ A_i^{(p)} $ for $ p > a_i $  are sent to 0 under $\overline{\Phi}^\lambda_\mu$.  We conjecture that these elements generate the kernel of $ \overline{\Phi}^\lambda_\mu $ and thus we get a presentation of $Y^\lambda_\mu $ (the $ \hbar = 1 $ specialization of $ \bY^\lambda_\mu $).
\end{Remark}

\subsection{Specialization to the dominant case}
Now, let us assume that $ \mu $ is dominant.

\subsubsection{The scheme \texorpdfstring{$ \CG_\mu$}{Gμ}}

Consider the scheme $ \CW_{\mu} $ defined as the locus 
$ G_1[[z^{-1}]] z^{\mu} \subset G((z^{-1}))/G[z]$.  It is the moduli space of 
pairs $ (\scP, \sigma) $ where  $\scP$ is a $ G$-bundle on $\BP^1$ of 
isomorphism type $ \mu $ and $ \sigma $ is a trivialization in the formal 
neighbourhood of $ \infty $, such that $ \scP $ has isomorphism type $ \mu $ 
and such that the Harder-Narasimhan flag of $ \scP $ at $ \infty $ is 
compatible with $ B_- \subset G $ (under $\sigma$).  For any $ \unl\lambda $, 
we have a morphism $ \oW^{\unl\lambda}_{\mu} \rightarrow \CW_{\mu} $ and a closed 
embedding $ \oW^{\unl\lambda}_{\mu} \hookrightarrow \CW_{\mu} \times \BA^N $.

For any $ \unl\lambda $ and any point $\unl{z}\in \BA^N $, let
$ \oW^{\unl\lambda, \unl{z}}_{\mu} $ be the fibre of
$ \oW^{\unl\lambda}_{\mu} \rightarrow \BA^N $ over the point $\unl{z}$.
The open locus $ \CW^{\unl\lambda, \unl{z}}_{\mu} $ embeds into $\CW_{\mu}$ as the
intersection $\CW_{\mu}\cap G[z] z^{\unl\lambda,\unl{z}} \subset G((z^{-1}))/G[z]$,
where $ z^{\unl\lambda,\unl{z}} = \prod_{s=1}^N (z-z_s)^{\omega_{i_s}} $.

In \cite{kwy}, we constructed a Poisson structure on $ \CW_{\mu} $.  Now~\cite[Theorem~2.5]{kwy} generalizes immediately to show that
$ \CW^{\unl\lambda,\unl{z}}_{\mu} $ is a symplectic leaf of $ \CW_{\mu} $.
(In the case when $ G = SL_n$, this is closely related
to~\cite[Theorem~2.2]{arXiv1405.3909}).

Now, consider the subgroup of $ G_1[[z^{-1}]] $ defined as
$$
\CG_\mu = \{ g \in G_1[[z^{-1}]]\mid z^{-\mu} g z^\mu \in G_1[[z^{-1}]] \}.
$$

The natural map $ g \mapsto gz^\mu $ provides an isomorphism $ \CG_\mu \cong  \CW_\mu$.

The following result is~\cite[Theorem~3.12]{kwy}.

\begin{Theorem}
\label{previous}
There is an isomorphism of Poisson algebras $ \Psi\colon \bY_\mu / \hbar \bY_\mu \rightarrow \BC[\CG_{\mu^*}] $ given by
\begin{align*}
H_i(z) &\mapsto z^{\langle \mu, \alpha_i \rangle} \prod_{\substack{h\in\Qo\sqcup\overline\Qo\\ \vout{h}= i}} \Delta_{w_0 \omega^\vee_{\vin{h}}, w_0 \omega^\vee_{\vin{h}}}(z) \Delta_{w_0\omega^\vee_i, w_0\omega^\vee_i}(z)^{-2}, \\
F_i(z) &\mapsto \Delta_{w_0s_i \omega^\vee_i, w_0\omega^\vee_i}(z) \Delta_{w_0\omega^\vee_i, w_0\omega^\vee_i}(z)^{-1}, \\
E_i(z) &\mapsto  z^{\langle \mu, \alpha_i \rangle} \Delta_{w_0\omega^\vee_i, w_0s_i\omega^\vee_i}(z) \Delta_{w_0\omega^\vee_i, w_0\omega^\vee_i}(z)^{-1}.
\end{align*}
\end{Theorem}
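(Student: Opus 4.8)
Since this is \cite[Theorem~3.12]{kwy}, the plan is to recall that argument in the present notation. The generalized minors $\Delta_{\gamma,\delta}(z)$ appearing in the formulas for $\Psi$ are the matrix coefficients of the tautological loop-group element $gz^{\mu^*}\in G((z^{-1}))/G[z]$ in the fundamental representations $V^{\omega_i^\vee}$, taken between extremal weight vectors indexed by $\gamma=w\omega_i^\vee$, $\delta=w'\omega_i^\vee$; their coefficients in powers of $z^{-1}$ span a subalgebra of $\BC[\CW_{\mu^*}]\cong\BC[\CG_{\mu^*}]$. First I would check that $\Psi$ is a well-defined homomorphism of commutative algebras, i.e.\ that these minors satisfy the images of the defining relations of $\bY_\mu/\hbar\bY_\mu=\gr^{F_{\mu_1,\mu_2}}Y_\mu$, which are the classical $\hbar\to0$ limits of \eqref{eq: Y relation H and H}--\eqref{eq: Y relation Serre F}. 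As in the proof of \thmref{th:filGKLO}, each relation only involves a sub-root-system of rank $\le2$, so it reduces to the cases $\fg=\mathfrak{sl}_2$ and $\fg=\mathfrak{sl}_3$ (and the trivial $a_{ij}=0$ case), where the required identities among the $\Delta$'s are exactly the classical short Plücker relations for the $2\times2$ and $3\times3$ minors of an element of the loop group. I would organize this verification relation by relation, in parallel with \thmref{th:filGKLO}.

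Next I would check that $\Psi$ intertwines the Poisson brackets: on the source, $\{a,b\}=\tfrac1\hbar(ab-ba)$ computed in $\bY_\mu$ (well defined since $Y_\mu$ is almost commutative for this filtration by \cite{fkprw}), and on the target, the bracket constructed in \cite[Section~2]{kwy}, for which $\CW^{\unl\lambda,\unl z}_{\mu^*}$ is a symplectic leaf of $\CW_{\mu^*}$ (the generalization of \cite[Theorem~2.5]{kwy} recalled above). Since Poisson brackets are biderivations and the series $H_i(z),E_i(z),F_i(z)$ generate, it suffices to compare $\{\,\cdot\,,\cdot\,\}$ on these generating series, which is once more a finite rank-$\le2$ computation that runs alongside the one for the algebra relations.

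Finally, bijectivity. For $\mu=0$ one has $\CG_0=\CW_0=G_1[[z^{-1}]]$ and $Y_0=Y$ the ordinary Yangian, and the statement is the classical fact that $Y$ is a quantization of $\BC[G_1[[z^{-1}]]]$ together with the fact that the extremal-weight minors generate this coordinate ring; bijectivity then follows from the PBW theorem for $Y$. For general dominant $\mu$ one uses that $Y_\mu$ identifies with the subalgebra of $Y$ generated by the shifted generators (see \cite[Section~3.4]{kwy}) and that $\CG_{\mu^*}\hookrightarrow G_1[[z^{-1}]]$ is a closed subscheme, so that $\Psi$ arises by restriction; injectivity and surjectivity then follow by comparing the two gradings, both of which are products of the same one-parameter factors indexed by the PBW variables $E_\beta^{(p)},F_\beta^{(p)},H_i^{(q)}$. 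The main obstacle is the first step: matching the loop-group Plücker identities to precisely the list of Yangian relations, in particular the Serre relations \eqref{eq: Y relation Serre}--\eqref{eq: Y relation Serre F}; once the relations and the brackets are in hand, the bijectivity is comparatively soft.
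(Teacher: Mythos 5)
The paper does not prove this statement: the line immediately preceding the theorem reads ``The following result is~\cite[Theorem~3.12]{kwy}'', so the paper's ``proof'' is simply a citation and there is no argument here against which to compare yours.

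Your sketch is a reasonable outline of the general shape such a proof must take, and you correctly identify that the relation-checking reduces to rank $\le 2$ as in \thmref{th:filGKLO}. One caution about the bijectivity step, which is where all the content lies. The assertion that $\Psi$ ``arises by restriction'' from the $\mu=0$ case does not literally parse: $\CG_{\mu^*}\hookrightarrow G_1[[z^{-1}]]$ is a closed subgroup, so restriction of functions gives a \emph{surjection} $\BC[G_1[[z^{-1}]]]\twoheadrightarrow\BC[\CG_{\mu^*}]$, while the identification of $Y_\mu$ with a subalgebra of $Y$ (for $\mu$ dominant) is an \emph{inclusion}; composing these in either order does not by itself produce a map $\bY_\mu/\hbar\bY_\mu\to\BC[\CG_{\mu^*}]$, let alone show it is a bijection. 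Moreover the $\mu$-dependent prefactors $z^{\langle\mu,\alpha_i\rangle}$ in $\Psi(H_i(z))$ and $\Psi(E_i(z))$ make clear that $\Psi$ is not simply the $\mu=0$ homomorphism restricted along that inclusion. One needs to argue bijectivity directly, degree by degree in the $(\mu_1,\mu_2)$-filtration, by matching a PBW basis of $\gr Y_\mu$ with a system of minor coordinates on $\CG_{\mu^*}$; this comparison is the substantive input from \cite{kwy} that your sketch leaves implicit.
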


Here $ \Delta_{w_0\omega^\vee_i, w_0\omega^\vee_i} $, etc. are generalized minors
(see~\cite[Section~2]{kwy} for more explanation) and we define $ \Delta_{w_0\omega^\vee_i, w_0\omega^\vee_i}(z) \in \BC[\CG_{\mu^*}]((z^{-1})) $ by
$$
\Delta_{w_0\omega^\vee_i, w_0\omega^\vee_i}(z)(g) =
\Delta_{w_0\omega^\vee_i, w_0\omega^\vee_i}(g).
$$

\subsubsection{Involutions}
Let $ G \rightarrow G $, $ g \mapsto g^t $ denote the transpose involution (it is an antiautomorphism which corresponds to the Lie algebra antiautomorphism given $ E_i \mapsto F_i, F_i \mapsto E_i, H_i \mapsto H_i $).  Also, let
$ \varkappa_{-1}\colon G_1[[z^{-1}]] \rightarrow G_1[[z^{-1}]]$ be the involution given by $ z \mapsto -z $.

If $ g \in \CG_{\mu*} $, then $ z^{-\mu^*} g z^{\mu^*} \in G_1[[z^{-1}]] $ and so $ (z^{-\mu^*} g z^{\mu^*})^t = z^{\mu^*} g^t z^{-\mu^*} \in \CG_{\mu^*} $.

We define an involution $ \mathbf i \colon \CG_{\mu^*} \rightarrow \CG_{\mu^*} $ by $ \mathbf i(g) = z^{-\mu^*} \varkappa_{-1}(g^{t}) z^{\mu^*} $.  We can extend $ \mathbf i $ to $ \CG_{\mu^*} \times \mathbf \BA^N $ by acting by multiplication by $ -1 $ on the second factor.

Following \cref{Joel}, we consider an involution
$ \mathbf i\colon \oW^{\unl\lambda^*}_{\mu^*} \rightarrow \oW^{\unl\lambda^*}_{\mu^*} $ as the composition
of $ \iota^\lambda_\mu $ and $ \varkappa_{-1} $ and the action of $ \beta(-1) $, where $ \beta$ is the coweight defined by
$$
\beta = \sum_i (a_i - \sum_{h\in\Qo:\vout{h}=i} a_{\vin{h}}) \omega_i.
$$

Let us write $ \mho\colon \oW^{\unl\lambda^*}_{\mu^*} \rightarrow \CW_{\mu^*} \times \BA^N \cong \CG_{\mu^*} \times \BA^N $ for the natural composition.  The following result is immediate.

\begin{Lemma} \label{lem:Inv1}
Up to $ \beta(-1) $, the involutions are compatible with $ \mho $.  More precisely,
$$ \mho \circ a(\beta(-1)) \circ \mathbf i = \mathbf i \circ \mho.$$
\end{Lemma}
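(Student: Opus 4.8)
\textbf{Plan of proof for Lemma~\ref{lem:Inv1}.}
The statement to establish is the identity $\mho\circ a(\beta(-1))\circ\mathbf i=\mathbf i\circ\mho$ of maps $\oW^{\unl\lambda^*}_{\mu^*}\to\CG_{\mu^*}\times\BA^N$, where on the left $\mathbf i$ denotes the involution of $\oW^{\unl\lambda^*}_{\mu^*}$ from \cref{Joel} (the composite of $\iota^{\unl\lambda^*}_{\mu^*}$, $\varkappa_{-1}$, and $a(\beta(-1))$) and on the right $\mathbf i$ denotes the involution of $\CG_{\mu^*}\times\BA^N$ defined above by $\mathbf i(g)=z^{-\mu^*}\varkappa_{-1}(g^t)z^{\mu^*}$ together with $z_s\mapsto -z_s$. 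The basic strategy is to unwind the construction of $\mho$ through the symmetric description of the BD slice and the embedding $\Psi$ of \subsecref{G(z)}, and then to match the two recipes term by term, using that the embedding $\Psi'\colon \oW^{\unl\lambda,\unl z}_{\mu_-,\mu_+}\hookrightarrow B[z]\backslash\ol{G[z]z^{\unl\lambda,\unl z}G[z]}/B_-[z]$ intertwines the transpose/Cartan involution with passing to the opposite quiver orientation, exactly as recorded in \cref{Joel} and \remref{Brav}.

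First I would recall precisely how $\mho$ is built: given $(\scP_\pm,\sigma,\phi_\pm)\in\oW^{\unl\lambda^*,\unl z}_{\mu^*}=\oW^{\unl\lambda^*,\unl z}_{0,\mu^*}$, one trivializes $\phi_+|_{\BA^1}$ (and $\phi_-|_{\BA^1}$, which is forced since $\scP_-$ is trivial), so that $\sigma$ becomes a well-defined element of $B[z]\backslash G(z)/B_-[z]$, which one then pushes into $B_1[[z^{-1}]]z^{\mu^*}B_{-,1}[[z^{-1}]]\cap\ol{G[z]z^{\unl\lambda^*,\unl z}G[z]}$; finally the isomorphism $\CW_{\mu^*}\cong\CG_{\mu^*}$, $g\mapsto gz^{\mu^*}$, produces the point of $\CG_{\mu^*}$, and $\unl z$ gives the $\BA^N$ coordinate. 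Next I would trace what $\mathbf i=a(\beta(-1))\circ\varkappa_{-1}\circ\iota^{\unl\lambda^*}_{\mu^*}$ does to this data. The Cartan involution $\fC$ (hidden in $\iota^{\unl\lambda^*}_{\mu^*}$, see \subsecref{involution}) replaces $(\scP_-,\scP_+,\phi_-,\phi_+,\sigma)$ by $(\fC\scP_+,\fC\scP_-,\fC\phi_+,\fC\phi_-,\fC\sigma^{-1})$, which on the level of the $G(z)$-element is precisely transposition composed with inversion; combined with the symmetric repackaging of \subsecref{symmetric} this is exactly the passage $g\mapsto z^{-\mu^*}g^t z^{\mu^*}$ (the conjugation by $z^{\mu^*}$ records the shift between $\oW_{0,\mu^*}$ and $\oW_{\mu^*,0}$). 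The automorphism $\varkappa_{-1}$, induced by $z\mapsto -z\colon\BP^1\iso\BP^1$, manifestly becomes $\varkappa_{-1}$ on $G_1[[z^{-1}]]$ and $z_s\mapsto -z_s$ on $\BA^N$. This accounts for the two factors of $\mathbf i$ on $\CG_{\mu^*}\times\BA^N$; the remaining discrepancy between $\varkappa_{-1}\circ\iota^{\unl\lambda^*}_{\mu^*}$ and the honest involution of $\oW^{\unl\lambda^*}_{\mu^*}$ is, by the computation in \cref{Joel}/\remref{Brav}, exactly the action $a(\beta(-1))$ of the Cartan torus element $\beta(-1)$ with $\beta=\sum_i(a_i-\sum_{h\in\Qo:\vout h=i}a_{\vin h})\omega_i$ — and this is the factor $a(\beta(-1))$ explicitly inserted on the left-hand side of the asserted identity. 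So after inserting it, the two sides agree.

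Concretely the proof reduces to the following checks, carried out fibrewise over $\unl z\in\BA^N$: (i) under $\mho$, the action of $\varkappa_{-1}$ on the source goes to $\varkappa_{-1}$ on $G_1[[z^{-1}]]$ and $z\mapsto -z$ on the base — immediate from the definitions; (ii) under $\mho$, the Cartan involution $\fC$ on the source (which sends a triple to the ``opposite'' triple for the opposite Borel) corresponds to $g\mapsto z^{-\mu^*}g^t z^{\mu^*}$ on $\CG_{\mu^*}$ — this is where one must compare the symmetric definition in \subsecref{symmetric} with the transpose antiautomorphism $g\mapsto g^t$ on $G$, using that $\fC$ interchanges $B\leftrightarrow B_-$ and acts as $t\mapsto t^{-1}$ on $T$, so that under $\Psi'$ it becomes transposition on the double coset $B[z]\backslash G(z)/B_-[z]$; (iii) the residual torus-valued discrepancy is $a(\beta(-1))$, quoted from \remref{Brav} (more precisely from its analogue \remref{Joel} in the framed setting), which is the content that makes $a(\beta(-1))\circ\varkappa_{-1}\circ\iota^{\unl\lambda^*}_{\mu^*}$ into the genuine involution of $\oW^{\unl\lambda^*}_{\mu^*}$. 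The main obstacle is step (ii): one has to be careful about the trivialization choices at $\infty$ and at the formal disc that enter the definition of $\mho$ (the $B[z]$- and $B_-[z]$-ambiguities), and about the fact that transposing a $G$-bundle with a $B$-structure naturally yields a bundle with a $B_-$-structure, which is precisely why the symmetric incarnation $\oW^{\unl\lambda^*}_{\mu_-,\mu_+}$ of \subsecref{symmetric} is the right setting; once one works there, the identity $\mathbf i(g)=z^{-\mu^*}\varkappa_{-1}(g^t)z^{\mu^*}$ drops out, and the lemma follows. Since all the ingredients are pointwise identifications of moduli data, no normality or flatness input is needed, and the verification is a direct, if somewhat bookkeeping-heavy, unwinding of definitions.
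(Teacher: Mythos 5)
The paper itself offers no proof of this lemma---it is introduced with ``The following result is immediate''---so there is no ``paper's argument'' to compare against; what you have written out is the unwinding of definitions that makes ``immediate'' honest. Your skeleton is the right one: pass through the symmetric description of the BD slice and the $G(z)$-embedding of \subsecref{G(z)}, and check separately that $\varkappa_{-1}$ and the Cartan involution part of $\iota^{\unl\lambda^*}_{\mu^*}$ transport under $\mho$ to the two pieces $z\mapsto -z$ (together with $z_s\mapsto -z_s$) and $g\mapsto z^{-\mu^*} g^t z^{\mu^*}$ of the target involution.

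One remark, though: your ``step (iii)'' is a logical red herring. Since $\mathbf i$ on $\oW^{\unl\lambda^*}_{\mu^*}$ is \emph{defined} to be $a(\beta(-1))\circ\varkappa_{-1}\circ\iota^{\unl\lambda^*}_{\mu^*}$ and $\beta(-1)$ is $2$-torsion, the composite $a(\beta(-1))\circ\mathbf i$ on the left-hand side equals $\varkappa_{-1}\circ\iota^{\unl\lambda^*}_{\mu^*}$ by a one-line cancellation; there is no ``residual torus discrepancy'' to discover here and nothing to quote from \cref{Brav} or \cref{Joel}. (Those remarks identify the abstract algebra involution $\fri^{\unl\lambda}_{\mu*}$ with $a(\beta(-1))\circ\varkappa_{-1}\circ\iota^{\unl\lambda^*}_{\mu^*}$; that input is used in the subsequent \cref{lem:Inv3}, not here.) So the whole content of the lemma is your steps (i) and (ii). There the bookkeeping you flag---which side of $g^t$ the $z^{\pm\mu^*}$ lands on, how $(-z)^{\mu^*}$ produces the torus element $\mu^*(-1)$ when $\varkappa_{-1}$ passes across the conjugation, and how the passage $\oW_{0,\mu^*}\leftrightarrow\oW_{\mu^*,0}$ interacts with the $B[z]$/$B_-[z]$ ambiguities in $\Psi'$---is precisely the nontrivial bit, and you have gestured at it without actually carrying it out. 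Given that the paper declares the result immediate, sketching it at this level is acceptable, but be aware that ``the identity drops out'' is doing some real work that you have not exhibited.
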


We also can define an involution on $ \mathbf i\colon \bY_\mu \rightarrow \bY_\mu $ by
$$
E_i^{(p)} \mapsto (-1)^p F_i^{(p)}, \ H_i^{(p)} \mapsto (-1)^{p+\langle \mu, \alpha_i \rangle} H_i^{(p)}, \ F_i^{(p)} \mapsto (-1)^{p+ \langle \mu, \alpha_i \rangle } E_i^{(p)}.
$$

Above we defined the map $ \Psi\colon \bY_\mu /\hbar \bY_\mu \rightarrow \BC[\CG_{\mu^*}] $.
A simple computation shows the following result.

\begin{Lemma} \label{lem:Inv2}
The involutions are compatible with $ \Psi $.  More precisely,
$$ \Psi \circ \mathbf i = \mathbf i \circ \Psi.$$
\end{Lemma}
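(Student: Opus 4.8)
The plan is to verify the identity $\Psi \circ \mathbf i = \mathbf i \circ \Psi$ directly on generators of $\bY_\mu/\hbar\bY_\mu$, using the explicit formulas for $\Psi$ from \thmref{previous} (applied to the generating series $H_i(z)$, $E_i(z)$, $F_i(z)$) and the explicit definition of both involutions on the two sides. First I would translate the involution $\mathbf i$ on $\bY_\mu$ into its action on generating series: since $\mathbf i(E_i^{(p)}) = (-1)^p F_i^{(p)}$, we get $\mathbf i(E_i(z)) = F_i(-z)$, and similarly $\mathbf i(F_i(z)) = z^{\langle\mu,\alpha_i\rangle} E_i(-z) \cdot (\text{sign bookkeeping})$ and $\mathbf i(H_i(z)) = \pm H_i(-z)$, where the signs are dictated by the powers of $-1$ in the definition together with the leading power of $z$ in $H_i(z)$ and $E_i(z)$. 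I would be careful here: because $H_i(z)$ and $E_i(z)$ carry an explicit factor $z^{\langle\mu,\alpha_i\rangle}$ in their definition (once written with the correct normalization), substituting $z \mapsto -z$ produces exactly the sign $(-1)^{\langle\mu,\alpha_i\rangle}$ that matches the extra sign in $\mathbf i(H_i^{(p)})$ and $\mathbf i(F_i^{(p)})$.

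Next I would compute the action of $\mathbf i$ on $\BC[\CG_{\mu^*}]$ on the relevant generalized minors $\Delta_{w_0\omega_i^\vee, w_0\omega_i^\vee}(z)$, $\Delta_{w_0 s_i\omega_i^\vee, w_0\omega_i^\vee}(z)$, $\Delta_{w_0\omega_i^\vee, w_0 s_i\omega_i^\vee}(z)$, and $\Delta_{w_0\omega_{\vin h}^\vee, w_0\omega_{\vin h}^\vee}(z)$. Recall $\mathbf i(g) = z^{-\mu^*} \varkappa_{-1}(g^t) z^{\mu^*}$. The transpose antiautomorphism $g \mapsto g^t$ interchanges the two indices of a generalized minor, $\Delta_{\gamma,\delta}(g^t) = \Delta_{\delta,\gamma}(g)$, so the diagonal minors $\Delta_{w_0\omega_i^\vee, w_0\omega_i^\vee}$ are fixed by transpose while $\Delta_{w_0 s_i\omega_i^\vee, w_0\omega_i^\vee}$ and $\Delta_{w_0\omega_i^\vee, w_0 s_i\omega_i^\vee}$ get swapped; the substitution $\varkappa_{-1}$ replaces $z$ by $-z$; and conjugation by $z^{\mu^*}$ rescales $\Delta_{\gamma,\delta}$ by $z^{\langle\mu^*,\gamma\rangle - \langle\mu^*,\delta\rangle}$ (after using $\langle w_0\mu^*, \cdot\rangle = \langle \mu, w_0(\cdot)\rangle$ and simplifying with $\mu^* = -w_0\mu$). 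Assembling these three effects on each of the three formulas in \thmref{previous} and matching against the series-level formulas for $\mathbf i(H_i(z))$, $\mathbf i(E_i(z))$, $\mathbf i(F_i(z))$ obtained in the previous paragraph is then a finite check, one generating series at a time.

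The main obstacle I anticipate is bookkeeping of the $z$-powers and signs: getting the normalization of $H_i(z)$ and $E_i(z)$ (whether the $z^{\langle\mu,\alpha_i\rangle}$ prefactor sits inside the series or is separated out) consistent between the two descriptions, and checking that the conjugation-by-$z^{\mu^*}$ shift in the weights of the minors exactly cancels against the leading $z$-powers so that, for instance, $\mathbf i(F_i(z))$ — which a priori has a pole structure at $z=0$ coming from $z^{\langle\mu,\alpha_i\rangle}E_i(-z)$ — actually lands in $\BC[\CG_{\mu^*}]((z^{-1}))$ and equals the image under $\Psi\circ\mathbf i$ of $F_i(z)$. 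Once the normalizations are pinned down, each of the three identities reduces to an elementary identity of rational (Laurent) series in $z$ with coefficients in $\BC[\CG_{\mu^*}]$, and since $\Psi$ is an isomorphism of Poisson algebras it suffices to check the generators, so the remaining relations are automatic. A sanity check on $\mathfrak{sl}_2$ (where all minors are elementary and the computation is transparent) would confirm the sign conventions before writing up the general case.
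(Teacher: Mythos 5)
Your proposal is exactly the direct verification the paper has in mind: the paper itself dispatches this lemma with the single sentence ``A simple computation shows the following result'' and offers no further argument, and your three-step plan (translate $\mathbf i$ on $\bY_\mu$ into generating-series form, track the effect of transpose, $z\mapsto-z$, and $z^{\mu^*}$-conjugation on each generalized minor, then match series) is precisely that computation. One small slip to correct before writing it up: from $\mathbf i(F_i^{(p)}) = (-1)^{p+\langle\mu,\alpha_i\rangle}E_i^{(p)}$ you should get $\mathbf i(F_i(z)) = (-1)^{\langle\mu,\alpha_i\rangle}E_i(-z)$, not $z^{\langle\mu,\alpha_i\rangle}E_i(-z)$ — the monomial factor $z^{\langle\mu,\alpha_i\rangle}$ only re-emerges once you apply $\Psi$ to $E_i(-z)$, and you correctly anticipate that this is exactly the factor needed to cancel the weight shift $z^{\langle\mu^*,w_0\alpha_i^\vee\rangle}$ coming from conjugating the minors by $z^{\mu^*}$.
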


Finally, we also have the involution
$\fri^{\unl\lambda}_{\mu*}\colon \cA_0 \rightarrow \cA_0 $ (where $ \cA_0 = \cAh/\hbar \cAh$)
defined as in \cref{Joel}, which comes from the isomorphism of varieties
$\fri^{\unl\lambda}_\mu\colon \cR_{\GL(V),\bN^\lambda_\mu} \iso\cR_{\GL(V^*),\bN^\lambda_\mu}$.
Note that the
$ \bN^\lambda_\mu $ on the right hand side is computed with respect to the opposite orientation.

In \cref{th:PhiBar}, we defined a homomorphism
$ \overline{\Phi}\colon \bY_\mu[z_1,\dots, z_N] \rightarrow \cAh$ and thus a homomorphism $ \bY_\mu[z_1,\dots, z_N]/\hbar \bY_\mu[z_1, \dots, z_N] \rightarrow \cA_0 $.  We extend the involution $ \mathbf i $ from $ \bY_\mu $ to $ \bY_\mu[z_1, \dots, z_N] $ by setting $ \mathbf i(z_k) = -z_k $ for all $k $.

\begin{Lemma} \label{lem:Inv3}
Up to $ \beta(-1) $, the involutions are compatible with $ \overline{\Phi} $.  More precisely,
$$ \overline{\Phi} \circ  \mathbf i = a(\beta(-1)) \circ
\fri^{\unl\lambda}_{\mu*} \circ \overline{\Phi}$$
as maps $ \bY_\mu[z_1,\dots, z_N]/\hbar \bY_\mu[z_1, \dots, z_N] \rightarrow \cA_0 $.
\end{Lemma}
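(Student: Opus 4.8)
\textbf{Proof proposal for Lemma~\ref{lem:Inv3}.}
The plan is to reduce the identity $\overline{\Phi}\circ\mathbf i = a(\beta(-1))\circ\fri^{\unl\lambda}_{\mu*}\circ\overline{\Phi}$ to a check on the Poisson generators $A_i^{(s)}$, $E_i^{(1)}$, $F_i^{(1)}$, $z_k$ of $\bY_\mu[z_1,\dots,z_N]/\hbar\bY_\mu[z_1,\dots,z_N]$, exactly as in the proof of \cref{th:PhiBar}. Both sides of the claimed equation are $\BC[z_1,\dots,z_N]$-algebra homomorphisms (here $\mathbf i$ and $a(\beta(-1))\circ\fri^{\unl\lambda}_{\mu*}$ each twist the $z_k$ by $z_k\mapsto -z_k$, so the two sides agree as maps of the polynomial coefficient ring), and $\overline{\Phi}$ is Poisson; since the listed elements generate the target as a Poisson algebra over $\BC[z_1,\dots,z_N]$, it suffices to verify the identity on each of them. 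So first I would observe that $\mathbf i(z_k)=-z_k$, $\mathbf i(A_i^{(s)}) = ?$ — actually $\mathbf i$ was only defined on $E_i^{(p)}, H_i^{(p)}, F_i^{(p)}$, so I would first record the induced action on the $A_i^{(s)}$: from \eqref{eq: H and A} and $\mathbf i(H_i(z))=(-1)^{\langle\mu,\alpha_i\rangle}H_i(-z)$ one reads off that $\mathbf i(A_i(z))$ equals $A_i(-z)$ up to a sign and a shift, hence $\mathbf i(A_i^{(s)}) = \pm A_i^{(s)}$ (with an explicit sign), and similarly $\mathbf i$ interchanges $E_i^{(1)}$ and $F_i^{(1)}$ up to sign.

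Next I would compute both sides on these generators. On the left, I use the explicit formulas of \cref{th:PhiBar}: $\overline{\Phi}(A_i^{(p)}) = (-1)^p e_p(\{w_{i,r}\})$, $\overline{\Phi}(E_i^{(p)})=(-1)^{a_i}(c_1(\uS_i)+\hbar)^{p-1}\cap[\cR_{\varpi^*_{i,1}}]$, $\overline{\Phi}(F_i^{(p)})=(-1)^{\sum_{\vout h=i}a_{\vin h}}(c_1(\uQ_i)+\hbar)^{p-1}\cap[\cR_{\varpi_{i,1}}]$, specialized to $\hbar=0$, and apply the sign formulas for $\mathbf i$ just recorded. On the right, I use \cref{Joel} (and its cohomological shadow, the isomorphism $\fri^{\unl\lambda}_{\mu*}\colon\cA_0\iso\cA_0$ coming from $\fri^{\unl\lambda}_\mu\colon\cR_{\GL(V),\bN^\lambda_\mu}\iso\cR_{\GL(V^*),\bN^\lambda_\mu}$): the isomorphism $\fri$ sends a $\GL(V)$-bundle data to its transpose-inverse, which on the variety of triples interchanges the minuscule orbits $\Gr^{\varpi_{i,1}}_{\GV}$ and $\Gr^{\varpi^*_{i,1}}_{\GV}$ and replaces the orientation $\Qo$ by $\overline{\Qo}$; composing with the action $a(\beta(-1))$ of the Cartan element $\beta(-1)\in T$ and with $\varkappa_{-1}$ (the $z\mapsto -z$ automorphism of $\BP^1$, which on $\cR$ multiplies $w_{i,r}$ by $-1$ and contributes a sign $(-1)^{a_i}$ to each $y_{i,r}$, as in \cref{Brav}) produces precisely the sign $(-1)^{a_i-\sum_{h\in\Qo:\vout{h}=i}a_{\vin h}}$ on the $i$-th generator. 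The most efficient route is to transfer the whole computation through the injective embedding $\bz^*(\iota_*)^{-1}\colon\cA_0\hookrightarrow\tilde\cA_0$ and use the explicit difference-operator formulas \eqref{eq:82}, \eqref{eq:83}: there one sees directly that interchanging $\uQ_i\leftrightarrow\uS_i$, flipping $\Qo\leftrightarrow\overline{\Qo}$, negating all $w_{i,r}$ (so $\sfu_{i,r}\leftrightarrow\sfu_{i,r}^{-1}$) and negating all $z_k$ carries the image of $E_i^{(1)}$ to that of $\pm F_i^{(1)}$ and vice versa, and multiplies the $A_i$-part by the expected sign, matching the left-hand computation term by term.

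Finally, I would invoke \cref{lem:Inv1} and \cref{lem:Inv2} as a consistency/sanity check (and, in the dominant case, as an independent derivation): on the symplectic leaf $\oW^{\unl\lambda^*,\unl z}_{\mu^*}\subset\CW_{\mu^*}\cong\CG_{\mu^*}$ the three involutions $\mathbf i$ on $\bY_\mu$, on $\CG_{\mu^*}$, and on $\cA_0$ are intertwined by $\Psi$, $\mho$, and $\overline{\Phi}$ respectively, up to the single Cartan factor $\beta(-1)$, so the identity on generators is forced there, and by density of the open leaf it propagates to all of $\cA_0$; but since I want the statement for arbitrary (not necessarily dominant) $\mu$, the generator-by-generator verification above is the actual proof and \cref{lem:Inv1}, \cref{lem:Inv2} only motivate the sign bookkeeping. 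The main obstacle I anticipate is precisely this sign bookkeeping: keeping track of the $(-1)^p$ in $\mathbf i(E_i^{(p)})$, the $(-1)^{\langle\mu,\alpha_i\rangle}$ in $\mathbf i(H_i^{(p)})$, the $(-1)^{a_i}$ from $\varkappa_{-1}$ acting on $y_{i,r}$, the $(-1)^{a_i-\sum a_{\vin h}}$ from $a(\beta(-1))$, and the reindexing $\varpi_{i,1}\leftrightarrow\varpi^*_{i,1}$ under $\fri$, and checking that all of these cancel to give exactly the asserted equality rather than an equality twisted by some residual central element; verifying the $A_i^{(s)}$ case also requires care since $\mathbf i$ is only defined on $\bY_\mu$ via $H,E,F$ and its effect on the PBW-type variables $A_i^{(s)}$ has to be deduced from \eqref{eq: H and A}.
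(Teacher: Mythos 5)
Your approach is essentially the paper's: verify the identity on generators by working out how $\fri^{\unl\lambda}_{\mu*}$ acts on the geometric ingredients of the formulas in \cref{th:PhiBar}. The paper records exactly three facts — that $\fri^{\unl\lambda}_{\mu*}$ swaps $[\cR_{\varpi_{i,1}}]$ and $[\cR_{\varpi^*_{i,1}}]$, that $\fri^{\unl\lambda}_{\mu*}(c_1(\uS_i))=-c_1(\uQ_i)$ (because $\fri^{\unl\lambda}_{\mu*}(\uS_i)=\uQ_i^*$), and that $\fri^{\unl\lambda}_{\mu*}(w_{i,r})=-w_{i,r}$ — and then substitutes into the formulas for $\overline\Phi$, matching signs against $\mathbf i$ and $a(\beta(-1))$.

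Two remarks on your route. First, you repeatedly unpack $\fri^{\unl\lambda}_{\mu*}$ as $a(\beta(-1))\circ\varkappa_{-1}\circ\iota^{\lambda^*}_{\mu^*}$ via \cref{Joel} and then reason about those three pieces separately (together with the extra $a(\beta(-1))$ appearing in the statement); the paper instead treats $\fri^{\unl\lambda}_{\mu*}$ directly as the pushforward under the variety isomorphism $\CR_{\GL(V),\bN^\lambda_\mu}\iso\CR_{\GL(V^*),\bN^\lambda_\mu}$, which immediately yields the three facts above and sidesteps the risk of double-counting $a(\beta(-1))$; the statement's remaining $a(\beta(-1))$ factor is then absorbed by the $\pi_1(\GL(V))$-weight $\pm\alphavee_i$ of the classes $[\cR_{\varpi^{(*)}_{i,1}}]$. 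Second, and this is where a term-by-term check most easily goes wrong: you describe the effect of $\fri$ as simply interchanging $\uQ_i\leftrightarrow\uS_i$, but in fact $\fri^{\unl\lambda}_{\mu*}(\uS_i)=\uQ_i^{*}$ (the dual), so the first Chern class picks up a sign $c_1(\uS_i)\mapsto -c_1(\uQ_i)$; this extra sign is essential to closing the identity. Your concern about $\mathbf i(A_i^{(p)})$ is legitimate (the paper's proof does not spell it out); it follows from the $H$-to-$A$ relation together with $\mathbf i(H_i(z))=(-1)^{\langle\mu,\alpha_i\rangle}H_i(-z)$ and $\mathbf i(z_k)=-z_k$ that $\mathbf i(A_i(z))=A_i(-z)$, i.e.\ $\mathbf i(A_i^{(p)})=(-1)^pA_i^{(p)}$, which then matches $\fri^{\unl\lambda}_{\mu*}(w_{i,r})=-w_{i,r}$ via $\overline\Phi(A_i^{(p)})=(-1)^pe_p(\{w_{i,r}\})$.
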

\begin{proof}
Note that we have $\fri^{\unl\lambda}_{\mu*}([\cR_{\varpi_{i,1}}]) = [\cR_{\varpi^*_{i,1}}]$
and $\fri^{\unl\lambda}_{\mu*}([\cR_{\varpi^*_{i,1}}]) = [\cR_{\varpi_{i,1}}]$.
Also $\fri^{\unl\lambda}_{\mu*}(c_1(\uS_i)) = - c_1(\uQ_i) $ since under the
isomorphism $\fri^{\unl\lambda}_\mu\colon \CR_{\GL(V),\bN^\lambda_\mu}
\iso\CR_{\GL(V^*),\bN^\lambda_\mu}$
from \cref{Joel}, we have that $\fri^{\unl\lambda}_{\mu*} (\uS_i) = \uQ_i^*$.
Finally, we have that $\fri^{\unl\lambda}_{\mu*}(w_{i,r}) = -w_{i,r} $.

Hence examining the formulas for $ \overline{\Phi} $ given
in \cref{th:PhiBar}, the result follows.
\end{proof}

\begin{Remark}
The involution $\fri^{\unl\lambda}_{\mu*}\colon \cA_0 \rightarrow \cA_0 $ extends
to an involution $\fri^{\unl\lambda}_{\mu*}\colon \cAh \rightarrow \cAh$.
However, it is easy to see that the map
$ \overline{\Phi}\colon \bY_\mu[z_1, \dots, z_N] \rightarrow \cAh $ is not compatible with this involution (not even up to sign).  It is possible to modify the involution of $ \bY_\mu $ to make it compatible up to sign, but it will be given by a bit more complicated formulae (for example $ E_i(z) \mapsto -F_i(-z+\hbar) $).  However, we will not need compatibility at the non-commutative level in this paper.
\end{Remark}

\subsubsection{Commutativity}
We have a surjection
$$ \mho \circ \Psi\colon \bY_\mu[z_1, \dots, z_N] / \hbar \bY_\mu [z_1, \dots, z_N] \iso \BC[\CG_{\mu^*} \times \BA^N] \rightarrow \BC[\oW^{\unl\lambda^*}_{\mu^*}].$$

Recall that in the previous section, we constructed a map $ \overline{\Phi}\colon \bY_\mu[z_1, \dots, z_N] \rightarrow \cAh$.  On the other hand,
in \cref{Coulomb_defo}, we have constructed an isomorphism $ \Xi\colon \BC[\oW^{\unl\lambda^*}_{\mu^*}]\iso \cAh / \hbar \cAh$.

\begin{Lemma}
The composition $ \Xi^{-1} \circ \overline{\Phi} $ equals $ \mho \circ \Psi $ as morphisms $$ \bY_\mu[z_1, \dots, z_N] / \hbar \bY_\mu[z_1, \dots, z_N] \rightarrow \BC[\oW^{\unl\lambda^*}_{\mu^*}].$$
\end{Lemma}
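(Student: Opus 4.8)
The plan is to prove the identity by checking it on a generating set of the algebra $\bY_\mu[z_1,\dots,z_N]/\hbar\bY_\mu[z_1,\dots,z_N]$ and then invoking a density/irreducibility argument so that agreement on an open locus forces agreement everywhere. Concretely, both $\Xi^{-1}\circ\overline{\Phi}$ and $\mho\circ\Psi$ are algebra homomorphisms from $\bY_\mu[z_1,\dots,z_N]/\hbar$ into $\BC[\oW^{\unl\lambda^*}_{\mu^*}]$, so it suffices to check they agree on the elements $A_i^{(p)}$, $E_i^{(1)}$, $F_i^{(1)}$, $z_k$, which generate the source as a Poisson algebra (indeed, as an algebra once we adjoin the $z_k$ and use that the $A$'s and $E_i^{(1)},F_i^{(1)}$ together with brackets generate $\bY_\mu$). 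Since both maps are Poisson morphisms — $\Xi$ and $\mho\circ\Psi$ are Poisson by \cref{Coulomb_defo}, \propref{prop:poi} and \thmref{previous}, and $\overline{\Phi}$ is built from the Rees construction of an almost-commutative algebra — it is enough to check agreement on the algebra generators.

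First I would pin down both sides over the open locus $\oG_m^{\alpha}$ (or rather the intersection with $\bigcap_i f_{i,\unl\lambda^*}^{-1}(\BG_m)$) where everything is explicit: there $\BC[\oW^{\unl\lambda^*}_{\mu^*}]$ is identified with $(\BC[\BG_m^{|\alpha|}\times\BG_m^{|\alpha|}])^{S_\alpha}$ localized, and the coordinates $w_{i,r},y_{i,r}$ have explicit meaning. On this locus, $\Xi^{-1}\circ\overline{\Phi}(A_i^{(p)}) = (-1)^p e_p(w_{i,\bullet})$ by the very definition of $\overline{\Phi}$ in \cref{th:PhiBar}, while $\mho\circ\Psi(H_i(z))$ unwinds via the generalized-minor formulas of \thmref{previous} into a ratio of the $W_{\vin{h}}$-polynomials, which by \eqref{eq: H and A} encodes precisely the same $A_i(z) = z^{-a_i}W_i(z)$; comparing coefficients gives the $A_i^{(p)}$ agreement. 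For $E_i^{(1)}$ and $F_i^{(1)}$ I would compare the GKLO-type formula of \cref{th:filGKLO}/\corref{cor:GKLO} (which is exactly $\bz^*(\iota_*)^{-1}\circ\overline{\Phi}$ by the proof of \cref{th:PhiBar}) against the image under $\mho\circ\Psi$ of the minor expressions for $E_i(z),F_i(z)$ in \thmref{previous}; both are rational functions on $\oW^{\unl\lambda^*}_{\mu^*}$ restricting to the same expression in $w_{i,r},\sfu_{i,r}^{\pm1}$ on the torus locus, so they coincide there, hence everywhere since $\BC[\oW^{\unl\lambda^*}_{\mu^*}]$ embeds into functions on that dense open set (using normality/reducedness of $\oW^{\unl\lambda^*}_{\mu^*}$ from \lemref{lem:CMBD}). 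The involution compatibilities \lemref{lem:Inv1}, \lemref{lem:Inv2}, \lemref{lem:Inv3} can be used to reduce the $F$-check to the $E$-check, cutting the work in half.

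The main obstacle I expect is the bookkeeping that matches $\overline{\Phi}$'s normalization — the half-integer shifts $\pm\tfrac12$ (or $\pm\tfrac12\hbar$), the sign prefactors $(-1)^{a_i}$, $(-1)^{\sum_{\vout h = i}a_{\vin h}}$, and the precise choice of splitting $\mu = \mu_1+\mu_2$ used in the filtration — with $\Psi$'s normalization in terms of generalized minors $\Delta_{w_0\omega_i^\vee,w_0\omega_i^\vee}(z)$ and the identification $\mho$ of $\oW^{\unl\lambda^*}_{\mu^*}$ inside $\CG_{\mu^*}\times\BA^N$. In other words, the conceptual content (both maps are Poisson, agree generically, the source is generated by low-order elements) is routine; the risk is a stray sign or shift. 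I would resolve this by carrying out the comparison after specializing $\hbar=0$ throughout (which is already the setting of the statement), localizing to $\oG_m^\alpha$, and literally substituting the explicit formula for $s^{\lambda^*}_{\mu^*}$ from \cref{factorization} together with the embedding $\Psi'$ into $G(z)$ from \cref{G(z)}, which is exactly how the minors $\Delta_{w_0\omega_i^\vee,\bullet}$ get computed; this makes the two sides manifestly equal term by term, completing the proof.
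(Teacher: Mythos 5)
Your proposal is essentially correct and follows the same strategy as the paper: reduce to checking on the Poisson algebra generators $A_i^{(p)}$, $E_i^{(1)}$, $F_i^{(1)}$ over $\BC[z_1,\dots,z_N]$, and then compute both sides explicitly using the zastava coordinates $(w_{i,r},y_{i,r})$ and the generalized-minor description of $\Psi$. Two small points of divergence worth flagging. First, you propose to reduce the $F$-check to the $E$-check via the involutions; the paper goes the other way, reducing $E$ to $F$, and for a real reason: after applying $\Xi^{-1}$ the $F$-generator lands on $\sum_r y_{i,r}\big((z-w_{i,r})\prod_{s\neq r}(w_{i,r}-w_{i,s})\big)^{-1}$, which by Lagrange interpolation is literally $P_i(z)/Q_i(z)$, matching $\Psi(F_i(z))$ term by term; the $E$-generator instead lands on an expression involving $\sfu_{i,r}^{-1}$, hence $y_{i,r}^{-1}$ (or rather the $x_{i,r}$-type coordinates), which is not regular on all of $\oW^{\unl\lambda^*}_{\mu^*}$ and requires the extra $Z_i(w_{i,r})$ factor from $\Hom(W_i,V_i)$. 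Doing $E$ directly on the torus locus and extending by density would work, but the involution trick (with the $\beta(-1)$ twist carefully accounted for by Lemmas~\ref{lem:Inv1}--\ref{lem:Inv3}) is cleaner. Second, \lemref{lem:CMBD} gives Cohen--Macaulayness rather than normality; what you actually need to extend the identity from the dense open torus locus is reducedness of $\oW^{\unl\lambda^*}_{\mu^*}$, which the paper obtains by combining Cohen--Macaulay with generic reducedness (see \cref{nondom}), so your appeal to that lemma is fine in substance but should be phrased in terms of reducedness.
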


\begin{proof}
Since all the morphisms involved are Poisson $\BC[z_1, \dots, z_N]$-algebra morphisms, it suffices to check the statement on the generators $ E_i^{(s)}, A_i^{(s)}, F_i^{(s)} $ of $ \bY_\mu $.

Recall that from~\subsecref{subsec:bd-slices} we have the morphism
$ s^{\unl\lambda^*}_{\mu^*} \colon \oW^{\unl\lambda^*}_{\mu^*} \rightarrow Z^\alpha \times \BA^N $, where $ \alpha = \lambda-\mu $.  Given a point $ ([gz^{\mu^*}], (z_1, \dots, z_N)) \in \oW^{\unl\lambda^*}_{\mu^*} $, the corresponding principal $ G$-bundle $\scP $ has associated vector bundle $ \CV^{\lambda^\vee}_{\scP} = gz^{\mu^*} (V^{\lambda^\vee} \otimes \CO_{\BP^1}) $ and invertible subsheaf $ \CL_{\lambda^\vee} = gz^{\mu^*}(V^{\lambda^\vee}_{w_0 \lambda^\vee}\otimes \CO_{\BP^1}) $.  Thus the image of $   ([gz^{\mu^*}], (z_1, \dots, z_N)) $ under $ s^{\unl\lambda^*}_{\mu^*} $ gives the collection of invertible subsheaves
$$ gz^{\mu^*}(V^{\lambda^\vee}_{w_0 \lambda^\vee}\otimes \CO_{\BP^1})\big(\sum_{s=1}^N \langle w_0 \omega_{i^*_s}, \lambda^\vee \rangle \cdot z_s\bigr) \subset V^{\lambda^\vee} \otimes \CO_{\BP^1}.
$$
Now, we specialize to $ \lambda^\vee = \omega_i^\vee $.  Then the invertible subsheaf is generated over $ \CO_{\BP^1}$ by $$ (\prod_{s=1}^N (z-z_s)^{-\langle w_0 \omega_{i^*_s}, \omega_i^\vee\rangle})gz^{\mu^*}(v_{w_0 \omega_i}) =  Q_i(z) v_{w_0 \omega_i^\vee} + P_i(z) v_{w_0 s_i \omega_i^\vee} + \cdots $$ where
$$
Q_i(z) = \Delta_{w_0\omega^\vee_i, w_0\omega^\vee_i}(\prod_{s=1}^N (z-z_s)^{-\langle w_0 \omega_{i^*_s}, \omega_i^\vee\rangle} g z^{\mu^*}) = z^{-\langle \mu, \omega_i^\vee\rangle}   \prod_{s=1}^N (z-z_s)^{\langle \omega_{i_s}, \omega_i^\vee \rangle}  \Delta_{w_0\omega^\vee_i, w_0\omega^\vee_i}(g),
$$
and
$$
P_i(z) = \Delta_{w_0s_i\omega^\vee_i, w_0\omega^\vee_i}(\prod_{s=1}^N (z-z_s)^{-\langle w_0 \omega_{i^*_s}, \omega_i^\vee\rangle} g z^{\mu^*}) = z^{-\langle \mu, \omega_i^\vee\rangle}   \prod_{s=1}^N (z-z_s)^{\langle \omega_{i_s}, \omega_i^\vee \rangle}  \Delta_{w_0s_i\omega^\vee_i, w_0\omega^\vee_i}(g).
$$
By definition (see~\cite[2.2]{bdf}), $ Q_i(z), P_i(z) $ are related to the coordinates $ (w_{i,r}, y_{i,r}) $ by $ Q_i(w_{i,r}) = 0, P_i(w_{i,r}) = y_{i,r} $.

Now using the definition of $ \Psi(H_i(z)) $ given in \cref{previous}
and the definition of $ A_i(z) $ given in \eqref{eq: H and A}, we deduce that
$$
\Psi(A_i(z)) = z^{-a_i} \prod_{s=1}^N (z-z_s)^{\langle \omega_{i_s}, \omega_i^\vee \rangle} z^{-\langle \mu, \omega^\vee_i \rangle} \Delta_{w_0\omega^\vee_i, w_0\omega^\vee_i}(z),
$$
and so $ \Psi(A_i(z)) = z^{-a_i}Q_i(z) $ which agrees with $\overline{\Phi} $.

Next, we consider $F_i(z)$.  First, we have that $$ \Psi(F_i(z)) = \Delta_{w_0s_i \omega^\vee_i, w_0\omega^\vee_i}(z) \Delta_{w_0\omega^\vee_i, w_0\omega^\vee_i}(z)^{-1} = \frac{P_i(z)}{Q_i(z)}$$
by the above analysis.

We also have that
$$
\Phi_\mu^\lambda(F_i(z)) =  \sum_{r=1}^{a_i}
    \frac{\displaystyle\prod_{h\in\Qo:\vout{h}=i}
      \prod_{s=1}^{a_j}
      (w_{i,r} - w_{\vin{h},s})}
    {(z - w_{i,r}) \displaystyle\prod_{s \ne r} (w_{i,r} - w_{i,s})}
     \sfu_{i,r}.
$$
On the other hand, we have $ \Xi^{-1}(\displaystyle\prod_{h\in\Qo:\vout{h}= i}\displaystyle\prod_{s=1}^{a_j} (-w_{i,r} + w_{\vin{h},s}) \sfu_{i,r} ) = y_{i,r} $
(see \cref{Co_bra,quivar}) and thus
$$
\Xi^{-1}(\overline{\Phi}(F_i(z)) =  \sum_{r=1}^{a_i}
    \frac{ y_{i,r} }{(z-w_{i,r}) \displaystyle\prod_{s \ne r} (w_{i,r} - w_{i,s})}  = \frac{P_i(z)}{Q_i(z)},
$$
where the last equality is obtained by Lagrange interpolation.

Thus, the statement holds for $ F_i^{(p)}$.

Finally, we wish to show that $ \Xi^{-1} \circ \overline{\Phi}(E_i^{(p)}) = \mho \circ \Psi(E_i^{(p)})$.  It suffices to prove that this equation holds after applying $ \mathbf i $.

Applying \cref{Joel} and \cref{lem:Inv3}, we deduce that
$$
\mathbf i(\Xi^{-1} \circ \overline{\Phi}(E_i^{(p)})) = (-1)^{b_i}\Xi^{-1} \circ \overline{\Phi}(\mathbf i(E_i^{(p)})),
$$
where $ b_i = a_i - \sum_{i \rightarrow j} a_j$.
However $ \mathbf i(E_i^{(p)}) = (-1)^p F_i^{(p)}$ and above we proved that $\Xi^{-1} \circ \overline{\Phi}( F_i^{(p)}) = \mho \circ \Psi(F_i^{(p)}) $.  Thus we conclude that
$$
\mathbf i(\Xi^{-1} \circ \overline{\Phi}(E_i^{(p)})) = (-1)^{b_i}\mho \circ \Psi(\mathbf i(E_i^{(p)})).
$$

Now, applying Lemmas \ref{lem:Inv1} and \ref{lem:Inv2}, we deduce that
$$ \mho \circ \Psi(\mathbf i(E_i^{(p)})) = (-1)^{b_i} \mathbf i( \mho \circ \Psi(E_i^{(p)})).
$$

Thus, we conclude that $$ \mathbf i( \Xi^{-1} \circ \overline{\Phi}(E_i^{(p)})) = \mathbf i (\mho\circ\Psi(E_i^{(p)})),$$
and hence $ \Xi^{-1} \circ \overline{\Phi}(E_i^{(p)}) = \mho\circ\Psi(E_i^{(p)})$ as desired.
\end{proof}

\begin{Corollary}\label{cor:shiftedYangian}
We have an equality $ \bY^\lambda_\mu = \cAh$ and in particular, we have an isomorphism $ \bY^\lambda_\mu / \hbar \bY^\lambda_\mu \cong \BC[\oW^{\unl\lambda^*}_{\mu^*}]$.
\end{Corollary}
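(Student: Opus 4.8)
The plan is to deduce the equality $\bY^\lambda_\mu=\cAh$ from the preceding Lemma by a graded Nakayama argument, and then to read off the $\hbar=0$ statement directly from \cref{Coulomb_defo}. Recall that $\bY^\lambda_\mu$ was \emph{defined} as the image of the graded $\BC[\hbar,z_1,\dots,z_N]$-algebra homomorphism $\overline{\Phi}_\mu^\lambda\colon\bY_\mu[z_1,\dots,z_N]\to\cAh$ of \cref{th:PhiBar}; thus the equality $\bY^\lambda_\mu=\cAh$ amounts exactly to the surjectivity of $\overline{\Phi}_\mu^\lambda$, and once this is known the second assertion is immediate from the isomorphism $\Xi\colon\BC[\oW^{\unl\lambda^*}_{\mu^*}]\iso\cAh/\hbar\cAh$ of \cref{Coulomb_defo}: it gives $\bY^\lambda_\mu/\hbar\bY^\lambda_\mu=\cAh/\hbar\cAh\cong\BC[\oW^{\unl\lambda^*}_{\mu^*}]$.

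First I would check surjectivity modulo $\hbar$. Reducing $\overline{\Phi}_\mu^\lambda$ mod $\hbar$ and composing with $\Xi^{-1}$, the preceding Lemma identifies the resulting morphism $\bY_\mu[z_1,\dots,z_N]/\hbar\bY_\mu[z_1,\dots,z_N]\to\BC[\oW^{\unl\lambda^*}_{\mu^*}]$ with $\mho\circ\Psi$. Here $\Psi$ is an isomorphism $\bY_\mu/\hbar\bY_\mu\iso\BC[\CG_{\mu^*}]$ by \cref{previous}, extended $\BC[z_1,\dots,z_N]$-linearly to an isomorphism onto $\BC[\CG_{\mu^*}\times\BA^N]$, while $\mho^{*}$ is surjective since $\oW^{\unl\lambda^*}_{\mu^*}\hookrightarrow\CW_{\mu^*}\times\BA^N\cong\CG_{\mu^*}\times\BA^N$ is a closed embedding; this is precisely the surjection $\mho\circ\Psi$ recorded just before the statement of the Corollary. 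Hence $\overline{\Phi}_\mu^\lambda$ maps onto $\cAh/\hbar\cAh$.

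Next I would promote this to honest surjectivity. Since $\mu$ is dominant, $\langle\mu,\beta\rangle\ge 0>-1$ for every positive root $\beta$, so the ambient quiver gauge theory is good or ugly; by \cref{rem:Delta-grading} the $(\mu/2,\mu/2)$-grading on $\bY_\mu[z_1,\dots,z_N]$ is then $\ZZ_{\ge 0}$-graded, the $\Delta$-grading on $\cAh$ is bounded below, and $\overline{\Phi}_\mu^\lambda$ is homogeneous for these gradings, with $\hbar$ in strictly positive degree. The image $\bY^\lambda_\mu$ is a graded $\BC[\hbar]$-submodule of $\cAh$, and surjectivity mod $\hbar$ means $\bY^\lambda_\mu+\hbar\cAh=\cAh$. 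Hence the graded $\BC[\hbar]$-module $M:=\cAh/\bY^\lambda_\mu$ is bounded below and satisfies $\hbar M=M$, so an inspection of its lowest nonzero graded component forces $M=0$, that is $\bY^\lambda_\mu=\cAh$. Reducing mod $\hbar$ and applying $\Xi$ finishes the argument.

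The genuinely nontrivial input here is the preceding Lemma — the compatibility of the GKLO-type homomorphism $\Phi_\mu^\lambda$ with $\bz^{*}(\iota_{*})^{-1}$ together with the identification of $\Psi$ with the zastava coordinates $(w_{i,r},y_{i,r})$ — and that has already been carried out above. Relative to it, the present deduction is essentially bookkeeping; the only point requiring a little care is lining up the $(\mu/2,\mu/2)$-grading, the $\Delta$-grading, and the homogeneity of $\overline{\Phi}_\mu^\lambda$ so that graded Nakayama genuinely applies, which is why I would simply quote \cref{rem:Delta-grading} rather than re-derive it. I do not expect a serious obstacle.
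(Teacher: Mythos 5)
Your proposal is correct and follows essentially the same route as the paper: reduce $\overline{\Phi}^\lambda_\mu$ modulo $\hbar$, identify it with the surjection $\mho\circ\Psi$ via the preceding Lemma and \cref{Coulomb_defo}, and then upgrade to honest equality by a graded Nakayama argument using that the gradings are bounded below. The paper's own proof is just a more terse version of this; the only place where you go slightly beyond what the paper records is in trying to \emph{justify} the boundedness-below via \cref{rem:Delta-grading}, and there one should be a little careful, since the implication ``$\langle\mu,\beta\rangle\ge -1\Rightarrow$ good or ugly'' is stated in that remark as conditional on the very surjectivity you are proving; the paper simply asserts the boundedness as known, and it would be cleaner to do the same (or to cite an independent combinatorial bound on the monopole formula) rather than route through the good/ugly dichotomy.
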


\begin{proof}
The above theorem shows that the inclusion $ \bY^\lambda_\mu \hookrightarrow \cAh $ gives as isomorphism $ \bY^\lambda_\mu / \hbar \bY^\lambda_\mu \cong \cAh / h \cAh $.

Thus each element of $ \cAh $ admits a lift modulo $ \hbar $ to $ \bY^\lambda_\mu $.  Since $\bY^\lambda_\mu, \cAh $ are graded and the grading is bounded below, this proves the equality.
\end{proof}

\begin{Remark}
The isomorphism $ \bY^\lambda_\mu / \hbar \bY^\lambda_\mu \cong \BC[\oW^{\unl\lambda^*}_{\mu^*}] $ was conjectured in \cite{kwy}.  More precisely, in \cite{kwy}, we proved that the map $ \Psi $ descended to a surjection $ \bY^\lambda_\mu / (\hbar, z_1, \dots, z_N) \rightarrow \BC[\oW^{\lambda^*}_{\mu^*}] $ which was an isomorphism modulo nilpotents.  The above Corollary shows that this map is an isomorphism.  For other points $\unl{z}\in \BA^N$, this also proves that the corresponding quotient in $ \bY^\lambda_\mu $ is isomorphic to the corresponding fibre of $ \oW^{\unl\lambda^*}_{\mu^*} $.  In \cite{kwy}, we made a mistake on this point (we stated that this would always quantize the central fibre).
\end{Remark}

\begin{Remark}
If we take $ \mu^* $ not dominant, then some of the results of this section continue to hold.  In this case, we defined a version of $ \CW_{\mu^*}$ and we directly constructed the isomorphism $ \bY / \hbar \bY \cong \BC[\CW_{\mu^*}] $ in \cite[Theorem 5.15]{fkprw} .  However, we do not know how to see that $ \CW_{\mu^*} $ has an intrinsic Poisson structure, nor have we proven the surjectivity of $ \bY_\mu \rightarrow \cAh $ in this situation.
\end{Remark}





\makeatletter
\renewcommand{\cA}[1][{}]{%
  \@ifmtarg{#1}%
  {\mathcal A}
  {\mathcal A(#1)}
}
\makeatother

\begin{NB}
    Sections for the affine Yangian of $\gl_1$ and Cherednik algebras
    are comment out as they are not completed yet.

\renewenvironment{NB}{
\color{blue}{\bf NB2}. \footnotesize
}{}
\renewenvironment{NB2}{
\color{purple}{\bf NB3}. \footnotesize
}{}
\end{NB}

\bibliographystyle{myamsalpha}
\bibliography{nakajima,mybib,coulomb}

\end{document}